\DeclareMathAlphabet{\mathpzc}{OT1}{pzc}{m}{it}
\newtheorem{thm}[equation]{Theorem}
\newtheorem{Con}[equation]{Conjecture}
\newtheorem{rmk}[equation]{Remark}
\newtheorem{cor}[equation]{Corollary}
\newtheorem{prop}[equation]{Proposition}
\newtheorem{lem}[equation]{Lemma}
\newtheorem{dfn}[equation]{Definition}
\numberwithin{equation}{section}
\newcommand{\be}{begin{equation}}
\newcommand{\bH}{\mathbb H}
\newcommand{\LG}{\Lambda(\Gamma)}
\newcommand{\tS}{\tilde S}
\newcommand{\q}{\mathbb{Q}}
\newcommand{\e}{\epsilon}
\newcommand{\z}{\mathbb{Z}}
\renewcommand{\q}{\mathbb{Q}}
\newcommand{\N}{\mathbb{N}}
\renewcommand{\c}{\mathbb{C}}
\newcommand{\br}{\mathbb{R}}
\newcommand{{\grinv}}{{\Cal G}^{-r}}
\newcommand{\ba}{\backslash}\newcommand{\bs}{\backslash}
\newcommand{\G}{\Gamma}
\newcommand{\g}{\gamma}
\newcommand{\Cal}{\mathcal}
\renewcommand{\P}{\mathcal P}\newcommand{\B}{\mathcal B}
\newcommand{\bG}{\mathbf G}
\newcommand{\la}{\langle}
\newcommand{\ra}{\rangle}
\newcommand{\SL}{\operatorname{SL}}
\newcommand{\bp}{\begin{pmatrix}}
\newcommand{\ep}{\end{pmatrix}}
\renewcommand{\be}{\begin{equation}}
\newcommand{\ee}{\end{equation}}
\renewcommand{\bp}{{\rm bp}}
\newcommand{\mC}{\mathcal C}
\renewcommand{\O}{\operatorname{O}}
\newcommand{\SO}{\operatorname{SO}}
\renewcommand{\H}{\operatorname{H}}
\newcommand{\T}{\operatorname{T}}
\newcommand{\bh}{\partial(\mathbb{H}^n)}
\newcommand{\Vol}{\op{Vol}}
\newcommand{\PS}{\op{PS}}\newcommand{\Haar}{\op{Haar}}
\newcommand{\op}{\operatorname}\newcommand{\supp}{\operatorname{supp}}
\renewcommand{\deg}{\text{DEP}}
\newcommand{\BR}{\operatorname{BR}}
\newcommand{\BMS}{\operatorname{BMS}}
\renewcommand{\setminus}{-}
\newcommand{\Om}{\Omega}
\renewcommand{\S}{\mathcal S}
\def\bbc{\mathbb{C}}
\def\bbr{\mathbb{R}}
\def\bbz{\mathbb{Z}}
\def\om{\omega}
\def\scal{\mathcal{S}}
\def\ocal{\mathcal{O}}
\def\B{\mathcal{B}}
\def\M{\mathcal{M}}
\def\P{{\rm P}}
\newcommand\Hom{{\rm Hom}}
\def\lam{\lambda}
\def\cfun{{\rm c}}
\newcommand{\cA}{\mathsf A}
\begin{document}

\title[Effective counting]
{Matrix coefficients, counting and primes for orbits of geometrically finite groups}

\author{Amir Mohammadi}
\address{Department of Mathematics, The University of Texas at Austin, Austin, TX 78750}
\email{amir@math.utexas.edu}

\author{Hee Oh}
\address{Mathematics department, Brown university, Providence, RI 02912
and Korea Institute for Advanced Study, Seoul, Korea}
\email{heeoh@math.brown.edu}
\thanks{The authors are supported in part by NSF grants.}




\begin{abstract}
 Let $G:=\SO(n,1)^\circ$ and $\G<G$ be a geometrically finite Zariski dense subgroup with critical exponent $\delta$
bigger than $(n-1)/2$. Under a spectral gap  hypothesis on $L^2(\G\ba G)$, which is always satisfied when $\delta>(n-1)/2$ for $n=2,3$ and when $\delta>n-2$ for $n\geq 4$,
we obtain an {\it effective} archimedean counting result for a discrete orbit of $\G$ in a homogeneous space $H\ba G$
where $H$ is the trivial group, a symmetric subgroup or a horospherical subgroup.
More precisely,
we show that for any effectively well-rounded family $\{\B_T\subset H\ba G \}$ of compact subsets,
there exists $\eta>0$ such that $$\#[e]\G\cap \B_T=\mathcal M(\B_T) +O(\mathcal M(\B_T)^{1-\eta})$$
 for an explicit measure $\mathcal M$ on $H\ba G$ which depends on $\G$.
 We also apply the affine sieve and describe the distribution of almost primes on orbits of $\G$ in arithmetic settings.


 One of key ingredients in our approach is
an effective asymptotic formula for the matrix coefficients of $L^2(\G\ba G)$
that we prove by combining methods from spectral analysis, harmonic analysis and ergodic theory.
We also prove exponential mixing of the frame flows with respect to the Bowen-Margulis-Sullivan measure.
\end{abstract}

\maketitle
\tableofcontents
\section{Introduction}
Let $n\ge 2$ and let $G$ be the identity component of the special orthogonal group $\SO(n,1)$. As well known,
$G$ can be considered as the group of orientation preserving isometries of the hyperbolic space
$\bH^n$. A discrete subgroup $\G$ of $G$ is called {\it geometrically finite} if the unit neighborhood of
its convex core\footnote{The {\em convex core} $C_\G\subset \G\bs\bH^n$ of $\G$ is the
image of the minimal convex subset of $\bH^n$ which contains all geodesics connecting
any two points in the limit set of $\G$.} has finite Riemannian volume.
As any discrete subgroup admitting a finite sided polyhedron as a fundamental domain in $\bH^n$ is geometrically finite,
this class of discrete subgroups provides a natural generalization of lattices in $G$.
In particular, for $n=2$, a discrete subgroup of $G$ is geometrically finite if and only if it is finitely generated.

In the whole introduction, let $\G$ be a torsion-free geometrically finite, Zariski dense,
discrete subgroup of $G$.
We denote by $\delta$ the critical exponent of $\G$.
Note that any discrete subgroup of $G$ with $\delta>(n-2)$ is Zariski dense in $G$.
 The main aim of this paper is to obtain {\it effective} counting results for discrete orbits of $\G$ in
$H\ba G$,  where $H$ is the trivial group, a
 symmetric subgroup or a horospherical subgroup of $G$,
 and to discuss their applications in
the affine sieve on $\G$-orbits in an arithmetic setting.
Our results are formulated under a suitable spectral gap hypothesis for $L^2(\G\ba G)$ (see Def. \ref{sng} and \ref{sngg}). This hypothesis on $\G$ is known to be true
if the critical exponent $\delta$ is strictly bigger than $n-2$.
 Though we believe that the condition $\delta>(n-1)/2$ should be sufficient to guarantee this hypothesis,
it is not yet known in general (see \ref{conj}).



For $\G$ lattices, i.e., when $\delta=n-1$, both the effective counting and
 applications to an affine sieve  have been extensively studied (see \cite{DRS}, \cite{EM}, \cite{BeO}, \cite{Mau}, \cite{GN}, \cite{LS},\cite{NS}, \cite{GN1}, etc. as well as survey articles \cite{Oh}, \cite{Oh1} \cite{Kow}, \cite{Kow1}).
Hence our main focus is when $\G$ is of infinite co-volume in $G$.

\medskip

\subsection{Effective asymptotic of Matrix coefficients for $L^2(\G\ba G)$}\label{section1}
We begin by describing an effective asymptotic result on the matrix coefficients for $L^2(\G\ba G)$, which
is a key ingredient in our approach as well as of independent interest.
When $\G$ is not a lattice,
a well-known theorem of Howe and Moore \cite{HM}
implies that for any $\Psi_1, \Psi_2\in L^2(\G\ba G)$,
the matrix coefficient $$\la a \Psi_1, \Psi_2\ra:=\int_{\G\ba G} \Psi_1(ga)\overline{\Psi_2(g)} dg$$
 decays to zero as $a\in G$ tends to
infinity (here $dg$ is a $G$-invariant measure on $\G\ba G$).
Describing the precise asymptotic is much more involved.
Fix a Cartan decomposition $G=KAK$ where $K$ is a maximal compact subgroup and
 $A$ is a one-parameter subgroup of diagonalizable elements. Let $M$ denote the centralizer of $A$ in $K$.
The quotient spaces $G/K$ and $G/M$ can be respectively identified with $\bH^n$ and its unit tangent bundle $\T^1(\bH^n)$, and
we parameterize elements of $A=\{a_t:t\in \br\}$ so that the right translation action of $a_t$ in $G/M$ corresponds to
the geodesic flow on $\T^1(\bH^n)$ for time $t$.

We let $\{m_x: x\in \bH^n\}$ and $\{\nu_x: x\in \bH^n\}$ be $\G$-invariant conformal densities of
dimensions $(n-1)$ and $\delta$ respectively, unique up to scalings.
Each $\nu_x$ is a finite measure on the limit set of $\G$, called the Patterson-Sullivan measure viewed from $x$.
Let $m^{\BMS}, m^{\BR}, m^{\BR}_*$ and $m^{\Haar}$ denote, respectively, the Bowen-Margulis-Sullivan measure,
 the Burger-Roblin measures for the expanding and the contracting
horospherical foliations, and the Liouville-measure on the unit tangent bundle $\T^1(\G\ba \bH^n)$, all defined with respect to the fixed
 pair of $\{m_x\}$ and $\{\nu_x\}$ (see Def. \ref{bm}).
Using the identification $\T^1(\G\ba \bH^n)=\G\ba G/M$, we may extend these measures to right $M$-invariant measures
on $\G\ba G$, which we will denote by the same notation and call them the BMS, the BR, the BR$_*$,
the Haar measures for simplicity.
We note that for $\delta<n-1$, only the BMS measure has finite mass \cite{OS}.



In order to formulate a notion of a spectral gap for $L^2(\G\ba G)$,
denote by $\hat G$ and $\hat M$ the unitary dual of $G$ and $M$ respectively.
A representation $(\pi,\mathcal H) \in \hat G$ is called {\it tempered} if for any 
$K$-finite $v\in\mathcal H$, the associated matrix coefficient function $g\mapsto \la \pi(g)v, v\ra$ belongs to $L^{2+\epsilon}(G)$ for any $\e>0$; {\it non-tempered} otherwise.
The non-tempered part
of $\hat G$ consists of the trivial representation, and complementary series representations $\mathcal U(\upsilon, s-n+1)$
parameterized by $\upsilon\in \hat M$ and $s\in I_\upsilon$,
where $I_\upsilon\subset (\tfrac{n-1}2, n-1)$ is an interval depending on $\upsilon$.
This was obtained by Hirai \cite{Hi} (see also \cite[Prop. 49, 50]{KS}).
Moreover $\mathcal U(\upsilon, s-n+1)$ is spherical (i.e., has a non-zero $K$-invariant vector) if and only if $\upsilon$ is the trivial representation $1$;
 see discussion in section \ref{ss;standard-comp-series}.

By the works of  Lax-Phillips \cite{LaxPhillips} and Sullivan \cite{Sullivan1979}, if $\delta>\tfrac{n-1}2$,
$\mathcal U(1,\delta-n+1)$ occurs as a subrepresentation of $L^2(\G\ba G)$ with multiplicity one,
and  $L^2(\G\ba G)$ possesses {\it spherical} spectral gap, meaning that
there exists $\tfrac{n-1}2< s_0<\delta$ such that $L^2(\G\ba G)$ does not weakly contain{\footnote{
 for two unitary representations $\pi$ and $\pi'$ of $G$,
  $\pi$ is said to be weakly contained in $\pi'$ (or $\pi'$ weakly contains $\pi$) if every diagonal matrix coefficient of $\pi$ can be approximated, uniformly on compact subsets, by
convex combinations of diagonal matrix coefficients of $\pi'$.
}}
any {\it spherical} complementary series representation $\mathcal U(1,s-n+1)$, $s\in (s_0, \delta)$.
The following notion of a spectral gap concerns both the spherical and non-spherical parts of $L^2(\G\ba G)$.

\begin{dfn}\label{intro_strong_gap}\label{sng}  \rm  We say that $L^2(\G\ba G)$ has a {\it strong spectral gap} if
   \begin{enumerate}
    \item $L^2(\G\ba G)$ does not contain any $\mathcal U(\upsilon,\delta-n+1)$ with $\upsilon\ne 1$;
\item there exist
$\tfrac{n-1}{2}<s_0(\G)<\delta$ such that  $L^2(\G\ba G)$ does not weakly contain any $\mathcal U(\upsilon,s-n+1)$ with $s\in (s_0(\G), \delta)$ and $\upsilon\in \hat M$. \end{enumerate}
\end{dfn}

For $\delta\le \tfrac{n-1}2$, the Laplacian spectrum of $L^2(\G\ba \bH^n)$ is continuous \cite{LaxPhillips}; this implies that
there is no spectral gap for $L^2(\G\ba G)$.

\begin{Con}[Spectral gap conjecture]\label{conj}
 If $\G$ is a geometrically finite and Zariski dense subgroup of $G$ with $\delta>\tfrac{n-1}2$,  $L^2(\G\ba G)$ has a strong spectral gap.
\end{Con}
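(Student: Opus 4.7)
The plan is to break the strong spectral gap into the two conditions of Definition \ref{sng} and attack each via the $M$-type decomposition. For any $\upsilon \in \hat M$, the complementary series $\mathcal U(\upsilon, s-n+1)$ can be realized as the induced representation from $MAN$, so its occurrence in $L^2(\G\ba G)$ is governed by the spectrum of a Casimir-type operator $\Delta_\upsilon$ acting on $L^2$-sections of the homogeneous vector bundle $\G\ba G\times_M V_\upsilon \to \G\ba \bH^n$. The spherical case $\upsilon = 1$ is exactly the Laplacian on $L^2(\G\ba \bH^n)$, for which Lax-Phillips \cite{LaxPhillips} and Sullivan \cite{Sullivan1979} establish that the bottom eigenvalue is $\delta(n-1-\delta)$, attained with multiplicity one via the Patterson-Sullivan density $\{\nu_x\}$, and that there is a finite gap below this base eigenvalue to the continuous spectrum at $\left(\tfrac{n-1}{2}\right)^2$. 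The first step, then, is to extend this resolvent analysis to $\Delta_\upsilon$ for non-trivial $\upsilon$, showing that each $\Delta_\upsilon$ has meromorphic resolvent continuation past the line $\operatorname{Re} s = \tfrac{n-1}{2}$ with only finitely many poles above.

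To handle condition (1)—the absence of non-spherical base eigenvalue at $s = \delta$—the strategy is to show that any eigenfunction of $\Delta_\upsilon$ at the critical level forces $\upsilon = 1$. Via Frobenius reciprocity and the boundary realization of principal and complementary series, such an eigenfunction corresponds to a $\G$-equivariant section on the limit set $\Lambda(\G)$ with values in $V_\upsilon$, conformal of weight $\delta$. By the uniqueness (up to scaling) of the $\delta$-dimensional PS density and the ergodicity of the BMS measure for the geodesic and frame flows, one expects that only the trivial $M$-type can support such an equivariant section. Making this rigorous is the analytic heart of part (1): one should combine Sullivan's shadow lemma with mixing of the frame flow to rule out non-trivial $M$-isotypic components of the base eigenvector. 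For condition (2)—the uniform gap—the plan is to apply Dolgopyat-type cancellation estimates for transfer operators associated to the BMS measure on the frame bundle, in the spirit of Stoyanov and of Sarkar-Winter, to obtain an unconditional strip of $\upsilon$-independent width in which no $\mathcal U(\upsilon, s-n+1)$ appears. Combined with the finite multiplicity of point spectrum above threshold, this yields a strictly positive $\delta - s_0(\G)$.

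The main obstacle—and the reason this remains a conjecture—is in condition (1). For $\delta > n-2$, one can exploit the small codimension to bootstrap from the spherical gap (this is essentially what gives the known cases in the paper's hypothesis), but in the intermediate range $\tfrac{n-1}{2} < \delta \le n-2$ for $n \ge 4$ we lack a Patterson-Sullivan theory with enough $M$-equivariance to exclude non-trivial $\upsilon$ at the critical exponent. A successful attack likely requires either a genuinely non-abelian conformal density theory on $\Lambda(\G)$ taking values in $\hat M$-isotypic bundles, or a transfer-operator model for the frame flow whose leading eigenfunction can be shown to be $M$-invariant by a direct dynamical argument; neither approach is currently in place in the generality of the conjecture, which is why the results of this paper are stated conditionally on Definition \ref{sng}.
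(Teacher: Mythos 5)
You are addressing a statement that the paper itself leaves unproved: Conjecture \ref{conj} is stated as a conjecture, used only as a hypothesis (via Definitions \ref{sng} and \ref{sngg}), and the only portion the paper actually establishes is Theorem \ref{nm2}, which covers $\delta>(n-1)/2$ for $n=2,3$ and $\delta>n-2$ for $n\ge 4$ by a purely representation-theoretic observation: by Hirai's classification, every non-spherical complementary series $\mathcal U(\upsilon,s-n+1)$ with $\upsilon\ne 1$ has parameter $s\in(\tfrac{n-1}2,\,n-1-\ell(\upsilon))$ with $\ell(\upsilon)\ge 1$, hence $s< n-2$, so for $\delta>n-2$ both conditions of Definition \ref{sng} reduce to the spherical statements of Lax--Phillips and Sullivan. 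There is thus no proof in the paper to compare yours against, and your proposal is not a proof either: as you concede in your final paragraph, the decisive step --- ruling out non-trivial $M$-types at the bottom parameter $\delta$ and producing a $\upsilon$-uniform strip free of complementary series when $\tfrac{n-1}2<\delta\le n-2$, $n\ge 4$ --- is precisely the content of the conjecture, and none of your proposed tools is carried out. Note also that for the known range your route is heavier than necessary; the "bootstrap from small codimension" you allude to is in fact the one-line classification argument above.

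Two concrete weaknesses in the program as written. First, for condition (1), uniqueness of the Patterson--Sullivan density and ergodicity of the BMS measure do not control the objects you would need to exclude: a non-spherical $\mathcal U(\upsilon,\delta-n+1)$ inside $L^2(\G\ba G)$ corresponds at the boundary to a $\G$-equivariant $V_\upsilon$-valued distribution of weight $\delta$ on $\Lambda(\G)$, not a positive conformal measure, so Sullivan's shadow lemma and the uniqueness theory for conformal densities give no a priori restriction on it; making "only the trivial $M$-type can support such a section" rigorous is exactly the missing non-abelian conformal density theory you mention, not a consequence of existing ergodicity results. Second, for condition (2), the Dolgopyat--Stoyanov transfer-operator estimates you invoke are available in this setting only for convex cocompact groups, and within this paper the logical direction is the reverse of what you propose: exponential mixing of the frame flow (Theorem \ref{mmix}) is deduced from the spectral gap hypothesis, not used to produce it; in the presence of cusps no such cancellation estimates for the frame flow are established here, so this step would require genuinely new analysis rather than an application of known results.
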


If $\delta>(n-1)/2$ for $n=2,3$, or if $\delta>(n-2)$ for $n\ge 4$, then
$L^2(\G\ba G)$ has a strong spectral gap (Theorem \ref{nm2}).

Our main theorems are proved under the following slightly weaker spectral gap property assumption:
\begin{dfn}\label{introsg}\label{sngg}  \rm  We say that $L^2(\G\ba G)$ has a {\it spectral gap} if there exist
$\tfrac{n-1}{2}<s_0=s_0(\G)<\delta$ and $n_0=n_0(\G)\in \N$ such that 
   \begin{enumerate}
    \item the multiplicity of $\mathcal U(\upsilon,\delta-n+1)$ contained in $L^2(\G\ba G)$ is at most $\dim(\upsilon)^{n_0}$ for any $\upsilon\in \hat M$;
\item  $L^2(\G\ba G)$ does not weakly contain any  $\mathcal U(\upsilon,s-n+1)$ with $s\in (s_0, \delta)$ and  $\upsilon\in \hat M$.\end{enumerate}
  The pair $(s_0(\G), n_0(\G))$ will be referred to as the spectral gap data for $\G$.
\end{dfn}

In the rest of the introduction, we impose the following hypothesis on $\G$:
$$\text{ $L^2(\G\ba G)$ has a spectral gap.}$$


\begin{thm}\label{harmixing}
  There exist $\eta_0>0$ and $\ell\in \N$ (depending only on the spectral gap data for $\G$) such that
 for any real-valued $\Psi_1, \Psi_2\in C_c^\infty(\G\ba G)$, as $t\to \infty$,
\begin{multline*} e^{(n-1-\delta)t} \int_{\G\ba G} \Psi_1 (ga_t) {\Psi_2}(g) dm^{\Haar}(g)
 \\= \frac{m^{\BR}(\Psi_1)\cdot m^{\BR}_*(\Psi_2)}{|m^{\BMS}|}
+O(\mathcal S_\ell (\Psi_1)\mathcal S_\ell(\Psi_2) e^{-\eta_0 t} )\end{multline*}
where $\mathcal S_\ell(\Psi_i)$ denotes the $\ell$-th Sobolev $L^2$-norm of $\Psi_i$ for each $i=1,2$.
\end{thm}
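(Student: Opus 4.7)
My plan is to deduce the Haar asymptotic from an effective BMS mixing theorem for the frame flow, combined with a quantitative thickening transfer along the horospherical foliations.

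\textbf{Step 1 (effective BMS mixing).} I would first establish the companion effective mixing estimate
\[
  |m^{\BMS}|\int_{\G\ba G}\phi_1(ga_t)\phi_2(g)\,dm^{\BMS}(g) = m^{\BMS}(\phi_1)m^{\BMS}(\phi_2) + O\bigl(\mathcal S_\ell(\phi_1)\mathcal S_\ell(\phi_2)e^{-\eta t}\bigr).
\]
Decompose $L^2(\G\ba G)=\bigoplus_{\upsilon\in\hat M}L^2(\G\ba G)_\upsilon$ into $M$-isotypic components, and spectrally decompose each summand into complementary series and a tempered part. For $\upsilon=1$, the distinguished $K$-spherical subrepresentation $\mathcal U(1,\delta-n+1)\subset L^2(\G\ba G)$ supplied by Lax--Phillips--Sullivan is generated by the Patterson--Sullivan eigenfunction, and projection onto it against $m^{\BMS}$ produces exactly $m^{\BMS}(\phi_i)/|m^{\BMS}|$; hypothesis (2) of Definition~\ref{sngg} handles the remaining $s<s_0$ spherical complementary series and the tempered part, giving the desired exponential error. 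For $\upsilon\neq 1$ the $\upsilon$-isotypic contains no $K$-fixed vector, so every matrix coefficient there decays at rate at least $e^{-(n-1-\delta)t}$, and hypothesis (1) supplies a multiplicity bound $\dim(\upsilon)^{n_0}$; combined with the Peter--Weyl--Sobolev decay $\|\phi_\upsilon\|_2\ll\dim(\upsilon)^{-\ell}\,\mathcal S_\ell(\phi)$ and the standard estimate $\sum_{\upsilon\in\hat M}\dim(\upsilon)^{-k}<\infty$ for $k$ large, these contributions sum absolutely into the error.

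\textbf{Step 2 (thickening transfer to Haar).} In local flow-box coordinates adapted to the unstable horospherical $N^+$, stable horospherical $N^-$, Cartan $A$ and compact $M$ directions, the Haar measure has Lebesgue density on the $N^\pm$ factors, whereas $m^{\BMS}$ uses the PS densities $\mu^\pm_{\PS}$; the Burger--Roblin measures $m^{\BR}$ and $m^{\BR}_*$ arise by swapping one PS factor for Lebesgue. Introduce smooth $\epsilon$-thickenings $\Psi_1^{(\epsilon,+)}$ along $N^+$ and $\Psi_2^{(\epsilon,-)}$ along $N^-$, each normalised by the ratio of Lebesgue to PS mass on an $\epsilon$-ball. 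Using the $\delta$-conformality of the PS density under the $a_t$-action (which expands $N^+$ by $e^t$ and contracts $N^-$ by $e^{-t}$), a change-of-variables calculation yields
\[
  \int\Psi_1(ga_t)\Psi_2(g)\,dm^{\Haar}(g) = e^{-(n-1-\delta)t}\int\Psi_1^{(\epsilon,+)}(ga_t)\Psi_2^{(\epsilon,-)}(g)\,dm^{\BMS}(g) + O\bigl(\epsilon^\alpha\,\mathcal S_\ell(\Psi_1)\mathcal S_\ell(\Psi_2)\bigr).
\]
Applying Step 1 to the thickened pair (whose Sobolev norms are bounded by $\epsilon^{-\ell}\mathcal S_\ell(\Psi_i)$), and using that $m^{\BMS}(\Psi_1^{(\epsilon,+)})\to m^{\BR}(\Psi_1)$ and $m^{\BMS}(\Psi_2^{(\epsilon,-)})\to m^{\BR}_*(\Psi_2)$ at a Hölder rate as $\epsilon\to 0$, one optimises $\epsilon=e^{-ct}$ for a small $c>0$ to obtain the claimed asymptotic.

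\textbf{Main obstacle.} The hardest part is the quantitative thickening of Step 2: one must establish a Hölder modulus of continuity for the PS density along horospheres in order to estimate $m^{\BMS}(\Psi_i^{(\epsilon,\pm)})-m^{\BR}(\Psi_i)$ polynomially in $\epsilon$, and simultaneously bound the Sobolev norms of the thickened functions without losing too large a power of $\epsilon^{-1}$, so that the overall optimisation in $\epsilon$ produces a genuine exponential rate $\eta_0>0$. A secondary technical difficulty is the uniform treatment of non-spherical $M$-types in Step 1, where the polynomial multiplicity bound $\dim(\upsilon)^{n_0}$ of Definition~\ref{sngg} combined with the $M$-type Peter--Weyl decay is precisely what guarantees that the summation over $\hat M$ converges.
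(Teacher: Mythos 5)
Your plan inverts the logical order that actually makes the spectral gap usable, and the inversion is where it breaks. The spectral gap hypothesis lives in the unitary representation $L^2(\G\ba G, m^{\Haar})$, and matrix coefficients of that representation are precisely the \emph{Haar} correlations $\int \Psi_1(ga_t)\Psi_2(g)\,dm^{\Haar}(g)$. The BMS correlation in your Step 1 is not a matrix coefficient of any unitary $G$-representation: $m^{\BMS}$ is only $A M$-invariant, so decomposing $L^2(\G\ba G)$ into $M$-isotypic pieces and complementary series gives you no direct handle on $\int \phi_1(ga_t)\phi_2(g)\,dm^{\BMS}(g)$. (In the paper, effective BMS mixing is a downstream \emph{consequence} of the Haar asymptotic, obtained through equidistribution of translates and transversal intersections; the only previously known direct proofs, for convex cocompact groups, go through Dolgopyat-type symbolic dynamics, not spectral theory.) Even read as an argument for the Haar asymptotic, Step 1 mishandles the main term: every $K$-type of the spherical complementary series $\mathcal U(1,\delta-n+1)$, and every non-spherical $\mathcal U(\upsilon,\delta-n+1)$ permitted by hypothesis (1) of Definition \ref{sngg}, has matrix coefficients decaying at exactly the rate $e^{(\delta-n+1)t}$ --- the same rate as the main term --- so they cannot be ``summed absolutely into the error''; they all contribute to the leading coefficient. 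Identifying that total leading coefficient with $m^{\BR}(\Psi_1)m^{\BR}_*(\Psi_2)/|m^{\BMS}|$ is the genuinely hard point: the paper needs the Harish-Chandra/Eisenstein-integral expansion (Theorem \ref{key}), with bounds polynomial in $d_\sigma, d_\tau$ uniform over $K$-types, to get an asymptotic with an \emph{unidentified} constant, and then invokes Roblin's non-effective mixing theorem (Theorem \ref{lo}) to pin the constant down. Your proposal supplies no mechanism for this identification; asserting that projection onto the base eigenfunction ``produces $m^{\BMS}(\phi_i)/|m^{\BMS}|$'' is not correct even formally, since pairings against the Patterson--Sullivan eigenfunction in $L^2(m^{\Haar})$ produce BR-type integrals, and the base vector is only one of infinitely many vectors contributing at top order.

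Step 2 also does not work as sketched. When $\delta<n-1$ the Patterson--Sullivan and Lebesgue measures on horospherical leaves are mutually singular, and the ratio of their masses on $\epsilon$-balls is not uniformly comparable, let alone H\"older in $\epsilon$: by Sullivan's shadow lemma it fluctuates with cusp excursions (between exponents $\delta$ and $2\delta-k(\xi)$), so a thickening ``normalised by the ratio of Lebesgue to PS mass on an $\epsilon$-ball'' has no uniform limit, and the claimed estimate $m^{\BMS}(\Psi_1^{(\epsilon,+)})\to m^{\BR}(\Psi_1)$ with polynomial rate is unfounded. The honest way to compare Lebesgue-weighted and PS-weighted translates is through counting transversal intersections of the translated leaf with flow boxes (the analogues of Lemmas \ref{com1} and \ref{com2} and Theorem \ref{tran}), together with the thick-thin decomposition of the non-wandering set needed to control cusp contributions (Theorem \ref{yo}); in the paper this machinery is what transfers the Haar result to PS/BMS statements, and running it in your reverse direction would require at least as much work, which the H\"older-thickening shortcut cannot replace.
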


\begin{rmk}\label{kk}\rm
We remark that if either $\Psi_1$ or $\Psi_2$ is $K$-invariant, then
Theorem \ref{harmixing} holds for any Zariski dense $\G$ with $\delta> \tfrac{n-1}2$ (without
the spectral gap hypothesis), as the spherical spectral gap of $L^2(\G\ba G)$ is sufficient
to study the matrix coefficients associated to spherical vectors.
\end{rmk}

Let $\mathcal H_{\delta}^\dag$ denote the sum of of all complementary series representations of parameter $\delta$
contained in $L^2(\G\ba G)$, and let
$P_{\delta}$ denote the projection operator from $L^2(\G\ba G)$
to $\mathcal H_{\delta}^\dag$.
By the spectral gap hypothesis on $L^2(\G\ba G)$,
the main work in the proof of Theorem \ref{harmixing}
 is to understand the asymptotic
of $\la a_t P_{\delta}(\Psi_1), P_{\delta}(\Psi_2)\ra$ as $t\to \infty$.
Building up on the work of Harish-Chandra on the asymptotic
behavior of the Eisenstein integrals (cf. \cite{Wa}, \cite{Wa2}), we first obtain an asymptotic formula for
$\la a_t v, w\ra $ for all $K$-finite vectors $v,w\in \mathcal H_{\delta}^\dag$ (Theorem \ref{key}).
This extension alone does not give the formula of the leading term
of $\la a_t P_{\delta} (\Psi_1), P_{\delta} (\Psi_2)\ra $ in terms of functions $\Psi_1$ and $\Psi_2$;
however, an ergodic theorem of Roblin \cite{Roblin2003} enables us to identify the main term as given in Theorem \ref{harmixing}.

\medskip






\subsection{Exponential mixing of frame flows}
Via the identification of the space $\G\ba G$ with the frame bundle over the hyperbolic manifold $\G\ba \bH^n$,
the right translation action of $a_t$  on $\G \ba G$ corresponds to the frame flow for time $t$.
The BMS measure $m^{\BMS}$ on $\G\ba G$ is known to be mixing for the frame flows (\cite{FS}, \cite{Wi}).
We deduce the following exponential mixing from Theorem \ref{harmixing}:
for a compact subset $\Om$ of $\G\ba G$, we denote by $C^\infty(\Om)$ the set of all smooth functions on $\G\ba G$
with support contained in $\Om$.

\begin{thm}\label{mmix}
There exist $\eta_0>0$
and $\ell\in \N$ such that for any compact subset $\Om \subset\G\ba G$, and
for any $\Psi_1, \Psi_2\in C^\infty(\Om)$, as $t\to \infty$,
\begin{multline*}\int_{\G\ba G} \Psi_1 (ga_t) \Psi_2(g) dm^{\BMS}(g)
\\ = \frac{m^{\BMS}(\Psi_1)\cdot m^{\BMS}(\Psi_2)}{|m^{\BMS}|}
+O(\mathcal S_\ell (\Psi_1)\mathcal S_\ell(\Psi_2) e^{-\eta_0 t} )\end{multline*}
where the implied constant depends only on $\Om$.
\end{thm}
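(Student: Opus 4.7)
The plan is to deduce Theorem \ref{mmix} from the Haar-measure mixing of Theorem \ref{harmixing} by a horospherical thickening argument of the type used by Roblin in the non-effective setting. Let $N^\pm$ denote the horospherical subgroups expanded/contracted by conjugation by $a_t$ as $t\to\infty$. By a smooth partition of unity at scale $\e$, we may assume $\Psi_1,\Psi_2$ are supported in a single small flow box inside $\Om$, and work there in Iwasawa-like coordinates $g=n^+a_s n^-$; the four measures $m^{\BMS}$, $m^{\BR}$, $m^{\BR}_*$, $m^{\Haar}$ all admit explicit product descriptions in these coordinates, differing only by whether one uses the PS or the Lebesgue density on each horospherical factor.

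Pick smooth bumps $\rho^\pm_\e\in C_c^\infty(N^\pm_\e)$ scaled so that Lebesgue convolution by $\rho^\pm_\e$ approximates PS-integration at scale $\e$, and define thickenings $\tilde\Psi_j$ by convolving $\Psi_j$ with $\rho^\pm_\e$ in the appropriate horospherical direction. A quantitative shadow-lemma comparison of PS- and Lebesgue-mass of small horospherical balls (uniform on the compact set $\Om$) yields, for suitable positive constants $c_1,c_2$ and some $a>0$,
\[
m^{\BR}(\tilde\Psi_1)=c_1 m^{\BMS}(\Psi_1)+O(\e^a\mathcal S_\ell(\Psi_1)),\qquad m^{\BR}_*(\tilde\Psi_2)=c_2 m^{\BMS}(\Psi_2)+O(\e^a\mathcal S_\ell(\Psi_2)).
\]
Exploiting the Jacobians of $a_t$ acting on $N^\pm$ (which are exactly responsible for the factor $e^{(n-1-\delta)t}$ when converting Lebesgue to PS weights on the horospheres) one further derives the pointwise identity
\[
e^{(n-1-\delta)t}\int \tilde\Psi_1(ga_t)\tilde\Psi_2(g)\,dm^{\Haar}(g)=c_1c_2\int\Psi_1(ga_t)\Psi_2(g)\,dm^{\BMS}(g)+O(\e^a\mathcal S_\ell(\Psi_1)\mathcal S_\ell(\Psi_2)),
\]
uniformly in $t\ge 0$.

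Applying Theorem \ref{harmixing} to the left-hand side with inputs $\tilde\Psi_1,\tilde\Psi_2$ replaces it by $c_1c_2\, m^{\BMS}(\Psi_1) m^{\BMS}(\Psi_2)/|m^{\BMS}|$ modulo errors $O(\e^a\mathcal S_\ell(\Psi_1)\mathcal S_\ell(\Psi_2))$ and $O(\mathcal S_\ell(\tilde\Psi_1)\mathcal S_\ell(\tilde\Psi_2)e^{-\eta_0 t})$. Differentiating the bumps $\rho^\pm_\e$ gives $\mathcal S_\ell(\tilde\Psi_j)\ll \e^{-b}\mathcal S_\ell(\Psi_j)$ for some $b>0$, so the choice $\e:=e^{-\eta_0 t/(a+b)}$ balances the two error terms and yields the desired exponential saving $e^{-\eta_0' t}$ with $\eta_0':=a\eta_0/(a+b)>0$. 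The hardest ingredient is the quantitative shadow lemma with an explicit H\"older exponent $a>0$: in the geometrically finite setting this cannot be made uniform on all of $\G\ba G$ because of the cusps (which is exactly why the implied constant in the statement depends on $\Om$), and one must carefully track the interplay between the scale $\e$ of the thickening, the Sobolev loss from $\rho^\pm_\e$, and the exponential rate $\eta_0$ from Theorem \ref{harmixing} to ensure that the final saving $\eta_0'$ is strictly positive.
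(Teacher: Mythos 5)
The step on which everything rests---choosing bumps $\rho^\pm_\e$ so that ``Lebesgue convolution approximates PS-integration at scale $\e$'' with error $O(\e^a)$, and the resulting identities $m^{\BR}(\tilde\Psi_1)=c_1\,m^{\BMS}(\Psi_1)+O(\e^a\S_\ell(\Psi_1))$ and the $t$-uniform pointwise identity relating $e^{(n-1-\delta)t}\int\tilde\Psi_1(ga_t)\tilde\Psi_2\,dm^{\Haar}$ to $\int\Psi_1(ga_t)\Psi_2\,dm^{\BMS}$---is false when $\delta<n-1$. The conditional measures of $m^{\BMS}$ along the horospherical leaves (the measures $\mu^{\PS}_{gN^\pm}$) are singular with respect to the Haar measures on those leaves: the shadow lemma gives PS-mass of an $\e$-ball of order $\e^{\delta}$ (with point-dependent cuspidal fluctuations), against Lebesgue mass $\e^{n-1}$, so the ratio degenerates like $\e^{\delta-n+1}$, fluctuates from point to point, and the PS mass vanishes identically off the limit set where the Lebesgue mass does not. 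Hence no constants $c_1,c_2$ with the stated properties exist, and no static, $t$-independent convolution can convert the Haar correlation (whose leading functional is $m^{\BR}\otimes m^{\BR}_*$) into the BMS correlation (leading functional $m^{\BMS}\otimes m^{\BMS}$); if such a bookkeeping identity held, BMS mixing would be a formal consequence of Haar mixing, which it is not. The dependence on $\Om$ in the statement comes from injectivity radius and partition-of-unity constants, not from a failure of uniformity of a shadow-lemma comparison, which does not exist in the needed form at any scale.

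The missing idea is to thicken \emph{transversally} to the horospherical leaves, where the relevant conditionals are smooth, and to compare the Haar and PS translates \emph{dynamically} through their common transversals rather than by a time-zero density comparison. This is what the paper does: Theorem \ref{maince} is proved by thickening a compact piece of the orbit in the $P'$-direction and applying Theorem \ref{harmixing}; Lemmas \ref{com1} and \ref{com2} then express both $e^{(n-1-\delta)t}\int\Psi(ha_t)\phi\,dh$ and $\int\Psi(ha_t)\phi\,d\mu^{\PS}_{yH}$, up to $1+O(\e)$, in terms of the same sum over the transversal intersection points $P_x(t)$ of $yHa_t$ with a weak-stable box, the differing conformal factors $e^{(n-1)t}$ and $e^{\delta t}$ being absorbed exactly by the respective leafwise measures under the flow. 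Combining these gives Theorem \ref{tran} and then Theorem \ref{loc} (effective equidistribution of $a_t$-translates of the PS measure on an $N$-leaf toward $m^{\BMS}$), and Theorem \ref{mmix} follows by integrating Theorem \ref{loc}, applied with $H=N$ leaf by leaf, over the transversal $xP_{\e_0}$ using the product decomposition \eqref{mea2} of $m^{\BMS}$. Your outline skips precisely this transversal-intersection mechanism, and without it the passage from Theorem \ref{harmixing} to BMS mixing does not go through.
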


For $\G$ convex co-compact, Theorem \ref{mmix} was known  for any $\delta>0$ by
Stoyanov \cite{St}, based on the approach developed by Dolgopyat \cite{Do}; however
when $\G$ has cusps, this theorem seems to be new even for $n=2$.

\medskip

\subsection{Effective equidistribution of orthogonal translates of an $H$-orbit}
When $H$ is a horospherical subgroup
or a symmetric subgroup of $G$, we can relate the asymptotic distribution
of orthogonal translates of a closed orbit $\G\ba \G H$ to the matrix coefficients of $L^2(\G\ba G)$.
We fix a generalized Cartan decomposition $G=HAK$.
 We parameterize $A=\{a_t\}$ as in section \ref{section1}, and for $H$ horospherical, we will assume that $H$ 
 is the expanding horospherical subgroup for $a_t$, that is, $H=\{g\in G: a_tga_{-t}\to e\text{ as $t\to \infty$}\}$.
Let $\mu_H^{\Haar}$ and $\mu_H^{\PS}$ be respectively the
$H$-invariant measure on $\G\ba \G H$ defined with respect to $\{m_x\}$ and the skinning measure on $\G\ba \G H$ defined with respect to $\{\nu_x\}$, introduced in \cite{OS} (cf. \eqref{psdef}).

\begin{thm}\label{effa} Suppose
 that $\G\ba \G H$ is closed and that $|\mu_H^{\PS}|<\infty$.
There exist $\eta_0>0$ and $\ell \in \N$ such that
for any compact subset $\Om \subset \G\ba G$, any $\Psi\in C^\infty(\Om)$ and any bounded $\phi\in C^\infty(\G\cap H\ba H)$, as $t\to \infty$,
\begin{multline*}  e^{(n-1-\delta)t} \int_{h\in \G \ba \G H} \Psi(ha_t ) \phi(h) d\mu_H^{\Haar}(h)
\\  =\frac{1 }{|m^{\BMS}|}\mu^{\PS}_{H}(\phi)m^{\BR}(\Psi)   + O(\mathcal S_\ell(\Psi)\cdot S_\ell(\phi)  e^{-\eta_0 t})\end{multline*}
with the implied constant depending only on  $\Om$. \end{thm}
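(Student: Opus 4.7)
The plan is to reduce Theorem \ref{effa} to the effective mixing Theorem \ref{harmixing} via a thickening argument of Eskin--McMullen/Oh--Shah type, the new input being the exponential rate in the mixing statement which allows the thickening parameter to shrink exponentially in $t$. Fix $\epsilon>0$ small and a transversal $V_\epsilon$ to $H$ of size $\epsilon$: for $H=N$ the expanding horospherical subgroup, take $V_\epsilon\subset MAN^-$; for $H$ symmetric, take $V_\epsilon\subset AK$ from the generalized decomposition $G=HAK$. Choose a non-negative bump $\rho_\epsilon\in C_c^\infty(V_\epsilon)$ with $\int\rho_\epsilon\,dv=1$ and $\mathcal S_\ell(\rho_\epsilon)\ll\epsilon^{-c_1}$, and define the thickened test function
\[\Phi_\epsilon(vh):=\rho_\epsilon(v)\,\phi(h),\qquad v\in V_\epsilon,\ h\in\G\ba\G H,\]
extended by zero off the injective tube $V_\epsilon\cdot(\G\ba\G H)$. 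The compactness of $\supp(\Psi)\subset\Omega$ restricts the effective range of $h$ in every integral below to a bounded region depending on $t$, which keeps all quantities finite even though $\mu_H^{\Haar}$ may have infinite total mass when $\G\ba\G H$ has cusps.

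By left-invariance of a Riemannian distance on $G$, $\Psi(vha_t)=\Psi(ha_t)+O(\epsilon\,\mathcal S_\ell(\Psi))$ for $v\in V_\epsilon$, and locally $dm^{\Haar}=(1+O(\epsilon))\,dv\,d\mu_H^{\Haar}$. Fubini then gives
\[\int_{\G\ba G}\Phi_\epsilon(g)\Psi(ga_t)\,dm^{\Haar}(g)=\int_{\G\ba\G H}\phi(h)\Psi(ha_t)\,d\mu_H^{\Haar}(h)+O_\Omega\bigl(\epsilon\,\mathcal S_\ell(\Psi)\mathcal S_\ell(\phi)\bigr),\]
while Theorem \ref{harmixing} applied with $\Psi_1=\Psi$ and $\Psi_2=\Phi_\epsilon$ yields
\[e^{(n-1-\delta)t}\int_{\G\ba G}\Psi(ga_t)\Phi_\epsilon(g)\,dm^{\Haar}(g)=\frac{m^{\BR}(\Psi)\,m^{\BR}_*(\Phi_\epsilon)}{|m^{\BMS}|}+O\bigl(e^{-\eta_0 t}\mathcal S_\ell(\Psi)\,\epsilon^{-c_1}\mathcal S_\ell(\phi)\bigr).\]
The central identification step is to check $m^{\BR}_*(\Phi_\epsilon)=\mu_H^{\PS}(\phi)+O(\epsilon\,\mathcal S_\ell(\phi))$. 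This is essentially the defining property of the skinning measure from \cite{OS}: the Burger--Roblin measure $m^{\BR}_*$ for the contracting horospherical foliation factors locally as Haar on $N^-$ times a PS-conformal density on the transversal to $N^-$; in the coordinates $vh$ this rearranges, up to a smooth Busemann cocycle whose modulus of continuity across $V_\epsilon$ is $O(\epsilon)$, into $d\mu_H^{\PS}(h)\,dv$. Integrating against $\Phi_\epsilon$ and using $\int\rho_\epsilon\,dv=1$ gives the claim, and the hypothesis $|\mu_H^{\PS}|<\infty$ ensures the leading term is finite for bounded $\phi$.

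Combining the two displays and choosing $\epsilon=e^{-\kappa t}$ with $\kappa$ in the range $(n-1-\delta,\,\eta_0/(c_1+1))$ (which is nonempty after shrinking $\eta_0$ if necessary, since $n-1-\delta\ge 0$) makes $\epsilon e^{(n-1-\delta)t}$, $\epsilon^{-c_1}e^{-\eta_0 t}$, and $\epsilon$ all decay exponentially, producing the theorem with a new positive rate $\eta_0'>0$. The main obstacle is the matching step for $m^{\BR}_*(\Phi_\epsilon)$: one must uniformly track the PS-conformal factor and the Busemann cocycle across the transversal $V_\epsilon$, and treat the horospherical and symmetric cases separately with their own transversal geometries; doing this uniformly up to the cuspidal boundary of $\G\ba\G H$, where the local product structure of $m^{\BR}_*$ is delicate but controlled by $|\mu_H^{\PS}|<\infty$, is the most technical part of the argument. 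A secondary technicality is the injectivity of the thickening map near cusps, which is handled by the compact support of $\Psi$.
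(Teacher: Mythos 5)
Your argument is essentially the paper's proof of Theorem \ref{maince} (the thickening step), and it does prove Theorem \ref{effa} in the cases where the relevant piece of $\G\ba\G H$ is compact: $H$ horospherical (Theorem \ref{horo}) or $H$ symmetric with $\op{Pb-corank}_H(\G)=0$ (Proposition \ref{cri}). But for the remaining case --- $\phi$ bounded and not compactly supported, with $\supp(\mu_H^{\PS})$ non-compact --- there is a genuine gap. The function $\Phi_\e(vh)=\rho_\e(v)\phi(h)$ is only well-defined where the map $(v,h)\mapsto vh$ is injective, and for a fixed $\e$ this fails in the cuspidal part of $\G_H\ba H$, where the injectivity radius tends to $0$; the compactness of $\supp(\Psi)$ does not help, because it constrains $ha_t$, not $h$, and for large $t$ the set $\{h: ha_t\in\Om\}$ reaches arbitrarily deep into the cusps (this is exactly the set $Y_\Om$ of Theorem \ref{yo}, which is non-compact here). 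So you must truncate $\phi$ to a thick part $Y_{\e_0}$ before thickening. Once you do, the discarded region contributes an error which you can only bound, a priori, by $e^{(n-1-\delta)t}\,\S_{\infty}(\Psi)\,\mu_H^{\Haar}(Y_\Om-Y_{\e_0})\ll e^{(n-1-\delta)t}\e_0^{\,n-1-p_0}$ (Theorem \ref{yo}(2c)), while the thickening/smoothing error forces $\e_0^{-q_\ell}e^{-\eta_0 t}\to 0$. These two constraints are incompatible unless the spectral gap $\eta_0$ happens to exceed a fixed multiple of $n-1-\delta$, which one cannot assume --- note that your own parenthetical remark goes the wrong way: shrinking $\eta_0$ makes the admissible interval for $\kappa$ smaller, not larger, and the needed inequality $\eta_0>c_1(n-1-\delta)$ simply has no reason to hold. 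This is precisely the obstruction the paper flags: the left-hand side is weighted by $e^{(n-1-\delta)t}$ and integrated against $\mu_H^{\Haar}$, whereas the finiteness hypothesis is on $\mu_H^{\PS}$, so a Haar-measure bound on the cuspidal error cannot be beaten by the mixing rate alone.

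The missing idea is the one supplied in Section \ref{sec;eff-equi}: compare $(a_t)_*\mu_H^{\Haar}$ with $(a_t)_*\mu_H^{\PS}$ through the transversal intersections of $\G\ba\G Ha_t$ with the weak-stable foliation (Lemmas \ref{com1} and \ref{com2}, Theorems \ref{tran}, \ref{loc} and \ref{t;eq-ps-h-orbit}). With this in hand the paper splits $Y_\Om$ into three regions with two parameters $\e_1\ll\e_0$: the thick part $Y_{\e_0}$ is handled by your thickening argument; the intermediate range $Y_{\e_1}-Y_{\e_0}$ is converted, via the transversal lemmas, into an integral against $\mu_H^{\PS}$ and bounded by $\mu_H^{\PS}(Y_{\e_1}-Y_{\e_0})\ll\e_0^{\,\delta-p_0}$, with no $e^{(n-1-\delta)t}$ penalty; and only the genuinely thin part beyond $Y_{\e_1}$ is bounded by Haar measure, where $\e_1$ can be taken as small as needed (it enters no smoothing error). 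Your proposal, which relies on a single thickening parameter and on Haar-measure control of the cusp, cannot reproduce this step, and the ``technical'' matching of $m^{\BR}_*(\Phi_\e)$ with $\mu_H^{\PS}(\phi)$ uniformly up to the cuspidal boundary is not merely delicate --- as stated it is false for fixed $\e$, and repairing it is where the new content of the theorem lies.
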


For $H$ horospherical, $|\mu_H^{\PS}|<\infty$ is automatic for $\G\ba \G H$ closed. For
$H$ symmetric (and hence locally isomorphic to $\SO(k,1)\times \SO(n-k)$),  the criterion for the finiteness of $\mu_H^{\PS}$ 
has been obtained in \cite{OS} (see  Prop. \ref{cri});
in particular,  $|\mu_H^{\PS}|<\infty$ provided $\delta>n-k$.

Letting $Y_\Om:=\{h\in (\G\cap H) \ba H: ha_t \in \Om \text{ for some $t>0$}\}$,
 note that $$\int\Psi(ha_t) \phi(h) d\mu^{\Haar}_H= \int_{Y_\Om}\Psi(ha_t) \phi(h) d\mu^{\Haar}_H$$ since $\Psi$ is supported in $\Om$.
In the case when $\mu_H^{\PS}$ is compactly supported, $Y_\Om$ turns out to be a compact subset and in this case,
  the so-called thickening method (\cite{EM}, \cite{KM}) is sufficient to deduce Theorem \ref{effa} from Theorem \ref{harmixing}, using the wave front property introduced in \cite{EM} (see \cite{BeO} for the effective version).
The case of $\mu_H^{\PS}$ not compactly supported is much more
intricate to be dealt with. Though we obtain a thick-thin decomposition of $Y_\Om$ with the thick part being compact and control
both the Haar measure and the skinning measure of the thin part (Theorem \ref{yo}),
the usual method of thickening the thick part does not suffice,
as the error term coming from the thin part overtakes the leading term.
The main reason for this phenomenon is because we are taking the integral with respect to $\mu^{\Haar}_H$
as well as multiplying the weight factor $e^{(n-1-\delta)t}$ in the left hand side of Theorem \ref{effa},
whereas the finiteness assumption is made on the skinning measure $\mu^{\PS}_H$. However
we are able to proceed by comparing the two measures $(a_t)_*\mu^{\PS}_H$ and $(a_t)_*\mu^{\Haar}_H$ via the the transversal intersections
of the orbits $\G\ba \G Ha_t$ with the weak-stable horospherical foliations (see the proof of Theorem \ref{meq} for more details). 

In the special case of $n=2,3$ and $H$ horospherical,
Theorem \ref{effa} was proved in \cite{KO}, \cite{KO2} and \cite{LO} by a different method.

\medskip



\subsection{Effective counting for a discrete $\G$-orbit in $H\ba G$}
In this subsection, we let $H$ be the trivial group, a horospherical subgroup or a symmetric subgroup, and assume
 that the orbit $[e]\G$ is discrete in $H\ba G$.
Theorems \ref{harmixing} and \ref{effa} are key ingredients in understanding
the asymptotic of the number $\#([e]\G \cap \B_T)$  for a given family $\{\B_T\subset H\ba G \}$
of growing compact subsets, as observed in \cite{DRS}.

We will first describe a Borel measure $\mathcal M_{H\ba G}=\mathcal M_{\H\ba G}^\G$ on $H\ba G$, depending on $\G$, which
turns out to describe the distribution of $[e]\G$.
Let  $o\in \bH^n$ be the point fixed by $K$,  $X_0\in \T^1(\bH^n)$ the vector fixed by $M$, $X_0^{+},
X_0^-\in \partial(\bH^n)$
  the forward and the backward endpoints of $X_0$ by the geodesic flow, respectively
  and $\nu_o$ the Patterson-Sullivan measure on $\partial(\bH^n)$ supported on the limit set of $\G$, viewed from $o$.

\begin{dfn}\label{group} \rm For $H$ the trivial subgroup $\{e\}$,
define a Borel measure $\mathcal M_{G}=\mathcal M_G^\G$ on $G$ as follows: for $\psi\in C_c(G)$,
\begin{equation*}\label{mdeg}
\mathcal M_{G} (\psi)=\tfrac{ 1}{|m^{\BMS}|} \int_{k_1a_tk_2\in K A^+K} \psi(k_1a_tk_2)
  e^{\delta t} d\nu_o(k_1X_0^+)dt d\nu_o(k_2^{-1}X_0^-).\end{equation*}
\end{dfn}

\begin{dfn} \label{sd} \rm For $H$ horospherical or symmetric,  we have either $G=HA^+K$ or $G=HA^+K\cup HA^-K$ (as a disjoint union except for the identity element)
where $A^\pm=\{a_{\pm t}: t\ge 0\}$.

Define a Borel measure $\mathcal M_{H\ba G}=\mathcal M_{H\ba G}^\G$ on $H\ba G$ as follows: for $\psi\in C_c(H\ba G)$,
\begin{equation*}
\mathcal M_{H\ba G} (\psi)=\begin{cases}\tfrac{ |\mu_{H}^{\PS}|}{|m^{\BMS}|} \int_{a_tk\in A^+K} \psi([e]a_tk)
  e^{\delta t} dt d\nu_o(k^{-1}X_0^-)&\text{if $G=HA^+K$}
\\ \sum \tfrac{ |\mu_{H,\pm}^{\PS}|}{|m^{\BMS}|}\int_{a_{\pm t}k\in A^\pm K}  \psi([e]a_{\pm t}k)
e^{\delta t} dt d\nu_o(k^{-1}X_0^{\mp})
 &\text{otherwise}, \end{cases}\end{equation*}
where $\mu_{H,-}^{\PS}$ is the skinning measure on $\G\cap H\ba H$
in the negative direction, as defined
in \eqref{hmi}.\end{dfn}



\begin{dfn}\label{adm} \rm
  For a family $\{\B_T\subset H\ba G\}$ of compact subsets
 with $\mathcal M_{H\ba G}(\B_T)$ tending to infinity as $T\to \infty$, we say that $\{\B_T\}$
is {\it effectively well-rounded with respect to $\G$}
if there exists $p>0$ such that
for all small $\e>0$ and large $T\gg 1$:
$$\mathcal M_{H\ba G} (\B_{T,\e}^+ - \B_{T,\e}^-) = O(\e^p \cdot \mathcal M_{H\ba G} (\B_T)) $$
where $\B_{T,\e}^+=G_\e \B_T G_\e$ and $\B_{T,\e}^-=\cap_{g_1, g_2\in G_\e} g_1\B_T g_2$ if $H=\{e\}$;
and $\B_{T,\e}^+=\B_{T}G_\e$ and $\B_{T,\e}^-= \cap_{g\in G_\e}\B_T g$ if $H$ is horospherical or symmetric.
Here $G_\e$ denotes a symmetric $\e$-neighborhood of $e$ in $G$ with respect to a left invariant
 Riemannian metric on $G$.
\end{dfn} Since any two left-invariant
Riemannian metrics on $G$ are Lipschitz equivalent to each other,
the above definition is independent of the choice of a Riemannian metric used in the definition of $G_\e$.

 See Propositions \ref{exam}, \ref{sectore} and \ref{normw} for examples of effectively well-rounded families.
For instance, if $G$ acts linearly from the right on a finite dimensional linear space $V$ and $H$ is the stabilizer of $w_0\in V$,
then the family of norm balls $B_T:=\{Hg\in H\ba G: \|w_0g\|<T\}$ is effectively well-rounded.

If $\G$ is a lattice in $G$,
then $\mathcal M_{H\ba G}$ is essentially the leading term of the invariant measure in $H\ba G$
and hence the definition \ref{adm} is equivalent to
the one given in \cite{BeO}, which is an effective version of the well-roundedness condition given in \cite{EM}.
Under the additional assumption that $H\cap \G$ is a lattice in $H$, it is known that
 if $\{\B_T\}$ is effectively well-rounded, then
\begin{equation} \label{lc} \# ([e]\G \cap \B_T )=
\op{Vol}(\B_T)  + O( \op{Vol}(\B_T)^{1-\eta_0})\end{equation}
for some $\eta_0>0$, where $\Vol$ is computed with respect to a suitably normalized invariant measure on $H\ba G$
 (cf. \cite{DRS}, \cite{EM}, \cite{Mau}, \cite{GN1}, \cite{BeO}).

We present a generalization of \eqref{lc}. In the next two theorems \ref{mc} and \ref{bisec}, we let
$\{\G_d: d\in I\}$ be a family of subgroups of $\G$ of finite index such that $\G_d\cap H=\G\cap H$. We assume that
 $\{\G_d:d\in I\}$ has a uniform spectral gap in the sense that
$\sup_d s_0(\G_d)<\delta$ and $\sup_d n_0(\G_d)<\infty$.

For our intended application to the affine sieve, we formulate our effective results
uniformly for all $\G_d$'s.

\begin{thm}\label{mc}
Let $H$ be the trivial group, a horospherical subgroup or a symmetric subgroup.
When $H$ is symmetric, we also assume that $|\mu_H^{\PS}|<\infty$.
If $\{\B_T\}$ is effectively well-rounded with respect to $\G$,
 then there exists $\eta_0>0$ such that for any $d\in I$ and for any $\gamma_0\in \G$,
$$\# ([e]\G_d\gamma_0\cap \B_T )= \tfrac{ 1}{[\G:\G_d] }
\mathcal M_{H\ba G}(\B_T)  + O( \mathcal M_{H\ba G}(\B_T)^{1-\eta_0})$$
where $\mathcal M_{H\ba G}=\mathcal M_{H\ba G}^\G$ and the implied constant is independent of $\G_d$ and $\gamma_0\in \G$.
\end{thm}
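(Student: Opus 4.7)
The plan is to follow the classical strategy of \cite{DRS,EM}, transported to the infinite-volume Patterson--Sullivan setting as in \cite{OS}, reducing the count to an effective equidistribution statement. First, we pass to a smoothed count
\[
N_\e(T):=\int_{G_\e}\rho_\e(g)\,\#\bigl([e]\G_d\gamma_0\cap g\B_T\bigr)\,dg
\]
for a nonnegative bump $\rho_\e\in C_c^\infty(G_\e)$ of unit Haar mass; the effective well-roundedness of $\{\B_T\}$ immediately gives $|N_\e(T)-\#([e]\G_d\gamma_0\cap \B_T)|=O(\e^p\,\mathcal M_{H\ba G}^\G(\B_T))$, uniformly in $d$ and $\gamma_0$. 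Using $H\cap\G_d=H\cap\G$ we rewrite the inner count as a Poincar\'e series over $(H\cap\G)\ba\G_d\gamma_0$ and unfold, converting $N_\e(T)$ into an integral of $\mathbf{1}_{\B_T}$ over $(H\cap\G)\ba G$ against a lifted bump built from $\rho_\e$ and $\gamma_0$.

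The generalised Cartan decomposition $G=HA^+K$, resp.\ $HA^+K\cup HA^-K$, resp.\ $KA^+K$ when $H=\{e\}$, will then factorise this integral according to the $(t,k)\in A^+\times K$ slice of $\B_T$; each slice contributes a transverse integral of the form
\[
\int_{(H\cap\G)\ba H}\psi(ha_tk)\,\phi(h)\,d\mu_H^{\Haar}(h),
\]
to which we apply Theorem \ref{effa} when $H$ is horospherical or symmetric (resp.\ Theorem \ref{harmixing} in the trivial case, after a complementary thickening in a direction transverse to $K$). Both theorems yield
\[
\frac{e^{-(n-1-\delta)t}}{|m^{\BMS}|^{\G_d}}\,\mu_H^{\PS}(\phi)\,m^{\BR}(\psi)+O\bigl(\mathcal S_\ell(\rho_\e)\,e^{-\eta_0 t}\bigr)
\]
with $\eta_0,\ell$ independent of $d\in I$ by the uniform spectral gap hypothesis. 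Integrating the leading term over the $A^+K$-slice of $\B_T$ against $e^{\delta t}dt\,d\nu_o(k^{-1}X_0^-)$, and using $|m^{\BMS}|^{\G_d}=[\G:\G_d]\,|m^{\BMS}|^\G$ together with the fact that $\mu_H^{\PS}$ and $\nu_o$ are common to all $\G_d$, will reconstruct precisely $\tfrac{1}{[\G:\G_d]}\mathcal M_{H\ba G}^\G(\B_T)$ as the main term.

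Balancing the two error sources will finish the argument. The smoothing error is $O(\e^p\mathcal M(\B_T))$, while the equidistribution error integrates to $O(\e^{-N}\mathcal M(\B_T)^{1-\eta_0/\delta})$ in view of $\mathcal S_\ell(\rho_\e)\ll\e^{-N}$ and $e^{\delta t}\lesssim\mathcal M(\B_T)$ on the $\B_T$-support, so the choice $\e=\mathcal M(\B_T)^{-\kappa}$ for small $\kappa>0$ will produce the stated polynomial saving $\mathcal M(\B_T)^{1-\eta_0'}$. Uniformity in $\gamma_0$ is automatic because right multiplication by $\gamma_0$ acts by a bounded conjugation on the relevant Sobolev spaces. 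The main obstacle is the symmetric case with $\mu_H^{\PS}$ finite but not compactly supported: then the set $Y_\Om$ arising in Theorem \ref{effa} has an unbounded cuspidal thin part whose $\mu_H^{\Haar}$-weight, amplified by $e^{(n-1-\delta)t}$, could a priori dominate the main term. This thick--thin phenomenon is already overcome inside the proof of Theorem \ref{effa} via a transversal comparison of $(a_t)_*\mu_H^{\PS}$ with $(a_t)_*\mu_H^{\Haar}$; at the counting stage we simply invoke Theorem \ref{effa} as a black box, with no additional cusp bookkeeping required.
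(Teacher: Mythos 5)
Your overall architecture is the paper's: unfold the count over $(H\cap\G)\ba \G_d$, apply the effective equidistribution of Theorem \ref{effa} (uniformly in $d$ via the uniform spectral gap), reconstruct $\mathcal M_{H\ba G}$ from the $HA^+K$ slice, and balance the smoothing against the exponential saving. However, your opening reduction is circular as stated. Definition \ref{adm} bounds $\mathcal M_{H\ba G}(\B_{T,\e}^+ - \B_{T,\e}^-)$, i.e.\ the \emph{measure} of the $\e$-boundary region, whereas the inequality $|N_\e(T)-\#([e]\G_d\gamma_0\cap\B_T)|\ll \e^p\,\mathcal M_{H\ba G}(\B_T)$ requires a bound on the \emph{number of orbit points} lying in $\B_{T,\e}^+ - \B_{T,\e}^-$; that is precisely the kind of counting statement the theorem is supposed to deliver, and it is not available before the asymptotic is established. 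The correct order of quantifiers, as in the paper's Theorem \ref{mcla}, is the two-sided sandwich $\la F_{\B_{T,\e}^-},\Psi^\e_{\gamma_0^{-1}}\ra \le \#([e]\G_d\gamma_0\cap\B_T)\le \la F_{\B_{T,\e}^+},\Psi^\e_{\gamma_0^{-1}}\ra$, with effective well-roundedness invoked only at the last stage to compare $\mathcal M_{H\ba G}(\B_{T,c\e}^{\pm})$ with $\mathcal M_{H\ba G}(\B_T)$ inside the main terms. This is a reordering of your steps rather than a new idea, but the step as you wrote it would fail.

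A second point you gloss over is the identification of the smoothed main term with $\mathcal M_{H\ba G}(\B_T)$. After unfolding and applying Theorem \ref{effa} (Proposition \ref{sqz} in the paper), the main term is $\tilde m^{\BR}(\psi^\e * f^{\pm}_{\B_{T,\e}})$, a BR-integral of the bump convolved with the radial weight; turning this into $(1+O(\e^p))\,\mathcal M_{H\ba G}(\B_T)$ is not a formal integration over the slice but uses the strong wavefront property of the $HA^+K$ and $ANK$ decompositions (Lemma \ref{str}, \cite{GOS}) to ensure that a $G_\e$-perturbation moves the $A$- and $K$-coordinates only by $O(\e)$, combined with admissibility/well-roundedness. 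You should also note that the two-sided case $G=HA^+K\cup HA^-K$ needs the $a_{-t}$ analogue (Theorem \ref{meqminus}), and that $H=\{e\}$ is handled by a genuinely separate argument (Theorem \ref{mcla3}) with smoothing on \emph{both} sides and the $KA^+K$, $AN^{\pm}K$ coordinates, rather than a one-sided thickening. Your final paragraph is correct: the non-compactly supported skinning measure causes no extra difficulty at the counting stage, since Theorem \ref{effa} with $\phi\equiv 1$ is indeed used as a black box there.
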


See Corollaries \ref{sector} and \ref{vsector} where we have applied Theorem \ref{mc} to
 sectors and norm balls.

\begin{rmk} \rm
Theorem \ref{mc} can be used to provide an effective version of circle-counting theorems
studied in \cite{KO}, \cite{LO}, \cite{OS1} and \cite{Oh0} (as well as its higher dimensional analogues for 
sphere packings discussed in \cite{Oh}).
\end{rmk}

We also formulate our counting statements for bisectors in the $HAK$ decomposition, motivated by recent applications
in \cite{BK1} and \cite{BK2}.
Let ${\tau}_1\in C^\infty_c(H)$ and $\tau_2\in C^\infty(K)$,
and define $\xi_T^{\tau_1,\tau_2}\in C^\infty(G)$ as follows: for $g=hak\in HA^+K$,
$$\xi_T^{\tau_1,\tau_2}(g)= \chi_{A_T^+}(a) \cdot \int_{H\cap M} \tau_1(hm)\tau_2( m^{-1}k) dm  $$
where $\chi_{A_T^+}$ denotes the characteristic function of $A_T^+=\{a_t: 0<t<\log T\}$
and $d_{H\cap M}$ is the probability Haar measure of $H\cap M$.
Since $hak=h'ak'$ implies that $h=h'm$ and $k=m^{-1}k'$ for some $m\in H\cap M$,
$\xi_T^{\tau_1,\tau_2}$ is well-defined.

\begin{thm}\label{bisec}


There exist $\eta_0>0$ and $\ell \in \N$ such that
for any compact subset $H_0$ of $H$ which injects to $\G\ba G$,
 any ${\tau}_1\in C^\infty(H_0)$, $\tau_2\in C^\infty(K)$,
  any $\gamma_0\in \G$ and any $d\in I$,
$$\sum_{\gamma\in \G_d\gamma_0}
\xi_T^{\tau_1,\tau_2}(\gamma) = \frac{\tilde \mu_{H}^{\PS}(\tau_1)\cdot  \nu_o^* (\tau_2)}{\delta\cdot
[\G:\G_d]\cdot |m^{\BMS}|}
T^\delta + O( \S_\ell(\tau_1) \S_\ell(\tau_2)  T^{\delta -\eta_0})$$
where $\nu_o^* (\tau_2)=\int_K\tau_2(k) d\nu_o(k^{-1}X_0^-)$ and $\tilde \mu_H^{\PS}$ is the skinning measure
on $H$ with respect to $\G$ and the implied constant depends only on $\G$ and $H_0$.
\end{thm}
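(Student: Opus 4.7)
The plan is to derive Theorem \ref{bisec} from the effective equidistribution of closed $H$-orbit translates (Theorem \ref{effa}) via a Riemann-sum decomposition in the $A^+$-parameter $t$, refined by test functions encoding the bisector data $\tau_1$ and $\tau_2$. One first reduces to $\gamma_0=e$: the $HAK$-coordinates of $\gamma'\gamma_0$ (for $\gamma'\in\G_d$) differ from those of $\gamma'$ by a bounded perturbation depending only on $\gamma_0$, which is absorbed into the test functions with the $\gamma_0$-dependence going into the implied constant. Partition $(0,\log T)$ into $\lceil \log T/\e\rceil$ shells $I_j:=[j\e,(j+1)\e]$ of width $\e>0$ (to be optimized, e.g.\ $\e\approx T^{-\beta}$), and decompose $S_T:=\sum_{\gamma\in\G_d\gamma_0}\xi_T^{\tau_1,\tau_2}(\gamma)=\sum_jS_{T,j}$ with $S_{T,j}$ the contribution of $\gamma$ satisfying $t_\gamma\in I_j$.

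For each $j$, construct test functions as follows. Take $\eta_\e\in C_c^\infty(H)$ a smooth bump at $e$ with $\int\eta_\e\,d\mu_H^{\Haar}=1$, $\rho_j\in C_c^\infty(A)$ a bump near $a_{j\e}$ with $\int\rho_j(a_t)\,dt=\e$, and define $F_j\in C_c^\infty(G)$ by $F_j(hak):=\eta_\e(h)\rho_j(a)\tau_2(k)$ in the $HAK$-decomposition (the $H\cap M$-averaging being built into the definition of $\xi_T$). Let $\Psi_j(g):=\sum_{\gamma\in\G_d}F_j(\gamma g)$ be its $\G_d$-automorphisation, a smooth function on $\G_d\ba G$, and let $\phi_j\in C_c^\infty((\G_d\cap H)\ba H)$ be the $(\G_d\cap H)$-periodic extension of $\tau_1$, which is well-defined because $H_0$ injects into $\G_d\ba G$ (using $\G_d\cap H=\G\cap H$). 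Via an effective wave-front lemma (in the form developed in \cite{BeO}) together with the compactness of $H_0$, one identifies, up to boundary-smoothing,
\[ S_{T,j}\;=\;e^{(n-1-\delta)t_j}\int_{(\G_d\cap H)\ba\G_d H}\Psi_j(h\,a_{t_j})\,\phi_j(h)\,d\mu_H^{\Haar}(h)+O(\text{smoothing}), \]
where $t_j:=j\e$ and the exponential prefactor is the Jacobian relating the counting measure on $\G_d$-orbits to the $a_{t_j}$-translated equidistribution normalization. Applying Theorem \ref{effa} to the integral (on $\G_d\ba G$, so $|m^{\BMS}_{\G_d}|=[\G:\G_d]|m^{\BMS}|$) and computing $m^{\BR}(\Psi_j)=\int_GF_j\,dm^{\BR}$ via the disintegration of $m^{\BR}$ in $HAK$-coordinates: the $H$-bump integrates to $1$ (Lebesgue along the unstable horosphere), the $A$-bump contributes $\int\rho_j(a_t)e^{\delta t}\,dt\approx\e e^{\delta t_j}$ (from the geodesic-direction density $e^{\delta t}$), and the transversal $K$-factor integrates $\tau_2$ against $d\nu_o(k^{-1}X_0^-)$ to produce $\nu_o^*(\tau_2)$. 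Using also $\mu_H^{\PS}(\phi_j)=\tilde\mu_H^{\PS}(\tau_1)$, this gives
\[ S_{T,j}\;=\;\frac{\e\,e^{\delta t_j}\,\tilde\mu_H^{\PS}(\tau_1)\,\nu_o^*(\tau_2)}{[\G:\G_d]\,|m^{\BMS}|}+\mathrm{err}_j. \]

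Summing over $j$, the Riemann sum $\sum_j\e\,e^{\delta t_j}$ converges to $\int_0^{\log T}e^{\delta t}\,dt=(T^\delta-1)/\delta$, producing the main term of Theorem \ref{bisec}. The equidistribution errors from Theorem \ref{effa} aggregate to $O(\S_\ell(\tau_1)\S_\ell(\tau_2)T^{\delta-\eta_0}\e^{-C})$, with $C$ absorbing the polynomial $\e$-dependence of the Sobolev norms of $\rho_j$ and $\eta_\e$; boundary-smoothing errors contribute $O(\e T^\delta)$; choosing $\e=T^{-\eta_0/(2C)}$ balances these into $O(T^{\delta-\eta_0'})$ for some $\eta_0'>0$. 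Uniformity in $d$ follows from the uniform spectral gap hypothesis on $\{\G_d\}$, and uniformity in $\gamma_0$ from the preliminary reduction. The principal technical difficulties are (i) the disintegration of $m^{\BR}$ in $HAK$-coordinates producing the transversal $\nu_o^*$-integration on $K$, and (ii) the wave-front identification of $S_{T,j}$ with the $H$-orbit integral, including the Jacobian $e^{(n-1-\delta)t_j}$; the compactness of $H_0$ injecting into $\G\ba G$ confines the analysis to a bounded region, avoiding the cusp-neighborhood complications treated in Theorem \ref{meq}.
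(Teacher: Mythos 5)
Your overall idea---feed the bisector weight into the effective equidistribution of $H$-translates---is the right family of argument, but the central step of your proof is not correct. You claim that the shell count $S_{T,j}$ equals $e^{(n-1-\delta)t_j}\int_{(\G_d\cap H)\ba H}\Psi_j(ha_{t_j})\phi_j(h)\,d\mu_H^{\Haar}(h)$ up to smoothing errors, where $\Psi_j$ is the $\G_d$-automorphisation of $F_j(hak)=\eta_\e(h)\rho_j(a)\tau_2(k)$. Unfolding, the right-hand side is $\sum_{\gamma\in\G_d}\int_H\tau_1(h)F_j(\gamma h a_{t_j})\,dh$, and the support condition constrains the $HAK$-coordinates of $\gamma h a_{t_j}$, not of $\gamma$. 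These are very different things: for instance $\gamma=e$ contributes about $\tau_1(e)\tau_2(e)$ (since $ha_{t_j}$ is already in $HA^+K$-form with $A$-part $t_j$ and $K$-part $e$), even though $t_\gamma=0\notin I_j$; more generally elements with $t_\gamma$ anywhere up to roughly $2t_j$ contribute. Conversely, a $\gamma$ with $t_\gamma\in I_j$ and $h_\gamma\in\supp\tau_1$ is essentially invisible to the right-hand side: it contributes only through the exponentially small set of $h$ for which the $A$-part of $a_{t_\gamma}k_\gamma h a_{t_j}$ collapses back to $\approx t_j$, and then $\tau_1$ is evaluated at those special $h$, not at $h_\gamma$, so the weight $\tau_1(h_\gamma)\tau_2(k_\gamma)$ is never produced. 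There is no term-by-term correspondence, and the strong wave front lemma (Lemma \ref{str}), which controls the $HAK$-coordinates of $g u$ for \emph{small} $u\in G_\e$, cannot supply one; the two sides merely have main terms of the same order, so equating them begs the question. (Your clean product disintegration of $\tilde m^{\BR}$ in $HAK$-coordinates is also asserted, not proved; the paper only has such formulas in $KAN^{\pm}$-coordinates and obtains $\tilde m^{\BR}(\psi^\e *\tau_2)=\nu_o^*(\tau_2)+O(\e)\S_\ell(\tau_2)$ via the wavefront property.) A second, independent problem is your reduction to $\gamma_0=e$: right multiplication by the fixed $\gamma_0$ perturbs the $HAK$-coordinates by a bounded, not small, amount, so absorbing it into $\tau_1,\tau_2$ and the $T$-cutoff changes $\tilde\mu_H^{\PS}(\tau_1)$ and $\nu_o^*(\tau_2)$ by $O(1)$ and destroys the exact main term, which must hold uniformly in $\gamma_0$.

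For comparison, the paper's proof (Theorem \ref{bisec2}) keeps the weight $\xi_T^{\tau_1,\tau_2}$ attached to the group elements and smooths on the other side: it forms $F_T(g)=\sum_{\gamma\in\G_d}\xi_T^{\tau_1,\tau_2}(\gamma g)$ on $\G_d\ba G$, sandwiches $F_T(\gamma_0)$ between $\la F_T^{\e,\pm},\Psi^\e_{\gamma_0^{-1}}\ra$ where $\Psi^\e_{\gamma_0^{-1}}$ is a small automorphised bump placed at $\gamma_0$ and $F_T^{\e,\pm}$ use $\e$-perturbed $\tau_i^{\e,\pm}$ and $A^+_{(1\pm\e)T}$ (this is exactly where Lemma \ref{str} is used legitimately, since the perturbation is by $G_{c\e}$); it then unfolds to $\int_K\tau_2(k)\int_0^{\log T}\bigl(\int_H\tau_1(h)\Psi^\e(ha_tk)\,dh\bigr)\rho(t)\,dt\,dk$, applies Theorem \ref{efn} (the compactly supported case, so no thin-part analysis is needed) to the inner $H$-integral for each fixed $(t,k)$, integrates $e^{\delta t}$ in $t$, and converts the $k$-integral by $\tilde m^{\BR}(\psi^\e *\tau_2)=\nu_o^*(\tau_2)+O(\e)\S_\ell(\tau_2)$; the main term is independent of $\gamma_0$ by the left $\G$-invariance of $\tilde m^{\BR}$, and optimizing $\e$ against $T^{-\eta}$ gives the power saving. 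If you want to salvage your write-up, replace your identification of $S_{T,j}$ and your $\gamma_0$-reduction by this pairing-and-unfolding step; your shell decomposition in $t$ then becomes an unnecessary discretization of the $t$-integral that appears naturally after unfolding.
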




Theorem \ref{bisec} also holds when $\tau_1$ and $\tau_2$ are characteristic functions of
the so-called {\it admissible} subsets (see Corollary \ref{bik} and Proposition \ref{exam}).

 We remark that unless $H=K$,
 Corollary \ref{sector}, which is a special case of Theorem \ref{mc} for sectors in $H\ba G$,
  does not follow from Theorem \ref{bisec}, as the latter
deals only with compactly supported functions $\tau_1$.
For $H=K$, Theorem \ref{bisec} was earlier shown in \cite{BKS} and \cite{V} for $n=2,3$ respectively.

\begin{rmk}\rm Non-effective versions of Theorems \ref{effa}, \ref{mc}, and \ref{bisec} were obtained in \cite{OS}
for a more general class of discrete groups, that is, any non-elementary discrete subgroup admitting finite BMS-measure. \end{rmk}
\medskip




\medskip

\subsection{Application to Affine sieve}
One of the most exciting applications of Theorem \ref{mc} can be found in connection with Diophantine
problems on orbits of $\G$.
Let ${\bf G}$ be a $\q$-form of $G$, that is, $\bG$ is a connected algebraic group defined over $\q$ such that $G={\bf G}(\br)^\circ$.
Let $\bf G$ act on an affine space $V$ via a $\q$-rational representation in a way that ${\bf G}(\z)$ preserves $V(\z)$.
 Fix a non-zero vector $w_0\in V(\z)$
and denote by ${\bf H}$ its stabilizer subgroup and set $H=\bf H(\br)$.
We consider one of the following situations: (1) $H$ is a symmetric
subgroup of $G$ or the trivial subgroup; (2) $w_0{\bf G} \cup \{0\}$ is Zariski closed and
 $H$ is a compact extension of a horospherical subgroup of $G$.

 In the case (1), $w_0\bG$ is automatically Zariski closed by \cite{GOS1}.
  Set $W:=w_0\bG$ and $w_0\bG\cup\{0\}$ respectively for (1) and (2).

Let $\G$ be a geometrically finite and Zariski dense subgroup of $G$ with $\delta >\tfrac{n-1}2$, which is contained
in ${\bf G}(\z)$. If $H$ is symmetric, we assume that $|\mu_H^{\PS}|<\infty$.

For a positive integer $d$, we denote by $\G_d$ the congruence subgroup of $\G$ which consists
of $\gamma\in \G$ such that $\gamma \equiv e$ mod $d$.
For the next two theorems \ref{affine_u} and \ref{affine_l}
we assume that
there exists a finite set $S$ of primes that
the family $\{\G_d: \text{$d$ is square-free with no prime factors in $S$}\}$
has a uniform spectral gap.
This property always holds if $\delta>(n-1)/2$ for $n=2,3$ and if $\delta>n-2$ for $n\ge 4$ via the recent works of Bourgain,Gamburd, Sarnak (\cite{BGS}, \cite{BGS2}) and
Salehi-Golsefidy and Varju \cite{SV} together with the classification of the unitary dual of $G$ (see Theorem \ref{usp}).

Let $F\in \q [W]$ be an integer-valued polynomial on the orbit $w_0\G$.
 Salehi-Golsefidy  and Sarnak \cite{SS}, generalizing \cite{BGS}, showed that for some $R>1$,
the set of ${\bf x}\in w_0\G$ with $F({\bf x})$ having at most $R$ prime factors
is Zariski dense in $w_0{\bf G}$.
The following are quantitative versions:
Letting $F=F_1 F_2 \cdots F_r$ be a factorization into irreducible
polynomials in $\q[W]$, assume that all $F_j$'s are irreducible in $\c[W]$ and integral on $w_0\G$.
Let $\{\B_T\subset w_0G : T\gg 1 \}$ be an effectively well-rounded family of subsets with respect to $\G$.

\begin{thm}[Upper bound for primes] \label{affine_u}
For all $T\gg 1$,
$$\{{\bf x}\in w_0\G\cap \B_T : F_j({\bf x}) \text{ is prime for $j=1, \cdots, r$}\}\ll
\frac{\M_{w_0G}(\B_T) }{(\log \M_{w_0G}(\B_T)) ^r} .$$
\end{thm}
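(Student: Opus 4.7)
The plan is to apply a combinatorial upper bound sieve of dimension $r$ to the multiset
$$\mathcal{A} = \mathcal{A}(T) := \{F(\mathbf{x}) : \mathbf{x} \in w_0\G \cap \B_T\},$$
using Theorem \ref{mc} as the arithmetic (level-of-distribution) input. For a squarefree integer $d$ coprime to the exceptional set $S$, I first decompose
$$\mathcal{A}_d := \{a \in \mathcal{A} : d \mid a\} = \bigsqcup_{\substack{\gamma_0 \in \G_d \backslash \G \\ d \mid F(w_0 \gamma_0)}} \bigl(w_0 \G_d \gamma_0 \cap \B_T\bigr);$$
applying Theorem \ref{mc} uniformly over these cosets (justified by the uniform spectral gap assumption on $\{\G_d\}$) yields
$$|\mathcal{A}_d| = g(d)\, X + r_d, \qquad X := \M_{w_0G}(\B_T),\quad g(d) := \frac{\omega(d)}{[\G : \G_d]},\quad |r_d| \ll \omega(d)\, X^{1-\eta_0},$$
where $\omega(d) := \#\{\gamma_0 \in \G_d \backslash \G : d \mid F(w_0 \gamma_0)\}$. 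This is precisely the axiomatic starting point of the affine sieve of Bourgain--Gamburd--Sarnak.

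Next I verify the three standard hypotheses needed for a dimension-$r$ combinatorial sieve. \emph{(i) Multiplicativity} of $g$ on squarefree $d$ coprime to $S$: by strong approximation for the Zariski dense group $\G \subset \bG(\z)$ (valid away from a finite set of primes, absorbed into $S$), the reduction map $\G/\G_d \to \prod_{p \mid d}\G/\G_p$ is a bijection, whence $g$ factors through primes. \emph{(ii) Sieve dimension equal to $r$}: the factorization $F = F_1 \cdots F_r$ into distinct absolutely irreducible polynomials together with a Lang--Weil-style point count on the reductions modulo $p$ produces $g(p) = r/p + O(p^{-2})$ for every prime $p$ outside an enlarged finite set, so that $\sum_{p \le z} g(p) \log p = r \log z + O(1)$. \emph{(iii) Level of distribution}: since $\omega(d) \ll_\epsilon d^{1+\epsilon}$, the total sieve error satisfies
$$\sum_{\substack{d \le D \\ d \text{ sqfree},\, (d,S)=1}} |r_d| \ll D^{2+\epsilon}\, X^{1-\eta_0},$$
so choosing $D := X^{\alpha}$ with $\alpha := \eta_0/4$ yields a usable level of distribution of size $X^{\eta_0/4}$.

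With the three axioms verified, Brun's combinatorial upper bound sieve of dimension $r$ with sifting parameter $z := D^{1/s}$ for a fixed sufficiently large $s = s(r)$ produces
$$\#\bigl\{\mathbf{x} \in w_0\G \cap \B_T : \gcd\bigl(F(\mathbf{x}), P(z)\bigr) = 1\bigr\} \ll \frac{X}{(\log z)^r} \asymp \frac{X}{(\log X)^r},$$
where $P(z) := \prod_{p < z} p$. The exceptional set of $\mathbf{x} \in \B_T$ for which some $|F_j(\mathbf{x})| \le z$ lies on a union of finitely many codimension-one subvarieties of $w_0 G$ and is of size $o(X/(\log X)^r)$ by standard arguments; off this set, ``each $F_j(\mathbf{x})$ is prime'' forces $\gcd(F(\mathbf{x}), P(z)) = 1$, completing the argument. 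The principal technical obstacle is a clean uniform verification of hypotheses (i) and (ii), which requires explicit control of strong approximation for $\G$ (contributing to the choice of $S$) and of the reduction behaviour of the $F_j$ modulo almost every prime; the dynamical input Theorem \ref{mc} enters only as a black box through the uniform error bound on $r_d$.
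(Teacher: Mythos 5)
Your overall strategy is the same as the paper's (a dimension-$r$ combinatorial upper-bound sieve fed by the effective counting theorem as level-of-distribution input), but there is a genuine gap in the way you set up the arithmetic axiom. You decompose $\mathcal A_d$ over cosets of the principal congruence subgroup $\G_d\backslash \G$ and invoke Theorem \ref{mc} for each coset. Theorem \ref{mc} is only stated (and its proof, via Theorem \ref{mcla} and Proposition \ref{sqz}, only works) for families of finite-index subgroups satisfying $\G_d\cap H=\G\cap H$, since the main term $\tfrac{1}{[\G:\G_d]}\M_{H\ba G}(\B_T)$ uses the skinning measure of $(\G\cap H)\ba H$; for the principal congruence subgroups this hypothesis typically fails, and the true per-orbit count carries an extra factor $[\G\cap H:\G_d\cap H]$. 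Moreover your decomposition of $\mathcal A_d$ as a \emph{disjoint} union over $\G_d\backslash\G$ is false in exactly the same situation: each point $w_0\gamma$ lies in $[\G\cap H:\G_d\cap H]$ distinct sets $w_0\G_d\gamma_0$. These two inaccuracies cancel in the main term (which is why your formula $g(d)=\omega(d)/[\G:\G_d]$ agrees with the correct density $\#\mathcal O_F(d)/\#\mathcal O_d$), but the error-term control, which is the entire content of the sieve axiom $(A_2)$, is not justified as written, because no counting statement applicable to $\G_d$ itself has been established. The paper's remedy is to sieve with the groups $\G_{w_0}(d)=\{\gamma\in\G:\,w_0\gamma\equiv w_0\ (d)\}$: these contain $\G\cap H$ (so $\G_{w_0}(d)\cap H=\G\cap H$, the orbit decomposition over $\G_{w_0}(d)\ba\G$ is genuinely disjoint, and Theorem \ref{mc} applies), and they contain $\G_d$, so the uniform spectral gap hypothesis transfers; this is exactly Corollary \ref{ns1}.

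Two further points need repair or more care. First, with your definition $\omega(d)=\#\{\gamma_0\in\G_d\backslash\G:\,d\mid F(w_0\gamma_0)\}$ the bound $\omega(d)\ll_\epsilon d^{1+\epsilon}$ is false: already for $d=p$ prime one has $\omega(p)\asymp p^{\dim{\bf G}-1}$ (or, in the corrected setup, $\#\mathcal O_F(p)\asymp r\,p^{\dim W-1}$ by Lang--Weil). Only polynomial growth in $d$ is needed, but then the level of distribution must be taken as a correspondingly small power of $\M_{w_0G}(\B_T)$, as in the paper's choice $D\le \M_{w_0G}(\B_T)^{\eta_0/(2\dim W)}$; your $D=\M_{w_0G}(\B_T)^{\eta_0/4}$ does not follow from what you proved. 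Second, your appeal to strong approximation for $\G\subset{\bf G}(\z)$ needs the passage to the spin cover, since $\SO(n,1)$ is not simply connected (the paper replaces $\G$ by its preimage in $\tilde{\bf G}$ before invoking Matthews--Vaserstein--Weisfeiler), and your treatment of the small-value exceptional set $\{|F_j({\bf x})|\le z\}$ by ``standard arguments'' should be replaced by the quantitative fiber bound $\#\{{\bf x}\in w_0\G\cap\B_T:\,F_j({\bf x})=k\}\ll \M_{w_0G}(\B_T)^{1-\eta}$ uniformly in $k$, summed over the at most $O(\M_{w_0G}(\B_T)^{\epsilon_1})$ admissible values of $k$, which is how the paper closes the argument.
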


\begin{thm} [Lower bound for almost primes] \label{affine_l}
Assume further that $\max_{x\in \B_T} \|x\| \ll \M_{w_0G}(\B_T)^\beta$ for some $\beta>0$, where $\|\cdot \|$ is any norm on $V$.
Then
there exists $R=R(F, w_0\G, \beta ) \ge 1$ such that for all $T\gg 1$,
$$\{{\bf x}\in w_0\G\cap \B_T: F({\bf x}) \text{ has at most $R$ prime factors}\}\gg
\frac{\M_{w_0G}(\B_T)}{(\log \M_{w_0G}(\B_T)) ^r}  .$$
\end{thm}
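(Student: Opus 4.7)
The plan is to apply a combinatorial sieve of beta/Brun type, in the form used by Salehi-Golsefidy and Sarnak \cite{SS} (generalizing \cite{BGS}), to the sequence
$$\mathcal{A} \;=\; \{\,F(w_0\gamma) : \gamma\in\Gamma,\; w_0\gamma\in\B_T\,\}$$
with total mass $X := \M_{w_0G}(\B_T)$. For each square-free $d$ coprime to the excluded set $S$, decomposing $\Gamma$ into $\Gamma_d$-cosets gives
$$|\mathcal{A}_d| \;=\; \sum_{\gamma_0 \in R(d)} \#\bigl([e]\Gamma_d\gamma_0 \cap \B_T\bigr),$$
where $R(d)\subset\Gamma/\Gamma_d$ denotes the cosets on which $F(w_0\cdot)\equiv 0 \pmod d$. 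The uniform spectral gap on $\{\Gamma_d\}$ makes Theorem \ref{mc} applicable with an error that is uniform in both $d$ and $\gamma_0$, yielding
$$|\mathcal{A}_d| \;=\; g(d)\,X + r_d, \qquad g(d) \;:=\; \frac{|R(d)|}{[\Gamma:\Gamma_d]}, \qquad r_d \;=\; O\bigl(|R(d)|\,X^{1-\eta_0}\bigr).$$

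To set up the sieve, the local density $g$ must be multiplicative with the correct size at primes. Multiplicativity on square-free $d$ coprime to $S$ follows from strong approximation for $\Gamma$ in $\bG$, which in the thin-group setting is provided by the works of Bourgain--Gamburd--Sarnak and Salehi-Golsefidy--Varju \cite{SV} (invoked via the uniform spectral gap hypothesis). A Lang--Weil count on the mod-$p$ reductions of the hypersurfaces $\{F_j = 0\}\subset W$ gives $g(p) = r/p + O(p^{-3/2})$, where the irreducibility of each $F_j$ over $\bc$ ensures that its mod-$p$ reduction remains absolutely irreducible for all but finitely many primes; in particular the sieve has dimension exactly $r$. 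For the level of distribution, the crude bound $|R(d)|\le d^{\dim\bG}$ together with the $d$-independence of $\eta_0$ gives
$$\sum_{d\le D,\; d \text{ sq-free}} |r_d| \;\ll\; D^{1+\dim\bG}\cdot X^{1-\eta_0} \;\ll\; X^{1-\eta_0/2}$$
upon choosing $D = X^\alpha$ with $\alpha := \eta_0/(2(1+\dim\bG))$.

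With these inputs, the beta sieve of dimension $r$ and level $D = X^\alpha$ (Halberstam--Richert) produces
$$\#\bigl\{w_0\gamma\in\B_T : (F(w_0\gamma),\,P(z)) = 1\bigr\} \;\gg\; X\!\!\prod_{p<z,\,p\notin S}\!\!(1-g(p)) \;\gg\; \frac{X}{(\log X)^r}$$
for $z = X^{\alpha/(2R)}$. The size hypothesis $\max_{x\in\B_T}\|x\|\ll X^\beta$ forces $|F(w_0\gamma)|\le X^C$ on $\B_T$ with $C = C(F,\beta)$, so any $w_0\gamma$ surviving the sieve has $F(w_0\gamma)$ composed of at most $R := \lceil 2C/\alpha\rceil + |S|$ prime factors, which is the desired conclusion.

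The one substantive difficulty in the argument is the error estimate $r_d = O(|R(d)|\,X^{1-\eta_0})$ with $\eta_0$ genuinely independent of $d$; this is not merely a corollary of Theorem \ref{mc} but precisely its purpose in the present context, and it is what forces the uniformity assumption on the spectral gap data across the family $\{\Gamma_d\}$. Once this uniform effective count is in hand, the rest of the argument follows the established affine-sieve template of \cite{BGS}, \cite{SS}, with no new analytic ingredient required.
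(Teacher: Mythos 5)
Your proposal follows the paper's own route essentially step for step: uniform effective counting over a congruence family (Theorem \ref{mc}) supplies the sieve input $|\cA_d|=g(d)\X+r_d$ with error uniform in $d$, strong approximation (Matthews--Vaserstein--Weisfeiler) gives multiplicativity of $g$ away from a finite set $S$, Lang--Weil gives $g(p)=r/p+O(p^{-3/2})$ and sieve dimension $r$, the level of distribution is a small power of $\X$, and the hypothesis $\max_{x\in\B_T}\|x\|\ll\X^\beta$ converts ``$(F(x),P(z))=1$'' into ``at most $R$ prime factors'' exactly as in the paper.

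The one point needing repair is your use of the principal congruence subgroups $\G_d$ as the sieving family. Theorem \ref{mc} is stated for finite-index subgroups $\G_0<\G$ with $\G_0\cap H=\G\cap H$ (here $H=G_{w_0}$), and this fails for $\G_d$ in general: elements of $\op{Stab}_\G(w_0)$ need not be $\equiv e \pmod d$. Moreover the decomposition $|\cA_d|=\sum_{\gamma_0\in R(d)}\#([e]\G_d\gamma_0\cap\B_T)$ is not an identity: since $\G\cap H\not\subset\G_d$, distinct $\G_d$-cosets in $R(d)$ can produce the same vector orbit, so the right-hand side overcounts each point by $[\G\cap H:\G_d\cap H]$, a factor that varies with $d$. (If one tracks both effects --- the enlarged skinning measure of $(\G_d\cap H)\ba H$ and the overcounting --- they cancel, so your $g(d)=|R(d)|/[\G:\G_d]$ is the correct density, but as written the step is not an application of Theorem \ref{mc}.) The paper avoids this by sieving instead along $\G_{w_0}(d)=\{\gamma\in\G: w_0\gamma\equiv w_0 \pmod d\}$, for which $\op{Stab}_{\G_{w_0}(d)}(w_0)=\op{Stab}_\G(w_0)$, the orbit decomposition of $w_0\G$ into $\G_{w_0}(d)$-pieces is genuinely disjoint, and uniform spectral gap is inherited from $\G_d<\G_{w_0}(d)$; this yields Corollary \ref{ns1} with $g(d)=\#\O_F(d)/\#\O_d$, which agrees with your ratio. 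With that substitution your argument is the paper's proof.
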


Observe that these theorems provide
a description of the asymptotic distribution of almost prime vectors, as $\B_T$ can be taken arbitrarily.

\begin{rmk}
 \rm In both theorems above, if $\mathcal B_T$ are all $K$-invariant subsets,
our hypothesis on the uniform spectral gap for the family $\{\G_d\}$ can be disposed again,
as the uniform {\it spherical} spectral gap property proved in \cite{SV} and \cite{BGS} is sufficient in this case.
\end{rmk}

For instance, Theorems \ref{affine_u} and \ref{affine_l} can be
applied to the norm balls $\B_T=\{{\bf x}\in w_0G: \|{\bf x}\|<T\}$ and in this case
$\M_{w_0G}(\B_T)\asymp T^{\delta/\lam}$
where $\lambda$ denotes the log of the largest eigenvalue of $a_1$ on the $\br$-span of $w_0G$.

In order to present a concrete example, we consider an integral quadratic form $Q(x_1, \cdots, x_{n+1})$ of signature $(n,1)$
and for an integer $s\in \z$, denote by $W_{Q,s}$ the affine quadric given by
$$\{{\bf x}: Q({\bf x})=s\} .$$
As well-known, $W_{Q,s}$ is a one-sheeted hyperboloid if $s>0$, a two-sheeted hyperboloid if $s<0$ and a cone if $s=0$.
We will assume that $Q({\bf x})=s$ has a non-zero integral solution, so pick $w_0\in W_{Q,s}(\z)$.
 If $s\ne 0$, the stabilizer subgroup $G_{w_0}$ is symmetric; more precisely, locally isomorphic to $\SO(n-1,1)$ (if $s>0$) or $\SO(n)$ (if $s<0$)
 and if $s=0$, $G_{w_0}$ is a compact extension of a horospherical subgroup.
By the remark following Theorem \ref{effa}, the skinning measure $\mu_{G_{w_0}}^{\PS}$ is
finite if $n\ge 3$.
For $n=2$ and $s>0$,
$G_{w_0}$ is a one-dimensional subgroup consisting of diagonalizable elements, and
 $\mu_{G_{w_0}}^{\PS}$ is infinite only when the geodesic in $\bH^2$ stabilized by $G_{w_0}$
is divergent and goes into a cusp of a fundamental domain of $\G$ in $\bH^2$; in this case, we call $w_0$
 externally $\G$-parabolic, following \cite{OS}.
Therefore
the following are special cases of Theorems \ref{affine_u} and \ref{affine_l}:
\begin{cor}\label{affine2}
Let $\G$ be a geometrically finite and Zariski dense subgroup of $\SO_Q(\z)$ with $\delta>\tfrac{n-1}{2}$.
In the case when $n=2$ and $s>0$, we additionally assume that $w_0$ is not externally $\G$-parabolic.
Fixing a $K$-invariant norm $\|\cdot \|$ on $\br^{n+1}$, we have, for any $1\le r\le n+1$,
\begin{enumerate}
 \item $\{{\bf x}\in w_0\G:  \|{\bf x}\|<T, \;\; \text{$x_j$ is prime for all $j=1, \cdots, r$}\}\ll
\frac{T^\delta}{(\log T) ^r} ;$
\item for some $R>1$, $$\{{\bf x}\in w_0\G :  \|{\bf x}\|<T, \; x_1\cdots x_r \text{ has at most $R$ prime factors}\}\gg
\frac{T^\delta}{(\log T) ^r} .$$
\end{enumerate}
\end{cor}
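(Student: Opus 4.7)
The plan is to derive both (1) and (2) as direct instances of Theorems \ref{affine_u} and \ref{affine_l}, applied with $\bG = \SO_Q$ acting linearly on $V := \br^{n+1}$, distinguished vector $w_0\in W_{Q,s}(\z)$, factorization $F({\bf x}) := x_1 x_2 \cdots x_r = F_1({\bf x})\cdots F_r({\bf x})$ with $F_j({\bf x}) := x_j$, and family $\B_T := \{{\bf x}\in w_0 G : \|{\bf x}\|<T\}$. Each linear factor $F_j$ is irreducible in $\c[V]$ and takes integer values on $w_0\G \subset V(\z)$, so the factorization hypothesis of the two theorems is immediate.

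First I would verify the algebraic/geometric hypotheses of the affine sieve framework. For $s\ne 0$, the orbit $w_0\bG$ coincides with the affine quadric $W_{Q,s}$, the level set of the irreducible quadratic $Q-s$, hence is Zariski closed in $V$; the stabilizer $\bG_{w_0}$ is the special orthogonal group of $Q|_{w_0^\perp}$, a $\q$-symmetric subgroup of $\bG$ that is isomorphic over $\br$ to $\SO(n-1,1)$ when $s>0$ and to $\SO(n)$ when $s<0$. For $s=0$, the closure $w_0\bG\cup\{0\}=W_{Q,0}$ is the light cone, which is Zariski closed, and $\bG_{w_0}$ is a compact extension of a horospherical subgroup of $G$. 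Discreteness of $w_0\G$ in $V$ and closedness of $\G\ba\G H$ in $\G\ba G$ follow from the rational definability of $\bG_{w_0}$ together with $\G\subset\bG(\z)$.

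Next I would verify the analytic hypotheses. The finiteness of $\mu_H^{\PS}$ in the symmetric case splits into subcases: for $s<0$, $H$ is compact and $\G\cap H$ finite, so $\mu_H^{\PS}$ has finite mass trivially; for $s>0$ and $n\ge 3$, Proposition \ref{cri} applied to $H$ locally isomorphic to $\SO(n-1,1)$ yields finiteness whenever $\delta>1$, which is implied by $\delta>(n-1)/2\ge 1$; for $s>0$ and $n=2$, the one-dimensional $H$ fixes a geodesic in $\bH^2$, and by the criterion of \cite{OS} $\mu_H^{\PS}$ is finite if and only if this geodesic does not escape into cusps at both ends, i.e., if and only if $w_0$ is not externally $\G$-parabolic --- precisely the assumption excluded in the hypothesis. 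The uniform spectral gap for the congruence family $\{\G_d\}$ is part of the standing hypothesis of Theorems \ref{affine_u} and \ref{affine_l}; it holds unconditionally for $n=2,3$ from $\delta>(n-1)/2$ by the cited works of Bourgain--Gamburd--Sarnak and Salehi-Golsefidy--Varju, while for $n\ge 4$ one must either strengthen the assumption to $\delta>n-2$ or invoke Conjecture \ref{conj}.

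Finally I would check effective well-roundedness of $\{\B_T\}$ and compute $\M_{w_0G}(\B_T)$. Since $\|\cdot\|$ is $K$-invariant, each $\B_T$ is right $K$-invariant, so in the generalized Cartan decomposition $G=HA^+K$ (or its signed disjoint union when $s<0$) the condition $\|w_0\cdot hak\|<T$ reduces, by $\bG_{w_0}$-invariance of $w_0$ and $K$-invariance of the norm, to $\|w_0 a_t\|<T$, i.e., $t<\log T+O(1)$, because the largest eigenvalue of $a_1$ acting on the $\br$-span of $w_0 G$ is $e$. Perturbing $a_t$ by an element of $G_\e$ changes $t$ by $O(\e)$, which together with the product form of Definition \ref{sd} yields $\M_{w_0G}(\B_{T,\e}^+\setminus\B_{T,\e}^-)\ll \e\cdot \M_{w_0G}(\B_T)$, establishing effective well-roundedness with exponent $p=1$. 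A direct computation from Definition \ref{sd} gives
\[
\M_{w_0G}(\B_T) \;=\; c\int_0^{\log T+O(1)} e^{\delta t}\,dt \;\asymp\; T^\delta,
\]
and in particular $\max_{{\bf x}\in\B_T}\|{\bf x}\|\ll T \asymp \M_{w_0G}(\B_T)^{1/\delta}$, verifying the polynomial growth hypothesis of Theorem \ref{affine_l} with $\beta = 1/\delta$. Theorems \ref{affine_u} and \ref{affine_l} then deliver both assertions with the stated bounds of order $T^\delta(\log T)^{-r}$. The main technical subtlety is the $n=2,\,s>0$ case, where the finiteness of $\mu_H^{\PS}$ forces exactly the exclusion of externally $\G$-parabolic $w_0$; everything else is a routine matching of the corollary to the framework of the affine sieve theorems.
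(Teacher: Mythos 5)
Your reduction of the corollary to Theorems \ref{affine_u} and \ref{affine_l} (with $F_j({\bf x})=x_j$, $\B_T$ the norm balls, and the case analysis on $s$) is the same route the paper takes, and most of your verifications match the paper's: closedness of $w_0\bG$ (resp.\ $w_0\bG\cup\{0\}$), the identification of $G_{w_0}$ as symmetric or a compact extension of a horospherical group, the finiteness of $\mu_H^{\PS}$ via Proposition \ref{cri} for $n\ge 3$ and via the externally-$\G$-parabolic exclusion for $n=2$, $s>0$, and the computation $\M_{w_0G}(\B_T)\asymp T^{\delta}$ (i.e.\ $\lambda=1$) together with effective well-roundedness, which for norm balls is Proposition \ref{normw}.

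There is, however, a genuine gap in how you handle the spectral gap hypothesis, and it is exactly the point where the $K$-invariance of the norm is essential. The corollary is asserted for \emph{all} $n\ge 2$ under the sole assumption $\delta>\tfrac{n-1}{2}$, whereas your argument, treating the uniform spectral gap for $\{\G_d\}$ as a standing hypothesis, only yields the statement for $n=2,3$, and for $n\ge 4$ you propose to strengthen the hypothesis to $\delta>n-2$ or to invoke Conjecture \ref{conj} --- that does not prove the corollary as stated. The missing idea is the one recorded in the remark following Theorem \ref{affine_l} (cf.\ also Remark \ref{kk}): since $\|\cdot\|$ is $K$-invariant, the sets $\B_T$ are $K$-invariant, so the counting function for each congruence group $\G_{w_0}(d)$ only pairs against $K$-invariant (spherical) vectors in $L^2(\G_d\ba G)$; consequently the full uniform spectral gap of Definition \ref{usg} can be dispensed with, and the uniform \emph{spherical} spectral gap --- which follows for every $n$ and every $\delta>\tfrac{n-1}{2}$ from Lax--Phillips/Sullivan together with \cite{SV} and \cite{BGS}, with no appeal to the non-spherical dual --- already suffices to run Theorem \ref{mc} uniformly in $d$ and hence the sieve axioms $(A_1)$--$(A_3)$. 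You used the $K$-invariance of the norm only to simplify the computation of $\M_{w_0G}(\B_T)$; using it to remove the spectral gap hypothesis is what makes the corollary unconditional for $n\ge 4$. (A smaller imprecision: the irreducibility required in Theorems \ref{affine_u} and \ref{affine_l} is of $F_j$ in $\c[W]$, i.e.\ in the coordinate ring of the quadric, not in $\c[V]$; this should be stated, though it holds for the coordinate functions here.)
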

The upper bound in (1) is sharp up to a multiplicative constant. 
The lower bound in (2) can also be stated for admissible sectors under the uniform spectral gap hypothesis
(cf. Corollary \ref{vsector}). 
Corollary \ref{affine2}  was previously obtained in cases when $n=2,3$ and $s\le 0$ (\cite{BGS2}, \cite{Ko}, \cite{KO},\cite{KO2}, \cite{LO}).


To explain how Theorems \ref{affine_u} and \ref{affine_l} follow from Theorem \ref{mc},
 let $\G_{w_0}(d)=
\{\gamma\in \G: w_0\gamma=w_0 \;\text{mod (d)}\}$ for each square-free integer $d$. Then
 $\op{Stab}_{\G_{w_0}(d)}(w_0)=\op{Stab}_\G(w_0)$
and the family $\{\G_{w_0}(d)\}$ admits a uniform spectral gap property as $\G_d<\G_{w_0}(d)$.
Hence Theorem \ref{mc} holds for the congruence family $\{\G_{w_0}(d): \text{$d$ is square-free, with no small primes}\}$,
providing a key axiomatic condition in executing the combinatorial
sieve (see \cite[6.1-6.4]{IK}, \cite[Theorem 7.4]{HR}, as well as \cite[Sec. 3]{BGS}).
When an explicit uniform spectral gap for $\{\G_d\}$ is known (e.g., \cite{G}, \cite{Magee}),
the number $R(F, w_0\G)$ can also be
explicitly computed in principle.


\medskip

The paper is organized as follows. In section \ref{de},
we recall the ergodic result of Roblin which gives the leading term of the matrix
coefficients for $L^2(\G\ba G)$. In section \ref{se:ma}, we  obtain an effective asymptotic
expansion for the matrix coefficients of the complementary series representations of $G$ (Theorem \ref{key})
 as well as for those
of $L^2(\G\ba G)$, proving Theorem \ref{harmixing}.
In section \ref{nw}, we study the reduction theory for
the non-wandering component of $\G\ba \G Ha_t$, describing its
 thick-thin decomposition; this is needed
as  $\G\ba \G H$ has infinite Haar-volume in general. We will see that the non-trivial dynamics of
$\G\ba \G Ha_t$ as $t\to\infty$ can be seen only within a subset of finite PS-measure.
In section \ref{localt}, for $\phi$ compactly supported,
 we prove Theorem \ref{effa} using Theorem \ref{harmixing} via thickening.
For a general bounded $\phi$, Theorem \ref{effa} is obtained via a careful study of the transversal intersections
in section \ref{sec;eff-equi}. Theorem \ref{mmix} is also proved in section \ref{sec;eff-equi}.
Counting theorems \ref{mc} and \ref{bisec} are proved in section \ref{s:count}
and Sieve theorems  \ref{affine_u} and \ref{affine_l} are proved in the final section \ref{sieve}.

\medskip

\noindent{\bf Acknowledgment:}
We are grateful to Gregg Zuckerman for pointing out an important mistake regarding the spectrum of
$L^2(\Gamma\backslash G)$ in an earlier version of this paper.
We would like to thank Peter Sarnak for helpful comments and Sigurdur Helgason for the reference on the work of Harish-Chandra.
We also thank Dale Winter for helpful conversations on related topics.

\section{Matrix coefficients in $L^2(\G\ba G)$ by ergodic methods}\label{de}
Throughout the paper, let $G$ be $\SO(n,1)^\circ=\op{Isom}^+(\bH^n)$ for $n\ge 2$, i.e., the group of orientation preserving isometries
of $(\bH^n, d)$, and
$\G< G$ be a non-elementary torsion-free geometrically finite group.
Let $\partial(\bH^n)$ denote the geometric boundary of
$\bH^n$. Let $\Lambda(\G)\subset \partial(\bH^n)$ denote the limit set of $\G$, and $\delta$ the critical exponent of $\G$,
which is known to be equal to the Hausdorff dimension of $\Lambda(\G)$ \cite{Sullivan1984}.

 A family of measures
$\{\mu_x:x\in \bH^n\}$ is called  a {\em $\G$-invariant conformal
density\/}
 of dimension $\delta_\mu > 0$  on $\partial(\bH^n)$, if  each
$\mu_x$ is a non-zero finite Borel measure on $\partial(\bH^n)$
satisfying for any $x,y\in \bH^n$, $\xi\in \partial(\bH^n)$ and
$\gamma\in \G$,
$$\gamma_*\mu_x=\mu_{\gamma x}\quad\text{and}\quad
 \frac{d\mu_y}{d\mu_x}(\xi)=e^{-\delta_\mu \beta_{\xi} (y,x)}, $$
where $\gamma_*\mu_x(F)=\mu_x(\gamma^{-1}(F))$ for any Borel
subset $F$ of $\partial(\bH^n)$. Here
$\beta_{\xi} (y,x)$ denotes the Busemann function:
$\beta_{\xi}(y,x)=\lim_{t\to\infty} d(\xi_t, y) -d(\xi_t, x)$ where
$\xi_t$ is a geodesic ray tending to $\xi$ as $t\to \infty$.


 We denote by $\{\nu_x\}$ the
Patterson-Sullivan density, i.e.,
a {\em $\G$-invariant conformal
density\/}
 of dimension $\delta$ and
 by $\{m_x:x\in \bH^n\}$ a {\em
Lebesgue density}, i.e., a $G$-invariant conformal density on the boundary
$\partial(\bH^n)$ of dimension $(n-1)$. Both densities
are determined unique up to scalar multiples.

Denote by $\{\mathcal G^t:t\in \br\}$ the geodesic flow on $\T^1(\bH^n)$.
For $u\in \T^1(\bH^n)$, we denote by $u^{\pm}\in \partial(\bH^n)$ the forward and the backward endpoints of the geodesic determined by
$u$, i.e., $u^{\pm}=\lim_{t\to \pm \infty}\mathcal G^t(u)$.
Fixing $o\in \bH^n$, the map
\[
u \mapsto (u^+, u^-, s=\beta_{u^-} (o,\pi(u)))
\]
is a homeomorphism between $\op{T}^1(\bH^n)$ with
 \[
 (\partial(\bH^n)\times \partial(\bH^n) - \{(\xi,\xi):\xi\in \partial(\bH^n)\})  \times \br.
 \]

Using this homeomorphism,
we define measures
$\tilde m^{\BMS}, \tilde m^{\BR}, \tilde m^{\BR}_*$,  $\tilde m^{\Haar}$ on $\T^1(\bH^n)$ as follows
(\cite{Bowen1971}, \cite{Margulisthesis}, \cite{Sullivan1984},
\cite{Burger1990}, \cite{Roblin2003}):

\begin{dfn} \label{bm} Set
\begin{enumerate}
 \item  $
d \tilde m^{\BMS}(u)= e^{\delta \beta_{u^+}(o,
\pi(u))}\;
 e^{\delta \beta_{u^-}(o, \pi(u)) }\; d\nu_o(u^+) d\nu_o(u^-) ds; $
\item
$
 d \tilde m^{\BR}(u)= e^{(n-1) \beta_{u^+}(o,
\pi(u))}\;
 e^{\delta \beta_{u^-}(o, \pi(u)) }\; dm_o(u^+) d\nu_o(u^-) ds;$

\item $
 d \tilde m^{\BR}_*(u)= e^{\delta \beta_{u^+}(o,
\pi(u))}\;
 e^{(n-1) \beta_{u^-}(o, \pi(u)) }\; d\nu_o(u^+) dm_o(u^-) ds; $

\item  $d \tilde m^{\Haar}(u)= e^{(n-1)\beta_{u^+}(o,
\pi(u))}\;
 e^{(n-1) \beta_{u^-}(o, \pi(u)) }\; dm_o(u^+) dm_o(u^-) ds.$
\end{enumerate}
 \end{dfn}
The conformal properties of $\{\nu_x\}$ and $\{m_x\}$ imply that these definitions are independent of the choice of $o\in \bH^n$.
We will extend these measures to $ G$; these extensions depend on the choice of
 $o\in \bH^n$ and $X_0\in \T_o^1(\bH^n)$.
Let $K:=\op{Stab}_G(o)$ and $M:=\op{Stab}_G(X_0)$, so that
$\bH^n\simeq G/K$ and $\T^1(\bH^n)\simeq G/M$.
Let $A=\{a_t: t\in \br\}$ be the one-parameter subgroup of diagonalizable elements in the centralizer
of $M$ in $G$ such that  $\mathcal G^t(X_0)=[M]a_t=[a_tM]$.

Using the identification $\T^1(\bH^n)$ with $G/M$, we lift the above measures to $G$, which will be denoted by the same
notation by abuse of notation, so that
they are all invariant under $M$ from the right.

 These measures are all left $\G$-invariant, and hence
induce locally finite Borel measures on $\G \ba G$, which we denote by $m^{\BMS}$ (the BMS-measure), $m^{\BR}$ (the BR-measure), $m^{\BR}_*$
(the BR$_*$ measure), $m^{\Haar}$ (the Haar measure) by abuse of notation.

Let $N^+$ and $N^-$ denote the expanding and the contracting horospherical subgroups, i.e.,
$$N^\pm =\{g\in G: a_t ga_{-t}\to e\text{ as $t\to \pm \infty$}\}.$$
For $g\in G$, define $$g^{\pm}:=(gM)^{\pm}\in \partial(\bH^n).$$

We note that $m^{\BMS}$, $m^{\BR}$, and $m^{\BR}_*$ are invariant under $A$, $N^+$ and $N^-$ respectively
and their supports are given respectively
by $\{g\in \G\ba G: g^+, g^-\in
\Lambda(\G)\}$,
$\{g\in \G\ba G: g^-\in \LG\} $ and
$\{g\in \G\ba G: g^+\in \LG\} $.
The measure  $m^{\Haar}$ is invariant under both $N^+$ and $N^-$, and hence
under $G$, as $N^+$ and $N^-$ generate $G$ topologically.
That is, $m^{\Haar}$ is a Haar measure of $G$.

We consider the action of $G$ on $L^2(\G\ba G, m^{\Haar})$ by right translations, which
gives rise to the unitary action for the inner product:
$$\la \Psi_1, \Psi_2\ra =\int_{\G\ba G} \Psi_1(g)\overline{\Psi_2(g)} dm^{\Haar}(g).$$


\begin{thm}\label{lo} Let $\G$ be Zariski dense.
For any functions $\Psi_1,\Psi_2\in C_c(\G\ba G)$,
$$\lim_{t\to \infty}
e^{(n-1 -\delta)t} \la a_t \Psi_1, \Psi_2\ra = \frac{m^{\BR}(\Psi_1)\cdot m^{\BR}_*(\Psi_2)}{|m^{\BMS}|} .$$
\end{thm}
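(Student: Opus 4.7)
The plan is to deduce Theorem \ref{lo} from Roblin's mixing of the BMS measure together with a local product-structure computation that rewrites a Haar matrix coefficient as a BMS one. Under our hypotheses, $|m^{\BMS}|<\infty$ by Sullivan's work \cite{Sullivan1984}, and Roblin \cite{Roblin2003} proved that the geodesic flow on $\G\ba G/M$ mixes $m^{\BMS}$; Zariski density of $\G$ lifts this mixing to the frame flow on $\G\ba G$ itself (\cite{FS}, \cite{Wi}). I would take this BMS mixing on $\G\ba G$ as the only ergodic input.

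To pass from Haar to BMS I would work in local product charts along the horospherical foliations. By a partition of unity one may assume $\Psi_2$ is supported in a single such chart, in which $m^{\Haar}$ factors as $dn^+\,ds\,dm\,dn^-$ in $N^+AMN^-$ coordinates, while $m^{\BMS}$, $m^{\BR}$ and $m^{\BR}_*$ factor along the same coordinates but with the $N^{\pm}$-factors replaced, up to conformal weights, by pullbacks of $\nu_o$ (dimension $\delta$) or $m_o$ (dimension $n-1$) via the boundary identifications dictated by Definition \ref{bm}. Writing $g=n^+a_sm n^-$ and computing
$$ga_t=n^+a_{s+t}m\,(a_{-t}n^-a_t),$$
the perturbation $a_{-t}n^-a_t\in N^-$ tends to $e$ by the defining property of $N^-$; uniform continuity of $\Psi_1$ then gives $\Psi_1(ga_t)=\Psi_1(n^+a_{s+t}m)+o_t(1)$ uniformly on $\supp(\Psi_2)$. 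Integrating over $n^-$ first then replaces $\Psi_2$ by its Lebesgue $N^-$-fiber average.

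Next, the Lebesgue $N^-$-fiber average of $\Psi_2$ must be re-expressed as an $m_o$-average so as to assemble into $m^{\BR}_*(\Psi_2)$: on a horospherical piece of boundary diameter $e^{-t}$, Lebesgue mass is of order $e^{-(n-1)t}$ while the $\nu_o$-mass is of order $e^{-\delta t}$, which is precisely where the weight $e^{(n-1-\delta)t}$ in the statement originates. Symmetrically, the transversal average of $\Psi_1\circ a_t$ along $N^+$-leaves produces $m^{\BR}(\Psi_1)$. Applying Roblin's BMS mixing to the remaining $(s,m)$-integration against BMS-weighted transversal functions then yields
$$\lim_{t\to\infty} e^{(n-1-\delta)t}\la a_t\Psi_1,\Psi_2\ra = \frac{m^{\BR}(\Psi_1)\,m^{\BR}_*(\Psi_2)}{|m^{\BMS}|}.$$

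The principal obstacle in carrying out this plan is bookkeeping: tracking the several conformal factors through the change of coordinates carefully enough that the leading constant comes out exactly, and making the replacement $\Psi_1(ga_t)\approx \Psi_1(n^+a_{s+t}m)$ uniform in the $n^-$-variable on $\supp(\Psi_2)$. A density and partition-of-unity argument reduces to $\Psi_i$ supported inside charts where the injectivity radius is bounded below, so cuspidal geometry plays no role at the qualitative level treated here. The effective analogue, which must confront the cusps quantitatively, is the content of Theorem \ref{harmixing} and accounts for much of the subsequent spectral work in the paper.
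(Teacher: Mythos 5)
Your strategy is in fact Roblin's own, and the paper's proof of Theorem \ref{lo} is nothing more than the observation that \cite{Roblin2003} proves the statement for $M$-invariant functions from mixing of $m^{\BMS}$ under the geodesic flow, and that for $\G$ Zariski dense the mixing holds for the frame flow on $\G\ba G$ (\cite{FS}, \cite{Wi}), so Roblin's argument repeats verbatim. So the choice of ergodic input is right; the problem is that the one step where the real work lies is asserted rather than proved, and the mechanism you offer for it does not work. After you freeze the $N^-$-variable (using $a_{-t}n^-a_t\to e$) and integrate $\Psi_2$ over $N^-$-fibers against Lebesgue measure, what remains is a pairing of $\Psi_1(\cdot\, a_{s+t})$ against a fixed function integrated with \emph{Lebesgue} measure along $N^+$-leaves, i.e.\ exactly the (leafwise-Haar) horospherical equidistribution statement, which is of the same depth as the theorem itself. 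You cannot "apply BMS mixing to the remaining $(s,m)$-integration" directly, because $m^{\BMS}$ carries Patterson--Sullivan measures both leafwise and transversally while your remaining integral carries Lebesgue ones. The conversion is the heart of Roblin's proof (and, in effective form, of Lemmas \ref{com1}--\ref{com2} and Theorem \ref{tran} here): one first deduces from mixing the equidistribution of PS-leafwise translates by thickening with PS transversals, and then compares Haar-leafwise and PS-leafwise translates through their transversal intersections with a fixed box decomposition, using the \emph{exact} cocycle identities --- under $g\mapsto ga_t$ leafwise Lebesgue measure scales by $e^{-(n-1)t}$ and leafwise PS measure by $e^{-\delta t}$, by conformality of the densities of dimensions $n-1$ and $\delta$.

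By contrast, your stated source of the factor $e^{(n-1-\delta)t}$ --- that a boundary piece of diameter $e^{-t}$ has Lebesgue mass of order $e^{-(n-1)t}$ and $\nu_o$-mass of order $e^{-\delta t}$ --- is not available: for geometrically finite $\G$ with cusps the PS mass of such balls fluctuates by factors $e^{(k-\delta)d(\xi_t,\G o)}$ (Sullivan's shadow lemma, as used in the proof of Proposition \ref{exam}), it vanishes off $\Lambda(\G)$, and even a two-sided comparison up to constants could never produce the exact limit $m^{\BR}(\Psi_1)m^{\BR}_*(\Psi_2)/|m^{\BMS}|$. (A smaller slip: in $N^+AMN^-$ coordinates Haar measure carries an exponential Jacobian $e^{\pm(n-1)s}$, not just $dn^+\,ds\,dm\,dn^-$; that really is bookkeeping, but the transversal comparison above is not.) So either cite Roblin's theorem together with the frame-flow mixing upgrade, as the paper does, or supply the transversal-intersection argument; as written, the sketch assumes the essential sub-statement it is meant to establish.
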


\begin{proof} Roblin \cite{Roblin2003} proved this for $M$-invariant functions
$\Psi_1$ and $\Psi_2$. His proof is based on the mixing of the geodesic flow
on $\T^1(\G\ba \bH^n)=\G\ba G/M$.
For $\G$ Zariski dense, the mixing of $m^{\BMS}$ was extended to
 the frame flow on $\G\ba G,$ \cite{FS}, see also \cite{Wi} for more detailed analysis. 
Based on this, the proof given in \cite{Roblin2003} can be repeated verbatim to prove the claim.
\end{proof}


\section{Asymptotic expansion of Matrix coefficients}\label{se:ma}
\subsection{Unitary dual of $G$}
Let $G=\SO(n,1)^\circ$ for $n\ge 2$ and $K$  a maximal compact subgroup of $G$. Denoting by $\mathfrak g$ and $\mathfrak k$ the Lie algebras of $G$ and $K$
respectively,
let $\mathfrak g=\mathfrak k \oplus \mathfrak p$ be the corresponding Cartan decomposition of  $\mathfrak g$.
 Let $A=\exp (\mathfrak a)$ where
$\mathfrak a$ is a maximal abelian subspace of $\mathfrak p$
and let $M$ be the centralizer of $A$ in $K$.

Define the symmetric bi-linear form $\la \cdot, \cdot \ra$ on $\mathfrak g$ by
\be \label{dinner} \la X, Y\ra :=\frac{1}{2(n-1)} B(X,Y)\ee
 where $B(X,Y)=\op{Tr}(\text{ad}X \text{ad} Y)$ denotes the Killing form for $\mathfrak g$.
The reason for this normalization is so that the Riemannian metric on $G/K\simeq \bH^n$ induced by
 $\la \cdot, \cdot \ra$ has constant curvature $-1$.

 Let $\{X_i\}$ be a basis for $\mathfrak g_{\c}$ over $\c$;
put $g_{ij}=\la X_i, X_j\ra $ and let $g^{ij}$ be
the $(i,j)$ entry of the inverse matrix of $(g_{ij})$.
 The element
$$ \mathcal C =\sum g^{ij}X_iX_j$$
is called the Casimir element of $\mathfrak g_{\c}$ (with respect to $\la \cdot, \cdot \ra$).
 It is well-known that this definition is independent of the choice of a basis and that
$\mC$ lies in the center of the universal enveloping algebra $U(\mathfrak g_\c)$ of $\mathfrak g_{\c}$.

Denote by $\hat G$ the unitary dual, i.e., the set of equivalence classes of irreducible unitary representations of $G$.
A representation $\pi\in \hat G$ is said to be {\it tempered} if for any $K$-finite vectors $v_1, v_2$
of $\pi$, the matrix coefficient function $g\mapsto \la \pi(g)v_1, v_2\ra$ belongs to $L^{2+\e}(G)$ for any $\e>0$.
We describe the non-tempered part of $\hat G$ in the next subsection.

\subsection{Standard representations and complementary series}\label{ss;standard-comp-series}
Let $\alpha$ denote the simple relative root for $(\mathfrak g, \mathfrak a)$.
The root subspace $\mathfrak n$ of $\alpha$ has dimension $n-1$ and
 hence $\rho$, the half-sum of all positive roots of $(\mathfrak g, \mathfrak a)$ with multiplicities,
is given by $\tfrac{n-1}{2}\alpha$. Set $N=\exp \mathfrak n$.
By the Iwasawa decomposition, every element $g\in G$ can be written uniquely as $g=kan$
with $k\in K$, $a\in A$ and $n\in N$. We write $\kappa(g)=k$, $\exp H(g)=a$ and $n(g)=n$.








For any $g\in G$ and $k\in K$, we let
 $\kappa_g(k)=\kappa(gk),$ and $H_g(k)=H(gk)$ so that
\[
\mbox{$gk=\kappa_g(k)\exp(H_g(k))n(gk).$ }
\]

Given a complex valued linear function $\lam$
on $\mathfrak{a}$, we define a $G$-representation $ U^\lambda$ on $L^2(K)$ by the prescription:
for $\phi\in L^2(K)$ and $g\in G$,
\be\label{eq:rep-l2k}
U^\lam(g)\phi=e^{(-(\lam+2\rho)\circ H_{g^{-1}})} \cdot \phi\circ\kappa_{g^{-1}}.
\ee


This is called a standard representation of $G$ (cf. \cite[Sec. 5.5]{Wa}).
Observe that the restriction of $U^\lambda$ to $K$ coincides with the left regular
representation of $K$ on $L^2(K)$: $U^\lam (k_1) f(k)=f(k_1^{-1}k)$.
If $R$ denotes the right regular representation of $K$ on $L^2(K)$, then
$R(m)U^\lam (g)=U^\lam (g) R(m)$ for all $m\in M$. In particular
each $M$-invariant subspace of $L^2(K)$ for the right translation action
is a $G$-invariant subspace of $U^{\lam}$.

Following~\cite{Wa}, for any $\upsilon\in\hat M$, we let
${\bf Q}(\upsilon)L^2(K)$ denote the isotypic $R( M)$ submodule of
$L^2( K)$ of type $\upsilon.$ Choosing a finite dimensional vector space, say, $V$ on which $M$ acts irreducibly via $\upsilon$,
it is shown in~\cite{Wa}
that the $\upsilon$-isotypic space ${\bf Q}(\upsilon)L^2(K)$ can be written as a sum of $\dim(\upsilon)$ copies of $\mathcal U_\upsilon(\lambda)$
where
\[
\mathcal U_\upsilon(\lambda)=\left\{f\in L^2(K,V): \begin{array}{c} \text{for each $m\in M$, }\\
f(km)=\upsilon(m) f(k),\text{for almost all } k\in K\end{array} \right\}.
\]
If $\lambda\in (\tfrac{n-1}2+i\br)\alpha$, then 
$\mathcal U_\upsilon (\lambda)$ is unitary with respect to the inner product
$\la f_1, f_2 \ra=\int_K \la f_1(k), f_2(k)\ra_V dk$, and called a unitary principal series representation.
These representations are tempered.
A representation 
$\mathcal U_\upsilon (\lambda)$ with $\lambda \notin (\tfrac{n-1}2+i\br)\alpha$
is called a {\it complementary series} representation if it is unitarizable.
For $\lambda=r\alpha$, we will often omit $\alpha$ for simplicity. 
For $n=2$, the complementary series representations of $G=\SO(2,1)^\circ$ are 
$\mathcal U_1( s-1)$ with $1/2<s<1$; in particular they are all spherical.
For $n\ge 3$, a representation $\upsilon\in \hat M$ is
 specified by its highest weight, which can be regarded as a sequence of $\tfrac{n-1}2$ integers with $j_1\ge j_2\ge \cdots
 \ge |j_{(n-1)/2}|$ if $n$ is odd, and as a sequence of $\tfrac{n-2}2$ integers with $j_1\ge j_2\ge \cdots
 \ge j_{(n-2)/2}\ge 0$ if $n$ is even. 
In both cases, let $\ell=\ell(\upsilon)$ be the largest index such that $j_\ell\ne 0$ and put $\ell(\upsilon)=0$ if $\upsilon$ is the trivial representation. Then the complementary series representations are precisely
$\mathcal U_\upsilon(s-n+1)$, $s\in I_\upsilon:=(\tfrac{n-1}2,  (n-1)-\ell)$, up to equivalence.

In particular, 
 the
spherical complementary series representations are exhausted by
$\{ \mathcal U_1( s-n+1): (n-1)/2<s < n-1\}$.

The complementary 
representation $\mathcal U_\upsilon(\lambda)$ contains the minimal $K$-type, say, $\sigma_\upsilon$ with multiplicity
one.

The classification of $\hat G$ says that if $\pi\in \hat G$ is non-trivial
and non-tempered, then $\pi$ is (equivalent to) the unique irreducible subquotient of 
the complementary series representation $\mathcal U_\upsilon(s-n+1)$, $s\in I_\upsilon$,
 containing the $K$-type
$\sigma_\upsilon$, which we will denote by $\mathcal U(\upsilon, s-n+1)$.
This classification was obtained by Hirai \cite{Hi}; see also \cite[Prop. 49 and 50]{KS}) and \cite{BS}. 

 Note that $\mathcal U(\upsilon, s-n+1)$ is spherical
if and only if $\mathcal U_\upsilon(s-n+1)$ is spherical if and only if $\upsilon=1$.
For convenience, we will call $\mathcal U(\upsilon, s-n+1 )$
 a complementary series representation of parameter $(\upsilon, s)$.

Observe that the non-spherical complementary series representations exist only when $n\ge 4$.
	For $\tfrac{n-1}2<s < n-1$, we will set $\mathcal H_s:= \mathcal U({1},{s-n+1}) $, i.e., 
	the spherical complementary series
representations of parameter $s$.
Our normalization of the Casimir element $\mC$ is so that $\mC$ acts on $\mathcal H_s$ as the scalar $s (s-n+1)$.

In order to study the matrix coefficients of complementary series representations, we work
with the standard representations, which we first relate with Eisenstein integrals.
\subsection{Generalized spherical functions and Eisenstein integrals}
Fix a complex valued linear function $\lam$
on $\mathfrak{a},$ and the standard representation $U^\lambda$. By the Peter-Weyl theorem, we may decompose
the left-regular representation $V=L^2(K)$
as
$V=\oplus_{\sigma\in \hat K} V_\sigma,$
where $V_{\sigma}=L^2(K;\sigma)$ denotes the
isotypic $K$-submodule of type $\sigma$,
and $V_\sigma \simeq d_\sigma \cdot \sigma$ where $d_\sigma$ denotes the dimension of $\sigma$.

Set $\Omega_K=1+ \omega_K= 1-\sum X_i^2$
where $\{X_i\}$ is an orthonormal basis of $\mathfrak k_\c$.
 It belongs to the center of the universal enveloping algebra
of $\mathfrak k_{\c}$.
By Schur's lemma, $\Omega_K$ acts on $V_\sigma$ by a scalar, say, $c(\sigma)$.
Since $\Omega_K$ acts as a skew-adjoint operator, $c(\sigma)$ is real.
Moreover $c(\sigma)\ge 1$, see~\cite[p. 261]{Wa}, and
$\|\Omega_K^\ell v\|=c(\sigma)^{\ell} \|v\|$ for any smooth vector $v\in  V_\sigma$.
Furthermore it is shown in~\cite[Lemma 4.4.2.3]{Wa}
that if $\ell $ is large enough, then
\be
\label{cd} \sum_{\sigma\in \hat K} d_\sigma^2 \cdot c(\sigma)^{-\ell}<\infty .
\ee

For $\sigma\in \hat K$ and $k\in K$
 define $$\chi_\sigma(k):=d_\sigma\cdot \op{Tr}(\sigma(k))$$ 
where $\op{Tr}$ is the trace operator.

For any continuous representation $W$ of $K$, and $\sigma\in \hat K$, the projection operator
from $W$ to the $\sigma$-isotypic space $W(\sigma)$ is given as follows:
$$
\P_\sigma= \int_K { \overline{\chi}_\sigma(k)} W(k) dk.
$$

For $\upsilon\in \hat M$,
we write $\upsilon\subset \sigma$ if $\upsilon$ occurs in $\sigma|_M$,
and we write $\upsilon\subset \sigma\cap \tau$ if  $\upsilon$ occurs both in $\sigma|_M$ and $\tau|_M$.
We remark that for $\sigma\in \hat K$, given $\upsilon\in \hat M$ occurs at most once in
$\sigma|_M$ (\cite{Di}; \cite[p.41]{Wa}).
For $\upsilon \subset \sigma$, we denote by
$\P_\upsilon$ the projection operator from $V_\sigma$ to the $\upsilon$-isotypic
subspace $V_\sigma(\upsilon)\simeq d_\sigma \cdot \upsilon$ so that any $w\in V_\sigma$ can be written as 
$w=\sum_{\upsilon\subset \sigma}\P_\upsilon(w)$.
By the theory of representations of compact Lie groups, we have
for any $f\in L^2(K;\sigma)$ 
we have
$$\la f,  \bar \chi_\sigma\ra=f(e).$$


In the rest of this subsection, we fix $\sigma,\tau\in\hat K$. 
Define an $M$-module homomorphism $T_0:V_{\sigma}\rightarrow V_{\tau}$ by
\[ T_0(w )=\sum_{\upsilon\subset \sigma\cap \tau}
\la \P_\upsilon(w),\P_{\upsilon}(\bar{\chi}_\sigma) \ra\P_{\upsilon}(\bar{\chi}_\tau) .\]

Set $E:=\Hom_\bbc(V_\sigma,V_\tau)$. Then
$E$ is a $(\tau,\sigma)$-double representation space, a
left $\tau$ and right $\sigma$-module. We put
$$E_M:=\{T\in E: \tau(m)T= T \sigma(m)  \text{ for all $m\in M$}\} .$$
Denote by $U^\lam_\sigma$ and $U^\lam_\tau$ the representations of $K$
obtained by restricting $U^\lambda|_ K$ to
the subspace $V_\sigma$ and $V_\tau$ respectively.
Define $T_\lam\in E$ by
 $$T_\lam :=\int_M  U_\tau^\lam(m) T_0 U_\sigma^\lam(m^{-1})dm $$
where $dm$ denotes the probability Haar measure of $M$.
It is easy to check that $T_\lam \in E_M$.

An integral of the form
$\int_K U_\tau^\lam (\kappa(a k))T_\lam U_\sigma^\lam(k^{-1})e^{\lam(H(a k))}dk$ is called an {\it Eisenstein integral}.

Clearly, the matrix coefficients of the representation $U^\lambda$
are understood if we understand $P_\tau U^\lambda (a) P_\sigma$ for all $\tau,\sigma\in \hat K$, 
which can be proved to be an Eisenstein integral:

\begin{thm}\label{lem:harish-chandra-integral}
For any $a\in A$, we have
$$
\P_\tau U^\lam (a )\P_\sigma=\int_K U_\tau^\lam (\kappa(a k))T_\lam U_\sigma^\lam(k^{-1})e^{\lam(H(a k))}dk.$$
\end{thm}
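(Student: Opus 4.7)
The approach is a direct calculation in the $L^2(K)$-model of $U^\lambda$, exploiting two features: the restriction $U^\lambda|_K$ is the left regular representation, so the spectral projectors $\P_\sigma,\P_\tau$ act as convolution with the characters $\bar\chi_\sigma, \bar\chi_\tau$; and the explicit Iwasawa formula \eqref{eq:rep-l2k} gives $U^\lambda(a)$. A preliminary observation is that both sides of the claimed identity belong to $E_M$: the left-hand side because $a$ centralises $M$, so that $U_\tau^\lambda(m)\P_\tau U^\lambda(a)\P_\sigma = \P_\tau U^\lambda(a)\P_\sigma U_\sigma^\lambda(m)$ for every $m\in M$; the right-hand side by construction of $T_\lambda$ as an $M$-average of $T_0$. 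In fact the substitution $k\mapsto km^{-1}$ inside the RHS integral, combined with the transformation rules $\kappa(akm^{-1}) = \kappa(ak)m^{-1}$ and $H(akm^{-1})=H(ak)$, shows that replacing $T_\lambda$ by $T_0$ does not change the value of the integral, so the $M$-averaging only serves to make the kernel well-defined as an operator in $E_M$, and for the present calculation one may use $T_0$.

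For the main computation, fix $v\in V_\sigma$ and $k_0\in K$. Expanding $\P_\tau = \int_K \bar\chi_\tau(k_1)\,U^\lambda(k_1)\,dk_1$ and applying \eqref{eq:rep-l2k} gives
\[
(\P_\tau U^\lambda(a)v)(k_0) = \int_K \bar\chi_\tau(k_1)\,e^{-(\lambda+2\rho)H_{a^{-1}}(k_1^{-1}k_0)}\,v\!\bigl(\kappa_{a^{-1}}(k_1^{-1}k_0)\bigr)\,dk_1.
\]
I would substitute $k_1 \mapsto k_0 k^{-1}$ and then $k \mapsto \kappa(ak)$; the Iwasawa identities $\kappa_{a^{-1}}(\kappa(ak)) = k$ and $H_{a^{-1}}(\kappa(ak)) = -H(ak)$ together with the standard integration formula $dk = e^{-2\rho H(ak')}dk'$ yield, after the cancellation of the two $e^{-2\rho H(ak)}$ factors,
\[
(\P_\tau U^\lambda(a)v)(k_0) = \int_K \bar\chi_\tau\!\bigl(k_0\,\kappa(ak)^{-1}\bigr)\,v(k)\,e^{\lambda H(ak)}\,dk.
\]
The right-hand side of the claim evaluated on $v$ at $k_0$, after the reduction of $T_\lambda$ to $T_0$ described above, reads
\[
\int_K e^{\lambda H(ak)}\,\bigl(T_0[\,U_\sigma^\lambda(k^{-1})v\,]\bigr)\!\bigl(\kappa(ak)^{-1}k_0\bigr)\,dk.
\]

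To match the two expressions I would rewrite $\bar\chi_\tau(k_0\kappa(ak)^{-1}) = (U_\tau^\lambda(\kappa(ak))\bar\chi_\tau)(k_0)$ and use the reproducing kernel property of $\bar\chi_\sigma$ in $V_\sigma$ (namely $f(k) = \langle f,\, U_\sigma^\lambda(k)\bar\chi_\sigma\rangle$ for $f\in V_\sigma$) to express $v(k)$ as an inner product. Decomposing $V_\sigma$ and $V_\tau$ into right-$M$ isotypic components and inserting the defining formula of $T_0$ --- which pairs the $\upsilon$-component of its input with $\P_\upsilon\bar\chi_\sigma$ and outputs $\P_\upsilon\bar\chi_\tau$ --- identifies the two integrands as the same kernel, completing the equality. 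The principal obstacle I expect is the bookkeeping of this last step: carefully propagating the $M$-isotypic decomposition through the change of variables and the reproducing-kernel substitution, and verifying that the normalisation in the definition of $T_0$ is exactly what the Iwasawa cocycle formula produces.
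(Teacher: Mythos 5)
Your setup is sound and in fact runs parallel to the paper's own argument (the paper pairs against a second vector $\psi\in V_\tau$ instead of evaluating the kernel at $k_0$, but the change of variables, the Iwasawa identities $\kappa_{a^{-1}}(\kappa(ak))=k$, $H_{a^{-1}}(\kappa(ak))=-H(ak)$, and the observation that the $M$-average may be moved between $T_0$ and $T_\lam$ via $\kappa(akm)=\kappa(ak)m$, $H(akm)=H(ak)$ are all correct and are exactly the manipulations in the paper). The gap is in your final step: after the right-$M$-isotypic decomposition the two integrands are \emph{not} the same kernel. Your formula for the left side produces, at each $k$, the full product
$$\Big(\textstyle\sum_{\upsilon_1\subset\sigma}\la \P_{\upsilon_1}(U_\sigma^\lam(k^{-1})v),\P_{\upsilon_1}(\bar\chi_\sigma)\ra\Big)\cdot\Big(\textstyle\sum_{\upsilon_2\subset\tau}\P_{\upsilon_2}(\bar\chi_\tau)(\kappa(ak)^{-1}k_0)\Big)e^{\lam(H(ak))},$$
whereas the $T_0$-kernel on the right side keeps only the diagonal terms $\upsilon_1=\upsilon_2$. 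The off-diagonal terms are not pointwise zero (already for $a=e$ and $\sigma=\tau$ with $\sigma|_M$ reducible they are visibly nonzero functions of $k$); what is true is that their \emph{integrals} over $K$ against $e^{\lam(H(ak))}$ vanish, and proving this is the heart of the theorem, not bookkeeping.

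Concretely, the missing argument is the one the paper supplies: unfold the $K$-integral over $M$ via $k\mapsto mk$, use that $\kappa(amk)=m\kappa(ak)$ and $H(amk)=H(ak)$ --- which is precisely where $a\in A$ (i.e.\ $a$ centralizing $M$) enters --- and then apply Schur orthogonality of matrix coefficients of inequivalent $M$-representations, noting that the relevant vectors $\P_{\upsilon}(\bar\chi_\sigma)$, $\P_{\upsilon}(\bar\chi_\tau)$ lie in the $\upsilon$-isotypic subspaces so the inner $M$-integral of each cross term is identically zero. Without some form of this orthogonality step the identity cannot be established, and your opening reduction from $T_\lam$ to $T_0$ discards exactly the $M$-average that encodes it; the paper instead performs the computation with $T_0$ throughout and only passes to $T_\lam$ at the very end, after the cross terms have been killed. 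So the proposal needs this vanishing lemma inserted before the "identification of kernels"; as written, that identification is false and the proof is incomplete at its decisive point.
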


\proof
For $\bullet\in \hat K$ and
 $\phi\in L^2(K;\bullet),$  we write
$
U_\bullet^\lam(k^{-1})\phi=
\sum_{\upsilon\subset\bullet}\phi_{k,\upsilon}$, that is,
$\phi_{k,\upsilon}=\P_\upsilon(U_\bullet^\lam(k^{-1})\phi)$.
In particular,
$\phi(k)=\sum_{\upsilon\subset\bullet}\phi_{k,\upsilon}(e)$
and
\[
\phi_{k,\upsilon}(e)= \la\phi_{k,\upsilon},\bar{\chi}_\bullet\ra=\la U_\bullet^\lam(k^{-1})\phi,\P_\upsilon(\bar{\chi}_\bullet) \ra.
\]
Let $\varphi\in V_\sigma$ and
$\psi\in V_\tau$. For any $g\in G$, we have
\begin{align}\label{uc1}
\langle U_\tau^\lam(\kappa(gk)) T_0 U_\sigma^\lam(k^{-1})\varphi,\psi\rangle
&=\sum_{\upsilon \subset \sigma\cap \tau}
\la U_\sigma^\lam(k^{-1})\varphi),\P_{\upsilon}(\bar{\chi}_\sigma)\ra
\la \P_{\upsilon}(\bar{\chi}_\tau),U_\tau^\lam(\kappa(gk))^{-1}\psi\ra \notag\\
&=\sum_{\upsilon \subset \sigma\cap \tau}
\varphi_{k,\upsilon}(e)\overline{\psi_{\kappa(gk),\upsilon}(e)}.
\end{align}
 On the other hand,  we have
\begin{align}\label{uc2}
\langle U^\lam(a )\varphi,\psi\rangle&=
\int_K\varphi(\kappa(a^{-1}k)\overline{\psi(k)}
e^{-(\lam+2\rho)H(a^{-1}k)} dk \notag \\
&=\int_K\varphi(k)\overline{\psi(\kappa(ak))} e^{\lam(H(ak))}dk \notag \\
&=\int_K (\sum_{\upsilon \subset \sigma}\varphi_{k,\upsilon}(e))
\overline{(\sum_{\upsilon\subset \tau}\psi_{\kappa(ak),\upsilon}(e))}e^{\lam(H(ak))}dk.
\end{align}
We now claim that; if $\upsilon_1\ne \upsilon_2,$ then
\[
\int_K \varphi_{k,\upsilon_1}(e)\overline{\psi_{\kappa(ak),\upsilon_2}(e)}
e^{\lam(H(ak))}dk=0
\]
To see this, first note that $M$ and
$a$ commute, and hence
$ H(amk)=H(ak),$ and $\kappa(amk)=m\kappa(ak).$
We also note that
\[
\mbox{$\varphi_{k,\upsilon_1}\in V_\sigma(\upsilon_1),$ and
$\psi_{\kappa(ak),\upsilon_2}\in V_\tau(\upsilon_2).$ }
\]
Now if $\upsilon_1\ne \upsilon_2,$ then by Schur's orthogonality of matrix coefficients,  
\begin{multline*}
\int_M \varphi_{k,\upsilon_1}(m^{-1})\overline{\psi_{\kappa(ak),\upsilon_2}(m^{-1})}dm=\\
\int_M\la U^\lambda_\sigma (m)\varphi_{k,\upsilon_1},\P_{\upsilon_1}(\bar\chi_\sigma)\ra\overline{\la U^\lambda_\tau (m)\psi_{k',\upsilon_2},\P_{\upsilon_2}(\bar\chi_\tau)\ra} dm=0;
\end{multline*}
we get
\begin{align*}
&\int_K \varphi_{k,\upsilon_1}(e)\overline{\psi_{\kappa(ak),\upsilon_2}(e)}
e^{\lam(H(ak))}dk\\
&=\int_{M\backslash K}(\int_M \varphi_{mk,\upsilon_1}(e)\overline{\psi_{\kappa(amk),\upsilon_2}(e)}
e^{\lam(H(amk))}dm)dk\\
&=\int_{M\backslash K}e^{\lam(H(ak))}
(\int_M \varphi_{k,\upsilon_1}(m^{-1})\overline{\psi_{\kappa(ak),\upsilon_2}(m^{-1})}dm)
dk=0,
\end{align*}
implying the claim.

Therefore, it follows from \eqref{uc1} and \eqref{uc2}
that for any $\varphi \in V_\sigma$ and $\psi\in V_\tau$,
\begin{align*}
 &\la \P_\tau U^\lam(a) \P_\sigma \varphi, \psi\ra =\langle U^\lam(a )\varphi,\psi\rangle\\&=\int_K \sum_{\upsilon \subset \sigma\cap \tau}
\varphi_{k,\upsilon}(e)\overline{\psi_{\kappa(gk),\upsilon}(e)} e^{\lam(H(ak))}dk \\&=
\int_K\langle U_\tau^\lam(\kappa(ak)) T_0 U_\sigma^\lam(k^{-1})\varphi,\psi\rangle
e^{\lam(H(ak))}dk \\
 &=\int_K\langle U_\tau^\lam(\kappa(ak)) T_\lam  U_\sigma^\lam(k^{-1})\varphi,\psi\rangle
e^{\lam(H(ak))}dk ;\end{align*}
 we have used $\kappa(a k m)=\kappa(ak)m$ and $H(akm)=H(ak)$ for the last equality.

It follows that
$$ P_\tau U^\lam(a) P_\sigma=\int_K U_\tau^\lam(\kappa(ak)) T_\lam  U_\sigma^\lam(k^{-1})
e^{\lam(H(ak))}dk. $$
\qed

For the special case of
$\tau=\sigma$, this theorem
was proved by Harish-Chandra (see \cite[Thm. 6.2.2.4]{Wa2}), where
$T_0$ was taken to be $T_0(w)=(w,\bar \chi_\sigma)\bar \chi_\sigma$ and $T_\lambda=\int_M  U_\sigma^\lam(m) T_0 U_\sigma^\lam(m^{-1})dm $.

\begin{lem}\label{tbdd} For any $\lam\in \c$,
$$\|T_\lam\|\le d_\sigma^2 \cdot d_\tau^2 $$
where $\|T_\lam\|$ denotes the operator norm of $T_\lam$.
\end{lem}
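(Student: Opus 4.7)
My plan is to reduce the operator norm estimate of $T_\lam$ to that of $T_0$, and then bound $\|T_0\|$ by elementary estimates. First, I would observe that, since $U^\lam|_K$ coincides with the left regular representation of $K$ on $L^2(K)$ by \eqref{eq:rep-l2k}, for each $m\in M\subset K$ the operators $U_\sigma^\lam(m)$ on $V_\sigma$ and $U_\tau^\lam(m)$ on $V_\tau$ are unitary. Consequently $\|U_\tau^\lam(m)\, T_0\, U_\sigma^\lam(m^{-1})\|=\|T_0\|$ for every $m\in M$, and combining this with the definition of $T_\lam$ and the triangle inequality for the operator norm yields
\[
\|T_\lam\|\le \int_M \|U_\tau^\lam(m)\, T_0\, U_\sigma^\lam(m^{-1})\|\, dm = \|T_0\|.
\]
Thus the problem reduces entirely to estimating $\|T_0\|$, and this reduction uses in a crucial way that the parameter $\lam$ does not affect the restriction of $U^\lam$ to $K$.

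For the second step, I would just unwind the definition of $T_0$ on a unit vector $w\in V_\sigma$: the triangle inequality together with Cauchy--Schwarz applied to the inner product gives
\[
\|T_0(w)\|\le \sum_{\upsilon\subset \sigma\cap \tau}\|\P_\upsilon(w)\|\cdot \|\P_\upsilon(\bar\chi_\sigma)\|\cdot \|\P_\upsilon(\bar\chi_\tau)\|.
\]
Schur orthogonality of matrix coefficients immediately yields $\|\bar\chi_\sigma\|_{L^2(K)}=d_\sigma$ and $\|\bar\chi_\tau\|_{L^2(K)}=d_\tau$, and since each $\P_\upsilon$ is an orthogonal projection on $L^2(K)$, the three factors in the sum are controlled by $1$, $d_\sigma$ and $d_\tau$ respectively.

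The remaining (and only nontrivial) point is to count how many $\upsilon\in \hat M$ appear in $\sigma\cap\tau$. Using the fact recalled earlier in the paper that every $\upsilon$ appears at most once in $\sigma|_M$, together with $\sum_{\upsilon\subset \sigma}\dim(\upsilon)=d_\sigma$, there are at most $d_\sigma$ distinct $\upsilon\subset\sigma$; symmetrically, at most $d_\tau$ such $\upsilon\subset\tau$. Hence the number of terms in the sum above is at most $\min(d_\sigma,d_\tau)\le d_\sigma d_\tau$, producing $\|T_0\|\le d_\sigma^2\, d_\tau^2$ and hence the claimed bound on $\|T_\lam\|$. I do not anticipate any genuine obstacle here: the estimate is rather loose and a more careful double application of Cauchy--Schwarz in $\upsilon$ would even improve it to $\|T_0\|\le d_\sigma d_\tau$, but the stated bound is already sufficient for the later asymptotic analysis of the Eisenstein integrals.
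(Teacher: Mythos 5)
Your proof is correct and takes essentially the same route as the paper: both reduce to $\|T_\lam\|\le\|T_0\|$ using the unitarity of $U^\lam|_K$ on $V_\sigma$ and $V_\tau$, and then bound $\|T_0\|$ directly from its defining formula. The only (immaterial) difference is bookkeeping: the paper estimates term-by-term via $\|\chi_\upsilon\|\le d_\upsilon^2$ and the multiplicity-one fact $\sum_{\upsilon\subset\sigma}d_\upsilon\le d_\sigma$, while you use $\|\bar\chi_\sigma\|=d_\sigma$, $\|\bar\chi_\tau\|=d_\tau$ together with a count of the $\upsilon$'s occurring in $\sigma\cap\tau$.
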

\begin{proof} Since
 $\|\chi_\upsilon \|\le  d_\upsilon^2$ for any $\upsilon\in \hat M$,
\begin{align*}&\|T_0\|^2 \le
 \sum_{\upsilon\subset \sigma\cap  \tau} \|\chi_\upsilon\|^2 \le 
 \sum_{\upsilon\subset \sigma\cap \tau} d_\upsilon^4
\\&\le (\sum_{\upsilon\subset \sigma} d_\upsilon ^2\cdot
(\sum_{\upsilon\subset \tau} d_\upsilon ^2) \le   d_\sigma^2 \cdot d_\tau^2 .
\end{align*}

Since $\|T_\lam\|\le \|T_0\| \cdot \|\sigma\|\cdot \|\tau\|=\|T_0\|$,
the claim follows.
\end{proof}

\subsection{Harish-Chandra's expansion of Eisenstein integrals}
Fix $\sigma, \tau\in \hat K$.
Let $E$ and $E_M$ to be as in the previous subsection.

Given $T\in E_M$, $r\in\bbc$, and $a_t\in A^+$,  we investigate an Eisenstein integral:
$$\int_K \tau(\kappa(a_tk))T_{ir\alpha -\rho }\sigma(k^{-1})e^{(ir\alpha-\rho) (H(a_tk))}dk .$$

We recall the following fundamental result of Harish-Chandra:

\begin{thm}\label{thm:harish-chandra-asymptotic}\label{precc}
(Cf.~\cite[Theorem 9.1.5.1]{Wa2})
There exists
a subset $\ocal_{\sigma,\tau}$ of $\bbc,$
whose complement is a locally finite set,
such that for any $r\in\ocal_{\sigma,\tau}$ there exist uniquely determined
functions $\cfun_+(r), \cfun_-(r)\in\Hom_\bbc(E_M,E_M)$
such that for all $T\in E_M$,
\begin{multline*}
\rho(a_t )\int_K
\tau(\kappa(a_tk))T\sigma(k^{-1}) e^{(ir\alpha -\rho)H(a_tk)} dk \\
=\Phi(r:a_t)\cfun_{+}( r)T+\Phi(-r:a_t)\cfun_{-}( r)T
\end{multline*}
where $\Phi$ is a function on $\ocal_{\sigma,\tau}\times A^+$ taking
values in $\Hom_\bbc(E_M,E_M)$, defined as in \eqref{e;phi-function}.
\end{thm}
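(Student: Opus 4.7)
The plan is to derive the expansion from the Casimir eigenvalue equation together with a Frobenius-type analysis of the resulting ODE on $A^+$, in the spirit of Harish-Chandra.

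First, I would observe that the standard representation $U^{ir\alpha - \rho}$ has an infinitesimal character, so the Casimir element $\mC$ acts on it by a scalar of the form $-(r^2 + \la\rho,\rho\ra)$ (after the normalization \eqref{dinner}). Since $\mC$ commutes with the $K$-projections $\P_\sigma,\P_\tau$, the $\Hom_\bbc(V_\sigma,V_\tau)$-valued Eisenstein integral
$$F(a_t)\;=\;e^{\rho(H_t)}\int_K \tau(\kappa(a_t k))\, T\, \sigma(k^{-1})\, e^{(ir\alpha-\rho)(H(a_t k))}\,dk,$$
when regarded as a function on $A^+=\{a_t=\exp(H_t)\}$, satisfies a second order linear ODE whose form is dictated by the radial component of $\mC$ acting on $(\tau,\sigma)$-spherical functions. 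The prefactor $e^{\rho(H_t)}$ is precisely what kills the first-order drift, turning the equation into
$$\Bigl(\tfrac{d^2}{dt^2}+r^2\Bigr) F(a_t) \;=\; R(t)\, F(a_t),$$
where $R(t)$ is an $\Hom_\bbc(E_M,E_M)$-valued perturbation admitting a convergent expansion in powers of $e^{-2\alpha(H_t)}$ on $A^+$ (this is the radial-part computation carried out, for instance, in \cite[Ch.~9]{Wa2}).

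Second, I would solve this ODE by Frobenius, seeking formal solutions of the shape
$$\Phi(r:a_t)\;=\;e^{(ir-\rho)(H_t)}\sum_{\mu\in 2\N\alpha}\Gamma_\mu(r)\,e^{-\mu(H_t)}, \qquad \Gamma_0(r)=\mathrm{id},$$
\begin{equation}\label{e;phi-function}
\end{equation}
and substituting into the ODE to obtain a recursion of the schematic form
$$\mu\bigl(\mu-2ir\bigr)\Gamma_\mu(r)\;=\;\text{(polynomial expression in the $\Gamma_\nu(r)$ with $\nu<\mu$).}$$
This recursion determines each $\Gamma_\mu(r)$ uniquely as a rational function of $r$, with poles confined to the locally finite set where some factor $\mu(\mu-2ir)$ vanishes; let $\ocal_{\sigma,\tau}$ denote the complement of this set. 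Standard majorization estimates on the recursion, exactly as in Harish-Chandra's treatment, give convergence of the series on all of $A^+$ and analytic dependence on $r\in\ocal_{\sigma,\tau}$.

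Third, since the modified ODE is a regular singular system of rank two in the variable $e^{-\alpha(H_t)}$, and since $\Phi(r:\cdot)$ and $\Phi(-r:\cdot)$ have distinct leading exponents whenever $r\in\ocal_{\sigma,\tau}$, the two formal solutions are linearly independent and span the solution space. Therefore the Eisenstein integral $F$ admits a unique decomposition
$$F(a_t)\;=\;\Phi(r:a_t)\cfun_+(r)T+\Phi(-r:a_t)\cfun_-(r)T,$$
with $\cfun_\pm(r)$ determined by matching the leading $e^{(\pm ir-\rho)(H_t)}$ asymptotics; linearity in $T$ transports to linearity of $\cfun_\pm(r)$, and $M$-equivariance of both sides (built into $T\in E_M$ and preserved by the recursion) forces $\cfun_\pm(r)\in \Hom_\bbc(E_M,E_M)$. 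The main obstacle is bookkeeping: one must verify that $M$-equivariance is preserved at every step of the recursion (so that the Frobenius coefficients $\Gamma_\mu(r)$ act within $E_M$) and pin down the exceptional set $\ocal_{\sigma,\tau}$ precisely; both are handled by exploiting that $\mC$, the projections, and $T$ all commute with $M$, so that the ODE descends to an equation on the finite-dimensional space $E_M$.
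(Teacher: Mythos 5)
Your outline is exactly the argument the paper is pointing to: Theorem \ref{precc} is not proved in the paper at all, but quoted from Harish--Chandra via \cite[Thm.\ 9.1.5.1]{Wa2}, and the proof there is precisely the radial--Casimir ODE plus Frobenius-series-and-matching scheme you describe. Two details in your sketch, however, are wrong as written and would make the literal computation fail for general $\sigma,\tau$. First, the series runs over \emph{all} nonnegative integer powers of $e^{-t}$, not only even ones: the radial component of $\mC$ on $(\tau,\sigma)$-spherical functions contains coupling terms of the form $\tau(Y_\alpha)\sigma(Y_{-\alpha})$ carrying odd powers of $e^{-t}$ (this is visible in the paper's recursion, which involves $\Gamma_{\ell-(2j-1)}$ as well as $\Gamma_{\ell-2j}$), so an ansatz indexed by $2\N\alpha$ does not close under the recursion except in the spherical case $\sigma=\tau=1$. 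Second, the indicial operator in the recursion is not the scalar $\mu(\mu-2ir)$ but $\ell(2ir-\ell)-f$, where $f$ acts through $\sigma(\Omega_M)$ on $E_M$; consequently the exceptional set is where this \emph{operator} fails to be invertible, which is why it genuinely depends on $\sigma$ and $\tau$ and is denoted $\ocal_{\sigma,\tau}$. Neither correction changes the structure of your argument (the bad set is still locally finite, the two solutions $\Phi(\pm r:\cdot)$ still have distinct leading exponents for $r\in\ocal_{\sigma,\tau}$, and matching asymptotics still yields unique $\cfun_\pm(r)\in\Hom_\bbc(E_M,E_M)$), but both must be made before the sketch matches the cited proof.
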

Let us note that, fixing $T$ and $a_t$,
the Eisenstein integral on the left hand side of the above
is an entire function of $r$; see~\cite[Section 9.1.5]{Wa2}.

Much of the difficulties lie in the fact that the above formula holds only
for $\mathcal O_{\sigma,\tau}$ but not for the entire complex plane, as we have no
knowledge of which complementary series representations appear in $L^2(\Gamma\ba G)$.
We need to understand the Eisenstein integral $\int_K
\tau(\kappa(a_tk))T_{s\alpha -2\rho}\sigma(k^{-1}) e^{(s\alpha -2\rho)H(a_tk)} dk$ for every
 $s\in (\tfrac{n-1}{2}, n-1)$. We won't be able to have as precise as a formula as Theorem \ref{precc}
but will be able to determine a main term with an exponential error term.

We begin by discussing the definition and properties of the functions $\Phi$ and $\cfun_\pm$.
\subsubsection{The function $\Phi$.}
As in~\cite[page 287]{Wa2}, we will recursively define
 rational functions
$\{\Gamma_\ell:\ell\in\bbz_{\geq0}\}$ which are
holomorphic except at a locally finite subset, say $\scal_{\sigma,\tau}$.
The subset $\ocal_{\sigma,\tau}$ in Theorem~\ref{thm:harish-chandra-asymptotic}
is indeed
$
\bbc\setminus\cup_{r\in\scal_{\sigma,\tau}}\{\pm r\}.
$

More precisely, let $\mathfrak l$ be the Lie algebra of the Cartan subalgebra
(=the centralizer of $A$). Let $H_\alpha\in \mathfrak l_\c$ be such that
$B(H, H_\alpha)=\alpha(H)$ for all $H\in \mathfrak l_\c$.

Let $X_{\pm \alpha}\in \mathfrak g_\c^{\pm \alpha}$ be chosen
so that $[X_\alpha, X_{-\alpha} ]=H_\alpha$ and $[H, X_\alpha]=\alpha(H) X_\alpha$.
In particular, $B(X_\alpha, X_{-\alpha})=1$.
Write $X_{\pm \alpha}=Y_{\pm \alpha}+Z_{\pm \alpha}$ where $Y_{\pm \alpha}\in \mathfrak k_\c$
and $Z_{\pm \alpha}\in \mathfrak p_\c$.

Letting $\Omega_M$ denote the Casimir element of $M$,
given $S\in\Hom_\bbc(E_M,E_M),$ we define $f(S)$ by
\[
f( S)T=ST\sigma(\Omega_M),\text{ for all } T\in E_M.
\]


We now 
define $\Gamma_\ell:=\Gamma_\ell(ir-\tfrac{n-1}2)$'s in $ \mathcal Q(\mathfrak a_{\c})\otimes \op{Hom}_\c(E_M, E_M)$ by
the following recursive relation (see \cite[p. 286]{Wa2}
for the def. of $\mathcal Q(\mathfrak a_{\c}$)): $\Gamma_0=I$ and  
\begin{multline*}
\left\{ \ell (2ir-n+1)-\ell (\ell -n+1)  -f \right\} \Gamma_\ell =
 \sum_{j\ge 1}( ( 2ir-n+1)  -2(\ell -2j)) \Gamma_{\ell -2j}\\ +
8 \sum_{j\ge 1} (2j-1) \tau(Y_\alpha)\sigma(Y_{-\alpha}) \G_{\ell -(2j-1)}
-8 \sum_{j\ge 1} j \left\{ \tau(Y_\alpha Y_{-\alpha}) +\sigma(Y_\alpha  Y_{-\alpha})\right\} \Gamma_{\ell -2j} .
\end{multline*}

The set $\ocal_{\sigma,\tau}$ consists of $r$'s such that
$\left\{ \ell (2ir-n+1)-\ell (\ell -n+1)  -f \right\}$ is invertible so that
the recursive definition of the $\Gamma_\ell$'s is meaningful.

\begin{lem} \label{gbdd}
Fix any $t_0>0$ and a compact subset $\omega\subset \mathcal O_{\sigma,\tau}$.
There exist $b_\omega$ (depending only on $t_0$ and $\omega$)
and $N_0>1$ (independent of $\sigma,\tau\in \hat K$)
such that
for any $r\in \omega$ and $\ell \in \N$,
$$
\|\Gamma_\ell( ir-\tfrac{n-1}{2})\|\leq b_\omega d_{\sigma}^{N_0}d_{\tau}^{N_0} e^{\ell t_0}.$$
\end{lem}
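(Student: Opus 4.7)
The plan is to prove the lemma by induction on $\ell$, extracting the explicit dependence on $d_\sigma, d_\tau$ from the recursive definition together with operator bounds that are uniform in $\sigma, \tau \in \hat K$. The setup $A_\ell \Gamma_\ell = B_\ell$ with
$$A_\ell := \ell(2ir-n+1) - \ell(\ell - n+1) - f = -\ell^2 + \ell(2ir + n -1) - f$$
and $B_\ell$ the right-hand side of the recursion shows that the core task is to bound $\|A_\ell^{-1}\|$ and $\|B_\ell\|$ separately.

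First I would establish a uniform polynomial bound $\|\sigma(X)\| \leq C_X d_\sigma^{c_0}$ for any fixed $X\in \mathfrak{g}_{\mathbb{C}}$, uniformly over $\sigma \in \hat K$, and likewise for $\tau$. This follows from the Weyl dimension formula for the compact group $K$, which bounds the highest weight of $\sigma$ polynomially by $d_\sigma$, combined with the fact that the operator norm of $\sigma(X)$ on $V_\sigma$ is controlled by the highest weight. The same argument gives $\|\sigma(\Omega_M)\| \leq C\, d_\sigma^{c_1}$ and hence $\|f\| \leq C\,(d_\sigma^{c_1} + d_\tau^{c_1})$ as an operator on $\mathrm{Hom}_{\mathbb{C}}(E_M, E_M)$. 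From the decomposition of $A_\ell$ above, the term $-\ell^2 I$ dominates once $\ell \geq \ell_1 := C_1(1 + \max_{r\in\omega}|r| + d_\sigma^{c_1/2} + d_\tau^{c_1/2})$, giving $\|A_\ell^{-1}\| \leq 2/\ell^2$ in that regime. For $\ell < \ell_1$ and $r \in \omega \subset \mathcal{O}_{\sigma,\tau}$, invertibility of $A_\ell$ is guaranteed by the assumption on $\omega$, and compactness yields a bound on $\|A_\ell^{-1}\|$ which will be controlled via the global argument below.

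The inductive step, assuming $\|\Gamma_k\| \leq b_\omega d_\sigma^{N_0} d_\tau^{N_0} e^{k t_0}$ for $k < \ell$, estimates each of the three sums in $B_\ell$. The coefficients $|2ir - n+1 - 2(\ell - 2j)|$ and $(2j-1)$ and $j$ grow at most linearly in $\ell$ or $j$, while $\|\tau(Y_\alpha)\sigma(Y_{-\alpha})\|$ and $\|\tau(Y_\alpha Y_{-\alpha})\|$ are dominated by $d_\sigma^{c_0} d_\tau^{c_0}$. Summing the convergent series $\sum_{j\geq 1} j\, e^{-j t_0}$ yields
$$\|B_\ell\| \leq C(t_0)\, b_\omega\, d_\sigma^{N_0 + 2c_0}\, d_\tau^{N_0 + 2c_0}\, (\ell + 1)\, e^{\ell t_0}.$$
Combined with $\|A_\ell^{-1}\| \leq 2/\ell^2$, this closes the induction in the regime $\ell \geq \ell_1$, provided $N_0$ is chosen large enough to absorb the shift by $2c_0$.

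The main obstacle is the transition range $\ell < \ell_1$, since $\ell_1$ itself grows polynomially in $d_\sigma, d_\tau$, so a naive iteration would contribute a factor $(d_\sigma d_\tau)^{O(\ell_1)}$, far worse than the target polynomial $d_\sigma^{N_0} d_\tau^{N_0}$. I would close this gap by exploiting the holomorphicity of $r \mapsto \Gamma_\ell(ir - (n-1)/2)$ on $\omega$, together with the fact that the generating series $\Phi(r; a_t) = \sum_\ell \Gamma_\ell(ir - (n-1)/2)\, e^{-\ell t}$ is a Frobenius solution to the radial part of the Casimir equation, which is a linear ODE of regular singular type. The classical theory of such ODEs gives absolute convergence of the series for all $t > 0$, hence a bound $\|\Gamma_\ell\| \leq C\, e^{\ell t_0}$ for any $t_0 > 0$, with $C$ controlled by the sup-norm of $\Phi(r; a_{t_0/2})$ on $\omega$. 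Quantifying this sup-norm by examining the explicit integral defining the Eisenstein integral on the left-hand side of Theorem~\ref{thm:harish-chandra-asymptotic}, together with Lemma~\ref{tbdd}, provides the polynomial factor $b_\omega\, d_\sigma^{N_0}\, d_\tau^{N_0}$ uniformly in $\sigma, \tau$, thereby handling the transition range and completing the proof.
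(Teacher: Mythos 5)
Your treatment of the large-$\ell$ regime is essentially the paper's argument: the same polynomial operator-norm bounds via the highest-weight estimate $q_\sigma\ll d_\sigma$, the same recursion estimate $\|\Gamma_\ell\|\le \ell\,\|\Lambda_\ell^{-1}\|\,c_1 d_\sigma^3 d_\tau^3\sum_{j<\ell}\|\Gamma_{\ell-j}\|$, and the same use of $\|\Lambda_\ell^{-1}\|\ll\ell^{-2}$ together with the factor $(1-e^{-t_0})^{-1}$ to propagate the bound once $\ell$ exceeds a threshold $N_1\le c_2 d_\sigma^{N_2}d_\tau^{N_2}$. Where you diverge is the range $\ell\le N_1$: the paper absorbs these finitely many terms into $M(t_0,\omega):=\max_{\ell\le N_1,\,r\in\omega}\|\Gamma_\ell(ir-\tfrac{n-1}{2})\|e^{-\ell t_0}$ and asserts, from the recursion and the polynomial bounds on $N_1$ and on $\max_{\ell\le N_1}\|\Lambda_\ell^{-1}\|$, that $M(t_0,\omega)\le b_\omega d_\sigma^{N_0}d_\tau^{N_0}$, starting the induction only above $N_1$. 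You correctly sense that this is the delicate range, but the substitute argument you propose there is where the proposal breaks down.

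Concretely: (i) Theorem \ref{thm:harish-chandra-asymptotic} expresses the Eisenstein integral as the combination $\Phi(r:a_t)\cfun_+(r)T+\Phi(-r:a_t)\cfun_-(r)T$, so the easy polynomial bound on the integral (via Lemma \ref{tbdd}) controls only this combination, not $\Phi$ alone; separating the two summands requires bounds on $\cfun_\pm(r)$, and in the paper the bound for $\cfun_-(r)T$ (Proposition \ref{cpm2}) is itself proved \emph{using} Lemma \ref{gbdd}, so your route is circular as stated — and for real $r\in\omega$ the factors $e^{\pm irt}$ have equal modulus, so there is no separation by growth rates at a single time. (ii) Passing from a sup-norm bound on $\Phi(r;a_{t_0/2})$ to $\|\Gamma_\ell\|\le Ce^{\ell t_0}$ is a Cauchy estimate in $z=e^{-t}$ and needs control on a full complex circle $|z|=e^{-t_0}$, i.e.\ for complex $t$; the Eisenstein integral is defined and estimated only for $a_t\in A^+$, and a bound at one real point does not bound Taylor coefficients. (iii) The appeal to ``classical Frobenius theory'' is not quantitative in $\sigma,\tau$: the classical convergence proof is precisely the recursive estimate you started from, so invoking it does not remove the $(d_\sigma d_\tau)^{O(\ell_1)}$ loss you identified — obtaining constants polynomial in $d_\sigma,d_\tau$, uniformly over $\hat K\times\hat K$, is the entire content of the lemma (and is what the later summation over $\hat K$ in Theorem \ref{mc2} requires). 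So the transition range is not closed by your argument; a complete write-up must either use the paper's device of absorbing the terms $\ell\le N_1$ into the constant, with an actual uniform estimate for that finite range, or supply a genuinely different uniform bound there, which your sketch does not yet do.
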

\begin{proof}
Our proof uses an idea of the proof of \cite[Lemmas 9.1.4.3-4]{Wa2}.
For $s=ir-\tfrac{n-1}{2}$, and $T\in \Hom_\c(E_M,E_M)$,
define
$$\Lambda_\ell (T):=\left(-\ell^2+\ell(2s-n+1)  -f \right) T .$$

For $q_\sigma$ and $q_\tau$ which are respectively the highest weights for $\sigma$ and
$\tau$,  since $q_\sigma \ll d_\sigma$ with implied constant independent of $\sigma\in \hat K$,\begin{align*}&
\max \{ \|\tau(Y_\alpha)\sigma(Y_{-\alpha})\|, \|\tau(Y_\alpha Y_{-\alpha})\|,
\sigma(Y_\alpha  Y_{-\alpha})\| \}\\ & \le c_0 \cdot (q_\sigma q_\tau +q_\sigma^2 +q_\sigma^2)
d_\sigma d_\tau  \le c_0' d_\sigma^3 d_\tau^3
 \end{align*}
for some $c_0, c_0'>0$ independent of $\sigma$ and $\tau$. Hence for some $c_1=c_1(\omega)$, for all $r\in \omega$,
\begin{align}\label{glb} \|\Gamma_\ell(ir-\tfrac{n-1}{2})\| &\le \ell \cdot \|\Lambda_\ell^{-1}\| \cdot
c_1 d_\sigma^3 d_\tau^3  \sum_{j<\ell} \|\Gamma_{\ell-j}\|.
\end{align}

Let $N_1$ be an integer such that $ \ell^2 \cdot \|\Lambda_\ell^{-1}\| \cdot
 (1-e^{-t_0})^{-1} c_1 d_\sigma^3 d_\tau^3 \le N_1$ for all $\ell\ge 1$.
Since $\|\Lambda_\ell^{-1}\|\ll \ell^{-2}$ as $\ell \to\infty$
and the coefficients of $f$ depend only on the eigenvalues of $\Omega_M$
for those $\upsilon\in \hat M$ contained in $ \sigma$,
we can take $N_1=N_1(\omega)$ so that
$N_1 \le c_2 d_\sigma^{N_2}d_\tau^{N_2}$
for some $N_2\ge 1$ and $c_2=c_2(\omega)>1$
(independent of $\sigma$ and $\tau$).

Set
 $$M(t_0,\omega):=\max_{\ell\le N_1, r\in \omega} {\| \Gamma_\ell(ir-\tfrac{n-1}{2})\|}{ e^{-\ell t_0}}.$$

By \eqref{glb} together with
the observation that both $N_1=N_1(\omega)$ and
$\max_{\ell \le N_1}\| \Lambda_\ell^{-1}\|$ are bounded by
a polynomial in $ d_\sigma$ and $d_\tau$,
we  have $M(t_0,\om) \le b_\omega d_\sigma^{N_0} d_\tau^{N_0}$ for some $N_0\ge 1$
and $b_\omega>0$.

We shall now show by induction that for all $r\in \omega$ and for all $\ell \ge 1$,
\be\label{ind2} \|\Gamma_\ell (ir-\tfrac{n-1}{2}) \| \le M(t_0,\om) e^{\ell t_0} .\ee
First note that \eqref{ind2} holds for all $\ell \le N_1$ by the definition of $M(t_0,\om)$.
Now for any $N_1>N$, suppose \eqref{ind2} holds for each $\ell <N$.
Then
\begin{align*} \|\Gamma_{N}(ir-\tfrac{n-1}{2} )\| & \le N^{-1} (N^2 \|\Lambda_N^{-1}\| c_1 d_\sigma^3 d_\tau^3)  \sum_{j<N}
 \|\Gamma_{N-j}(ir-\tfrac{n-1}{2} )\|\\&
 \le N^{-1}N_1 (1-e^{-t_0}) M(t_0,\om) \sum_{j<N} e^{(N-j) t_0}
\\ & \le M(t_0,\om )e^{N t_0}, \end{align*}
finishing the proof.
\end{proof}

Following Warner (Cf.~\cite[Theorem 9.1.4.1]{Wa2}), we define
\be\label{e;phi-function}
\Phi(r:a_t)=e^{ir t}\sum_{\ell\geq0}
\Gamma_\ell(ir-\tfrac{n-1}{2})e^{-\ell t}
\ee
which converges for all large enough $t$ by Lemma \ref{gbdd}.

\medskip

\subsubsection{ The function $\cfun_\pm$.}
 Let $N^-=\exp(\mathfrak n^-)$
be the root subspace corresponding to $-\alpha$, and
 $d_{N^-}$ denote a Haar measure on $N^-$ normalized so that
$\int_{N^-}e^{-2\rho(H(n))}d_{N^-}(n)=1.$


The following is due to Harish-Chandra; see \cite[Thm. 9.1.6.1]{Wa2}.
\begin{thm}\label{thm:asymp-c-function}
For $r\in \mathcal O_{\sigma, \tau}$ with  $\Im ( r)<0$, $\cfun_+(r)$ is holomorphic and given by
 \[
\cfun_+( r)T=\int_{N^-}T\sigma(\kappa(n)^{-1})e^{-(ir\alpha +\rho)(H(n))} d_{N^-}(n).
\]
\end{thm}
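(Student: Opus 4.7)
My plan is to extract $\cfun_+(r) T$ as a $t\to\infty$ limit of the Eisenstein integral appearing on the left of Theorem~\ref{thm:harish-chandra-asymptotic} and to identify that limit with the claimed integral over $N^-$.

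First, I would rewrite the Eisenstein integral
\[
I(r,t) := \int_K \tau(\kappa(a_tk))\, T\,\sigma(k^{-1})\, e^{(ir\alpha-\rho)H(a_tk)}\,dk
\]
as an integral over $N^-$. The integrand is right $M$-invariant in $k$, since the Iwasawa identities $\kappa(gm)=\kappa(g)m$ and $H(gm)=H(g)$ combine with the defining property $\tau(m)T=T\sigma(m)$ of $T\in E_M$. Hence, using the parameterization $K/M\cong N^-$ with Jacobian $e^{-2\rho H(n)}$,
\[
I(r,t)=\int_{N^-}\tau(\kappa(a_t\kappa(n)))\, T\,\sigma(\kappa(n)^{-1})\, e^{(ir\alpha-\rho)H(a_t\kappa(n))}\, e^{-2\rho H(n)}\,d_{N^-}(n).
\]
Setting $n_t:=a_tna_{-t}\in N^-$, the identity $a_tn=n_ta_t$ together with the standard Iwasawa manipulations gives $\kappa(a_t\kappa(n))=\kappa(n_t)$ and $H(a_t\kappa(n))=H(n_t)+\log a_t-H(n)$, from which a short bookkeeping of exponents yields
\[
e^{-irt}\rho(a_t)\,I(r,t)=\int_{N^-}\tau(\kappa(n_t))\, T\,\sigma(\kappa(n)^{-1})\, e^{(ir\alpha-\rho)H(n_t)}\, e^{-(ir\alpha+\rho)H(n)}\,d_{N^-}(n).
\]

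Next I would match two asymptotic descriptions of this quantity as $t\to\infty$. On the one hand, Theorem~\ref{thm:harish-chandra-asymptotic} together with \eqref{e;phi-function} gives
\[
e^{-irt}\rho(a_t)I(r,t)=\cfun_+(r)T + e^{-2irt}\cfun_-(r)T + O(e^{-t}),
\]
with remainder controlled uniformly via Lemma~\ref{gbdd}; since $|e^{-2irt}|=e^{2\Im(r)t}\to 0$ when $\Im(r)<0$, the left hand side converges to $\cfun_+(r)T$. On the other hand, $a_t$ contracts $N^-$, so $n_t\to e$ pointwise and the integrand converges pointwise to $T\sigma(\kappa(n)^{-1})e^{-(ir\alpha+\rho)H(n)}$. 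Using upper-half-space coordinates $N^-\cong\br^{n-1}$ with $e^{-\alpha H(n_v)}=(1+|v|^2)^{-1}$, the contraction estimate $|H(n_t)|\le|H(n)|$ supplies the $t$-uniform majorant
\[
\bigl|e^{(ir\alpha-\rho)H(n_t)}\,e^{-(ir\alpha+\rho)H(n)}\bigr|\le C(1+|v|^2)^{\Im(r)-(n-1)/2},
\]
which lies in $L^1(\br^{n-1})$ precisely when $\Im(r)<0$. Dominated convergence then identifies the limit with the claimed integral formula, and holomorphy of $\cfun_+(r)$ on $\{\Im r<0\}$ follows from holomorphy of the integrand in $r$ together with local uniformity of the dominating bound.

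The main technical obstacle is justifying the interchange of the $t\to\infty$ limit with the $N^-$ integration: a $t$-uniform $L^1$ majorant is required, and this is delivered precisely by the contraction inequality $|H(n_t)|\le|H(n)|$ combined with the explicit upper-half-space formula for $H(n_v)$, which together show that the $n_t$-dependent factor can only shrink the limiting integrand throughout the region $\Im(r)<0$.
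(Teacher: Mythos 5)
The paper itself does not prove this statement; it is quoted from Harish-Chandra via Warner (Thm.\ 9.1.6.1), so there is no internal argument to compare against. What you write is essentially the classical derivation: use right $M$-invariance of the integrand (via $\tau(m)T=T\sigma(m)$) to pass from $K$ to $N^-$ with the substitution $dk=e^{-2\rho(H(n))}d_{N^-}(n)$, use the Iwasawa identities $\kappa(a_t\kappa(n))=\kappa(n_t)$ and $H(a_t\kappa(n))=H(n_t)+\log a_t-H(n)$ for $n_t=a_tna_{-t}$, and extract $\cfun_+(r)T$ as the $t\to\infty$ limit of $e^{-irt}e^{\rho(\log a_t)}I(r,t)$, which is legitimate on $\ocal_{\sigma,\tau}\cap\{\Im (r)<0\}$ because $|e^{-2irt}|=e^{2\Im(r)t}\to0$ and the $\Gamma_\ell$-tails are $O(e^{-t})$ for fixed $r$ by Lemma \ref{gbdd}. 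This is the right route and correctly identifies the limit with the stated $N^-$-integral.

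One step is misstated, though easily repaired. The claim that the $n_t$-dependent factor ``can only shrink the limiting integrand throughout $\Im(r)<0$,'' and hence the majorant $C(1+|v|^2)^{\Im(r)-(n-1)/2}$, is only valid when $-\Im(r)\le \tfrac{n-1}{2}$: since $|e^{(ir\alpha-\rho)H(n_t)}|=e^{(-\Im(r)-\frac{n-1}{2})\alpha(H(n_t))}$ with $\alpha(H(n_t))\in[0,\alpha(H(n))]$, for $\Im(r)<-\tfrac{n-1}{2}$ this factor exceeds $1$ and can be as large as $(1+|v|^2)^{-\Im(r)-\frac{n-1}{2}}$ (e.g.\ at $t=0$ the product is exactly $e^{-2\rho(H(n))}$, which is not bounded by your majorant for very negative $\Im(r)$). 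The correct $t$-uniform majorant, obtained from $\alpha(H(n_t))\le\alpha(H(n))$, is $(1+|v|^2)^{\max(\Im(r)-\frac{n-1}{2},\,-(n-1))}$, which is still integrable on $\br^{n-1}$ for every $\Im(r)<0$, so the dominated-convergence step, the identification of the limit, and the holomorphy conclusion all go through unchanged. (In the range the paper actually uses, $r=r_s$ with $s\in(\tfrac{n-1}{2},n-1)$, one has $-\tfrac{n-1}{2}<\Im(r)<0$ and your bound is correct as written.)
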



 The integral $\int_{N^-} e^{-(ir\alpha +\rho) H(n)} d_{N^-}(n)$ is absolutely convergent iff
 $\Im ( r)<0$, shown 
by Gindikin and Karperlevic (\cite[Coro.9.1.6.5]{Wa2}).
\begin{cor}\label{cp}
For any $r\in \mathcal O_{\sigma,\tau}$  with  $\Im ( r)<0$,
the operator norm $\|\cfun_{+}(r )\|$ is bounded above by $  \int_{N^-} e^{(\Im(r)\alpha -\rho) H(n)} d_{N^-}(n).$
 \end{cor}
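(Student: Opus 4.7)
The plan is to bound the operator norm of $\cfun_+(r)$ directly from the integral representation given in Theorem~\ref{thm:asymp-c-function}. The strategy is straightforward: apply the triangle inequality to the defining integral, then bound pointwise the integrand on $N^-$ by $\|T\|$ times an explicit scalar density, and finally factor out $\|T\|$ to read off the operator norm.

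First I would start from the formula
\[
\cfun_+(r)T=\int_{N^-}T\,\sigma(\kappa(n)^{-1})\,e^{-(ir\alpha+\rho)(H(n))}\,d_{N^-}(n),
\]
valid for $r\in\mathcal O_{\sigma,\tau}$ with $\Im(r)<0$. Applying the triangle inequality (which is justified precisely because the scalar integral converges absolutely in this range, by the Gindikin--Karpelevich result cited just before the corollary), I get
\[
\|\cfun_+(r)T\|\le \int_{N^-}\|T\,\sigma(\kappa(n)^{-1})\|\cdot\bigl|e^{-(ir\alpha+\rho)(H(n))}\bigr|\,d_{N^-}(n).
\]

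Next I would handle the two factors separately. Since $\sigma$ is a unitary representation of $K$ and $\kappa(n)\in K$, the operator $\sigma(\kappa(n)^{-1})$ is unitary on $V_\sigma$, so $\|T\sigma(\kappa(n)^{-1})\|\le \|T\|\cdot\|\sigma(\kappa(n)^{-1})\|=\|T\|$. For the scalar factor, I would use that $\rho$ is real-valued on $\mathfrak a$ while $ir$ has real part $-\Im(r)$, giving
\[
\bigl|e^{-(ir\alpha+\rho)(H(n))}\bigr|=e^{(\Im(r)\alpha-\rho)(H(n))}.
\]
Substituting these two estimates back in yields
\[
\|\cfun_+(r)T\|\le \|T\|\int_{N^-} e^{(\Im(r)\alpha-\rho)(H(n))}\,d_{N^-}(n),
\]
and dividing by $\|T\|$ and taking the supremum over $T\in E_M\setminus\{0\}$ gives the claimed bound on the operator norm.

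There is essentially no obstacle here — the only thing to check carefully is absolute convergence of the integral, which is exactly the content of the Gindikin--Karpelevich convergence criterion $\Im(r)<0$ quoted immediately above the corollary. That same criterion is simultaneously what makes the termwise estimate legitimate and what ensures the right-hand side is finite, so the two hypotheses of the corollary (that $r\in\mathcal O_{\sigma,\tau}$ and $\Im(r)<0$) are being used in exactly the right places.
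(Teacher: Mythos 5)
Your proposal is correct and follows the same route as the paper: the paper's proof simply observes that $\|\sigma(k)\|=1$ for $k\in K$ and reads the bound off the integral formula of Theorem \ref{thm:asymp-c-function}, exactly as you do. Your added details (the computation $|e^{-(ir\alpha+\rho)H(n)}|=e^{(\Im(r)\alpha-\rho)H(n)}$ and the appeal to the Gindikin--Karpelevich convergence criterion for $\Im(r)<0$) are the right justifications for the steps the paper leaves implicit.
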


\begin{proof}
Since the operator norm $\|\sigma(k)\|$ is $1$ for any $k\in K$, the claim is immediate from Theorem \ref{thm:asymp-c-function}.
\end{proof}







\begin{prop} \label{cpm2}
Fix a compact subset $\omega$ contained in $\mathcal O_{\sigma, \tau}\cap \{ \Im ( r)<0\} $.
There exist $d_1=d_1(\omega)$ and $N_2>1$ such that
for any $r\in \omega$,
we have
$$\|\cfun_{\pm }(r ) T_{\pm ir\alpha -\rho} \| \le d_1 \cdot d_\tau^{N_2}d_\sigma^{N_2} .$$
\end{prop}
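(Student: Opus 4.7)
The plan is to use the operator-norm inequality
$\|\cfun_{\pm}(r) T_{\pm ir\alpha-\rho}\| \le \|\cfun_{\pm}(r)\|\cdot \|T_{\pm ir\alpha-\rho}\|$
and bound each factor separately. The factor involving $T_\lambda$ is immediate from Lemma \ref{tbdd}: for any complex $\lambda$, and in particular for $\lambda=\pm ir\alpha-\rho$, one has $\|T_\lambda\|\le d_\sigma^2 d_\tau^2$. This already accounts for the $d_\sigma^{N_2}d_\tau^{N_2}$ factor in the conclusion with $N_2=2$.

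For the ``$+$'' case of the $\cfun$ factor, Corollary \ref{cp} gives
$\|\cfun_+(r)\| \le \int_{N^-} e^{(\Im(r)\alpha-\rho)(H(n))}d_{N^-}(n)$, whose integrand is positive and continuous in $r$ and whose integral is finite precisely when $\Im(r)<0$ by the Gindikin and Karperlevic calculation recalled after Theorem \ref{thm:asymp-c-function}. Since $\omega$ is a compact subset of $\{\Im(r)<0\}$, $\Im(r)$ is bounded away from $0$ on $\omega$, so this integral is a continuous function of $r$ on $\omega$ with a finite maximum $D_+(\omega)$. Combined with the previous step, this yields $\|\cfun_+(r)T_{ir\alpha-\rho}\|\le D_+(\omega)\, d_\sigma^2 d_\tau^2$ uniformly for $r\in\omega$.

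For the ``$-$'' case one uses the Weyl-group symmetry $\cfun_-(r)=\cfun_+(-r)$ (which falls out of the $r\leftrightarrow -r$ symmetry present in Theorem \ref{thm:harish-chandra-asymptotic}) together with the companion integral representation obtained by applying Theorem \ref{thm:asymp-c-function} to the opposite horospherical subgroup $N^+$ in place of $N^-$. This produces an expression of the shape
$\cfun_-(r)T=\int_{N^+} T\sigma(\kappa(n)^{-1}) e^{(ir\alpha-\rho)(H(n))}d_{N^+}(n)$
whose integrand is absolutely integrable exactly on $\{\Im(r)<0\}$, again by the Gindikin and Karperlevic formula. Arguing as before, one obtains $\|\cfun_-(r)\|\le D_-(\omega)$ for all $r\in\omega$. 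Setting $d_1(\omega):=\max(D_+(\omega),D_-(\omega))$ and $N_2=2$ completes the bound.

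The principal obstacle is the ``$-$'' case, since the integral formula in Theorem \ref{thm:asymp-c-function} is only stated for $\cfun_+$ and converges in only one half-plane; the naive substitution $r\mapsto -r$ via the Weyl symmetry would place us in the divergent half-plane. Exploiting instead the parallel integral representation over $N^+$ (available because $\cfun_-$ records the asymptotic contribution of $\Phi(-r:a_t)$, which by symmetry is governed by the opposite horospherical) circumvents this; verifying convergence and uniformity on $\omega$ is the only nontrivial point. Everything else is a product of Lemma \ref{tbdd} with a compactness argument.
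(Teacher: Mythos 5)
Your treatment of the ``$+$'' case is correct and is exactly the paper's argument: Corollary \ref{cp} gives a bound for $\|\cfun_+(r)\|$ that is uniform on the compact set $\omega\subset\{\Im(r)<0\}$, and Lemma \ref{tbdd} supplies the factor $d_\sigma^2 d_\tau^2$.

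The ``$-$'' case, however, has a genuine gap. First, the integral you write down does not exist where you need it. With the Iwasawa decomposition $G=KAN^+$ fixed in the paper one has $\kappa(n)=e$ and $H(n)=0$ for $n\in N^+$, so the displayed integral over $N^+$ is just $\int_{N^+}T\,d_{N^+}(n)$, which diverges; what you must mean is the integral taken with the Iwasawa projection relative to the opposite parabolic. But then the Gindikin--Karpelevich criterion, after swapping the roles of $N^+$ and $N^-$, gives absolute convergence on the half-plane $\{\Im(r)>0\}$, not on $\{\Im(r)<0\}$: the convergent integral always computes the coefficient of the \emph{dominant} exponential in the expansion of Theorem \ref{thm:harish-chandra-asymptotic}, and on $\omega$ that coefficient is $\cfun_+(r)$, never $\cfun_-(r)$. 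So the obstruction you yourself flag (``the naive substitution $r\mapsto-r$ places us in the divergent half-plane'') is not circumvented by passing to $N^+$; it reappears verbatim. Second, the identity $\cfun_-(r)=\cfun_+(-r)$ is not an immediate consequence of the $r\leftrightarrow -r$ symmetry of Theorem \ref{thm:harish-chandra-asymptotic}: replacing $r$ by $-r$ also changes the Eisenstein integral on the left-hand side (the parameter $ir\alpha-\rho$ becomes $-ir\alpha-\rho$), so uniqueness of the expansion only relates the $c$-functions of two \emph{different} Eisenstein integrals; in this operator-valued setting such a relation requires a functional equation involving intertwining operators and is not supplied. Finally, even granting some representation for $\cfun_-$, your argument would still have to produce the polynomial dependence $d_\sigma^{N_2}d_\tau^{N_2}$ uniformly in $\sigma,\tau$, which is the actual content of the proposition.

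The paper avoids any integral formula for $\cfun_-$ altogether: it evaluates the expansion of Theorem \ref{thm:harish-chandra-asymptotic} at a single fixed time $t_0\gg\log(d_\sigma d_\tau)$, uses Lemma \ref{gbdd} to see that $A_r:=e^{-irt_0}\Phi(-r:a_{t_0})$ is a small perturbation of the identity and hence invertible with $\|A_r^{-1}\|$ bounded polynomially in $d_\sigma d_\tau$, and then solves for $\cfun_-(r)T$ in terms of the Eisenstein integral at $t_0$ (bounded via Lemma \ref{tbdd}) and the already-controlled $\cfun_+$ term. Some argument of this kind (or an explicit meromorphic continuation with controlled operator norms) is needed to close your proof; as written, the ``$-$'' half of the proposition is not established.
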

\begin{proof}
By the assumption on $\omega$, the integral $\int_{N^-} e^{-(\Re(ir)\alpha+\rho) H(n)} d_{N^-}(n)$ converges
uniformly for all $r\in \omega$. Hence
the bound for $\cfun_+(r)T_{ir\alpha-\rho}$ follows from Corollary \ref{cp} together with Lemma \ref{tbdd}.
To get a bound for $\cfun_-(r)T_{-ir\alpha-\rho}$, we recall that
\begin{multline*} e^{(-ir+\tfrac{n-1}{2})t} \int_K
\tau(\kappa(a_tk))T\sigma(k^{-1}) e^{(ir\alpha -\rho)(H(a_tk))} dk \\
=e^{-irt} \Phi(r:a_t)\cfun_{+}( r)T+e^{-irt} \Phi(-r:a_t)\cfun_{-}( r)T .\end{multline*}

Then
$ e^{-irt} \Phi(-r:a_t)= I + \sum_{\ell\ge 1} \Gamma_\ell (-ir-\tfrac{n-1}{2})e^{-\ell t} $,
and applying Lemma~\ref{gbdd} with $t_0=1$, we get
$$\sum_{\ell\ge 1} \|\Gamma_\ell (-ir-\tfrac{n-1}{2})e^{-\ell t}\|\le b_\omega
 d_\sigma^{N_0} d_\tau^{N_0} \sum_{\ell \ge 1} e^{\ell (1-t)}.$$
Fix $t_0>0$ so that  $b_\omega
 d_\sigma^{N_0} d_\tau^{N_0} \sum_{\ell \ge 1} e^{\ell (1-t_0)}<1/2;$
then $t_0\gg \log (d_\sigma d_\tau).$
Now  $A_r:=e^{-irt_0} \Phi(-r:a_{t_0})$ is invertible and  for some $N_1$ and $b_\omega'$,
\be \label{ar} \|A_r^{-1}\| \le b'_\omega
 d_\sigma^{N_1} d_\tau^{N_1} .\ee

Since the map $k\mapsto H(a_{t_0}k)$ is continuous,
we have
 $\int_K | e^{(ir \alpha -\rho)(H(a_{t_0}k))}| dk<d_\omega  $ for all $r\in \omega$, and hence
\begin{align*}
&\| \cfun_{-}( r)T_{ir\alpha-\rho}\| \\& \le \|A_r^{-1}\| \cdot |e^{(-ir+\tfrac{n-1}{2})t_0}\int_K
\tau(\kappa(a_{t_0}k))T_{ir\alpha-\rho}\sigma(k^{-1}) e^{(ir\alpha -\rho)(H(a_{t_0}k))} dk| \\ &   +\|A_r^{-1}\| \cdot \|e^{-irt_0} \Phi(r:a_{t_0})\cfun_{+}( r)T_{ir\alpha -\rho}\|
\\ & \le \|A_r^{-1}\| \cdot d_\omega \left(\max_{k\in K}\| \tau(\kappa(a_{t_0}k))T_{ir\alpha-\rho}\sigma(k^{-1})\|  +
\|\cfun_{+}( r)T_{ir\alpha-\rho}\|\right).
 \end{align*}
Hence the claim on $\| \cfun_{-}( r)T_{ir\alpha-\rho}\|$ now follows from \eqref{ar},
Lemma \ref{tbdd} and the bound for $\|\cfun_+(r)T_{ir\alpha-\rho}\|$.
\end{proof}

\subsection{Asymptotic expansion of the matrix coefficients of the complementary series}
Fix a parameter $(n-1)/2<s_0<(n-1)$, and
recall that $2\rho =(n-1)\alpha.$
We apply the results of the previous subsections to  the standard representation $U^{(s_0-n+1)\alpha}=U^{s_0\alpha-2\rho}$.

The following theorem is a key ingredient of the proof of Theorem \ref{mc2}.
\begin{thm}\label{keyyy} There exist $\eta_0>0$ and $N>1$ such that for any $\sigma,\tau\in \hat K$, for all $t>2$, we have
$$P_{\tau}U^{(s_0-n+1)\alpha}(a_t) P_{\sigma} =   e^{(s_0 -n+1)t} \cfun_+(r_{s_0}) T_{(s_0-n+1)\alpha} + O(
d_{\sigma}^{N}\cdot d_{\tau}^{N}  e^{(s_0-n+1-\eta_0)t}),$$
with the implied constant independent of $\sigma, \tau$.
\end{thm}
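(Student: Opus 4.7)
My plan is to apply Harish-Chandra's asymptotic expansion (Theorem~\ref{precc}) at a parameter close to $r_{s_0}:=-i(s_0-\tfrac{n-1}2)$, and then use a maximum-modulus argument to pass to $r_{s_0}$ itself, since $r_{s_0}$ may fail to lie in $\mathcal O_{\sigma,\tau}$ for some $(\sigma,\tau)$. Note that $\lambda(r):=(ir-\tfrac{n-1}2)\alpha$ satisfies $\lambda(r_{s_0})=(s_0-n+1)\alpha$ and $\Im r_{s_0}<0$, which places $r_{s_0}$ in the domain of holomorphy of $\cfun_+$.

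First, by Theorem~\ref{lem:harish-chandra-integral}, the operator $P_\tau U^{\lambda(r)}(a_t)P_\sigma$ equals the Eisenstein integral appearing in Theorem~\ref{precc}. Multiplying by $\rho(a_t)=e^{(n-1)t/2}$, separating the leading $e^{irt}I$ term out of $\Phi(r:a_t)$, and rearranging, Theorem~\ref{precc} gives, for $r\in\mathcal O_{\sigma,\tau}$,
\begin{multline*}
e^{(n-1)t/2} P_\tau U^{\lambda(r)}(a_t) P_\sigma - e^{irt}\cfun_+(r)T_{\lambda(r)} \\
= e^{irt}\sum_{\ell\ge 1}\Gamma_\ell(ir-\tfrac{n-1}2)e^{-\ell t}\cfun_+(r)T_{\lambda(r)} + \Phi(-r:a_t)\cfun_-(r)T_{\lambda(r)}.
\end{multline*}
For $t>2$, I will apply Lemma~\ref{gbdd} with $t_0=1$ to obtain $\sum_{\ell\ge 1}\|\Gamma_\ell\|e^{-\ell t}\ll d_\sigma^{N_0}d_\tau^{N_0}\,e^{-t}$, and Proposition~\ref{cpm2} to obtain $\|\cfun_\pm(r)T_{\lambda(r)}\|\ll d_\sigma^{N_2}d_\tau^{N_2}$ uniformly for $r$ in a small neighborhood $\omega$ of $r_{s_0}$. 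Since $|e^{\pm ir_{s_0}t}|=e^{\pm(s_0-\tfrac{n-1}2)t}$, the two summands on the right, evaluated near $r_{s_0}$, are of size $O(d_\sigma^Nd_\tau^N e^{(s_0-\tfrac{n-1}2-1)t})$ and $O(d_\sigma^Nd_\tau^N e^{-(s_0-\tfrac{n-1}2)t})$ respectively. Dividing through by $e^{(n-1)t/2}$ gives an error of size $O(d_\sigma^Nd_\tau^N e^{(s_0-n+1-\eta_0)t})$ for $\eta_0=\eta_0(s_0)>0$ equal to (a fraction of) $\min(1,\,2s_0-n+1)$, where positivity uses $s_0>\tfrac{n-1}2$.

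The subtle step is justifying the evaluation at $r_{s_0}$ when $r_{s_0}\notin\mathcal O_{\sigma,\tau}$. The left-hand side of the displayed identity is holomorphic in $r$ on $\{\Im r<0\}$, because the Eisenstein integral is entire in $r$ (see \cite[Sec.~9.1.5]{Wa2}) and $\cfun_+(r)$ is holomorphic on $\{\Im r<0\}$ by Theorem~\ref{thm:asymp-c-function}. Hence the maximum modulus principle on a small circle $|r-r_{s_0}|=\epsilon$ contained in $\mathcal O_{\sigma,\tau}\cap\{\Im r<0\}$ dominates the value at $r_{s_0}$ by the supremum on the circle, where the identity above is valid; the resulting perturbation of the exponent by $O(\epsilon)$ is absorbed by choosing $\epsilon$ small relative to $s_0-\tfrac{n-1}2$.

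The main obstacle will be maintaining the uniform polynomial dependence $d_\sigma^Nd_\tau^N$ in the bound. The constants in Lemma~\ref{gbdd} and Proposition~\ref{cpm2} degrade inversely with the distance of $\omega$ to $\mathcal O_{\sigma,\tau}^c$ (through $\|\Lambda_\ell^{-1}\|$ in the recursion defining $\Gamma_\ell$). However, the excluded locus near $r_{s_0}$ is determined by the eigenvalues of the operator $f=\sigma(\Omega_M)$ acting on $E_M$, so its cardinality in any fixed neighborhood of $r_{s_0}$ grows only polynomially in $d_\sigma d_\tau$. A pigeonhole choice of $\epsilon$ in a fixed small interval therefore produces a circle whose distance to $\mathcal O_{\sigma,\tau}^c$ is at least $1/\mathrm{poly}(d_\sigma d_\tau)$, and this preserves the polynomial dependence in $d_\sigma,d_\tau$ throughout the argument, yielding the uniform bound claimed in the theorem.
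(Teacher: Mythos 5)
Your outline reproduces the paper's proof almost step for step: identify $P_\tau U^{(s_0-n+1)\alpha}(a_t)P_\sigma$ with the Eisenstein integral via Theorem \ref{lem:harish-chandra-integral}, apply Harish-Chandra's expansion (Theorem \ref{precc}) on a circle near the parameter of interest, bound the two series with Lemma \ref{gbdd} and Proposition \ref{cpm2}, observe that the remainder is holomorphic on $\{\Im r<0\}$ because the Eisenstein integral is entire and $\cfun_+$ is given there by a convergent integral, and transfer the estimate to the center by the maximum principle; your exponent bookkeeping ($\eta_0$ a fraction of $\min(1,\,2s_0-n+1)$, positivity from $s_0>\tfrac{n-1}{2}$) agrees with the paper's \eqref{fe}.

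The one place you genuinely deviate is the uniformity step, and as written it is a gap. You choose a $(\sigma,\tau)$-dependent circle by pigeonhole and assert that the constants of Lemma \ref{gbdd} and Proposition \ref{cpm2} degrade at most polynomially in $\operatorname{dist}(\omega,\scal_{\sigma,\tau})^{-1}$, hence in $d_\sigma d_\tau$. But those two results, as stated, only provide constants $b_\omega$ and $d_1(\omega)$ for a \emph{fixed} compact $\omega\subset\ocal_{\sigma,\tau}$, with no specified dependence on how close $\omega$ comes to the singular set; to run your argument you would have to reprove them quantitatively, tracking how the resonant $\|\Lambda_\ell^{-1}\|$ (for the boundedly many $\ell$ with a singular point near $r_{s_0}$) propagates through the recursion defining every subsequent $\Gamma_j$, and how it enters the invertibility of $A_r=e^{-irt_0}\Phi(-r:a_{t_0})$ in the bound for $\cfun_-$. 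You assert this control rather than establish it. (Incidentally, near $r_{s_0}$ the singular points are of the form $-\tfrac{i}{2}(\mu/\ell+\ell)$ with $\mu\ge 0$, so their number in a fixed neighborhood is bounded absolutely, not merely polynomially — but counting them is not the issue; the unquantified constants are.) The paper avoids all of this with a simpler observation: since $\bigcup_{\sigma',\tau'\in\hat K}\pm\scal_{\sigma',\tau'}$ is countable, one may choose a \emph{single} circle $\omega'$ centered at $s_0$, of radius at most $1/2$ and small compared to $s_0-\tfrac{n-1}{2}$, whose image $\omega=\{r_s:s\in\omega'\}$ avoids the singular sets of all pairs $(\sigma,\tau)$ simultaneously; with $\omega$ fixed once and for all, Lemma \ref{gbdd} and Proposition \ref{cpm2} apply verbatim and the entire $(\sigma,\tau)$-dependence is carried by the explicit factors $d_\sigma^{N}d_\tau^{N}$. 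Replacing your pigeonhole step by this countability argument closes the gap and yields the theorem as stated.
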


\begin{proof}
Set $r_s:=-i(s-\rho)\in \c,$ for all $s\in\bbc$.
In particular, $\Im(r_s)<0$ for $(n-1)/2<s<(n-1)$.

Fix $t>0$, and define $F_t: \bbc \to E_M$ by
$$F_t(s ):= P_{\tau}U^{s\alpha-2\rho}(a_t) P_{\sigma}.$$
By Theorem \ref{lem:harish-chandra-integral},
$$F_t(s)=\int_K
\tau(\kappa(a_tk))T_{s\alpha -2\rho}\sigma(k^{-1}) e^{(s\alpha -2\rho)H(a_tk)} dk.$$
As was remarked following Theorem~\ref{thm:harish-chandra-asymptotic},
for each fixed $t>0$, the function $F_t( s)$ is analytic on $\bbc$.
 Thus in view of
Theorem~\ref{thm:asymp-c-function}, we have, whenever $r_s\in \ocal_{\sigma,\tau}$ and $\Im(r_s)<0,$
\be\label{e;remainder-analytic}
\mbox{$F_t(s )-e^{(s -n+1)t} \cfun_+(r_{s}) T_{s\alpha-2\rho}$
is analytic}.
\ee

Recall the notation $\scal_{\sigma,\tau}$, that is, $\ocal_{\sigma,\tau}=\bbc\setminus\cup_{r\in\scal_{\sigma,\tau}}\{\pm r\},$
 and set $\tilde \scal_{\sigma,\tau}=\{s:r_s\in\scal_{\sigma,\tau}\}.$
Define $$G_t(s )=F_t(s )-e^{(s -n+1)t} \cfun_+(r_{s}) T_{s\alpha-2\rho}.$$
Indeed the map $s\mapsto G_t(s)$ is analytic on $\{s: \Im(r_s)<0\}-\tilde \scal_{\sigma,\tau}.$
Since
$\cup_{\sigma', \tau'\in \hat K} \pm  \tilde \scal_{\sigma',\tau'}$ is countable,
we may choose a small circle $\omega'$ of radius at most $1/2$
centered at $s_0$ such that
$\{r_s: s\in\omega'\} \cap \left(\cup_{\sigma', \tau'\in \hat K} \pm \mathcal
\scal_{\sigma',\tau'}\right) = \emptyset.$

Observe that the intersection of $ \omega'$
and the real axis is contained in the interval $((n-1)/2, n-1)$.
Note that there exists $\eta>0$ such that for all $s\in \omega'$,
\be\label{fe}(n-1)-s_0+\eta <\Re(s)<s_0+1-\eta .\ee

Then $G_t( s)$
is analytic on the disc bounded by $\omega'.$
Hence by the maximum principle, we get
\be\label{e;error-est1}
\|G_t(s_0)\|\leq\max_{s\in\omega'}\|G_t( s)\|.
\ee
Since $\omega:=\{r_s: s\in \omega'\} \subset\ocal_{\sigma,\tau},$
Theorems \ref{lem:harish-chandra-integral} and ~\ref{thm:harish-chandra-asymptotic} imply that
for all $s\in\omega',$ we have
\begin{multline*}
G_t(s )=e^{(s-n+1)t} (\sum_{\ell\geq 1}
e^{-\ell t} \Gamma_\ell (ir_s-\tfrac{n-1}{2})  \cfun_+(r_s)
T_{s\alpha -2\rho}) \\
 +e^{-st} (\sum_{\ell\geq0 } e^{-\ell t} \Gamma_\ell (-ir_s-\tfrac{n-1}{2}) \cfun_-(r_s) T_{s\alpha -2\rho}).
\end{multline*}

Fixing any $t_0>0$,
by Lemma \ref{gbdd},  there exists $b_0=b_0(t_0,\omega)>0$ such that for all
$r\in\omega$,
\be\label{e;asymp-phi}
\|\Gamma_\ell( ir-\tfrac{n-1}{2})\|\leq b_0 \cdot d_{\sigma}^{N_0} \cdot d_{\tau}^{N_0} \cdot   e^{\ell t_0}.
\ee

By Proposition \ref{cpm2},  for all $r\in \omega$,
$$ \|\cfun_{\pm}(r )T_{\pm ir\alpha -\rho} \|\leq d_1 \cdot d_{\sigma}^{N_2} \cdot d_{\tau}^{N_2}.$$


Let $t>t_0+1$ so that
$\sum_{\ell\geq 0}e^{-\ell (t-t_0)}\le 2$. Then
we have for any $t>0$ and $s\in \omega'$,
\begin{align*}
&\|\sum_{\ell\geq 1} e^{-\ell t} \Gamma_\ell (ir_s-\tfrac{n-1}{2}) \cfun_+ (r_s)
T_{s\alpha-2\rho}\| \\ & \leq  \cdot d_{\sigma}^{N_0+N_2} d_{\tau}^{N_0+N_2}\cdot  b_{\sigma,\tau} \cdot
 e^{-t}e^{t_0} \sum_{\ell\geq 0}e^{-\ell (t-t_0)}
\\ &\le \left( 2e^{t_0} \cdot b_0\cdot d_{\sigma}^{N_0+N_2}\cdot d_{\tau}^{N_0+N_2} \right) e^{-t}
\end{align*}
and

\be\label{e;error-est3}
\|\sum_{\ell\geq 0} e^{-\ell t} \Gamma_\ell (- ir_s-\tfrac{n-1}{2}) \cfun_- (r_s)
T_{s\alpha -2\rho}\|\leq 2  b_0 \cdot  d_{\sigma}^{N_0+N_2}\cdot  d_{\tau}^{N_0+N_2} .
\ee
We now combine these
and the expression for $G_t( s),$ for $s\in\omega'$ and get
for all $t>t_0+1$, \begin{align*}
& \|G_t(s_0)\| \\ & \leq  2 b_0(e^{t_0} +1)d_{\sigma}^{N_0+N_2}\cdot d_{\tau}^{N_0+N_2} \cdot  \max_{s\in\omega'}
 ( e^{(\Re (s)-n)t}+
e^{-\Re(s) t})
\\ &\le  b'\cdot  d_{\sigma}^{N_0+N_2}\cdot d_{\tau}^{N_0+N_2}  e^{(s_0-(n-1)-\eta)t}
\end{align*}
where $\eta>0$ is as in \eqref{fe} and $b'>0$ is a constant independent of $\sigma,\tau\in \hat K$.

Since
$P_{\tau}U^{(s_0-n+1)\alpha}(a_t) P_{\sigma} =e^{(s_0 -n+1)t} \cfun_+(r_{s_0}) T_{(s_0-n+1)\alpha}+G_t(s_0)$, this finishes the proof.
\end{proof}

By Theorem \ref{lem:harish-chandra-integral}, Theorem \ref{keyyy} implies:
\begin{thm}\label{hmc} \label{key}Let $(n-1)/2<s_0<(n-1)$. There exist $\eta_0>0$ and $N>1$ such that
for all $t\ge 2$ and for any unit vectors $v_\sigma\in V_\sigma$ and $v_\tau\in V_\tau$,
\begin{multline*}
\la U^{(s_0-n+1)\alpha}(a_t) (v_\sigma), v_\tau\ra \\ =
e^{(s_0 -n+1)t} \la \cfun_+(r_{s_0}) T_{(s_0-n+1)\alpha} (v_\sigma), v_\tau\ra
+ O( d_{\sigma}^N d_{\tau}^N e^{(s_0 -n+1 -\eta_0)t}),
\end{multline*}
with the implied constant independent of $\sigma, \tau,v_\sigma,v_\tau$.
\end{thm}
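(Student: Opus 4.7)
The plan is to deduce this theorem directly from Theorem~\ref{keyyy} by pairing the operator identity against the given unit vectors. The key observation is that, since $v_\sigma\in V_\sigma$ and $v_\tau\in V_\tau$, we have $\P_\sigma v_\sigma=v_\sigma$ and $\P_\tau v_\tau=v_\tau$, and because the projection operators are self-adjoint on $L^2(K)$,
\[
\la U^{(s_0-n+1)\alpha}(a_t)v_\sigma, v_\tau\ra
=\la \P_\tau U^{(s_0-n+1)\alpha}(a_t)\P_\sigma v_\sigma, v_\tau\ra.
\]
This reduces the statement to a matrix coefficient of the operator $\P_\tau U^{(s_0-n+1)\alpha}(a_t)\P_\sigma$ acting between the finite-dimensional isotypic blocks $V_\sigma$ and $V_\tau$.

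Next, I would substitute the operator asymptotic of Theorem~\ref{keyyy}, writing
\[
\P_\tau U^{(s_0-n+1)\alpha}(a_t)\P_\sigma = e^{(s_0-n+1)t}\, \cfun_+(r_{s_0})T_{(s_0-n+1)\alpha} + R(t),
\]
where the remainder $R(t)\in E=\Hom_\bbc(V_\sigma, V_\tau)$ has operator norm $\|R(t)\|\le C\, d_\sigma^N d_\tau^N e^{(s_0-n+1-\eta_0)t}$ for some constants $C,N,\eta_0>0$ independent of $\sigma,\tau\in\hat K$ and all $t\ge 2$. Pairing both sides with $v_\sigma$ on the right and $v_\tau$ on the left and applying Cauchy--Schwarz to the remainder yields
\[
|\la R(t)v_\sigma, v_\tau\ra|\le \|R(t)\|\cdot\|v_\sigma\|\cdot\|v_\tau\|\le C\, d_\sigma^N d_\tau^N e^{(s_0-n+1-\eta_0)t},
\]
which is exactly the error term in the statement.

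This reduction is essentially formal once Theorem~\ref{keyyy} is in hand; there is no real obstacle, only the routine checks that the constants $N$ and $\eta_0$ can be taken the same as in Theorem~\ref{keyyy} and that the implied constant is uniform in the choice of unit vectors (which follows from the operator-norm interpretation). No use of the explicit formula for $\cfun_+$ from Theorem~\ref{thm:asymp-c-function} is needed at this stage, since the leading term $\la \cfun_+(r_{s_0})T_{(s_0-n+1)\alpha}v_\sigma, v_\tau\ra$ is left in abstract form.
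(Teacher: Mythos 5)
Your proposal is correct and follows essentially the same route as the paper: the authors deduce Theorem \ref{key} immediately from the operator asymptotic of Theorem \ref{keyyy} (together with the identity $\la U^{(s_0-n+1)\alpha}(a_t)v_\sigma,v_\tau\ra=\la \P_\tau U^{(s_0-n+1)\alpha}(a_t)\P_\sigma v_\sigma,v_\tau\ra$ underlying Theorem \ref{lem:harish-chandra-integral}), exactly as you do. Your justification via self-adjointness of the $K$-isotypic projections for the $L^2(K)$ inner product (under which $U^\lambda|_K$ is the left regular representation, hence unitary) and the Cauchy--Schwarz bound of the remainder by its operator norm is the right routine check, with the same $N$ and $\eta_0$ as in Theorem \ref{keyyy}.
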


\subsection{Decay of matrix coefficients for $L^2(\G\ba G)$}\label{asde}
Let $\G<G$ be a torsion-free
geometrically finite group with $\delta>(n-1)/2$.

By Lax-Phillips \cite{LaxPhillips} and Sullivan \cite{Sullivan1979},
 $\mathcal U(1,\delta -n+1)$
occurs as a subrepresentation of $L^2(\G\ba G)$ with multiplicity one, and
$L^2(\G\ba G)$ does not weakly contain
any spherical complementary series $\mathcal U(1, s-n+1)$ of parameter $s$ strictly bigger than $\delta$.
In particular, $\delta$ is the maximum $s$ such that
 $\mathcal U(1, s-n+1)$ is weakly contained in $L^2(\G\ba G)$.
 
The following proposition then follows from \cite[Prop. 3.5]{Sh} and Theorem \ref{hmc}:
\begin{prop}
$L^2(\G\ba G)$ does not weakly contain any complementary
series $\mathcal U(\upsilon, s-n+1)$ with $\upsilon\in \hat M$ and $s> \delta$.
\end{prop}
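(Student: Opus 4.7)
The plan is to argue by contradiction: assume that for some $\upsilon\in \hat M$ and $s>\delta$, the complementary series representation $\pi:=\mathcal U(\upsilon, s-n+1)$ is weakly contained in $L^2(\G\ba G)$, and deduce that the \emph{spherical} complementary series $\mathcal U(1, s-n+1)$ is also weakly contained in $L^2(\G\ba G)$. Since $s>\delta$, this contradicts the Lax--Phillips/Sullivan result stating that $\delta$ is the supremum of parameters of spherical complementary series weakly contained in $L^2(\G\ba G)$.

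The first step is to use Theorem \ref{hmc} to quantify the asymptotic decay of matrix coefficients of $\pi$ (realized as the irreducible subquotient of $U^{(s-n+1)\alpha}$ containing the minimal $K$-type $\sigma_\upsilon$). For any nonzero $K$-finite vector $v\in \pi$, Theorem \ref{hmc} gives
\[
\la \pi(a_t)v,v\ra = e^{(s-n+1)t}\,c(v) + O(e^{(s-n+1-\eta_0)t})
\]
for $t\to\infty$, with explicit leading coefficient $c(v)=\la \cfun_+(r_s)T_{(s-n+1)\alpha}v, v\ra$. A key point is that $c(v)$ is nonzero for an appropriate choice of $v$ (e.g., inside the minimal $K$-type $\sigma_\upsilon$); this follows because $\pi$ is irreducible and unitarizable, so $c(v)$ gives a nontrivial positive-definite bilinear form on the $K$-finite part of $\pi$. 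This identifies $e^{(s-n+1)t}$ as the \emph{exact} rate of decay of $K$-finite matrix coefficients of $\pi$ along $a_t$.

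The second step is to invoke \cite[Prop.~3.5]{Sh}, which is designed for exactly this situation in rank one: it shows that if $\pi$ is weakly contained in a unitary representation $\pi'$ of $G$, then the spherical matrix coefficient decay rate of $\pi'$ (along $A$) is at least as slow as that of $\pi$. Concretely, weak containment of $\pi$ in $L^2(\G\ba G)$ forces the existence of a sequence of unit $K$-invariant vectors $f_n \in L^2(\G\ba G)$ whose diagonal matrix coefficients $t\mapsto \la a_t f_n,f_n\ra$ converge uniformly on compact subsets of $A$ to a $K$-bi-invariant positive definite function on $G$ whose decay rate is dictated by the $(s-n+1)$-leading term coming from $\pi$ via Theorem \ref{hmc}. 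By the classification of spherical positive-definite functions on $G$ (Harish-Chandra), such a limit is a convex combination of elementary spherical functions with parameters of modulus $\ge s$, which means that $\mathcal U(1, s-n+1)=\mathcal H_s$ is weakly contained in $L^2(\G\ba G)$.

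The final step is to combine this with the Lax--Phillips/Sullivan theorem: $L^2(\G\ba G)$ does not weakly contain any $\mathcal H_{s'}$ with $s'>\delta$. Since $s>\delta$, we get the desired contradiction. The main obstacle is the second step, i.e., implementing \cite[Prop.~3.5]{Sh} in our setting: one must carefully pass from an asymptotic estimate for a single non-spherical matrix coefficient of $\pi$ to a bound on spherical matrix coefficients of weakly containing representations. The input from Theorem \ref{hmc}---which says the leading-order asymptotic of a $K$-finite matrix coefficient of $\pi$ is governed by the same exponential $e^{(s-n+1)t}$ as for $\mathcal H_s$---is exactly what is needed to feed Shalom's criterion.
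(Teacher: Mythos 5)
You assemble exactly the ingredients the paper itself cites (the Lax--Phillips/Sullivan description of the spherical spectrum, Theorem \ref{hmc}, and \cite[Prop.~3.5]{Sh}), so at the level of inputs you are on the paper's route; the problem is that the step you defer to \cite[Prop.~3.5]{Sh} is the entire content of the proposition, and the mechanism you describe for it is not valid. The principle you attribute to that reference --- that weak containment of $\pi=\mathcal U(\upsilon,s-n+1)$ in a unitary representation $\pi'$ forces $\pi'$ to have unit $K$-invariant vectors whose diagonal matrix coefficients inherit the $e^{(s-n+1)t}$ decay of $\pi$, whence $\mathcal U(1,s-n+1)$ is weakly contained in $\pi'$ with the \emph{same} parameter $s$ --- is false as a general statement about weak containment: take $\pi'=\pi$ itself, which weakly contains $\pi$ but has no $K$-invariant vectors at all since $\upsilon\neq 1$. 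More to the point, an abstract unitary representation can have all its spherical constituents of parameter at most $\delta$ while weakly containing a non-spherical complementary series of parameter $s>\delta$ (e.g.\ $\mathcal U(1,\delta-n+1)\oplus\mathcal U(\upsilon,s-n+1)$), so the proposition cannot be a formal consequence of the Lax--Phillips/Sullivan spherical statement plus a general transfer principle; any correct proof must use finer information about $L^2(\G\ba G)$, which is precisely what the actual content of \cite[Prop.~3.5]{Sh} combined with Theorem \ref{hmc} has to supply, and which your sketch never identifies. Concretely, weak containment of the non-spherical $\pi$ only yields approximation of its matrix coefficients by coefficients of vectors of $L^2(\G\ba G)$ lying in nontrivial $K$-types, and uniform convergence on compact sets does not transfer asymptotic decay rates as $t\to\infty$, so neither the existence of your sequence $f_n$ of $K$-invariant vectors nor the weak containment of $\mathcal H_s$ follows from what you have written.

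Two smaller points you would also need to tighten. Theorem \ref{hmc} is stated for the standard representation $U^{(s-n+1)\alpha}$ with its $L^2(K)$ inner product; passing to the unitary complementary series changes the pairing by an intertwining operator, which is harmless for the exponential rates but means that your claim that the leading coefficient is nonzero for a suitable $v$ in the minimal $K$-type requires an actual argument: unitarizability alone does not exclude that $\la \cfun_+(r_s)T_{(s-n+1)\alpha}v,v\ra$ vanishes identically on a given $K$-type, in which case the true decay of that particular coefficient is governed by the error term. What you need, and should prove or cite, is that the leading exponent of the irreducible non-tempered representation $\mathcal U(\upsilon,s-n+1)$ along $a_t$ is exactly $s-(n-1)$ for some $K$-finite diagonal matrix coefficient.
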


\begin{dfn}[Spectral Gap] \label{snggg}
 We say $L^2(\G\ba G)$ has a spectral gap if
the following two conditions hold:
\begin{enumerate}
 \item there exists $n_0\ge 1$ such that the multiplicity of any complementary series $\mathcal U(\upsilon, \delta -n+1)$ of parameter $\delta$ occurring in $L^2(\G\ba G)$ is at most
$\text{dim}(\upsilon)^{n_0}$ for all $\upsilon \in \hat M$;
\item there exists $(n-1)/2< s_0<\delta$ such that
no complementary series with parameter $s_0< s<\delta$ is weakly contained in $L^2(\G\ba G)$.
\end{enumerate}

We set  $n_0(\G)$ and $s_0(\G)$ to be the infima of all $n_0$ and of all $s_0$
satisfying (1) and (2) respectively.
The pair $(n_0(\G), s_0(\G))$ will be referred as the spectral gap data for $\G$.
\end{dfn}
In other words, the spectral gap property of $L^2(\G\ba G)$ is equivalent to
the following decomposition:
 \be \label{stg} L^2(\G\ba G)=\mathcal H_\delta^\dag \oplus \mathcal W \ee
where $ \mathcal H_{\delta}^\dag= \oplus_{\upsilon\in \hat M} m(\upsilon) \mathcal U(\upsilon, \delta-n+1)$ with
$m(\upsilon)\le \text{dim}(\upsilon)^{n_0}$, and
no complementary series representation with parameter $s_0(\Gamma)<s<\delta $ is weakly contained in $\mathcal W$.


We recall the strong spectral gap property from Def. \ref{sng}.
\begin{thm}\label{nm2}
 Suppose that $\delta>(n-1)/2$ for $n=2,3$ or that $\delta >n-2$ for $n\ge 4$. Then $L^2(\G\ba G)$ has a strong
 spectral gap property. 
\end{thm}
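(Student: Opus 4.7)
The plan is to derive Theorem~\ref{nm2} by combining two essentially independent ingredients: the classification of non-tempered irreducible unitary representations of $G$ recorded in Section~\ref{ss;standard-comp-series}, and the spherical spectral gap of Lax--Phillips \cite{LaxPhillips} and Sullivan \cite{Sullivan1979}. The latter, under the hypothesis $\delta>(n-1)/2$, produces some $(n-1)/2<s_0^{\mathrm{sph}}<\delta$ such that no spherical complementary series $\mathcal U(1,s-n+1)$ with $s\in(s_0^{\mathrm{sph}},\delta)$ is weakly contained in $L^2(\G\ba G)$, and also guarantees that $\mathcal U(1,\delta-n+1)$ occurs as a subrepresentation with multiplicity one.

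The key purely representation-theoretic observation is that, for every non-trivial $\upsilon\in\hat M$, one has $\ell(\upsilon)\ge 1$, so the parameter window for a non-spherical complementary series satisfies
\[
I_\upsilon \;=\; \bigl(\tfrac{n-1}{2},\, n-1-\ell(\upsilon)\bigr) \;\subseteq\; \bigl(\tfrac{n-1}{2},\, n-2\bigr).
\]
Thus non-spherical complementary series live strictly inside a smaller interval than their spherical counterparts, and the case split in the theorem is dictated precisely by forcing $\delta$ to sit outside this smaller interval.

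\emph{Case $n=2,3$.} For $n=2$ the subgroup $M$ is trivial, and for $n=3$ every non-trivial character of $M=\mathrm{SO}(2)$ has $\ell(\upsilon)=1$, so $I_\upsilon=(1,1)=\emptyset$. Hence $\mathcal U(\upsilon,s-n+1)$ is not defined for any $\upsilon\ne 1$, condition~(1) of Definition~\ref{sng} is vacuous, and condition~(2) reduces to the spherical statement, which I satisfy by setting $s_0(\G):=s_0^{\mathrm{sph}}$.

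\emph{Case $n\ge 4$ with $\delta>n-2$.} Since $\delta$ lies outside $I_\upsilon$ for every $\upsilon\ne 1$, the symbol $\mathcal U(\upsilon,\delta-n+1)$ does not denote a unitary representation of $G$ at all, establishing condition~(1). For condition~(2), I take
\[
s_0(\G) := \max\bigl(n-2,\, s_0^{\mathrm{sph}}\bigr).
\]
Because $n\ge 4$ gives $n-2>(n-1)/2$, and both $n-2<\delta$ (by hypothesis) and $s_0^{\mathrm{sph}}<\delta$ hold, one has $(n-1)/2<s_0(\G)<\delta$. For $s\in(s_0(\G),\delta)$ and $\upsilon\in\hat M$, either $\upsilon=1$ and $\mathcal U(1,s-n+1)$ is not weakly contained by the spherical gap, or $\upsilon\ne 1$ and $s>n-2$, whence $s\notin I_\upsilon$ and $\mathcal U(\upsilon,s-n+1)$ is undefined. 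In either scenario nothing is weakly contained, which yields~(2). I do not anticipate any analytic obstacle in this argument; the only mild subtlety is arranging $s_0(\G)>(n-1)/2$ in the second case, and this is precisely what forces the dichotomy $n\in\{2,3\}$ versus $n\ge 4$, since $n-2>(n-1)/2$ exactly when $n>3$.
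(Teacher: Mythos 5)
Your proof is correct and follows essentially the same route as the paper: the classification of $\hat G$ (non-spherical complementary series $\mathcal U(\upsilon,s-n+1)$, $\upsilon\ne 1$, only occur for $n\ge 4$ and only with parameter $s\in(\tfrac{n-1}{2},n-2)$) combined with the Lax--Phillips/Sullivan spherical spectral gap. The paper states this in two sentences; you have merely spelled out the case analysis and the choice $s_0(\G)=\max(n-2,s_0^{\mathrm{sph}})$ explicitly, which is consistent with the intended argument.
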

\begin{proof}
By the the classification of the unitary dual $\hat G$ explained in the subsection \ref{ss;standard-comp-series},
any non-spherical complementary series representation
is of the form $\mathcal U(\upsilon, s-n+1)$ for some $\upsilon\in \hat M-\{1\}$ and $s\in  (\tfrac{n-1}2, n-2)$
(see \cite{Hi} and \cite{KS}). Together with the aforementioned work of Lax-Phillips on the spherical complementary series
representations occurring in $L^2(\G\ba G)$, this implies the claim.
\end{proof}

For $\Psi\in C^\infty(\G\ba G)$, $\ell\in \N$ and $1\le p\le \infty$,
we consider the following Sobolev norm:
\be \label{dso} \mathcal S_{p, \ell}(\Psi)=\sum \| X (\Psi)\|_{p}\ee
where the sum is taken over all monomials $X$ in a fixed basis $\mathcal B$ of $\mathfrak g$ of order at most $\ell$
and $\|X(\Psi)\|_p$ denotes the $L^p(\G\ba G)$-norm of $X(\Psi)$.
Since we will be using $\S_{2,\ell}$ most often, we will set $\S_\ell=\S_{2,\ell}$.

For a unitary $G$-representation space $W$ and a smooth vector $w\in W$,
$\S_\ell(w)$ is defined similarly: $\S_\ell(w)=\sum \| X .w\|_{2}$
where the sum is taken over all monomials $X$ in $\mathcal B$ of order at most $\ell$.

\begin{prop}\label{shh}
Fix $(n-1)/2< s_0 <(n-1)$. Let $W$ be a unitary representation of $G$
which does not weakly contain any complementary series representation $\mathcal U(\upsilon, s-n+1)$
with parameter $s> s_0$ and $\upsilon\in \hat M$.
Then for any $\e>0$, there exists $c_\e>0$ such that
 for any smooth vectors $w_1, w_2\in W$ and for any $t>0$,
we have $$|\la a_t w_1, w_2\ra|\le c_\e \cdot \S_{\ell_0}(w_1) \cdot \S_{\ell_0}(w_2)\cdot  e^{(s_0-n+1+\e)t}$$
where $\ell_0\ge 1$ depends only on $G$ and $K$.
\end{prop}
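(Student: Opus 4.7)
The plan is to combine Theorem \ref{key} with a $K$-type decomposition argument and standard estimates on tempered matrix coefficients. By the direct-integral decomposition of $W$ and the classification of $\hat G$ recalled in subsection \ref{ss;standard-comp-series}, the hypothesis forces every irreducible constituent appearing in the decomposition of $W$ to be either tempered or a complementary series $\mathcal U(\upsilon, s-n+1)$ with $s \le s_0$. It therefore suffices to establish the desired decay on $K$-isotypic unit vectors in each such irreducible constituent, with constants depending polynomially on the $K$-type dimensions and uniformly in the constituent, and then to transfer the bound to arbitrary smooth vectors in $W$ through the direct integral.

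For a complementary series constituent $\mathcal U(\upsilon, s - n + 1)$ with $s \le s_0$, Theorem \ref{key} together with the bound $\|\cfun_+(r_s) T_{(s-n+1)\alpha}\| \ll d_\sigma^{N_2} d_\tau^{N_2}$ from Proposition \ref{cpm2} yields, for unit vectors $v_\sigma \in V_\sigma,\, v_\tau \in V_\tau$,
\[ |\la a_t v_\sigma, v_\tau \ra | \ll d_\sigma^N d_\tau^N e^{(s - n + 1)t} \le d_\sigma^N d_\tau^N e^{(s_0 - n + 1)t}. \]
For a tempered constituent, the Harish--Chandra $\Xi$-function estimate gives $|\la a_t v_\sigma, v_\tau\ra | \ll d_\sigma d_\tau (1+t) e^{-(n-1)t/2}$, which, since $s_0 > (n-1)/2$, is absorbed by $d_\sigma^N d_\tau^N e^{(s_0 - n + 1 + \e)t}$ for any $\e > 0$. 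Transferring these uniform bounds through the direct integral produces the same estimate for $K$-isotypic smooth vectors in $W$.

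For arbitrary smooth $w_1, w_2 \in W$, I would decompose $w_i = \sum_{\sigma \in \hat K} P_\sigma w_i$ and expand
\[ \la a_t w_1, w_2 \ra = \sum_{\sigma, \tau \in \hat K} \la a_t P_\sigma w_1, P_\tau w_2 \ra. \]
Applying the $K$-isotypic bound termwise and invoking Cauchy--Schwarz separately in the $\sigma$ and $\tau$ sums, together with the convergence \eqref{cd} of $\sum_\sigma d_\sigma^{2N} c(\sigma)^{-\ell_0}$ for $\ell_0$ large enough (depending only on $G$ and $K$), one trades the dimension weights $d_\sigma^N$ for Casimir weights $c(\sigma)^{\ell_0/2}$. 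Since $\|\Omega_K^{\ell_0/2} P_\sigma w_i\| = c(\sigma)^{\ell_0/2} \|P_\sigma w_i\|$, the resulting quantity $\sum_\sigma c(\sigma)^{\ell_0} \|P_\sigma w_i\|^2$ equals $\|\Omega_K^{\ell_0/2} w_i\|^2$, which is controlled by $\S_{\ell_0}(w_i)^2$. This delivers the desired Sobolev-norm bound with $\ell_0$ depending only on $G$ and $K$.

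The main obstacle is the uniform transfer step from irreducibles to $W$: the polynomial blow-up in $d_\sigma, d_\tau$ inside each irreducible estimate must be absorbed by the Sobolev norm on $W$, which is exactly what the summability \eqref{cd} delivers once $\ell_0$ is taken large enough relative to the polynomial degree $N$ produced by Theorem \ref{key} and Proposition \ref{cpm2}. A secondary subtlety is ensuring uniformity of Proposition \ref{cpm2} as $s$ varies over $((n-1)/2, s_0]$; for $s$ bounded away from $(n-1)/2$ this is immediate from the proposition, while constituents near the tempered boundary are handled by the $\Xi$-function estimate, giving decay strictly faster than $e^{(s_0 - n + 1)t}$.
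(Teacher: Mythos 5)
Your overall skeleton is sound and is in fact the route the paper has in mind: the paper's own proof of this proposition is a one-line citation to \cite[Proof of Prop. 5.3]{KO} and \cite{Sh}, and that argument is exactly a decomposition into irreducible constituents, per-constituent bounds on $K$-isotypic matrix coefficients with polynomial dependence on $d_\sigma, d_\tau$, and a Cauchy--Schwarz summation against \eqref{cd} to convert the dimension weights into the Sobolev norms $\S_{\ell_0}$ (this last step is also how the paper itself treats the parameter-$\delta$ part in the proof of Theorem \ref{mc2}). The direct-integral reduction, the tempered case via the Harish-Chandra function, and the final summation are all fine.

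The genuine gap is in the per-constituent bound for the complementary series, which must be uniform over the parameter $s\in(\tfrac{n-1}2,s_0]$. Theorem \ref{key} and Proposition \ref{cpm2} are proved for a single fixed parameter: the contour is chosen around that parameter avoiding a countable singular set and the constants depend on it (for $s$ in a compact subinterval bounded away from $\tfrac{n-1}2$ this can be repaired by a finite covering, but it needs saying); more seriously, the bound on $\cfun_+(r_s)$ comes from the Gindikin--Karpelevich integral, which diverges as $\Im(r_s)\to 0^-$, i.e.\ as $s\downarrow\tfrac{n-1}2$, so no single constant extends down to the tempered boundary. Your proposed fix for those near-boundary constituents, ``the $\Xi$-function estimate,'' does not apply: $\mathcal U(\upsilon,s-n+1)$ with $s>\tfrac{n-1}2$ is \emph{not} tempered, so the Cowling--Haagerup--Howe bound by $\Xi(a_t)$ is unavailable, and the soft substitute (temperedness of the tensor square, giving a bound of order $\Xi(a_t)^{1/2}\asymp e^{-(n-1)t/4}$ up to polynomial factors) implies the required $e^{(s_0-n+1+\e)t}$ only when $s_0\ge \tfrac{3(n-1)}4$; it fails precisely when $s_0$ is close to $\tfrac{n-1}2$, which the proposition must cover. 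What is missing is a bound $|\la a_t v_\sigma,v_\tau\ra|\ll d_\sigma^N d_\tau^N e^{(s-n+1+\e)t}$ for the unitary complementary series with constants uniform in $s$ over the whole range (equivalently, domination by the spherical function of the same parameter with polynomial $K$-type dependence); this uniform input is exactly what \cite{Sh} and \cite[Prop. 5.3]{KO} supply and what the paper's citation appeals to. A related point you gloss over: Theorem \ref{key} controls pairings in the induced model $U^{(s-n+1)\alpha}$ with the $L^2(K)$ pairing, while the direct-integral matrix coefficients are taken in the unitary complementary-series inner product; transferring the estimate costs the norm of the intertwining operator defining that inner product, which also degenerates near the endpoints and so belongs to the same uniformity problem.
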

\begin{proof} This proposition is proved in \cite[Proof of Prop. 5.3]{KO} for $n=3$ (based on an earlier idea of
\cite{Sh}), and its proof easily extends to a general $n\ge 2$.
\end{proof}

 In the following two theorems, we assume that
  $\G$ is Zariski dense in $G$ and that
 $L^2(\G\ba G)$ has a spectral gap with the spectral gap data $(s_0(\G), n_0(\G))$.
\begin{thm} \label{mc2} 
There exist $\eta>0$ (depending only on $s_0(\G)$), and  $\ell\in \N$ (depending only on $n_0(\G)$) such that
for any real-valued functions $\Psi_1,\Psi_2\in C_c^\infty(\G\ba G)$ as $t\to +\infty$,
$$ e^{(n-1 -\delta)t} \la a_t \Psi_1, \Psi_2\ra = \frac{m^{\BR}(\Psi_1)\cdot m^{\BR}_*(\Psi_2)}{|m^{\BMS}|}
+O( e^{-\eta t}\S_\ell(\Psi_1)\S_\ell(\Psi_2)). $$
\end{thm}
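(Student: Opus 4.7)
\medskip

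My plan is to use the spectral gap hypothesis to split the matrix coefficient into two pieces, handle each piece with a different method, and then match the leading coefficient using the (non-effective) ergodic result Theorem \ref{lo}. Write the spectral decomposition \eqref{stg} as $L^2(\G\ba G)=\mathcal H_\delta^\dag\oplus\mathcal W$, and let $P_\delta$ be the orthogonal projection to $\mathcal H_\delta^\dag$. Since both summands are $G$-invariant, the decomposition is preserved by $a_t$ and
\[
\la a_t\Psi_1,\Psi_2\ra=\la a_tP_\delta\Psi_1,P_\delta\Psi_2\ra+\la a_t(\Psi_1-P_\delta\Psi_1),\Psi_2-P_\delta\Psi_2\ra.
\]
For the second term, the hypothesis on $\mathcal W$ together with Proposition~\ref{shh} yields a bound of the form
$C_\e\,\S_{\ell_0}(\Psi_1)\S_{\ell_0}(\Psi_2)\,e^{(s_0(\G)-n+1+\e)t}$, which, since $s_0(\G)<\delta$, is of order $e^{(\delta-n+1)t}e^{-\eta t}$ for small enough $\e$ and some $\eta>0$.

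The bulk of the work is therefore to get an effective asymptotic for $\la a_tP_\delta\Psi_1,P_\delta\Psi_2\ra$. Decompose each $P_\delta\Psi_i$ into $K$-isotypic components $\sum_\sigma \P_\sigma P_\delta\Psi_i$ and further into copies of the complementary series $\mathcal U(\upsilon,\delta-n+1)$ appearing in $\mathcal H_\delta^\dag$. Applying the Harish-Chandra type asymptotic Theorem~\ref{key} inside each copy gives, for each pair $(\sigma,\tau)$,
\[
\la a_t\P_\sigma P_\delta\Psi_1,\P_\tau P_\delta\Psi_2\ra=e^{(\delta-n+1)t}B_{\sigma,\tau}(P_\delta\Psi_1,P_\delta\Psi_2)+O\!\bigl(d_\sigma^N d_\tau^N\,\|\P_\sigma P_\delta\Psi_1\|\,\|\P_\tau P_\delta\Psi_2\|\,e^{(\delta-n+1-\eta_0)t}\bigr),
\]
with $B_{\sigma,\tau}$ built from $\cfun_+(r_\delta)$ and $T_{(\delta-n+1)\alpha}$. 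Summing over $\sigma,\tau$, and over the multiplicity (bounded by $\dim(\upsilon)^{n_0}$ by the strong spectral gap), produces a bilinear form $B(P_\delta\Psi_1,P_\delta\Psi_2)$ as the main term. The error is controlled using the Casimir eigenvalue bound $\|\P_\sigma\Psi\|\le c(\sigma)^{-\ell}\|\Omega_K^\ell\Psi\|$ together with the convergence \eqref{cd} of $\sum d_\sigma^{2N}c(\sigma)^{-\ell}$ for sufficiently large $\ell$; a Cauchy--Schwarz argument then absorbs the factor $d_\sigma^N d_\tau^N$ into a Sobolev norm $\S_\ell$. This yields
\[
e^{(n-1-\delta)t}\la a_t\Psi_1,\Psi_2\ra=B(P_\delta\Psi_1,P_\delta\Psi_2)+O\!\bigl(e^{-\eta t}\S_\ell(\Psi_1)\S_\ell(\Psi_2)\bigr).
\]

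It remains to identify the bilinear form $B$, which is where I invoke the (non-effective) Theorem~\ref{lo}: its limit formula forces
\[
B(P_\delta\Psi_1,P_\delta\Psi_2)=\frac{m^{\BR}(\Psi_1)\,m^{\BR}_*(\Psi_2)}{|m^{\BMS}|}
\]
for every pair of real-valued $\Psi_1,\Psi_2\in C_c^\infty(\G\ba G)$, completing the proof. The main technical obstacle I anticipate is uniformity in the error bound from Theorem~\ref{key}: one must track the polynomial dependence in $d_\sigma,d_\tau$ carefully and verify that the multiplicity bound $m(\upsilon)\le\dim(\upsilon)^{n_0}$ coming from the spectral gap data together with the Peter--Weyl bound on how many $\sigma\in\hat K$ contain a given $\upsilon\in\hat M$ are compatible with the Sobolev summation. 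A subtlety worth noting is that identifying $B$ via Theorem~\ref{lo} is a soft argument, so no effective information about the leading coefficient needs to be extracted from the abstract operators $\cfun_+(r_\delta)$ and $T_{(\delta-n+1)\alpha}$ themselves; this is the key reason the effective statement reduces to an effective decay estimate for the off-spectrum piece plus an effective rate in Theorem~\ref{key}.
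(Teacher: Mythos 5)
Your proposal is correct and follows essentially the same route as the paper's proof: split off the complement $\mathcal W$ via Proposition \ref{shh}, expand the projection to $\mathcal H_\delta^\dag$ in $K$-isotypic orthonormal bases, apply Theorem \ref{key} with the polynomial $d_\sigma, d_\tau$ dependence absorbed by the Casimir bound \eqref{cd} and Sobolev norms, and finally identify the leading bilinear form by comparison with Roblin's non-effective Theorem \ref{lo}. The only slip is attributing the multiplicity bound $m(\upsilon)\le\dim(\upsilon)^{n_0}$ to the \emph{strong} spectral gap; it is part of the (weaker) spectral gap hypothesis of Definition \ref{sngg}, which is all the theorem assumes.
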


\begin{proof} As in \eqref{stg},
we write
$$L^2(\G\ba G)=\mathcal H_{\delta}^\dag \oplus \mathcal W$$ where
 $ \mathcal H_{\delta}^\dag= \oplus_{\upsilon\in \hat M} m(\upsilon) \mathcal U(\upsilon, (\delta-n+1)\alpha)$ with
$m(\upsilon)\le  \text{dim}(\upsilon)^{n_0(\G)}$, and
no complementary series representation with parameter $s_0(\G) <s$ is weakly contained in $\mathcal W$.
For simplicity, set $s_0:=s_0(\G)$ and $n_0:=n_0(\G)$.
we set $V=\mathcal H_{\delta}^\dag $ and $V^\perp=\mathcal W$.
Given $\Psi_1,\Psi_2\in C_c^\infty(\G\ba G)$,
we write $\Psi_i=\Psi_i' +\Psi_i^\perp$, where
$\Psi_i'$ and $\Psi_i^\perp$ are the projections of $\Psi_i$ to $\mathcal H_{\delta}^\dag$ and $\mathcal W$ respectively.
Then by Proposition \ref{shh}, there exist $\ell_0\ge 1$ such that for any $\e>0$,
\be\label{fff}\la a_t \Psi_1^\perp,\Psi_2^\perp\ra =O(\S_{\ell_0}(\Psi_1)\S_{\ell_0}(\Psi_2) e^{(s_0 -n+1+\e)t}).\ee

If $\delta=n-1$ and hence  if $\mathcal H_{\delta}^\dag=\c$, it is easy to see that \eqref{fff} finishes the proof.
Now suppose $\delta<n-1$.

For each $\upsilon \in \hat M$,
 the $K$-representation $\mathcal U_\upsilon( \delta-n+1 )|_K$ is isomorphic to the induced representation 
$\op{Ind}_M^K (\upsilon)$
and hence by the Frobenius reciprocity, the multiplicity of $\sigma$ in 
$\mathcal U_\upsilon( s-n+1)|_K$ is equal
to the multiplicity of $\upsilon$ in $\sigma|_M$, which is denoted by $[\sigma: \upsilon]$.
Therefore as a $K$-module,
$$\mathcal U(\upsilon, \delta-n+1 )|_K=\oplus_{\sigma\in \hat K} m_\upsilon(\sigma) \sigma $$
where $m_\upsilon(\sigma)\le [\sigma:\upsilon]$.

As a $K$-module, we write
\begin{align*}\mathcal H_{\delta}^\dag &=\oplus_{\upsilon\in \hat M} m(\upsilon) \mathcal U(\upsilon, (\delta-n+1)\alpha)
\\ &=\oplus_{\upsilon\in \hat M} m(\upsilon) \left(\oplus_{\sigma\in \hat K} m_\upsilon(\sigma) \sigma\right)
\\ &=\oplus_{\sigma\in \hat K}m(\sigma) \sigma
\end{align*}
where $m(\sigma)\le \sum_{\upsilon\in \hat M, \upsilon \subset \sigma} m(\upsilon) [\sigma:\upsilon] $.
Note that $m(\sigma)\le  d_{\sigma}^{n_0+1} $ for $n_0=n_0(\G)$.

For each $\sigma\in \hat K$,
let $\Theta_\sigma$ be an orthonormal
basis in the $K$-isotypic component, say, $V_\sigma$, of $\mathcal H_\delta^\dag$,
which is formed by taking the union of orthonormal bases of each irreducible component of $V_\sigma$.
Then $\# \Theta_\sigma \le   d_{\sigma}^{n_0+2} $.

By Theorem \ref{hmc} and our discussion in
section \ref{ss;standard-comp-series}, there exist $\eta_0>0$ and
 $N\in \N$ such that for any $v_\sigma \in \Theta_\sigma$ and $v_\tau\in \Theta_\tau$, we have for all $t\gg 1$,
\be \label{mae}
\la a_t v_\sigma, v_\tau\ra: =c(v_\sigma, v_\tau)
e^{(\delta -n+1)t}
+ O( d_{\sigma}^N d_{\tau}^N e^{(\delta -n+1-\eta_0)t})
\ee
where $c(v_\sigma, v_\tau)= \la \cfun_+(r_{\delta}) T_{(\delta-n+1)\alpha} v_\sigma, v_\tau\ra $.

As $\Psi_i'=\sum_{\sigma\in \hat K}\sum_{v_\sigma\in\Theta_\sigma}\la \Psi_i, v_\sigma\ra v_\sigma$,
we have for each $t\in \br$,
$$\la a_t \Psi_1', \Psi_2'\ra =\sum_{\sigma, \tau\in \hat K}\sum_{v_\sigma\in \Theta_\sigma, v_\tau\in \Theta_\tau}
 \la \Psi_1, v_\sigma\ra \cdot \overline{\la \Psi_2, v_\tau\ra} \cdot \la a_t v_\sigma, v_\tau\ra $$
where the convergence follows from the Cauchy-Schwartz inequality.

Therefore, by \eqref{mae},
\begin{align*}
&\la a_t \Psi_1', \Psi_2'\ra \\
&=
\left(\sum_{\sigma, \tau\in \hat K}\sum_{v_\sigma\in \Theta_\sigma, v_\tau\in \Theta_\tau}
\la \Psi_1, v_\sigma \ra  \overline{\la \Psi_2, v_\tau\ra} c(v_\sigma,v_\tau) \right) e^{(\delta-n+1)t}
\\ & +\sum_{\sigma, \tau\in \hat K}\sum_{v_\sigma\in \Theta_\sigma, v_\tau\in \Theta_\tau}
 d_{\sigma}^{N} d_{\tau}^{N}\la \Psi_1, v_\sigma \ra  \overline{\la \Psi_2, v_\tau\ra} O(e^{(\delta-n+1-\eta_0) t})).
\end{align*}

Set $$\mathcal E(\Psi_1, \Psi_2):=
\left(\sum_{\sigma, \tau\in \hat K}\sum_{v_\sigma\in \Theta_\sigma, v_\tau\in \Theta_\tau}
\la \Psi_1, v_\sigma \ra  \overline{\la \Psi_2, v_\tau\ra} c(v_\sigma,v_\tau) \right) .$$

By \eqref{cd}, there exists $\ell \ge \ell_0$ (depending only on $n_0$) such that
\be \label{ell}
\sum_{\sigma, \tau\in \hat K}d_{\sigma}^{N+n_0+2} d_{\tau}^{N+n_0 +2}c(\sigma)^{-\ell} c(\tau)^{-\ell}
 <\infty
\ee
where $c(\sigma)$ is as in~\eqref{cd}.
Since for any unit vector $v\in V_\sigma$,
$$ |{\la \Psi, v\ra}|\ll  c(\sigma)^{-\ell}\mathcal \S_\ell(\Psi),$$
we now deduce that
$$\la a_t \Psi_1', \Psi_2'\ra =\mathcal E(\Psi_1, \Psi_2)
e^{(\delta-n+1)t} +  O(e^{(\delta-n+1-\eta_0) t} \mathcal \S_\ell(\Psi_1)\mathcal \S_\ell(\Psi_2)).$$

Hence, together with  \eqref{fff}, it implies that there exists $\eta>0$ such that
$$ \la a_t \Psi_1, \Psi_2\ra
=\mathcal E(\Psi_1, \Psi_2)
e^{(\delta-n+1)t} + O(e^{(\delta-n+1-\eta) t} \mathcal \S_\ell(\Psi_1)\mathcal \S_\ell(\Psi_2)).$$

On the other hand, by Theorem~\ref{lo},
$$\lim_{t\to \infty} e^{(n-1-\delta)t} \la a_t \Psi_1, \Psi_2\ra =
\frac{m^{\BR}(\Psi_1)\cdot m^{\BR}_*(\Psi_2)}{|m^{\BMS}|}.$$

It follows that
the infinite sum $\mathcal E(\Psi_1, \Psi_2)$ converges
and  $$\mathcal E(\Psi_1, \Psi_2)=\frac{m^{\BR}(\Psi_1)\cdot m^{\BR}_*(\Psi_2)}{|m^{\BMS}|}.$$
This finishes the proof.
\end{proof}

As $\la a_{-t} \Psi_1, \Psi_2\ra =\la a_t \Psi_2, \Psi_1\ra$ for $\Psi_i$'s real-valued, we
deduce the following from Theorem \ref{mc2}:
\begin{cor}\label{reverse} There exist $\eta>0$ and $\ell \in \N$ such that, as $t\to +\infty$,
$$ e^{(n-1 -\delta)t} \la a_{-t} \Psi_1, \Psi_2\ra = \frac{m^{\BR}_*(\Psi_1)\cdot m^{\BR}(\Psi_2)}{|m^{\BMS}|}
+O( e^{-\eta t}\S_\ell(\Psi_1)\S_\ell(\Psi_2)). $$
\end{cor}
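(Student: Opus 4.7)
The plan is to deduce this corollary directly from Theorem \ref{mc2} by a symmetry argument, exploiting the fact that $a_{-t}=a_t^{-1}$ and the unitarity of the right regular representation of $G$ on $L^2(\G\ba G, m^{\Haar})$.

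First, I would observe that for any $\Psi_1,\Psi_2\in C_c^\infty(\G\ba G)$ we have
$$\la a_{-t}\Psi_1,\Psi_2\ra =\la \Psi_1,a_t\Psi_2\ra =\overline{\la a_t\Psi_2,\Psi_1\ra},$$
by $a_t$-invariance of the Haar measure $m^{\Haar}$. Since $\Psi_1$ and $\Psi_2$ are real-valued, the matrix coefficient $\la a_t\Psi_2,\Psi_1\ra$ is real, so the complex conjugate can be dropped and we obtain the identity $\la a_{-t}\Psi_1,\Psi_2\ra=\la a_t\Psi_2,\Psi_1\ra$ already recorded in the excerpt.

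Next, I would apply Theorem \ref{mc2} to the pair $(\Psi_2,\Psi_1)$ in place of $(\Psi_1,\Psi_2)$. This yields, for the same $\eta>0$ and $\ell\in\N$ produced by that theorem,
$$e^{(n-1-\delta)t}\la a_t\Psi_2,\Psi_1\ra=\frac{m^{\BR}(\Psi_2)\cdot m^{\BR}_*(\Psi_1)}{|m^{\BMS}|}+O(e^{-\eta t}\S_\ell(\Psi_2)\S_\ell(\Psi_1)).$$
Combining the two displays and commuting the (scalar) numerator, together with the symmetry $\S_\ell(\Psi_1)\S_\ell(\Psi_2)=\S_\ell(\Psi_2)\S_\ell(\Psi_1)$, gives exactly the claimed asymptotic for $e^{(n-1-\delta)t}\la a_{-t}\Psi_1,\Psi_2\ra$ with leading term $|m^{\BMS}|^{-1}\, m^{\BR}_*(\Psi_1)\cdot m^{\BR}(\Psi_2)$.

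There is no genuine obstacle here: the corollary is a formal consequence of Theorem \ref{mc2} combined with the unitarity of the $G$-action on $L^2(\G\ba G,m^{\Haar})$, and the fact that swapping $\Psi_1\leftrightarrow\Psi_2$ in the leading term of Theorem \ref{mc2} interchanges the roles of $m^{\BR}$ and $m^{\BR}_*$, consistent with the fact that the contracting horospherical foliation for $a_t$ is the expanding horospherical foliation for $a_{-t}$. Hence the same constants $\eta$ and $\ell$ produced in Theorem \ref{mc2} work here verbatim.
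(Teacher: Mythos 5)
Your argument is correct and is precisely the paper's own deduction: the identity $\la a_{-t}\Psi_1,\Psi_2\ra=\la a_t\Psi_2,\Psi_1\ra$ for real-valued functions (via unitarity of the $G$-action on $L^2(\G\ba G,m^{\Haar})$), followed by an application of Theorem \ref{mc2} with the roles of $\Psi_1$ and $\Psi_2$ interchanged. Nothing further is needed.
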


\section{Non-wandering component of $\G\ba \G Ha_t$ as $t\to \infty$}\label{nw}
\subsection{Basic setup}\label{basicset} Let $H$ be either a symmetric subgroup or a horospherical subgroup of $G$.
For the rest of the paper,
we will set $K, M, A=\{a_t\}$ in each case as follows.
 If $H$ is symmetric, that is, $H$ is equal to the group of $\sigma$-fixed points
for some involution $\sigma$ of $G$,
up to conjugation and commensurability, $H$ is $\SO(k,1)\times \SO(n-k)$ for some $1\le k\le n-1$.
Let $\theta$ be a Cartan involution of $G$ which commutes with $\sigma$
and set $K$ to be the maximal compact subgroup fixed by $\theta$.
Let $G=K\exp \frak p$ be the Cartan decomposition and write $\mathfrak g$
as a direct sum of $d\sigma$ eigenspaces:
 $\mathfrak g=\mathfrak h\oplus \mathfrak q$ where $\mathfrak h$ is the Lie algebra of $H$
and $\mathfrak q$ is the $-1$ eigenspace for $d\sigma$.
Let $\frak a\subset \frak p\cap \frak q$ be a maximal abelian subspace and set
$A=\exp \frak a=\{a_t:=\exp (tY_0): t\in \br\}$ where $Y_0$
is a norm one element in $\frak a$ with respect to the Riemannian metric
induced by $\la, \ra$ defined in \eqref{dinner}. Let $M$ be the centralizer of $A$ in $K$.

 If $H$ is a horospherical subgroup of $G$, we let $A=\{a_t\}$ be a one-parameter subgroup
of diagonalizable elements so that $H$ is the expanding horospherical subgroup for $a_t$. Letting $M$ be
the maximal
compact subgroup in the centralizer of $A$, we may assume that
the right translation action of $a_t$ corresponds to the geodesic flow
on $\T^1(\bH^n)=G/M$.
Let $K$ be the stabilizer of the base point of the vector in $\T^1(\bH^n)$ corresponding to $M$.

In both cases, let $o\in \bH^n$ and $X_0\in \T^1(\bH^n)$ be
points stabilized by $K$ and $M$ respectively.
Let $N^+$ and $N^-$ be the expanding and contracting
horospherical subgroups of $G$ with respect to $a_t$, respectively.

\medskip

\subsection{Measures on $gH$ constructed from conformal densities}\label{psdef}
Set $P:=MAN^-$, which is the stabilizer of $X_0^+$.
Via the visual map $g\mapsto g^+$, we have $G/P\simeq \partial(\bH^n)$.
Since $G/P\simeq K/M$, we may consider the visual map as a map from $G$ to $K/M$.
In both cases, the restriction of the visual map to $H$ induces a diffeomorphism from $H/H\cap M$ to its image
inside $K/M$.



Letting $\{\mu_x: x\in G/K\}$ be a $\G$-invariant conformal density of dimension $\delta_\mu$,
 we will define
an $H\cap M$-invariant measure $\tilde \mu_{gH}$ on each $g\in G/H$. Setting $\bar H=H/(H\cap M)$,
 first consider the measure on $g\bar H$:
$$d\tilde \mu_{g\bar H}(gh)= e^{\delta_\mu \beta_{(gh)^+}(o,gh)} d\mu_o((gh)^+).$$
We denote by $\tilde\mu_{gH}$ the $H\cap M$-invariant extension of this measure on $gH$, that is,
for $f\in C_c(gH)$,
$$\int f(gh)\; d\tilde\mu_{gH}(gh)= \int_{g\bar H} \int_{H\cap M}f(ghm)d_{H\cap M}(m) d\tilde\mu_{g\bar H}(gh)$$
where $d_{H\cap M}(m)$ is the probability Haar measure on $H\cap M$.

By the $\G$-invariant conformality of $\{\mu_x\}$,
this definition is independent of $o\in \bH^n$ and
$\tilde \mu_{gH}$ is invariant under $\G$ and hence if $\G\ba \G gH$ is closed,
$\tilde \mu_{gH}$ induces a locally finite Borel measure $\mu_{gH}$ on $\G\ba \G g H$.

Recall the Lebesgue density $\{m_x\}$ of dimension $n-1$ and the Patterson-Sullivan density
$\{\nu_x\}$ of dimension $\delta$.
 We normalize them so that $|m_o|=|\nu_o|=1$. We set
 $$\tilde \mu^{\Haar}_{gH}=\tilde m_{gH}\quad\text{and}\quad \tilde \mu^{\PS}_{gH}=\tilde \nu_{gH} ,$$
and for a closed orbit $\G\ba \G gH$, we denote by
$\mu_{gH}^{\Haar}$ and $\mu_{gH}^{\PS}$ the measures on $\G\ba \G gH$ induced by them respectively.



\begin{lem}\label{inv}
For each $g\in G$,
 $d\tilde\mu^{\Haar}_{gH}(gh)=d\tilde\mu^{\Haar}_{H}(h)$ and
$dh:=d\tilde\mu^{\Haar}_{H}(h)$ is a Haar measure on $H$.
\end{lem}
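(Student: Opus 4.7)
My plan is to prove the identity $d\tilde\mu^{\Haar}_{gH}(gh)=d\tilde\mu^{\Haar}_{H}(h)$ by a direct computation that combines the three axiomatic properties of the Lebesgue density $\{m_x\}$: (a) $G$-equivariance, $g_*m_x=m_{gx}$ for all $g\in G$; (b) conformality of dimension $n-1$, $dm_{gx}(\xi)/dm_x(\xi)=e^{-(n-1)\beta_\xi(gx,x)}$; and (c) the $G$-equivariance of the Busemann cocycle, $\beta_{g\xi}(gx,gy)=\beta_\xi(x,y)$. The Haar assertion will then drop out by specializing $g$ to lie in $H$.

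First, since $\tilde\mu^{\Haar}_{gH}$ is by construction the $H\cap M$-invariant extension of $\tilde\mu^{\Haar}_{g\bar H}$ using the probability Haar measure of $H\cap M$ (which does not depend on $g$), it suffices to prove $d\tilde\mu^{\Haar}_{g\bar H}(gh)=d\tilde\mu^{\Haar}_{\bar H}(h)$. By definition,
$$d\tilde\mu^{\Haar}_{g\bar H}(gh)=e^{(n-1)\beta_{(gh)^+}(o,gh)}\,dm_o((gh)^+).$$
Writing $(gh)^+=g\cdot h^+$ and combining (a) and (b) gives the change-of-variable formula
$$dm_o(g\cdot h^+)=e^{(n-1)\beta_{(gh)^+}(go,o)}\,dm_o(h^+).$$

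Next I use the cocycle identity together with (c) to split
$$\beta_{(gh)^+}(o,gh)=\beta_{(gh)^+}(o,go)+\beta_{(gh)^+}(go,gh)=\beta_{(gh)^+}(o,go)+\beta_{h^+}(o,h).$$
Substituting both into the defining formula, the two exponential factors carrying $\beta_{(gh)^+}(o,go)=-\beta_{(gh)^+}(go,o)$ cancel, leaving
$$d\tilde\mu^{\Haar}_{g\bar H}(gh)=e^{(n-1)\beta_{h^+}(o,h)}\,dm_o(h^+)=d\tilde\mu^{\Haar}_{\bar H}(h),$$
which proves the first assertion.

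Finally, for the Haar assertion I specialize $g=h_0\in H$, so that $gH=H$ and the identity above asserts that left translation $h\mapsto h_0h$ preserves $\tilde\mu^{\Haar}_H$. Thus $\tilde\mu^{\Haar}_H$ is a left-invariant Borel measure on $H$, and since $H$ is unimodular (being either locally isomorphic to $\SO(k,1)\times\SO(n-k)$ or an abelian horospherical subgroup), it is a bi-invariant Haar measure. I anticipate no serious obstacle: the argument is a bookkeeping exercise in the conformal change-of-variable formula, with the only care required being consistent sign conventions for the Busemann function and the direction of the various pushforwards.
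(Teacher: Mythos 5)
Your proposal is correct and follows essentially the same route as the paper's proof: it uses the $G$-equivariance and $(n-1)$-conformality of the Lebesgue density together with the Busemann cocycle identity to cancel the extra exponential factor (the paper phrases this at the base point $g^{-1}(o)$ rather than $go$, which is the same computation), and then obtains the Haar property by deducing left $H$-invariance from the first claim and taking the product with the $H\cap M$-invariant probability measure. No gaps.
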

\begin{proof}
As $m_o$ is $G$-invariant, we have
$$dm_o((gh)^+)=dm_{g^{-1}(o)}(h^+)= e^{(n-1)\beta_{h^+} (o, g^{-1}(o))}dm_o(h^+).$$
Since
$$\beta_{h^+} (o, g^{-1}(o))+ \beta_{(gh)^+}(o,gh)=\beta_{h^+} (o, g^{-1}(o))+ \beta_{h^+}(g^{-1}(o),h)
=\beta_{h^+}(o,h),$$
we have
$$d\tilde \mu^{\Haar}_{gH}(gh)=e^{(n-1) \beta_{(gh)^+}(o,gh)} d\mu_o((gh)^+)=
e^{(n-1) \beta_{h^+}(o,h)} d\mu_o(h^+)
=d\tilde \mu^{\Haar}_{H}(h)$$
proving the first claim.
The first claim shows that
$d\tilde\mu_{\bar H}$ is left $H$-invariant. Since $d_{H\cap M}$ is an $H\cap M$-invariant measure,
the product measure $d\tilde\mu_{H}(hm)=d\tilde\mu_{\bar H}(h) d_{H\cap M} (m)$ is a Haar measure of $H$.
 \end{proof}

\subsection{}\label{ldef}
Let $\G$ be a torsion-free, non-elementary, geometrically finite
subgroup of $G$. For any given compact subset $\Omega$ of $\G\ba G$,
the goal of the rest of this
 section is to describe the set $$\{h\in \G\ba \G H: ha_t\in \Omega \; \text{ for some $t>0$}\}.$$
 For $H$ horospherical, this turns out to be a compact subset.
For $H$ symmetric, we will obtain a thick-thin decomposition, and give estimates of the size of thin parts
with respect to the measures $\mu_H^{\PS}$ and $\mu_H^{\Haar}$ (Theorem \ref{yo}).

An element $\gamma\in \G$ is called {\it parabolic} if there exists a unique fixed point of $\gamma$ in $\partial(\bH^n)$, and
an element $\xi\in \Lambda(\G)$ is called a parabolic fixed point if it is fixed by a parabolic element of $\G$.
Let $\Lambda_p(\G)\subset\Lambda(\G)$ denote the set of all parabolic fixed points
of $\G.$ Since $\G$ is geometrically finite, each parabolic fixed point $\xi$ is
bounded, i.e., $\op{Stab}_\G(\xi)$ acts co-compactly
on $\Lambda(\G)-\{\xi\}$.
Recall the notation $g^+= g(X_0)^+$ and $g^-=g(X_0)^-$.

 Consider the upper half space
model for $\bH^n$: $\bH^n=\{(x, y): x\in \br^{n-1}, y\in \br_{>0}\}$.
We set $\br^{n}_+:=\{(x,y): x\in \br^{n-1}, y\in \br_{>0}\}$, so that
$\partial (\br^n_+)=\{(x,0):x\in \br^{n-1}\}$.
Suppose that $\infty$ is a parabolic fixed point for $\G.$
Set $\G_\infty:=\op{Stab}_\G(\infty)$ and let $\G'(\infty)$ be
a
normal abelian subgroup of $\G_\infty$ which is of finite index; this exists
by a theorem of Biberbach.
Let $L$ be a minimal
$\G_\infty$-invariant subspace in $\br^{n-1}$.
By Prop. 2.2.6 in \cite{Bo}, $\G'(\infty)$ acts as translations and cocompactly on $L$.
We note that $L$ may not be unique, but any two such are Euclidean-parallel.

The notation $d_{\text{Euc}}$ and $\|\cdot\|$ denote the Euclidean distance and the Euclidean norm in $\br^{n}$ respectively.
Following Bowditch \cite{Bo},
we write for each $r>0$:
\begin{equation}
C(L, r):=\{x\in \bbr^n_+\cup \partial (\bbr^n_+): d_{\text{Euc}} (x, L)\ge r\} .\end{equation}

Each $C(L,r)$ is $\G_\infty$-invariant and called
 a standard parabolic region (or an extended horoball) associated to $\xi=\infty$.
\begin{thm}\cite[Prop. 4.4]{Bo}\label{bow1}\label{cinf}
 For any $\e_0>0$, there exists $r_0>0$ such that for any $r\ge r_0$,
\begin{enumerate}
 \item $\gamma C(L, r)=C(L, r)$ if $\gamma\in \G_\infty$;
\item if $\gamma\in \G - \G_\infty$, $d_{\text{Euc}}(C(L,r), \gamma
    C(L, r))>\e_0$.
\end{enumerate}
\end{thm}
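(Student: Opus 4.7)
My plan is to handle (1) by direct calculation, reduce (2) modulo $\G_\infty$-cosets to a collection of representatives with uniformly bounded isometric-sphere radii, and then apply an explicit Möbius-contraction estimate to force each $\gamma_0 C(L,r)$ to shrink to an $O(1/r)$-neighborhood of the point $\gamma_0(\infty)$, which remains at bounded Euclidean distance from $L$ while $C(L,r)$ retreats to distance $r$ from $L$.

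For (1): since $\infty$ is a parabolic fixed point of the discrete group $\G$, no element of $\G_\infty - \{e\}$ can be loxodromic, so each $\gamma \in \G_\infty$ acts on $\br^n_+$ as a Euclidean isometry $(x,y) \mapsto (Ax + b, y)$ with $A \in \op{O}(n-1)$ and $b \in \br^{n-1}$. Because $L$ is $\G_\infty$-invariant by construction, $d_{\text{Euc}}(\gamma z, L) = d_{\text{Euc}}(\gamma z, \gamma L) = d_{\text{Euc}}(z, L)$ for every $z$, giving $\gamma C(L,r) = C(L,r)$ for every $r > 0$.

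For (2): use the bounded parabolic hypothesis to fix a compact fundamental domain $K_0 \subset \Lambda(\G) - \{\infty\}$ for $\G_\infty$. Any $\gamma \in \G - \G_\infty$ admits a decomposition $\gamma = \alpha \gamma_0 \beta$ with $\alpha, \beta \in \G_\infty$ and both $\gamma_0(\infty), \gamma_0^{-1}(\infty) \in K_0$; invoking (1) together with the Euclidean-isometry property of $\G_\infty$,
\begin{equation*}
d_{\text{Euc}}(C(L,r), \gamma C(L,r)) = d_{\text{Euc}}(C(L,r), \gamma_0 C(L,r)),
\end{equation*}
so it suffices to bound the right-hand side for such $\gamma_0$. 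Discreteness of $\G$ combined with compactness of $K_0$ yields a uniform upper bound $R_0$ on the isometric-sphere radii of these $\gamma_0$. Writing $\gamma_0$ in Möbius form $\gamma_0(z) = \gamma_0(\infty) + R_{\gamma_0}^2 A(z - c)/\|z - c\|^2$ with $c = \gamma_0^{-1}(\infty) \in K_0$ and $A \in \op{O}(n)$, and setting $D := \sup_{p \in K_0} d_{\text{Euc}}(p, L) < \infty$, one finds $\|z - c\| \ge d_{\text{Euc}}(z, L) - D \ge r - D$ for any $z \in C(L,r)$, and hence $\|\gamma_0(z) - \gamma_0(\infty)\| = R_{\gamma_0}^2/\|z - c\| \le R_0^2/(r - D) = O(1/r)$. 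Therefore $\gamma_0 C(L,r)$ lies in the tubular $(D + O(1/r))$-neighborhood of $L$, whereas $C(L,r)$ lies in the complement of the $r$-tubular neighborhood of $L$, so $d_{\text{Euc}}(\gamma_0 C(L,r), C(L,r)) \ge r - D - O(1/r)$, which exceeds $\e_0$ for all $r \ge r_0$ sufficiently large.

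The main obstacle I anticipate is securing the uniform upper bound $R_0$ on isometric-sphere radii across all relevant double-coset representatives: this rests not only on discreteness of $\G$ but on the fine structure of a Ford fundamental domain at the bounded parabolic fixed point $\infty$, which is the substantive content of Bowditch's analysis. Without this uniformity the $O(1/r)$ contraction estimate fails to be uniform in $\gamma_0$ and the whole argument collapses, so this is the one step that I expect to require genuine geometric input beyond the bookkeeping above.
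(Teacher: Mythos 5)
Your part (1), the reduction of (2) to double-coset representatives $\gamma_0$ with $\gamma_0(\infty),\gamma_0^{-1}(\infty)\in K_0$, and the contraction identity $\|\gamma_0(z)-\gamma_0(\infty)\|=R_{\gamma_0}^2/\|z-c\|$ are all correct (note the paper itself gives no proof of this statement---it quotes Bowditch---so the only issue is whether your argument is complete). It is not: the step you assert, that ``discreteness of $\G$ combined with compactness of $K_0$ yields a uniform upper bound $R_0$ on the isometric-sphere radii,'' is essentially the whole content of part (2), and it does not follow from discreteness plus compactness by any soft argument. A sequence $\gamma_k$ with $\gamma_k^{\pm1}(\infty)\in K_0$ and $R_{\gamma_k}\to\infty$ is perfectly compatible with the convergence property of discrete M\"obius groups (the maps simply degenerate to the constant map $\infty$ away from $\lim\gamma_k^{-1}(\infty)$), and for general discrete groups in dimension $n\ge 4$ whose parabolic stabilizer consists of screw motions, the naive Shimizu--Leutbecher bound and the existence of precisely invariant horoballs genuinely fail (Ohtake-type examples). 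So the bounded-parabolic hypothesis must be used for more than just producing the compact set $K_0$, and you do not use it for that purpose; you concede as much in your closing paragraph, which means (2) has been reduced to an unproven key lemma rather than proved.

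The missing lemma is true, and the genuine input it needs is of the following kind. Since every element of $\G_\infty$ acts as a Euclidean isometry preserving $L$, cocompactness of $\G_\infty$ on $\Lambda(\G)-\{\infty\}$ forces $\Lambda(\G)-\{\infty\}$ into a slab $\{x: d_{\text{Euc}}(x,L)\le D\}$; hence the convex hull of $\Lambda(\G)$, and therefore the orbit $\G j$ of a basepoint $j$ (which stays at bounded hyperbolic distance from the hull), lies over this slab up to controlled error. If the Euclidean heights of $\G j$ were unbounded, translating by $\G'(\infty)$ along $L$ would produce orbit points of unbounded height at bounded hyperbolic distance from a fixed vertical geodesic, making $\infty$ a conical limit point, which is impossible for a parabolic fixed point of a discrete group. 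Thus $\sup_{\gamma\in\G}\op{ht}(\gamma j)<\infty$, and since $R_{\gamma_0}^2$ equals the height of the image under $\gamma_0$ of the point at Euclidean height $1$ above $c=\gamma_0^{-1}(\infty)\in K_0$, a point at uniformly bounded hyperbolic distance from $j$, the uniform bound $R_0$ follows; with it, your estimate finishes (2). Until some such argument (or a generalized Shimizu inequality in the style of Waterman, with compactness of $K_0$ controlling the displacement terms) is supplied, the proposal has a genuine gap at its central step.
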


\begin{cor}\label{proper}
Suppose that $\infty$ is a bounded parabolic fixed point for $\G$. Then
for any sufficiently large $r$, the natural projection map
$${\G}_\infty\ba (C(L,r)\cap \bH^n) \to \G\ba \bH^n$$
is injective and proper.
\end{cor}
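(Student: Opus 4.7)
The plan is to handle injectivity and properness separately, both as direct consequences of Theorem \ref{cinf}.

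For injectivity, I fix any $\e_0>0$ and take $r\ge r_0$ from Theorem \ref{cinf}. If $x_1,x_2\in C(L,r)\cap \bH^n$ are identified in $\G\ba \bH^n$ by some $\gamma\in \G$, then $x_2\in C(L,r)\cap \gamma C(L,r)$, so this intersection is non-empty; Theorem \ref{cinf}(2) rules out $\gamma\notin \G_\infty$, whence $\gamma\in \G_\infty$ and the identification already occurs in the $\G_\infty$-quotient.

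For properness I argue by sequences. Let $([\tilde x_n])$ be a sequence in $\G_\infty\ba(C(L,r)\cap \bH^n)$ whose image in $\G\ba\bH^n$ stays in a compact set. Choosing suitable lifts and $\gamma_n\in \G$, I arrange $\gamma_n\tilde x_n\in \tilde K$ for a fixed compact $\tilde K\subset \bH^n$, and then pass to a subsequence with $\gamma_n\tilde x_n\to \tilde y\in \tilde K$. The key intermediate claim is a \emph{local-finiteness} statement: the right cosets $\gamma\G_\infty\in \G/\G_\infty$ with $\gamma C(L,r)\cap \tilde K\ne \emptyset$ are finitely many. Granting this, after a further subsequence all $\gamma_n$ lie in a single coset $\gamma_*\G_\infty$; writing $\gamma_n=\gamma_* h_n$ with $h_n\in \G_\infty$, one has $h_n\tilde x_n=\gamma_*^{-1}(\gamma_n\tilde x_n)\to \gamma_*^{-1}\tilde y$, and since $h_n\tilde x_n\in C(L,r)\cap \bH^n$ by $\G_\infty$-invariance and this set is closed in $\bH^n$, the limit $\gamma_*^{-1}\tilde y$ lies there too. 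Hence $[\tilde x_n]=[h_n\tilde x_n]\to [\gamma_*^{-1}\tilde y]$ in $\G_\infty\ba(C(L,r)\cap \bH^n)$, as required.

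The main obstacle is therefore the local-finiteness claim. My approach is to work in the upper half-space model: for $\gamma\notin \G_\infty$, the translate $\gamma C(L,r)$ is a neighborhood of the parabolic fixed point $\gamma(\infty)\in \br^{n-1}$ containing a horoball based at $\gamma(\infty)$ whose Euclidean depth into $\bH^n$ is controlled by the derivative of $\gamma$ at $\infty$. Proper discontinuity of $\G$ on $\bH^n$ forces this depth to decay to zero as $\gamma\G_\infty$ escapes in $\G/\G_\infty$, so only finitely many such translates have a horoball reaching the compact $\tilde K$, which sits at positive Euclidean distance from $\partial \bH^n$. When $L$ is a proper affine subspace of $\br^{n-1}$, the extended horoball $\gamma C(L,r)$ acquires additional lateral extent along the $\gamma$-image of $L$; handling this requires a more delicate argument that uses the cocompact action of $\G_\infty$ on $L$ (so that a suitable $\G_\infty$-translate of any candidate point is brought into a bounded region) and is essentially subsumed by Bowditch's systematic analysis of the $\G$-orbit structure of extended horoballs in \cite{Bo}, of which the present corollary is a direct application.
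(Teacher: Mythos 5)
Your reduction of properness to the local-finiteness claim (only finitely many cosets $\gamma\G_\infty$ with $\gamma C(L,r)\cap \tilde K\ne\emptyset$ for a fixed compact $\tilde K\subset\bH^n$) matches the paper's strategy, and the injectivity argument is the same as the paper's. But the local-finiteness claim is exactly where your proposal has a genuine gap: it is the whole content of properness, and you do not prove it. Your sketch via the Euclidean depth of a horoball at $\gamma(\infty)$ decaying as $\gamma\G_\infty$ escapes only addresses the horoball portion of the extended parabolic region; you explicitly concede that when $L$ is a proper affine subspace of $\br^{n-1}$ the translate $\gamma C(L,r)$ has additional lateral extent and that this case ``requires a more delicate argument,'' which you then outsource to Bowditch's analysis, declaring the corollary ``a direct application'' of \cite{Bo}. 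That is not a proof but a deferral, and a somewhat circular one: the corollary being proved is precisely a statement of this type, and the paper's point is to deduce it from the one packaged separation statement it quotes from \cite{Bo}, namely Theorem \ref{cinf}.

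The missing idea is simply to use Theorem \ref{cinf}(2) a second time, now for properness. Fix $\e_0>0$ and $r\ge r_0$ as in Theorem \ref{bow1}. If infinitely many distinct cosets $\gamma_i\G_\infty$ satisfied $\gamma_i C(L,r)\cap F\ne\emptyset$ for a compact $F\subset\bH^n$, choose $y_i\in F\cap\gamma_i C(L,r)$; since for $i\ne j$ the element $\gamma_j^{-1}\gamma_i$ lies outside $\G_\infty$ and $C(L,r)$ is $\G_\infty$-invariant, Theorem \ref{cinf}(2) forces the translates attached to distinct cosets to stay at distance at least $\e_0$ from one another, so the points $y_i$ are uniformly separated; infinitely many uniformly separated points cannot lie in the compact set $F$, a contradiction. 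This two-line argument (which is the paper's) makes your horoball-depth discussion and the appeal to proper discontinuity unnecessary, and it covers the extended (lateral) part of $C(L,r)$ with no extra work, because the separation statement is about the whole standard parabolic region, not just a horoball. The remaining part of your sequence argument (closedness of $C(L,r)\cap\bH^n$ and extraction of a convergent subsequence within a single coset) is fine once local finiteness is in place.
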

\begin{proof}
We fix $\e_0>0$, and let $r_0>0$ be as in the above theorem \ref{bow1}. Let $r>r_0$, and set
$C_\infty=C(L, r)\cap \bH^n$ for simplicity.
 The injectivity is immediate from Theorem \ref{bow1}(2).
 Since $C_\infty$ is closed in $\bH^n$, so is $\gamma C_\infty$ 
 for all $\gamma\in \G$. Hence to prove the properness of the map, it is sufficient to show that
 if $F$ is a compact subset of $\bH^n$, then $F$ intersects at most finitely many distinct $\gamma C_\infty$'s.
 Now suppose there exists an infinite sequence $\{\gamma_i\in \G\}$ such that $\gamma_i \G_\infty$'s are all distinct from each other and
  $F\cap \gamma_i C_\infty\ne\emptyset$. Choosing $y_i\in F\cap \gamma_i C_\infty$,
by Theorem \ref{bow1}(2), we have
$d(y_i, y_j)\ge \e_0$ for all $i\ne j$, which contradicts the assumption that $F$ is compact.
\end{proof}

\subsection{$H$ horospherical}

\begin{thm}\label{horo} Let $H=N$ be a  horospherical subgroup.
Suppose that $\G\ba \G NM$ is closed in $\G\ba G$.
For any compact subset $\Omega$ of $\G\ba G$,
the set $\G\ba \G NM\cap  \Omega A$
is relatively compact.
\end{thm}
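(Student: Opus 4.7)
\medskip
\noindent\textbf{Proof plan.} The plan is to argue by contradiction using the structure of $\G\ba\G NM$ near the cusp corresponding to $X_0^+$. Since $\G$ is geometrically finite, closedness of $\G\ba \G NM$ together with $NM$ being the stabilizer in $G$ of the point $X_0^+\in\partial(\bH^n)$ forces $X_0^+$ to be either a bounded parabolic fixed point of $\G$ or a point outside $\Lambda(\G)$. In the latter case $\G\cap NM$ is trivial (as $\G$ is torsion-free and $X_0^+\notin\Lambda(\G)$ admits no parabolic or hyperbolic stabilizer in $\G$), so $\G\ba \G NM\hookrightarrow \G\ba G$ is an embedding of $NM$ whose $A$-translates escape every compact set, and the conclusion is immediate. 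So the main case is when $X_0^+$ is a bounded parabolic fixed point of $\G$, which is what I shall treat.

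Up to conjugating to the upper half-space model, I may assume $X_0^+=\infty$, $o=(0,1)$, $a_t\cdot(x,y)=(e^t x,e^t y)$, and that $N$ is identified with Euclidean translations of $\br^{n-1}$. Then $\G\cap NM=\G_\infty:=\op{Stab}_\G(\infty)$ and $\G\ba\G NM\cong \G_\infty\ba NM$. By Biberbach's theorem a finite index normal subgroup $\G'(\infty)\le \G_\infty$ acts as translations on a minimal $\G_\infty$-invariant affine subspace $L\subset\br^{n-1}$, cocompactly on $L$. Consequently, a fundamental domain $F$ for $\G_\infty$ on $\br^{n-1}$ is Euclidean-bounded in the $L$-direction but unbounded in $L^\perp$; any sequence $x_i\in F$ escaping compacta in $F$ satisfies $d_{\op{Euc}}(x_i,L)\to\infty$ along a subsequence.

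Now suppose, for contradiction, that there exist $h_i\in\G\ba\G NM$ and $t_i\in\br$ with $h_i a_{t_i}\in\Omega$ but $\{h_i\}$ leaves every compact subset of $\G\ba\G NM$. Write representatives $h_i=n_{x_i}m_i$ with $x_i\in F$ and $m_i\in M$; the escape hypothesis forces $d_{\op{Euc}}(x_i,L)\to\infty$ (after passing to a subsequence). The footpoint of $h_i a_{t_i}$ in $\bH^n$ is $(x_i,e^{t_i})$, and
$$ d_{\op{Euc}}\bigl((x_i,e^{t_i}),L\bigr)\;\ge\; d_{\op{Euc}}(x_i,L)\;\to\;\infty ,$$
so, choosing $r\ge r_0$ as in Theorem~\ref{bow1} and Corollary~\ref{proper}, we have $(x_i,e^{t_i})\in C(L,r)\cap\bH^n$ for all large $i$.

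By Corollary~\ref{proper}, the natural map $\G_\infty\ba(C(L,r)\cap\bH^n)\to\G\ba\bH^n$ is proper. Since $h_i a_{t_i}\in\Omega$, the image of $[(x_i,e^{t_i})]$ in $\G\ba\bH^n$ lies in the compact set $\pi(\Omega\cdot o)$; hence $[(x_i,e^{t_i})]$ stays in a compact subset of $\G_\infty\ba C(L,r)$. Because $\G_\infty$ acts cocompactly along $L$ but trivially along $L^\perp$, compactness of the projected orbit forces $d_{\op{Euc}}(x_i,L)$ to stay bounded, contradicting $d_{\op{Euc}}(x_i,L)\to\infty$. This proves that $\G\ba\G NM\cap \Omega A$ is relatively compact. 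The main technical obstacle is the passage between the (hyperbolic) properness statement of Corollary~\ref{proper} and the (Euclidean) coordinates on $\G_\infty\ba NM$; this is handled by the elementary comparison $d_{\op{Euc}}((x,y),L)\ge d_{\op{Euc}}(x,L)$ together with the fact that the footpoint computation $h_i a_{t_i}\cdot o=(x_i,e^{t_i})$ is independent of the sign of $t_i$, so the argument applies uniformly to both $A^+$ and $A^-$.
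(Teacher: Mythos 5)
Your argument is correct and is essentially the paper's own proof: both reduce to the case where the basepoint of the horosphere is a bounded parabolic fixed point, observe that the footpoints $n_ia_{t_i}(o)=(x_i,e^{t_i})$ eventually lie in the standard parabolic region $C(L,r)$ because $d_{\rm Euc}(x_i,L)\to\infty$ while $d_{\rm Euc}((x_i,e^{t_i}),L)\ge d_{\rm Euc}(x_i,L)$, and then contradict the properness of $\G_\infty\ba (C(L,r)\cap\bH^n)\to\G\ba\bH^n$ from Corollary \ref{proper}. The only caveats are cosmetic: with the paper's normalization ($N$ expanding) the relevant basepoint is $X_0^-$ rather than $X_0^+$, the group $NM$ is not the full stabilizer of that boundary point (the stabilizer also contains $A$), and your opening dichotomy (closed orbit forces the basepoint to be a bounded parabolic fixed point or to lie outside $\Lambda(\G)$) is precisely the result of Dal'bo that the paper invokes by citation rather than proves.
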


\begin{proof} We may assume without loss of generality that
the horosphere $NK/K$ in $G/K\simeq \bH^n$ is based at $\infty$.
Note that $\G_\infty\subset NM$ and that the closedness of $\G\ba \G NM$ implies
that $\G_\infty \ba NM \to \G\ba G$ is a proper map.

Therefore, if the claim does not hold,
 there
exists a sequence $n_i\in NM$ which is unbounded
 modulo $\G_\infty$ such that
$\gamma_i n_ia_{t_i}  \to x$ for some $t_i\in
\br$, $\gamma_i\in \G$ and $x\in G$.

It follows that, passing to a subsequence, $n_i a_{t_i}(o)\to \infty$ and
$d(n_ia_{t_i}, \gamma_i^{-1} x)\to 0$ as $i\to \infty$.
Therefore $\gamma_i^{-1}x(o)\to \infty$ and hence $\infty\in \Lambda(\G)$.

Since the image of the horosphere $N(o)$ in $\G\ba \bH^n=\G\ba G/K$ is closed,
it follows that $\infty$ is a bounded parabolic fixed point for $\G$ by \cite{Da}.
 Therefore
 $\G_\infty$ acts co-compactly on an $r$
neighborhood of a minimal $\G_\infty$-invariant subspace $L$ in $\bh\setminus\{\infty\}=\br^{n-1}$ for some $r>0$.
Write $n_ia_{t_i}(o)=(x_i, y_i)\in \bbr^{n-1}\times\bbr_{>0}$.
Since $\{n_i\}$ is unbounded
modulo $\G_\infty$, after passing to a subsequence if necessary,
we have $d_{\op{Euc}}(x_i, L)\to\infty$.
It follows that for any $r$, $(x_i, y_i)\in C(L,r)$ for all large $i$.
Since $n_i$ is unbounded modulo $\G_\infty$,
we get  $n_ia_{t_i}=(x_i, y_i)$ is unbounded in $\G_\infty\ba C(L,r)$.
Thus by Corollary
\ref{proper},  $n_ia_{t_i}$ must be unbounded modulo $\G,$ which is a
contradiction.
\end{proof}


\subsection{$H$ symmetric}\label{sec;return-sym}
We now consider the case when $H$ is symmetric.
Then $H(o)=H/H\cap K$ is a totally geodesic submanifold in $G(o)=G/K=\bH^n$.
We denote by $\pi$ the canonical projection from $G$ to $G/K=\bH^n$.
We set  $\tilde S=H(o)$.

Fixing a compact subset $\Omega$ of $\G\ba G$,
define
$$H_\Omega:=\{h\in H: \G\ba \G ha_t\in \Om\text{ for some $t>0$}\}$$
and set $\tilde S_\Om=H_\Om (o)$.

\begin{lem}\label{l;return-limitset-close}
Let $\xi\in \partial(\tS)$. If $\xi\notin \Lambda(\G)$, then
there exists a neighborhood $U$ of $\xi$ in $\overline {\bH^n}$ such
that $U\cap \tS_\Om=\emptyset$.
\end{lem}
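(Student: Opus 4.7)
I would prove the contrapositive: assume every neighborhood of $\xi$ in $\overline{\bH^n}$ meets $\tS_\Om$, and deduce $\xi\in\Lambda(\G)$. Pick a compact lift $\Om'\subset G$ of $\Om$ and choose $h_i\in H_\Om$ with $h_i(o)\to\xi$. By the definition of $H_\Om$, for each $i$ there exist $t_i>0$ and $\gamma_i\in\G$ with $\gamma_i h_i a_{t_i}\in\Om'$. Set $z_i:=h_ia_{t_i}(o)$ and $y_i:=\gamma_i z_i$; then $\{y_i\}$ is confined to the compact set $\Om'(o)\subset\bH^n$.

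The geometric heart of the argument is that $z_i\to\xi$ in $\overline{\bH^n}$, regardless of how the sequence $t_i$ behaves. Since $\mathfrak{a}\subset\mathfrak{p}\cap\mathfrak{q}$, the curve $t\mapsto a_t(o)$ is a geodesic perpendicular to $\tS$ at $o$; applying the isometry $h_i\in H$, which preserves $\tS$, the curve $t\mapsto h_ia_t(o)$ is a geodesic perpendicular to $\tS$ at $h_i(o)\in\tS$. Working in an upper half-space model in which $\tS$ is realized as the standard totally geodesic copy $\{x_k=\cdots=x_{n-1}=0,\ y>0\}$ of $\bH^k$, the perpendicular geodesic at a point $p=(p_1,\dots,p_{k-1},0,\dots,0,y)\in\tS$ is a Euclidean half-circle in a $2$-plane through $p$ orthogonal to $\tS$, centered at the point $(p_1,\dots,p_{k-1},0,\dots,0)\in\partial\tS$ with Euclidean radius equal to $y$. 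Thus every point of the half-circle lies within Euclidean distance $y$ of a point of $\partial\tS$ converging to the projection of $\xi$. An elementary verification in each of the cases $\xi\in\br^{n-1}$ and $\xi=\infty$ shows that the entire half-circle, uniformly in the hyperbolic parameter, tends to $\xi$ in $\overline{\bH^n}$. In particular $z_i\to\xi$ irrespective of $t_i$.

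To finish, note that $\gamma_i^{-1}(y_i)=z_i\to\xi\in\partial\bH^n$ while $\{y_i\}\subset\Om'(o)$ is bounded in $\bH^n$. If $\{\gamma_i\}$ were relatively compact in $\G$, which is discrete, some subsequence would be constant, say $\gamma_i\equiv\gamma$; but then $z_i=\gamma^{-1}(y_i)$ would stay in the compact set $\gamma^{-1}(\Om'(o))$, contradicting $z_i\to\xi\in\partial\bH^n$. Hence $\gamma_i$ leaves every finite subset of $\G$. Passing to a subsequence with $y_i\to y_\infty\in\bH^n$, the identity $d(\gamma_i^{-1}(y_\infty),\gamma_i^{-1}(y_i))=d(y_\infty,y_i)\to 0$ forces $\gamma_i^{-1}(y_\infty)\to\xi$ as well, which exhibits $\xi$ as an accumulation point of the orbit $\G\cdot y_\infty$. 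Therefore $\xi\in\Lambda(\G)$, contradicting the hypothesis.

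The main obstacle is the uniform geometric-collapse statement of the second paragraph: for $t_i$ bounded it is a trivial triangle-inequality argument, but for $t_i\to\infty$ one must rule out that $z_i$ escapes to some point of $\partial\bH^n$ distinct from $\xi$. The explicit half-space description of the normal bundle of $\tS$, in which all perpendicular geodesics at $p$ shrink Euclidean-metrically around a point of $\partial\tS$ as $p\to\partial\tS$, is exactly what is needed; once this is in hand, the rest of the proof is a standard discreteness-plus-limit-set argument.
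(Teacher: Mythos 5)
Your proof is correct and follows essentially the same route as the paper: pick $h_i\in H_\Om$ with $h_i(o)\to\xi$, use that $\{h_ia_t(o):t>0\}$ is the geodesic ray orthogonal to $\tS$ at $h_i(o)$ to conclude $h_ia_{t_i}(o)\to\xi$, and then run the standard discreteness/limit-set argument to get $\xi\in\Lambda(\G)$, a contradiction. The only difference is that you spell out, via the upper half-space computation, the collapse of the orthogonal geodesics toward $\xi$ uniformly in $t$, a step the paper states without proof; this is a correct and harmless elaboration.
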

\begin{proof}  Let $\Om_0$ be a compact subset of $G$ such
that $\Om=\G\ba \G \Om_0$.
If the claim does not hold, then there exist $h_n\in H$, $\gamma_n\in \G$ and $t_n>0$
such that $\gamma_n h_na_{t_n}\in \Om_0$
and $h_n (o)\to \xi$.
Note that $\{h_na_t(o ): t>0\}$ denotes the (half) geodesic emanating from $\pi(h_n)$
and orthogonal to $\tS.$  Since $h_n(o )$ converges to
$\xi\in\partial\bH^n,$  it follows that $h_n a_{t_n}(o)\to \xi$.

Now since $\Omega_0$ is compact, by passing to a subsequence in necessary,
we may assume $\gamma_nh_na_{t_n}\to x$.
As $G$ acts by isometries on $\bH^n$, we get $\gamma_n^{-1} (x( o))\to \xi.$ This implies $\xi\in \Lambda(\G)$ which is a
contradiction.
\end{proof}


Fix a Dirichlet domain $\mathcal D$ for $H\cap \G$
in $\tS$  and set \be\label{do}\mathcal D_\Om=\mathcal D\cap \tS_\Om.\ee

\begin{cor}\label{c;return-close-to-ps} Assume that the orbit $\G\ba \G H$ is closed in $\G\ba G$.
There exist a compact subset $Y_0$ of $\mathcal D$ and a finite subset
 $\{\xi_1,\ldots\xi_m\}\subset \Lambda_p(\G)\cap \partial(\tilde S)$
such that
$$ \mathcal D_\Om \subset Y_0\cup (\cup_{i=1}^m U(\xi_i))$$
where $U(\xi_i)$ is a neighborhood of $\xi_i$ in $\overline{\bH^n}$. In particular
if  $\Lambda_p(\G)\cap \partial(\tilde S)=\emptyset$, then $\mathcal D_\Om$
is relatively compact.
\end{cor}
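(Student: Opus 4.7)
The plan is to analyze the limit points of $\mathcal D_\Om$ on $\partial(\bH^n)$ in two steps: first show any such point lies in $\Lambda_p(\G)\cap\partial(\tS)$, then argue that only finitely many such points arise.

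For the first step, suppose $p_i\in\mathcal D_\Om$ with $p_i\to\xi\in\partial(\bH^n)$. Since $p_i\in\tS$, we have $\xi\in\partial(\tS)$, and Lemma~\ref{l;return-limitset-close} forces $\xi\in\Lambda(\G)$. By geometric finiteness of $\G$, $\xi$ is either a conical limit point or a parabolic fixed point, and I rule out the conical case as follows. Since $o\in\tS$ and $\tS$ is totally geodesic, the geodesic ray $[o,\xi)$ lies in $\tS$, and as $p_i\to\xi$ within $\tS$ the $p_i$ stay in a bounded tube about $[o,\xi)$. Conicality of $\xi$ provides $\gamma_n\in\G$ with $\gamma_n(o)\to\xi$ inside a bounded neighborhood of $[o,\xi)$, so $\G p_i$ stays bounded in $\G\bs\bH^n$. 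Closedness of $\G\bs\G H$ gives properness of $(\G\cap H)\bs\tS\to\G\bs\bH^n$, so there exist $\alpha_i\in\G\cap H$ with $\alpha_i p_i$ contained in a compact subset of $\tS$. The Dirichlet condition for $\mathcal D$ then yields $d(p_i,o)\le d(p_i,\alpha_i^{-1}(o))=d(\alpha_i p_i,o)$, which is bounded, contradicting $p_i\to\xi\in\partial(\bH^n)$. Hence $\xi\in\Lambda_p(\G)\cap\partial(\tS)$.

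Next, set $L:=\overline{\mathcal D_\Om}\cap\partial(\bH^n)\subset\Lambda_p(\G)\cap\partial(\tS)$, a compact subset of $\overline{\bH^n}$; to conclude finiteness, it suffices to prove discreteness. By geometric finiteness $\Lambda_p(\G)$ decomposes into finitely many $\G$-orbits, so I argue orbit-by-orbit. For each $\xi\in L$ choose witnesses $p_n(\xi)\to\xi$ in $\mathcal D_\Om$ together with $\gamma_n(\xi)$, $h_n(\xi)$, $t_n(\xi)$ satisfying $\gamma_n(\xi)h_n(\xi)a_{t_n(\xi)}\in\Om_0$, a compact lift of $\Om$ to $G$; this forces $\gamma_n(\xi)^{-1}(o)\to\xi$. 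Around any putative accumulation point $\xi_\infty$ of $L$, Theorem~\ref{bow1} supplies a standard parabolic region $C(L_{\xi_\infty},r)$, invariant under $\G_{\xi_\infty}$ and $\e_0$-separated from all its $\G$-translates; combining this separation with the compactness of $\Om_0$ and the Dirichlet property of $\mathcal D$ (each $p_n$ being a closest $(\G\cap H)$-representative to $o$) one derives that only finitely many distinct $\xi_n\in L$ can cluster near $\xi_\infty$, yielding discreteness.

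Given that $L=\{\xi_1,\ldots,\xi_m\}$ is finite, pick pairwise-disjoint open neighborhoods $U(\xi_i)$ in $\overline{\bH^n}$. By the definition of $L$, the set $\mathcal D_\Om\setminus\bigcup_i U(\xi_i)$ has no accumulation points on $\partial(\bH^n)$, so its closure $Y_0$ is a compact subset of $\mathcal D$, giving $\mathcal D_\Om\subset Y_0\cup\bigcup_i U(\xi_i)$. The ``in particular'' claim is immediate: when $\Lambda_p(\G)\cap\partial(\tS)=\emptyset$, $L=\emptyset$, so $m=0$ and $\mathcal D_\Om\subset Y_0$ is relatively compact. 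The main obstacle is the discreteness assertion in the second step: eliminating conical limit points is clean via the Dirichlet inequality once the totally geodesic structure places $[o,\xi)$ inside $\tS$, but controlling accumulations of parabolic fixed points requires a delicate interplay between Bowditch's parabolic region estimates and the compactness of $\Om_0$.
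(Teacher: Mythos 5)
There are two genuine gaps, one in each half of your argument. In the first step, the claim that because $p_i\in\tS$ and $p_i\to\xi$ the points ``stay in a bounded tube about $[o,\xi)$'' is false: a sequence in a totally geodesic plane can converge to a boundary point tangentially (horocyclically), with $d(p_i,[o,\xi))\to\infty$. Even granting it, the inference ``conicality of $\xi$ provides $\gamma_n(o)\to\xi$ near the ray, so $\G p_i$ stays bounded in $\G\bs\bH^n$'' does not follow: conicality only gives a \emph{sequence} of orbit points within a bounded tube, not that every far-out point near the ray is at bounded distance from $\G(o)$ (the projected ray may well make long excursions into cusps). The correct version of this exclusion runs the argument along the ray itself: by convexity of the Dirichlet domain, $[o,\xi)\subset\overline{\mathcal D}\subset\tS$, conicality gives times $t_n\to\infty$ with $d(x_{t_n},\G(o))$ bounded, closedness of $\G\ba\G H$ gives properness of $(\G\cap H)\ba\tS\to\G\ba\bH^n$ and hence $\alpha_n\in\G\cap H$ with $\alpha_n x_{t_n}$ in a fixed compact set, and then the Dirichlet inequality $d(x_{t_n},o)\le d(\alpha_n x_{t_n},o)$ gives the contradiction. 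This is precisely the content of \cite[Proposition 5.1]{OS} ($\Lambda(\G)\cap\partial\mathcal D\subset\Lambda_p(\G)$), which the paper simply cites; your attempt to reprove it as stated does not go through.

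In the second step, the discreteness (hence finiteness) of the accumulation set $L$ is both unproven and unnecessary. Your key sentence ``one derives that only finitely many distinct $\xi_n\in L$ can cluster near $\xi_\infty$'' is an assertion, not an argument, and it is the hardest point of the route you chose. The corollary does not require $L$ to be finite: the paper covers the compact set $\partial(\tS)\cap\partial(\mathcal D)$ by neighborhoods $U(\xi)$, arranged via Lemma \ref{l;return-limitset-close} so that $U(\xi)\cap\tS_\Om=\emptyset$ whenever $\xi\notin\Lambda(\G)$, extracts a finite subcover by compactness, sets $Y_0=\mathcal D-\cup_i U(\xi_i)$, and concludes because the remaining centers lie in $\Lambda(\G)\cap\partial\mathcal D\subset\Lambda_p(\G)$ by the cited proposition. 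So once the first step is repaired (or replaced by the citation), you should drop the discreteness claim entirely and finish by compactness of $L$ (or of $\partial(\tS)\cap\partial(\mathcal D)$): choosing any open neighborhoods of its points and passing to a finite subcover already yields $Y_0$ compact and the required inclusion, and the ``in particular'' statement follows as you say.
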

\begin{proof}
For each $\xi \in \partial (\tS)\cap \partial (\mathcal D)$,
let $U(\xi)$ be a neighborhood of $\xi$ in $\overline{\bH^n}$. When $\xi\notin \Lambda(\G)$,
we may assume by Lemma \ref{l;return-limitset-close} that
$U(\xi)\cap \tS_\Om =\emptyset$.
By the compactness of
$\partial (\tS)\cap \partial (\mathcal D)$,
there exists a  finite covering $\cup _{i\in I} U(\xi_i)$.
Set $Y_0:=\mathcal D-\cup _{i\in I} U(\xi_i)$, which is a compact subset.
Now $\mathcal D_\Om - Y_0\subset \cup_{i\in I, \xi_i\in \Lambda(\G)} U(\xi_i)$ by the choice of $U(\xi_i)$'s.
On the other hand, by \cite[Proposition 5.1]{OS},
 we have $\Lambda(\G)\cap\partial\mathcal{D}\subset \Lambda_p(\G)$.
Hence the claim follows.
\end{proof}



In the rest of this subsection,
we fix $\xi\in \Lambda_p(\G)\cap \partial(\tS)$,
and investigate $\mathcal D_\Om\cap U(\xi)$.
We consider the upper half space model for $\bH^n$ and assume that $\xi=\infty$.
In particular, $\tS$ is a vertical plane.
Let $\G_\infty$, $\G'(\infty)$, $L$ and $C(L, r)$
be as in the subsection \ref{ldef}. Without loss of generality, we assume $0\in L$.
We consider the orthogonal decomposition $\br^{n-1}=L\oplus L^\perp$ and let
$P_{L^\perp}:\br^{n-1}\to L^{\perp}$ denote the orthogonal projection map.
\begin{lem}\label{l;cusp-return}
There exist $R_0>0$ 
such that
for any $h \in H_\Om$, we have
$\|P_{L^\perp}(h^+)\|\le R_0$.
 \end{lem}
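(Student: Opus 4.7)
The plan is to argue by contradiction. Suppose there exist $h_n\in H_\Om$ with $\|P_{L^\perp}(h_n^+)\|\to\infty$, and fix witnessing $t_n>0$ and $\gamma_n\in\G$ so that $\gamma_n h_na_{t_n}\in\Om_0$ for a compact lift $\Om_0\subset G$ of $\Om$. The overall strategy is to show that $\gamma_n$ must eventually lie in $\G_\infty$ and then exploit the Euclidean structure of the parabolic subgroup to obtain a contradiction.

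First I would show $\gamma_n\to\infty$ in $\G$ and set up the contracting dynamics. Since $\gamma_nh_n^+=(\gamma_nh_na_{t_n})^+$ stays in the compact $\Om_0^+\subset\partial(\bH^n)$, whereas $\|P_{L^\perp}(h_n^+)\|\to\infty$ forces $h_n^+\to\infty$ in the visual topology, the sequence $\{\gamma_n\}$ cannot lie in any finite subset of $\G$. After passing to a subsequence, the standard north--south dynamics of a diverging sequence in a discrete isometry group produces $\xi^\pm\in\Lambda(\G)$ with $\gamma_n^{-1}\to\xi^-$ uniformly on compacta of $\overline{\bH^n}-\{\xi^+\}$. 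Comparing the boundedness of $\gamma_nh_n^+\in\Om_0^+$ with $h_n^+\to\infty$ identifies $\xi^-=\infty$, and applying $\gamma_n^{-1}$ to the bounded sequence $q_n:=\gamma_nh_na_{t_n}(o)\in\Om_0(o)$ (which is disjoint from the boundary point $\xi^+$) forces $p_n:=h_na_{t_n}(o)=\gamma_n^{-1}(q_n)\to\infty$ in $\overline{\bH^n}$.

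Next I would reduce to $\gamma_n\in\G_\infty$. Using Corollary~\ref{proper} together with the cocompact action of $\G'(\infty)$ on $L$ by translations---and treating separately the cases where $p_n$ escapes via large $y$ or large $L^\perp$-distance (so that $p_n\in C(L,r)$ eventually and the properness applies directly) versus where it diverges along $L$ at bounded height (where translations in $\G'(\infty)$ alone suffice)---one produces $\gamma_n'\in\G_\infty$ with $\gamma_n'p_n$ in a fixed compact $K'\subset\bH^n$. The element $\delta_n:=\gamma_n(\gamma_n')^{-1}$ then sends a point of $K'$ into $\Om_0(o)$, so by discreteness of $\G$ it belongs to a finite set; after a further subsequence $\delta_n=f$ is constant. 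A short argument using the continuity of $g\mapsto g^+$ on the bounded sequence $\gamma_n'h_na_{t_n}\in f^{-1}\Om_0$ then shows that $f$ may be absorbed into $\G_\infty$, so that $\gamma_n\in\G_\infty$ for all $n$ large.

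Finally, the Euclidean structure finishes the proof. Each $\gamma\in\G_\infty$ acts on $\bbr^{n-1}=\partial(\bH^n)-\{\infty\}$ as $x\mapsto Ax+b$ with $A$ orthogonal preserving both $L$ and $L^\perp$; and since $\G'(\infty)$ has finite index in $\G_\infty$ and acts by translations along $L$, the component $P_{L^\perp}(b)$ is uniformly bounded by some $C=C(\G)$. Hence $\bigl|\,\|P_{L^\perp}(\gamma x)\|-\|P_{L^\perp}(x)\|\,\bigr|\le C$ uniformly in $\gamma\in\G_\infty$ and $x\in\bbr^{n-1}$, and applied to $\gamma_nh_n^+\in\Om_0^+$ this bounds $\|P_{L^\perp}(h_n^+)\|$ uniformly, contradicting the standing hypothesis. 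The main technical obstacle is the absorption step in the previous paragraph: ruling out the subcase $f\notin\G_\infty$ with $f(\infty)\in\Om_0^+$, where $\gamma_n'h_n^+$ may itself accumulate at $\infty$, requires either rerunning the cusp analysis at the parabolic fixed point $f(\infty)$ or a direct argument in $G$ using that a bounded sequence whose forward endpoints escape to $\infty$ must accumulate in the stabilizer of $\infty$.
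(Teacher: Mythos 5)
Your argument has a genuine gap, and it occurs at the two places where you treat $\Om_0^+$ as if it were a \emph{bounded} subset of $\br^{n-1}$. The set $\Om_0^+$ is only compact in $\partial(\bH^n)\simeq S^{n-1}$; it may contain $\infty$ and may accumulate at $\infty$ from directions transverse to $L$, so $\sup\{\|P_{L^\perp}(\xi)\|:\xi\in\Om_0^+\setminus\{\infty\}\}$ can be infinite. Consequently your final step fails: even granting $\gamma_n\in\G_\infty$ (and here your Euclidean observation is fine -- in fact $\G_\infty$ preserves $L$, so $d_{\rm Euc}(\cdot,L)=\|P_{L^\perp}(\cdot)\|$ is exactly $\G_\infty$-invariant), the inequality $\|P_{L^\perp}(h_n^+)\|\le\|P_{L^\perp}(\gamma_n h_n^+)\|+C$ bounds nothing, because $\gamma_n h_n^+\in\Om_0^+$ gives no bound on $\|P_{L^\perp}(\gamma_n h_n^+)\|$. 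The same unjustified boundedness is what drives your first step: with $\gamma_n=\gamma$ constant one has $\gamma h_n^+\to\gamma(\infty)$, which may perfectly well lie in $\Om_0^+$, so neither ``$\gamma_n$ leaves every finite set'' nor ``$\xi^-=\infty$'' follows from comparing $h_n^+\to\infty$ with $\gamma_nh_n^+\in\Om_0^+$. (Your middle reduction also has its own holes -- the case where $p_n$ escapes with height tending to $0$ is missing from your dichotomy, and you yourself flag the absorption step -- but even if all of that were repaired, the argument would still not close for the reason above.)

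What is missing is precisely the mechanism the paper uses, which works with the \emph{interior} orbit point rather than with boundary points. First, Corollary~\ref{proper} together with Theorem~\ref{cinf}(1) (properness and injectivity of $\G_\infty\ba (C(L,r)\cap\bH^n)\to\G\ba\bH^n$, plus the $\G_\infty$-invariance of $d_{\rm Euc}(\cdot,L)$) gives an $R_0'$ such that no point $x\in\bH^n$ with $d_{\rm Euc}(x,L)\ge R_0'$ lies in $\G\Om_0(o)$; hence $h\in H_\Om$ forces $d_{\rm Euc}(ha_{t_0}(o),L)<R_0'$ for the witnessing time $t_0$. Second -- and this is the ingredient your proof never invokes -- the ray $\{ha_t(o):t>0\}$ is a Euclidean semicircle orthogonal to the vertical plane $\tS$, so the horizontal displacement from $ha_{t_0}(o)$ to the endpoint $h^+$ is bounded by the Euclidean height of $ha_{t_0}(o)$; this transfers the bound on the interior point to $\|P_{L^\perp}(h^+)\|=d_{\rm Euc}(h^+,L)\ll R_0'$. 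Without this geodesic geometry, even full knowledge that $\gamma_n\in\G_\infty$ and that $\gamma_n p_n$ stays in a fixed compact set only controls $d_{\rm Euc}(p_n,L)$, not $\|P_{L^\perp}(h_n^+)\|$, so the contradiction cannot be reached. Note also that once these two facts are in hand the statement follows directly, with no contradiction argument and no north--south dynamics at all.
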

\begin{proof}
  Let $\Om_0$ be a compact subset of $G$ such
that $\Om=\G\ba \G \Om_0$.
Then by Corollary \ref{proper}, and part (1) of Theorem \ref{cinf},
there exists $R'_0>0$ depending on $\Omega,$ such that
\be\label{e;cusp-gamma-inv}
\mbox{if $x\in C(L, R'_0)\cap \bH^n,$ then $x\not\in \G \Omega_0.$}
\ee

Suppose now that $h\in H_\Om,$ thus $ha_{t_0}(o)\in \G \Om_0$ for some $t_0>0$.
This, in view of~\eqref{e;cusp-gamma-inv} and the definition of $C(L,R'_0),$ implies $d_{\rm Euc}(ha_{t_0}(o), L)<R'_0$.

As discussed above, $\{ha_t: t>0\}$ is the geodesic ray
emanating from $h(o )$ and orthogonal to $\tS$ i.e. a Euclidean semicircle
orthogonal to the vertical plane.
Hence there exists some absolute constant $s_0$ such that
\[
\lim_{t\rightarrow\infty}d_{\rm Euc}(ha_{t}(o), L)\leq d_{\rm Euc}(ha_{t_0}(o), L)+s_0 \leq R'_0+s_0,
\]
which implies $\|P_{L^\perp}(h^+)\|\leq R_0:=R'_0+s_0,$ as we wanted to show.
\end{proof}

For $N\ge 1$,
set
\be\label{uni} U_N(\infty):=\{x\in \br^{n}_+\cup\partial(\br^{n}_+): \|x\|_{\rm Euc}>N \}.\ee

Let $\Delta:=\G'(\infty)\cap H$ and
let $p$ be the difference of the rank of $\G'(\infty)$ and the rank of $\Delta$.
Suppose $p\ge 1$.
Let ${\bf{\gamma}}=(\gamma_1,\ldots,\gamma_p)$ be a $p$-tuple of elements of $\G'$ such that
the subgroup generated by ${\bf \gamma}\cup\Delta$ has finite index in $\G'$.
For ${\bf k}=(k_1, \cdots , k_p)\in \z^p$,
we write ${\bf \gamma}^{\bf k}=\gamma_1^{k_1}\cdots \gamma_p^{k_p}$.
The notation $|\bf k|$ means the maximum norm
of $(k_1, \cdots, k_p)$.

The following gives a description of cuspidal neighborhoods of $\mathcal D_\Om$:
\begin{thm}\label{inc2}
There exist $c_0\ge 1$ and a compact subset
$\mathcal F$ of $\mathbb{R}^{n-1}$ such that for all large $N\gg 1$,
$$
\{h^+\in \br^{n-1}: h\in H, \pi(h)\in \mathcal D_\Om \cap U_{c_0N}(\infty)\}\subset
\cup_{|{\bf k}|\ge N} \Delta  {\bf \gamma}^{\bf k} \mathcal F.
$$
\end{thm}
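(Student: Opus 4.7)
My plan is to work in the upper half space model with $\xi = \infty \in \partial(\tilde S)$, so that $\tilde S = V \times \br_{>0}$ for some linear subspace $V \subset \br^{n-1}$. For $h \in H$ with $\pi(h) = (x,y) \in \tilde S$, the geodesic $t \mapsto ha_t(o)$ is orthogonal to $\tilde S$ (since $\mathfrak a \subset \mathfrak p \cap \mathfrak q$), hence a Euclidean semicircle tangent to the horizontal at $\pi(h)$; this yields the explicit formula $h^+ = x + y\tilde v$ for some unit vector $\tilde v \in V^\perp$. I then introduce the orthogonal decomposition $\br^{n-1} = V_1 \oplus L_1 \oplus L^\perp$, where $V_1 := L \cap V$ and $L_1$ is the orthogonal complement of $V_1$ inside $L$. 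Since $\Delta = \G'(\infty) \cap H$ preserves both $L$ (being a subgroup of $\G'(\infty)$) and $V$ (being in $H$), its translations lie in $V_1$; after modifying each $\gamma_i$ by an element of $\Delta$, I may assume the translation vectors of $\gamma_1, \ldots, \gamma_p$ project to a basis of $L_1$.

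The argument then rests on three estimates for the components of $h^+$ in this decomposition: (a) Lemma \ref{l;cusp-return} gives $\|P_{L^\perp}(h^+)\| \le R_0$; (b) since $\pi(h) \in \mathcal D$ and any Dirichlet fundamental domain for the larger group $H \cap \G$ sits inside some fundamental domain for the subgroup $\Delta$ (which is bounded in the $V_1$-direction), we have $\|P_{V_1}(h^+)\| = \|P_{V_1}(x)\| \le C_{\mathcal D}$; (c) the Pythagorean identity, combined with $\|h^+\|^2 = \|x\|^2 + y^2 > c_0^2 N^2$, forces
\[
\|P_{L_1}(h^+)\|^2 \ge c_0^2 N^2 - C_\mathcal D^2 - R_0^2,
\]
so that $\|P_{L_1}(h^+)\| \ge c_0 N/2$ for all sufficiently large $N$.

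For the final step, I use that $\G'(\infty)$ acts as a cocompact lattice of translations on $L$ and that $\langle \Delta, \bg \rangle$ has finite index in $\G'(\infty)$, to write $P_L(h^+) = \delta \bg^{\bk} + f''$ with $\delta \in \Delta$, $\bk \in \bz^p$, and $f''$ in a fixed bounded set $F' \subset L$. Combined with (a), this gives $h^+ \in \Delta \bg^{\bk} \mathcal F$ for the compact set $\mathcal F := F' + \overline{B_{L^\perp}(R_0)} \subset \br^{n-1}$. To show $|\bk| \ge N$, I project to $L_1$: using $P_{L_1}(\delta) = 0$ and the linear independence of $\{P_{L_1}(\gamma_i)\}_{i=1}^p$ in $L_1$, there exist constants $c', C''$ with $\|P_{L_1}(h^+)\| \le c' |\bk| + C''$. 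Combining with the lower bound from (c), $|\bk| \ge (c_0 N/2 - C'')/c'$; choosing $c_0$ large enough (depending on $c'$, $C''$, and the previously fixed constants) then yields $|\bk| \ge N$ for all large $N$.

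The main obstacle is orchestrating the four subspaces $V_1, L_1, V^\perp, L^\perp$ and the actions of $\Delta$ and $\bg^{\bk}$ on them simultaneously; in particular, the choice $L_1 := L \ominus V_1$ (rather than some arbitrary complement) is what makes the three-term Pythagorean identity in step (c) work cleanly. The essential geometric mechanism is Lemma \ref{l;cusp-return}: as $\pi(h)$ escapes into the cusp at $\infty$, the bound on $\|P_{L^\perp}(h^+)\|$ forces the growth of $h^+$ to occur in the $L$-direction, and the Dirichlet-domain constraint further confines the $V_1$-component, so that all the growth is absorbed by the $L_1$-direction, which is precisely the direction parameterized by $\bk$.
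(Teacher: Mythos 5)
The paper's own proof of Theorem \ref{inc2} is essentially a citation: it invokes \cite[Prop. 5.8]{OS} and observes that the only property of $h^+\in\Lambda(\G)$ used there is the boundedness of $P_{L^\perp}(h^+)$, which Lemma \ref{l;cusp-return} supplies for the set at hand. You instead reconstruct the geometric argument, and much of your reconstruction is sound: the formula $h^+=x+y\tilde v$ with $\tilde v$ orthogonal to $V$, the use of Lemma \ref{l;cusp-return}, the fact that the translation parts of elements of $\Delta$ lie in the direction space of $L\cap V$, the cocompactness of $\langle \Delta,{\bf \gamma}\rangle$ on $L$ producing a representation $h^+\in\delta{\bf \gamma}^{\bf k}\mathcal F$, and the projection trick at the end. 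The genuine gap is step (b). First, the containment of $\mathcal D$ in ``some fundamental domain for $\Delta$'' proves nothing: fundamental domains for $\Delta$ are highly non-unique and need not be bounded in any direction, and the one containing $\mathcal D$ inherits whatever unboundedness $\mathcal D$ has; what is actually usable is that the Dirichlet domain for $H\cap\G$ centered at $z_0$ is contained in the Dirichlet domain for $\Delta$ with the \emph{same} center, cut out by the inequalities $d(z,z_0)\le d(z,\delta z_0)$. Second, and more seriously, even that domain is confined only along the linear span $W$ of the translation parts of the elements of $\Delta$, and $W$ can be a proper subspace of $V_1=L\cap V$: nothing in the setup forces $\operatorname{rank}(\Delta)=\dim(L\cap V)$ (for instance $\Delta$ may be trivial while $L\cap V\ne 0$). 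Your argument implicitly assumes this equality; indeed $p=\operatorname{rank}\G'(\infty)-\operatorname{rank}(\Delta)=\dim L-\dim W\ge\dim L_1$, so when $W\subsetneq L\cap V$ the $p$ vectors $P_{L_1}(t(\gamma_i))$ cannot form a basis of $L_1$, and modifying the $\gamma_i$ by elements of $\Delta$ does not change these projections at all.

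When $W\subsetneq L\cap V$, a point $\pi(h)\in\mathcal D_\Om\cap U_{c_0N}(\infty)$ may escape with $h^+$ growing along a direction of $L\cap V$ orthogonal to $W$; then your bound (b) is false, the lower bound (c) on $\|P_{L_1}(h^+)\|$ is lost, and the final inequality yields no lower bound on $|{\bf k}|$, even though the statement remains true, because growth in any direction of $L$ transverse to $W$ is detected by the $\gamma_i$ modulo $\Delta$. The repair is to run your scheme with $W$ in place of $V_1$: bound the $W$-component of $x$ for $(x,y)\in\mathcal D$ directly from the Dirichlet inequalities relative to $\Delta$ (if elements of $\Delta$ have nontrivial rotation parts this only gives a bound of order $\sqrt{\|x\|}$, which still suffices), and in the last step use that the images of the translation vectors $t(\gamma_1),\dots,t(\gamma_p)$ are linearly independent in $L/W$ --- exactly what the finite-index choice of ${\bf \gamma}\cup\Delta$ guarantees --- so that $\|P_{L\ominus W}(h^+)\|\le c'|{\bf k}|+C$, which combined with $\|h^+\|>c_0N$, $\|P_{L^\perp}(h^+)\|\le R_0$ and the $W$-bound forces $|{\bf k}|\ge N$ once $c_0$ is large. (Two minor points: $L$ and $V$ are affine subspaces and need not intersect, so $V_1$ should be defined via direction spaces; and applying Lemma \ref{l;cusp-return} to every $h$ with $\pi(h)\in\mathcal D_\Om$, rather than to $h\in H_\Om$, is the same elision the paper itself makes, so I do not count it against you.)
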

\begin{proof}
In \cite[Prop. 5.8]{OS}, it is shown that for some $c_0\ge 1$ and a compact subset
$\mathcal F$ of $\mathbb{R}^{n-1}$,
\begin{equation} \label{osc} \{h^+\in \Lambda(\G):h\in H, \pi(h)\in\mathcal D\cap U_{c_0N}(\infty) \}\subset
 \cup_{|{\bf k}|\ge N} \Delta  {\bf \gamma}^{\bf k} \mathcal F \end{equation}
for all large $N\gg 1$.
However the only property of $h^+\in \Lambda(\G)$ used in this proof is
the fact that $\sup_{h\in H, h^+\in \Lambda(\G)}\|P_{L^\perp}(h^+)\|<\infty$.
Since this property holds for the set in concern by Lemma \ref{l;cusp-return},
the proof of Proposition 5.8 of \cite{OS} can be used.
\end{proof}



\subsection{Estimates on the size of thin part}
For $\xi \in \partial(\bH^n)$,
let $U_{N}(\xi)$ be defined to be $g(U_{N}(\infty))$ where $g\in G$ is such that $\xi=g(\infty)$
and $U_N(\infty)$ is defined as in \eqref{uni}.

\begin{prop}\label{mest} Let $\xi \in \partial(\tS)\cap \Lambda_p(\G)$ and $p_\xi:=\op{rank}(\G_\xi)
- \op{rank}(\G_\xi \cap H)$.
For all $N\gg 1$, we have
 $$\tilde \mu^{\PS}_{ H}\{h\in H: \pi(h)\in \mathcal D_\Omega \cap U_{N}(\xi)\} \ll N^{-\delta+p_\xi};$$
 $$\tilde \mu^{\Haar}_{ H}\{ h\in H: \pi(h)\in \mathcal D_\Omega\cap U_{N}(\xi) \} \ll N^{-n+1+p_\xi}$$
with the implied constants independent of $N$.
\end{prop}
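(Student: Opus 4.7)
The plan is to work in the upper half space model with $\xi=\infty$, parametrize $\bar H = H/(H\cap M)$ by the visual map $h\mapsto h^+$, reduce via Theorem~\ref{inc2} to a sum over parabolic cosets, and estimate each piece separately. I may assume $p_\xi\geq 1$, the case $p_\xi=0$ being easier and requiring only the single-term version of the estimates below. After conjugation, $\tilde S$ becomes a vertical plane $V\times\br_{>0}$ with $V\subset\br^{n-1}$ of dimension $k-1$, and each $h\in\bar H$ is determined by $(\pi(h),v)$ with $v\in V^\perp$ the unit outward normal of the geodesic $\{ha_t(o)\}$; writing $\pi(h)=(x,y)$ one has $h^+=x+yv$, so $|h^+|=\|\pi(h)\|$ and $|P_{V^\perp}h^+|=y$. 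A routine Jacobian computation, together with the formula
\[
\beta_{h^+}(o,h(o))=\log\frac{|h^+|^2+1}{2y}
\]
derived from the limit definition with $c(t)=ha_t(o)$, gives on $\bar H\leftrightarrow\br^{n-1}\setminus V$
\[
d\tilde\mu^{\Haar}_H=\frac{dh^+}{|P_{V^\perp}h^+|^{n-1}},\qquad d\tilde\mu^{\PS}_H=e^{\delta\beta_{h^+}(o,h(o))}\,d\nu_o(h^+),
\]
up to a bounded $H\cap M$-factor.

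Next I invoke Theorem~\ref{inc2}, absorbing $c_0$ into $N$: the set $\{h:\pi(h)\in\mathcal D_\Om\cap U_N(\infty)\}$ is then contained in $\{h:h^+\in\bigcup_{|\bk|\geq N}\Delta\bg^{\bk}\mathcal F\}$. Since $\Delta$ acts on $\br^{n-1}$ by translations lying in $V$ and the condition $\pi(h)\in\mathcal D$ singles out one $\Delta$-coset representative per $\bk$, it suffices to bound the measure of $\{h:h^+\in\bg^{\bk}\mathcal F\}$ and then to sum over $\bk\in\z^{p_\xi}$ with $|\bk|\geq N$. The generators $\gamma_j$ may be chosen so that their translation vectors lie in the minimal $\G'(\infty)$-invariant subspace $L$ and span, modulo $\Delta$, a $p_\xi$-dimensional subspace transverse to $V$; combined with the uniform bound $\|P_{L^\perp}(h^+)\|\leq R_0$ from Lemma~\ref{l;cusp-return}, this produces the key uniform comparisons $|h^+|\asymp|\bk|$ and $|P_{V^\perp}h^+|\asymp|\bk|$ on $\bg^{\bk}\mathcal F$.

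With these comparisons in hand, the Haar contribution of each piece is at most $\vol(\mathcal F)\,|\bk|^{-(n-1)}$; and for the PS piece the same comparisons give $e^{\delta\beta_{h^+}(o,h(o))}\asymp|\bk|^\delta$, while the standard quasi-invariance calculation
\[
\nu_o(\bg^{\bk}\mathcal F)=\int_{\mathcal F}e^{-\delta\beta_\xi(\bg^{-\bk}o,o)}\,d\nu_o(\xi)\asymp|\bk|^{-2\delta}
\]
(using $\beta_\xi(\bg^{-\bk}o,o)=2\log|\bk|+O(1)$ for $\xi\in\mathcal F$ bounded) yields a bound $|\bk|^{-\delta}$ for the PS contribution of each piece. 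Summing $|\bk|^{-(n-1)}$ and $|\bk|^{-\delta}$ over $\bk\in\z^{p_\xi}$ with $|\bk|\geq N$ produces $N^{-(n-1)+p_\xi}$ and $N^{-\delta+p_\xi}$, as claimed. The main technical obstacle is the uniform lower bound $|P_{V^\perp}h^+|\gg|\bk|$: the PS estimate degrades badly if a $\bg^{\bk}$-direction were swallowed by $V$, and preventing this is exactly what the identity $\Delta=\G'(\infty)\cap H$ ensures, by forcing every remaining generator $\gamma_j$ to translate transversely to $V$.
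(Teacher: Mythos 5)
Your overall skeleton is the same as the paper's, which simply invokes \cite[Prop.~5.2]{OS} and repeats that proof with $(\delta,\nu_o)$ replaced by $(n-1,m_o)$: cover the cuspidal region by the sets $\Delta\bg^{\bk}\mathcal F$ of Theorem \ref{inc2}, use the $\Delta$-invariance of $\tilde\mu_H^{\bullet}$ and the Dirichlet-domain condition to unfold to one representative per $\bk$, estimate each piece, and sum over $|\bk|\ge N$. Your boundary parametrization, the identity $\beta_{h^+}(o,h(o))=\log\tfrac{|h^+|^2+1}{2y}$, the densities $dh^+/\|P_{V^\perp}h^+\|^{n-1}$ and $e^{\delta\beta_{h^+}(o,h)}d\nu_o(h^+)$, the conformal computation $\nu_o(\bg^{\bk}\mathcal F)\ll|\bk|^{-2\delta}$, and the final summation are all correct.

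The gap is precisely at the point you yourself flag as the main obstacle, and your one-line resolution of it does not work. You need the uniform lower bound $\|P_{V^\perp}h^+\|\gg|\bk|$ on $\bg^{\bk}\mathcal F$, i.e.\ that the translation parts $t(\bk)=\sum_j k_j t_j$ satisfy $d(t(\bk),V)\gg|\bk|$ (Euclidean distance) for \emph{all} $\bk$. Knowing that each individual generator translates transversely to $V$ is strictly weaker: it gives no uniform angle between the lattice $\{t(\bk)\}$ and $V$. The bound you need is equivalent to $\Span(t_1,\dots,t_p)\cap V=\{0\}$, i.e.\ to $L\cap V$ being spanned by the translation parts of $\Delta$; if $\dim(L\cap V)>\op{rank}(\Delta)$, then for arbitrarily large $|\bk|$ there are vectors $t(\bk)$ within bounded distance of $V$ (take $t(\bk)$ nearest to $mw$ for $w\in(L\cap V)\setminus\Span\{t_\delta:\delta\in\Delta\}$ and $m\to\infty$), and on such pieces the weight $e^{\delta\beta_{h^+}(o,h)}\asymp(|\bk|^2/y)^{\delta}$ is of order $|\bk|^{2\delta}$ rather than $|\bk|^{\delta}$, destroying both the PS and the Haar per-piece estimates. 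Moreover this span condition is not a formal consequence of $\Delta=\G'(\infty)\cap H$: an element of $\G'(\infty)$ whose translation part lies in $V$ need not lie in $H$ (its rotation part on $L^\perp$ may fail to preserve $\tS$), and for $n=3$, $k=2$ a rank-two cusp together with a vertical plane over a line irrational with respect to the cusp lattice has $\dim(L\cap V)=1$ while $\Delta$ is trivial. What rules out such configurations is the standing hypothesis that $\G\ba\G H$ is closed: otherwise the $\G_\xi$-translates of $\tS$ (all vertical planes through $\xi$) accumulate on $\tS$; and the same accumulation argument applied to powers handles elements with translation part in $V$ but nontrivial rotation part. You must supply this step --- it is exactly the content hidden in the citation to \cite[Prop.~5.2]{OS} --- before the comparisons $|h^+|\asymp|\bk|$ and $\|P_{V^\perp}h^+\|\asymp|\bk|$, and hence the per-piece bounds $|\bk|^{-\delta}$ and $|\bk|^{-(n-1)}$, become available.
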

\begin{proof}
The first claim is shown in \cite[Proposition 5.2]{OS}.
Without loss of generality, we may assume that $\xi=\infty$.
By replacing $\delta$ by $n-1$ and $\nu_o$ by $m_o$ in the proof of \cite[Proposition 5.2]{OS},
we get
$$\int_{h^+\in {\bf \gamma}^{\bf k} \mathcal F} e^{(n-1)\beta_{h^+}(o,h)}dm_o(h^+)\asymp |{\bf k}|^{-n+1}$$
where  the notation $\bf \gamma$, $\bf k$ and $\mathcal F$  are as in Theorem \ref{inc2} and
$f({\bf k}) \asymp g({\bf k})$ means that the ratio of $f({\bf k})$ and $ g({\bf k})$ lies in between two bounded constants independent of ${\bf k}$.

Hence by Proposition \ref{inc2},
$$\tilde \mu^{\Haar}_{ H}\{ h\in H: \pi(h)\in \mathcal D_\Omega\cap U_{N}(\infty)\} \ll
\sum_{{\bf k}\in \z^{p_\infty}, {|\bf k|}\ge N}|{\bf k}|^{-n+1} \ll N^{-n+1+p_\infty}.$$
\end{proof}

Recall the notion of the parabolic-corank of $\G$
with respect to $H$, introduced in \cite{OS}:
$$\text{Pb-corank}_H(\G):=\max_{\xi\in \Lambda_p(\G)\cap \partial(\tilde S)}
\left( \text{rank}(\G_\xi)-\text{rank}(\G_\xi\cap H) \right).$$

The following is shown in \cite[Thm. 1.14]{OS}:
\begin{prop}\label{cri}
 We have $\op{Pb-corank}_H(\G)=0$ if and only if the support of $\mu_H^{\PS}$ is compact,
and
$\op{Pb-corank}_H(\G)<\delta $ if and only if  $\mu_H^{\PS}$ is finite.
\end{prop}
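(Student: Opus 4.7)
The plan is to derive both equivalences from the cuspidal analysis already set up in Section \ref{sec;return-sym}, combined with Proposition \ref{mest}. The support of $\tilde\mu_H^{\PS}$ is the closure of $\{h\in H:h^+\in\Lambda(\G)\}$, so both claims reduce to analyzing, inside a Dirichlet fundamental domain $\mathcal D$ for $\G\cap H$ in $\tilde S=H(o)$, the behavior of $\{h:\pi(h)\in\mathcal D,\;h^+\in\Lambda(\G)\}$ near each cusp of $\G\cap H\backslash\tilde S$.

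First I would fix such a $\mathcal D$ and invoke the fact (\cite[Prop.~5.1]{OS}) that $\Lambda(\G)\cap\partial\mathcal D\subset\Lambda_p(\G)$; combined with the compactness of $\partial\mathcal D\cap\partial(\tilde S)$, this yields a thick-thin decomposition
\[
\mathcal D\cap\{h:h^+\in\Lambda(\G)\}\subset Y_0\cup\bigcup_{i=1}^{m}U(\xi_i),
\]
where $Y_0$ is compact and $\xi_1,\dots,\xi_m\in\Lambda_p(\G)\cap\partial(\tilde S)$, exactly as in Corollary \ref{c;return-close-to-ps} (the same proof goes through since $\Omega$ played no essential role beyond providing a cutoff).

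For the first equivalence, I would argue that compactness of $\supp\mu_H^{\PS}$ is equivalent to the statement that each $U(\xi_i)$ contributes only a relatively compact piece. Near a fixed parabolic point $\xi\in\Lambda_p(\G)\cap\partial(\tilde S)$, normalized to $\infty$ in the upper half-space model, the relevant points are controlled by Lemma \ref{l;cusp-return} and (the PS-version of) Theorem \ref{inc2}, which show that
\[
\{h^+\in\Lambda(\G):h\in H,\;\pi(h)\in\mathcal D\cap U_{c_0N}(\xi)\}\subset\bigcup_{|{\bf k}|\ge N}\Delta\,{\bm\gamma}^{\bf k}\mathcal F .
\]
If $p_\xi=\op{rank}(\G_\xi)-\op{rank}(\G_\xi\cap H)=0$ then the index set $\{{\bf k}\}$ is trivial, so the cuspidal region above contributes only a compact piece of $\mathcal D$; doing this at every $\xi_i$ yields compactness of $\supp\mu_H^{\PS}$. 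Conversely, if $p_{\xi_0}>0$ at some $\xi_0$, then the translates $\Delta{\bm\gamma}^{\bf k}\mathcal F$ with $|{\bf k}|\to\infty$ produce a sequence of points of $\Lambda(\G)$ accumulating to $\xi_0$ whose preimages in $\mathcal D$ escape every compact set, giving non-compact support.

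For the second equivalence, Proposition \ref{mest} immediately gives the upper bound
\[
\tilde\mu_H^{\PS}\{h:\pi(h)\in\mathcal D\cap U_N(\xi)\}\ll N^{-\delta+p_\xi},
\]
so summing the contributions of $Y_0$ (finite by compactness) and of each $U(\xi_i)$ as $N\to\infty$ yields $|\mu_H^{\PS}|<\infty$ whenever $\delta>p_{\xi_i}$ for every $i$, i.e.\ whenever $\op{Pb-corank}_H(\G)<\delta$. For the converse I must produce a matching lower bound at the extremal cusp $\xi_0$ realizing $\op{Pb-corank}_H(\G)$: a pointwise lower bound for the Patterson--Sullivan density on a fixed compact piece $\mathcal F\cap\Lambda(\G)$ (available by geometric finiteness and the global Sullivan shadow lemma), together with essential disjointness of the translates $\Delta{\bm\gamma}^{\bf k}\mathcal F$, yields
\[
\tilde\mu_H^{\PS}\{h:\pi(h)\in\mathcal D\cap U_N(\xi_0)\}\gg N^{-\delta+p_{\xi_0}},
\]
which diverges in total mass once $p_{\xi_0}\ge\delta$.

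The main obstacle is the lower bound in the finiteness direction: the upper bound is a clean covering argument, but to show infiniteness one must verify that the translates ${\bm\gamma}^{\bf k}\mathcal F$ capture genuinely distinct PS-mass and that $\nu_o(\mathcal F\cap\Lambda(\G))>0$. The first uses that $\xi_0$ is a \emph{bounded} parabolic fixed point so that $\G_{\xi_0}$ acts properly discontinuously on the relevant transversal, and the second uses the global Sullivan shadow lemma applied at a point of $\Lambda(\G)$ inside $\mathcal F$; combining them with the conformal density identity gives the required matching estimate.
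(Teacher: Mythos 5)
You should first note that the paper does not actually prove Proposition \ref{cri}: it is quoted verbatim from \cite[Thm.~1.14]{OS}, so there is no internal argument to compare against, and your proposal is in effect a reconstruction of the proof in that reference. Measured against it, your outline follows the same circle of ideas (thick--thin decomposition of a Dirichlet domain in $\tilde S$ via Corollary \ref{c;return-close-to-ps}, covering of each cusp neighborhood by the parabolic translates $\Delta{\bm\gamma}^{\bf k}\mathcal F$ as in \eqref{osc}, and summation of the PS-mass of the pieces), and you correctly identify the crux: everything stated in this paper is one-sided. Theorem \ref{inc2} is only an inclusion, Proposition \ref{mest} is only an upper bound $\ll N^{-\delta+p_\xi}$, and both are formulated for the return set $\mathcal D_\Omega$ rather than for $\{h:h^+\in\Lambda(\G)\}$ (the transfer to the support of $\mu_H^{\PS}$ is harmless, since \eqref{osc} and the argument of Lemma \ref{l;cusp-return} apply to all $h$ with $h^+\in\Lambda(\G)$, but it should be said). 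The two ``only if'' directions therefore genuinely require the two-sided estimates: a reverse inclusion showing the cusp region of $\mathcal D$ really contains translates ${\bm\gamma}^{\bf k}\mathcal F'$ carrying PS-mass, and the matching lower bound $\int_{h^+\in{\bm\gamma}^{\bf k}\mathcal F'}e^{\delta\beta_{h^+}(o,h)}\,d\nu_o(h^+)\gg|{\bf k}|^{-\delta}$. This is exactly what \cite[Prop.~5.2]{OS} supplies (the $\asymp$ version of Proposition \ref{mest}, obtained from the conformal transformation rule for $\nu_o$ under the parabolic translations and the explicit Busemann computation in the upper half-space model, rather than from the global shadow lemma per se, which enters only to guarantee $\nu_o(\mathcal F'\cap\Lambda(\G))>0$ for a suitable compact piece); if you do not import it, you must redo that computation, and your sketch of it is plausible but not yet a proof.

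Two smaller points to repair. First, Theorem \ref{inc2} is stated under the hypothesis $p\ge 1$, so the equivalence ``$\op{Pb-corank}_H(\G)=0$ iff $\supp\mu_H^{\PS}$ compact'' needs a separate (easy) argument in the corank-zero case: when $\op{rank}(\G_\xi\cap H)=\op{rank}(\G_\xi)$, boundedness of the parabolic point $\xi$ gives $\Lambda(\G)\setminus\{\xi\}\subset(\G_\xi\cap H)\mathcal F_0$ for a compact $\mathcal F_0$ up to finite index, and one must then convert this into boundedness of $\{h\in\mathcal D:h^+\in\Lambda(\G)\}$ near $\xi$ using the relation between $\pi(h)$ and $h^+$ for the normal geodesics to $\tilde S$; you assert this but do not argue it. Second, for the divergence direction you need the translates $\Delta{\bm\gamma}^{\bf k}\mathcal F'$ to be essentially disjoint in $\mathcal D$ modulo $\G\cap H$, which is where the bounded-parabolicity of $\xi_0$ and the choice of ${\bm\gamma}$ complementary to $\Delta$ in $\G'(\infty)$ are used; this is implicit in \cite{OS} and should be made explicit if you intend the argument to be self-contained.
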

It is also shown in \cite[Lem. 6.2]{OS} that $\text{Pb-corank}_H(\G)$
is bounded above by $n-\text{dim}(H/(H\cap K))$. Therefore if $H$ is locally isomorphic to $\SO(k,1)\times \SO(n-k)$
and $\delta>n-k$, then  $\mu_H^{\PS}$ is finite.

For $h\in \G\ba G$, we denote by $r_{h}$ the injectivity radius, that is, the map $g\mapsto hg$ is injective on the set $d(g,e)\le r_h$.
By Corollary \ref{c;return-close-to-ps}, \eqref{osc}, Proposition \ref{mest},  and by the structure of the support of $\mu_H^{\PS}$
obtained in \cite{OS}, we have the following:

\begin{thm}\label{yo} Suppose that $\G\ba \G H$ is closed.
For any compact subset $\Om$ of $\G \ba G$, there exists an open subset $Y_\Om\subset\G\ba \G H$
containing the union $\supp(\mu_{H}^{\PS})\cup \{h\in \G\ba \G H: ha_t\in \Om \text{ for some $t>0$}\}$ and
satisfying the following properties:
\begin{enumerate}
\item if $\op{Pb-corank}_H(\G)=0$, $Y_\Om$ is relatively compact;
 \item if $\op{Pb-corank}_H(\G)\ge 1$, then the following hold:
 \begin{enumerate} 
\item $Y_\e:=\{h\in Y_\Om: r_{h}>\e\}$ is relatively compact;
\item  there exist $\xi_1,\cdots, \xi_m\in \Lambda_p(\G)\cap \partial (\tilde S)$ and $c_1>0$ such
that for all small $\e>0$,
$Y_\Om- Y_{\e} \subset \cup_{i=1}^m U_{{c_1}\e^{-1}}(\xi_i);$

\item 
for all small $\e>0$, $$ \mu^{\PS}_{ H} (Y_\Om - Y_{\e}) \ll \e^{\delta-p_0}\quad\text{and}\quad
 \mu^{\Haar}_{ H} (Y_\Om -Y_\e ) \ll \e^{n-1-p_0}$$
for $p_0:=\op{Pb-corank}_H(\G)$.
\end{enumerate}\end{enumerate}
\end{thm}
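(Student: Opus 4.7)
The plan is to construct the open set $Y_\Om$ as an open neighborhood of the union of a compact ``thick part'' in the Dirichlet domain $\mathcal D$ together with cuspidal regions around the finitely many parabolic fixed points $\xi_1,\dots,\xi_m\in\Lambda_p(\G)\cap\partial(\tS)$ that arise from both Corollary \ref{c;return-close-to-ps} (applied to the return set $R_\Om:=\{h\in\G\ba\G H:ha_t\in\Om\text{ for some }t>0\}$) and the analogous structural description of $\supp(\mu_H^{\PS})$ from \cite{OS}. Concretely, I would pick a compact $Y_0\subset\mathcal D$ and open neighborhoods $U(\xi_i)$ of each $\xi_i$ in $\overline{\bH^n}$ so that $R_\Om\cup\supp(\mu_H^{\PS})\subset Y_0\cup\bigcup_{i=1}^m U(\xi_i)$, and let $Y_\Om$ be the $(H\cap\G)$-saturation of a small open thickening of this set, intersected with $\mathcal D$ and pushed down to $\G\ba\G H$. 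By construction $Y_\Om$ is open and contains both $\supp(\mu_H^{\PS})$ and the return set.

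If $\parcorank_H(\G)=0$, then $p_{\xi_i}=0$ for every such $\xi_i$. Proposition \ref{cri} already gives that $\supp(\mu_H^{\PS})$ is compact. For the return set, Theorem \ref{inc2} yields $\{h^+:\pi(h)\in\mathcal D_\Om\cap U_{c_0 N}(\xi_i)\}\subset\bigcup_{|{\bf k}|\ge N}\Delta\bg^{\bk}\mathcal F$; since $\bk$ ranges over $\bz^{0}=\{0\}$ the right-hand side is empty once $N\ge 1$, so the cuspidal contribution to $R_\Om$ is uniformly bounded distance from each $\xi_i$, and $Y_\Om$ can be chosen relatively compact. This proves (1).

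When $\parcorank_H(\G)\ge 1$, the key quantitative input is the standard relationship between the injectivity radius and depth into a cusp. In the upper half space model around a bounded parabolic fixed point $\infty$ with stabilizer $\G_\infty$ acting cocompactly on a tubular neighborhood of the minimal invariant subspace $L\subset\br^{n-1}$, the injectivity radius in $\G\ba\bH^n$ of a point $(x,y)\in C(L,R)$ is comparable to $\|(x,y)\|_{\op{Euc}}^{-1}$ once $\|(x,y)\|$ is sufficiently large, and the $M$-fibers of $\G\ba G\to\G\ba\bH^n$ contribute only bounded factors. Combined with Lemma \ref{l;cusp-return}, which controls $\|P_{L^\perp}(h^+)\|$ on $H_\Om$, this produces $c_1>0$ and $\e_0>0$ such that for any $0<\e<\e_0$ and any $h\in Y_\Om$ with $r_h\le\e$ the point $\pi(h)$ lies in $U_{c_1\e^{-1}}(\xi_i)$ for some $i$. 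This gives (2)(b), and (2)(a) follows because $Y_\e$ then sits inside a fixed compact subset of $Y_\Om$ obtained by removing these deep cusp regions.

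Finally, for (2)(c), (b) reduces the task to bounding $\mu_H^{\PS}$ and $\mu_H^{\Haar}$ on $\mathcal D_\Om\cap U_{c_1\e^{-1}}(\xi_i)$, which is precisely Proposition \ref{mest}: taking $N=c_1\e^{-1}$ gives
\[
\mu_H^{\PS}(\mathcal D_\Om\cap U_N(\xi_i))\ll\e^{\delta-p_{\xi_i}}\le\e^{\delta-p_0},\qquad \mu_H^{\Haar}(\mathcal D_\Om\cap U_N(\xi_i))\ll\e^{n-1-p_{\xi_i}}\le\e^{n-1-p_0},
\]
and summing over the finitely many $\xi_i$ yields the required estimates. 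The main obstacle is the quantitative comparison $r_h\asymp\|(x,y)\|_{\op{Euc}}^{-1}$ uniformly across the finitely many cusps: while this is classical for a single cusp in $\G\ba\bH^n$, one needs to combine Theorem \ref{cinf}, Corollary \ref{proper}, and the fact that the $H\cap M$ direction is harmless, to transfer the estimate uniformly to the frame bundle $\G\ba G$ and to prove the reverse implication (small injectivity radius forces depth in \emph{some} cusp from the given list).
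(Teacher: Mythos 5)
Your overall skeleton is the same as the paper's, which establishes the theorem by citing Corollary \ref{c;return-close-to-ps}, the covering \eqref{osc}/Theorem \ref{inc2}, Proposition \ref{mest}, and the structure of $\supp(\mu_H^{\PS})$ from \cite{OS}; in particular your reduction of (2)(c) to Proposition \ref{mest} with $N\asymp \e^{-1}$ via (2)(b), and your treatment of the corank-zero case, are in the spirit of the intended argument (with the cosmetic caveat that Theorem \ref{inc2} is stated only for $p\ge 1$, so in the corank-zero case you must invoke the proof of \cite[Prop. 5.8]{OS} rather than its statement).

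The genuine gap is in the quantitative link you use for (2)(a)--(2)(b). The asserted comparison ``$r_h\asymp\|(x,y)\|_{\rm Euc}^{-1}$ for $(x,y)\in C(L,R)$'' is false as a statement about $C(L,R)$: at a point of bounded height $y$ that is far out horizontally, every translation in $\G'(\infty)$ displaces by $\asymp \|v\|/y\gg 1$, and by Theorem \ref{cinf}(2) every $\gamma\in\G-\G_\infty$ displaces by at least $\e_0$, so the injectivity radius there is bounded \emph{below}, not $\asymp$ the reciprocal of the norm. Only the lower bound $r_h\gg 1/y\ge 1/\|\pi(h)\|$ is automatic, and that is what (2)(b) needs; but (2)(a) requires the converse implication that a point of $Y_\Om$ of large Euclidean norm near $\xi_i$ must have \emph{height} comparable to its norm and hence small injectivity radius. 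This does not follow from Lemma \ref{l;cusp-return}, Theorem \ref{cinf} and Corollary \ref{proper}, nor from the statement of Theorem \ref{inc2} alone: one needs in addition that the translation parts of the ${\bf \gamma}^{\bf k}$ have component orthogonal to $\partial(\tS)-\{\xi_i\}$ of size $\gg |{\bf k}|$ (equivalently, that $\Delta$ is cocompact in the $H$-parallel directions of $L$), which is precisely the structural input from \cite{OS} where the closedness of $\G\ba \G H$ is used. The point is not academic: for a vertical plane over a line of irrational slope through a rank-two cusp in $\bH^3$, $\supp(\mu_H^{\PS})$ contains points escaping to infinity in $(\G\cap H)\ba H$ at bounded height which are $\G_\infty$-equivalent to points of a fixed compact subset of $\G\ba G$, so their injectivity radius stays bounded below and $Y_\e$ fails to be relatively compact; of course there $\G H$ is not closed, which shows the closedness hypothesis carries real weight that your argument never engages.
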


\section{Translates of a compact piece of $\G\ba \G H$ via thickening}\label{localt}
Let $\G$ be a non-elementary geometrically finite subgroup of $G$.
Let $H$ be either symmetric or horospherical, and let $A=\{a_t\}$, $M$, $K$, $N^{\pm}$, $o$, $X_0$
be as in the subsection \ref{basicset}.

\subsection{Decomposition of measures}
Set $P:=MAN^-$, which is the stabilizer of $X_0^+$.
The measure $$dn_0=e^{(n-1)\beta_{n_0^-}(o, n_0)}dm_o(n_0^-)$$ can be seen to be a Haar measure on $N^-$
by a similar argument as in Lemma \ref{inv}.
Then  for $p=n_0a_tm\in N^-AM$,
$$dp:= dn_0dtdm$$
is a right invariant measure on $P$
where $dm$ is the probability Haar measure of $M$ and $dt$ is the Lebesgue measure on $\br$.




For $g\in G$,  consider the measure
on $gP$ given by
\be\label{nup}
\mbox{$d\nu_{gP}(gp)=e^{\delta t} d\nu_o((gp)^-)dt\;$ for $\;t=\beta_{(gp)^{-1}}(o, gp).$}
\ee

For $\Psi\in C_c(G)$,
we have:
\be\label{mea} \tilde m^{\Haar}(\Psi)=\int_{gP}\int_{N}\Psi(gpn) dn\; dp;\ee
 \be\label{mea3} \tilde m^{\BR}(\Psi)=\int_{gP}\int_{N}\Psi(gpn)d\tilde \mu^{\Haar}_{gpN}(gpn) d\nu_{gP}(gp);\ee
\be \label{mea2} \tilde m^{\BMS}(\Psi)=\int_{gP}\int_{N}\Psi(gpn)d\tilde \mu^{\PS}_{gpN}(gpn) d\nu_{gP}(gp).\ee


\subsection{Approximations of $\Psi$}
We fix a left invariant metric $d$ on $G$, which is right $H\cap M$-invariant
and which descends to the hyperbolic metric on $\bH^n=G/K$.
For a subset $S$ of $G$ and $\e>0$, $S_\e$ denotes the $\e$-neighborhood of $e$ in $S$:
$S_{\e}=\{g\in S: d(g, e)\le \e\}$.

We fix  a compact subset $\Omega$ of $\G\ba G$.
Let $r_0:=r_\Omega$ denote the infimum of the injectivity radius over all $x\in \Omega$. That is,
for all $x\in \Omega$,
the map $g \mapsto xg$ in injective on the set $\{g\in G: d(g,e)<r_\Omega\}$.

We fix a function $\kappa_{\Om} \in C^\infty(\G\ba G)$ such that
$0\le \kappa_{\Om}\le 1$, $\kappa_\Om(x)=1$ for all $x$ in the $\tfrac{r_0}{2}$-neighborhood of $\Om$ and
and $\kappa_{\Om}(x)=0$ for $x$ outside the $r_0$-neighborhood of $\Om$.

Fix $\Psi\in C^\infty(\Om)$.
For all small $\e>0$, set
\begin{equation}\label{peee}
\Psi^+_\e(x)=\sup_{g\in G_\e}\Psi(xg)\quad\text{and}\quad \Psi_\e^-(x)=\inf_{g\in G_\e}\Psi(xg).\end{equation}

For each $0<\e \le r_\Omega$,  $x\in \G\ba G$ and $g\in G_\e$,
we have
\be\label{psin} \Psi_\e^-(x)\le \Psi(xg)\le \Psi_\e^+(x)\ee
and $$\;|\Psi^{\pm}_\e(x)-\Psi(x)| \le c_1 \e \S_{\infty, 1}(\Psi)\kappa_\Om (x)$$
for some absolute constant $c_1>0$.

For $\bullet=\Haar, \BR, \BR_*$ or $ \BMS$,
we define
$$
A_\Psi^{\bullet}= \S_{\infty,1}(\Psi) \cdot m^{\bullet}(\supp(\Psi)).
$$

Define for each $g\in G$, $$\phi_0(g)=|\nu_{g(o)}|.$$
 Then
 $\phi_0$ is left $\G$-invariant and right $K$-invariant, and hence
induces a smooth function in $C^\infty(\G\ba G)^K=C^\infty(\bH^n)$. Moreover
$\phi_0$ is an eigenfunction of the Laplacian with eigenvalue $\delta(n-1-\delta)$
\cite{Sullivan1984}.

\begin{lem}\label{brbms}
For a compact subset $\Omega$ of $\G\ba G$,
\begin{enumerate}
 \item $m^{\BR}(\Omega) \ll \sup_{x\in \Omega} \phi_0(x) \cdot m^{\Haar}(\Omega K)$;
\item $m^{\BR}_*(\Omega) \ll \sup_{x\in \Omega} \phi_0(x) \cdot m^{\Haar}(\Omega K )$;
\item $m^{\BMS}(\Omega) \ll  \sup_{x\in \Omega} \phi_0(x)^2 \cdot m^{\Haar} (\Omega K).$
\end{enumerate}
\end{lem}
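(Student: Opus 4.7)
The plan is to reduce all three bounds to the $K$-invariant set $\Omega K = \pi^{-1}(\pi(\Omega))$, where $\pi : \G\ba G \to \G\ba \Hn = \G\ba G/K$, and then exploit the $K$-fiber structure. Since $\Omega \subset \Omega K$ and the measures are nonnegative, $m^\bullet(\Omega) \leq m^\bullet(\Omega K)$ for $\bullet \in \{\BR, \BR_*, \BMS\}$, so it suffices to bound the latter. Integrating along the $K$-fiber gives $m^{\Haar}(\Omega K) = C_0 \cdot \vol_{\Hn}(\pi(\Omega))$ for an absolute constant $C_0>0$, so the task reduces to estimating each $m^\bullet(\Omega K)$ in terms of $\vol(\pi(\Omega))$ and suitable powers of $\sup_\Omega \phi_0$.

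The main technical tool is the following Jacobian identity: for each fixed $u^- \in \partial \Hn$, the map $(u^+, s) \mapsto x = \pi(u(u^+, u^-, s))$ is a diffeomorphism $(\partial \Hn \setminus \{u^-\}) \times \br \to \Hn$ with
\[ dm_o(u^+) \, ds = C_0 \cdot e^{-(n-1)\beta_{u^+}(o, x)} \, d\vol(x). \]
I would verify this by a direct computation in the upper half-space (placing $u^-$ at $\infty$ and using the explicit form of $m_o$ and of the Busemann function). For part (1), this Jacobian precisely cancels the $e^{(n-1)\beta_{u^+}(o,x)}$ factor in the defining integral of $m^{\BR}$: unfolding and applying Fubini with $u^-$ held fixed gives
\begin{align*}
\tilde m^{\BR}(\Omega K) &= \int d\nu_o(u^-) \int \mathbf{1}_{\pi(\Omega)}(x)\, e^{(n-1)\beta_{u^+}(o,x)} e^{\delta \beta_{u^-}(o,x)}\, dm_o(u^+)\, ds \\
&= C_0 \int d\nu_o(u^-) \int_{\pi(\Omega)} e^{\delta \beta_{u^-}(o,x)}\, d\vol(x) \\
&= C_0 \int_{\pi(\Omega)} \phi_0(x)\, d\vol(x),
\end{align*}
using the Patterson--Sullivan identity $\phi_0(x) = |\nu_x| = \int e^{\delta \beta_{u^-}(o,x)}\, d\nu_o(u^-)$. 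Bounding $\phi_0(x) \leq \sup_\Omega \phi_0$ on $\pi(\Omega)$ then yields (1). Part (2) follows by the symmetric Fubini argument exchanging the roles of $u^+$ and $u^-$.

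For part (3), the analogous unfolding of $m^{\BMS}(\Omega K)$ has singular boundary factors on both sides, so the clean Jacobian trick cannot be applied directly. The strategy is to dominate one of the singular factors pointwise. Writing $e^{\delta \beta_{u^+}(o, x)} d\nu_o(u^+) = d\nu_x(u^+)$ and using $|\nu_x| = \phi_0(x) \leq \sup_\Omega \phi_0$ on $\pi(\Omega)$, I would bound $d\nu_x(u^+) \leq \sup_\Omega \phi_0 \cdot d\tilde\nu_x(u^+)$ at each fixed $x$, where $\tilde\nu_x = \nu_x/|\nu_x|$ is the PS probability measure. The remaining integral can then be estimated by reapplying the Fubini scheme of (1) to the second $d\nu_o$ factor, which contributes a second factor of $\sup_\Omega \phi_0$ together with $\vol(\pi(\Omega))$, yielding the $\phi_0^2$ bound. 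The main technical obstacle is precisely the mutual singularity of the $\nu_x$ with respect to the Lebesgue densities $m_x$ on $\partial \Hn$, which is why the domination must be performed fiberwise at fixed $x$ before any reordering of integrals.
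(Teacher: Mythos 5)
Your parts (1) and (2) are correct: the Jacobian identity $dm_o(u^+)\,ds=C\,e^{-(n-1)\beta_{u^+}(o,x)}d\vol(x)$ (for fixed $u^-$) is exactly the consistency of the $(u^+,u^-,s)$ and $(x,u^\pm)$ parametrizations of the Liouville measure, and your Fubini computation just proves the identity the paper quotes without proof, namely $m^{\BR}(\psi)=m^{\BR}_*(\psi)=\int\psi\,\phi_0\,dm^{\Haar}$ for $K$-invariant $\psi$; together with the $K$-invariance of $\phi_0$ this yields (1)--(2). (You should add a word on passing through the $\G$-quotient -- unfold over a fundamental domain or note the fiberwise identity is $\G$-equivariant -- but that is routine.)

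Part (3), however, has a genuine gap. The rewriting $e^{\delta\beta_{u^+}(o,x)}d\nu_o(u^+)=d\nu_x(u^+)\le\sup_\Omega\phi_0\,d\tilde\nu_x(u^+)$ is legitimate (it is just multiplying and dividing by $\phi_0(x)$), but the next step -- ``reapplying the Fubini scheme of (1) to the second $d\nu_o$ factor, which contributes a second factor of $\sup_\Omega\phi_0$ together with $\vol(\pi(\Omega))$'' -- is unjustified. The scheme of (1) is precisely the cancellation of the Lebesgue conformal density of dimension $n-1$: it converts a factor $e^{(n-1)\beta_{u^\pm}(o,x)}dm_o(u^\pm)\,ds$ into $C\,d\vol(x)$. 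After your first domination no such factor remains: what is left is $\mathbf 1_{\pi(\Omega)}(x)\,d\tilde\nu_x(u^+)\,e^{\delta\beta_{u^-}(o,x)}d\nu_o(u^-)\,ds$, in which both boundary measures are Patterson--Sullivan measures of dimension $\delta$, supported on $\Lambda(\G)$, hence singular with respect to $m_o$ when $\delta<n-1$; there is no Jacobian turning the $(u^-,s)$- (or $(u^+,s)$-) integration into a $d\vol(x)$-integral, so the extra factor $\sup_\Omega\phi_0\cdot\vol(\pi(\Omega))$ cannot be produced this way. In effect the remaining integral is of the same nature as $m^{\BMS}(\Omega K)$ itself -- the BMS measure does not disintegrate absolutely continuously over the base $\G\ba\Hn$ -- so the proposed reduction is circular. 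The paper's proof of (3) instead invokes Sullivan's smearing argument, whose mechanism is different: since $\beta_{u^+}(o,x)+\beta_{u^-}(o,x)$ equals twice the Gromov product $(u^+\,|\,u^-)_o$ and is therefore constant along the geodesic $(u^-,u^+)$, one thickens each unit segment of geodesic lying over $\pi(\Omega)$ into a tube of definite volume over a bounded enlargement of $\pi(\Omega)$, on which $e^{\delta\beta_{u^+}(o,y)}e^{\delta\beta_{u^-}(o,y)}\gg e^{2\delta(u^+|u^-)_o}$; integrating against $d\nu_o(u^+)\,d\nu_o(u^-)$ then bounds $m^{\BMS}(\Omega K)$ by a constant times $\int\phi_0^2\,d\vol$ over that enlargement (harmless for the way the lemma is used, since the implied constants may depend on $\Omega$). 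The point is that the two PS factors must be used \emph{simultaneously}, reassembling $\phi_0(y)^2$ at smeared points $y$, rather than being peeled off one at a time as in your proposal.
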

\begin{proof}
The first two claims follow since for any
$K$-invariant function $\psi$ in $\G\ba G$,
 $m^{\BR}_*(\psi)=m^{\BR}(\psi)=\int_{\G\ba G} \psi(g) \phi_0(g) dm^{\Haar}(g)$.
The third one follows from the smearing argument of Sullivan, see~\cite[Proof of Prop. 5]{Sullivan1984}.
\end{proof}

On the other hand, there exists $\ell \in\N$ such that
for all $\Psi\in C^\infty(\Om)$, $\S_{\infty,1}(\Psi)\ll \S_\ell ({\Psi})$ \cite{Au}.
Hence it follows from Lemma \ref{brbms} that there exists $\ell\in \N$ such that
 for all $\Psi\in C^\infty(\Om)$, any $\bullet=\Haar, \BR,\BR_*$ or $ \BMS$, and any $0<\e <r_\Om$,
\be \label{bulleti} A_\Psi^\bullet  \ll \S_{\infty,1}(\Psi) \cdot m^{\Haar}(\supp(\Psi))\ll
{\S_\ell (\Psi) }
\quad \text{ and }\quad \S_{\ell }(\Psi^{\pm}_\e) \ll \S_\ell (\Psi)\ee
where the implied constants depend only on $\Om$.




\subsection{Thickening of a compact piece of $yH$}\label{th}

For the rest of this section,
fix $y\in \G\ba G$ and
$H_0\subset H$ be a compact subset such that
the map $h \mapsto yh$ is injective on $H_0$.
 Fix $0<\e_0<r_\Om$ which is smaller than the injectivity radius of $yH_0$.

Fix non-negative functions $\Psi\in C^\infty(\Omega)$ and $\phi\in C^\infty(yH_0)$.
Let $M'\subset M$ be a smooth cross section for $H\cap M$ in $M$ and set $P':=M'AN^-$.
As $hp=h'p'$ implies $h=h'm$ and $p=m^{-1}p'$ for $m\in H\cap M$,
it follows that the product map $H\times P'\to G$ is a diffeomorphism onto its image, which
is a Zariski open neighborhood of $e$.
Let $dp'$ be a smooth measure on $P'$ such that
$dp=d_{H\cap M}m dp' $ for $p=mp'$.
For $0<\e<\e_0$, let $\rho_\e\in C^\infty(P'_\e)$ be a non-negative function
such that $\int\rho_\e dp'=1$, and
we define $\Phi_\e\in C_c^\infty(\G\ba G)$
 by \be \Phi_\e(g)=\begin{cases} \phi(yh)\rho_\e(p) &\text{if $g=yhp\in yH_0 P'_\e$} \\
                     0&\text{otherwise}.
                    \end{cases}\ee

\begin{lem} \label{comp}
For all $0<\e<\e_0$ and $t>0$,
$$\int_{\G\ba G} \Psi_{\e}^- (ga_t) \Phi_{\e}(g) dg \le
\int_{h\in H_0} \Psi(yha_t ) \phi(yh)dh\le
\int_{\G\ba G} \Psi_{\e}^+ (ga_t) \Phi_{\e}(g) dg .$$
\end{lem}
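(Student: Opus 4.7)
The plan is to unfold the integral over $\G\ba G$ into the local $HP'$ coordinates, and then exploit the fact that conjugation by $a_{-t}$ contracts $P'$ for forward time $t>0$.

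First, I would choose $\e<\e_0$ small enough that the map $(h,p')\mapsto yhp'$ realizes $H_0\times P'_\e$ as an embedded submanifold of $\G\ba G$ on which $\Phi_\e$ is supported. Using the Haar decomposition $dg = dh\,dp'$ compatible with the normalizations fixed in Lemma~\ref{inv} and the paragraph preceding the statement, this lets me rewrite the left and right expressions in the lemma as
$$\int_{\G\ba G}\Psi_\e^\pm(ga_t)\Phi_\e(g)\,dg = \int_{H_0}\int_{P'_\e}\Psi_\e^\pm(yhp'a_t)\phi(yh)\rho_\e(p')\,dp'\,dh.$$

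Next comes the crucial geometric input: for $p'=m'an\in M'_\e A_\e N^-_\e$ and any $t>0$, the conjugate $a_{-t}p'a_t = m'a(a_{-t}na_t)$ stays in $P'_\e$, hence in $G_\e$, since $a_{-t}$ commutes with $m'$ and with $a$ while $\Ad(a_{-t})$ contracts $N^-$ for $t>0$. Writing the identity $yhp'a_t = (yha_t)(a_{-t}p'a_t)$ and applying the defining sandwich
$$\Psi_\e^-(x) \le \Psi(xg) \le \Psi_\e^+(x) \qquad (g\in G_\e)$$
with $x = yhp'a_t$ and $g=(a_{-t}p'a_t)^{-1}\in G_\e$ gives the pointwise estimate
$$\Psi_\e^-(yhp'a_t) \le \Psi(yha_t) \le \Psi_\e^+(yhp'a_t).$$
Multiplying by the nonnegative weight $\phi(yh)\rho_\e(p')$, integrating over $H_0\times P'_\e$, and using the normalization $\int_{P'_\e}\rho_\e(p')\,dp'=1$ to collapse the $p'$-integral of the middle term, I would recover both inequalities of the lemma simultaneously.

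The step that demands the most care is the first one: verifying that the unfolding $dg = dh\,dp'$ holds on $yH_0 P'_\e$ with no stray Jacobian. When $H=N^+$ is horospherical, this is immediate from~\eqref{mea}; in the symmetric case one must check that the cross-sectional measure $dp'$, combined with the Haar measure $dh$ on $H$ from Lemma~\ref{inv}, reconstitutes the ambient Haar measure under the local product diffeomorphism $H\times P'\to G$ near the identity. Once this normalization is in hand, the contraction argument and the $\rho_\e$-averaging are routine, and the use of $t>0$ (rather than an arbitrary sign of $t$) is what keeps $a_{-t}p'a_t$ inside $G_\e$.
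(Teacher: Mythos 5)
Your argument is essentially the paper's own proof: the key identity $yhp'a_t=(yha_t)(a_{-t}p'a_t)$ with $a_{-t}p'a_t\in P_\e\subset G_\e$ for $t>0$, the sandwich $\Psi_\e^-\le\Psi(\cdot\,g)\le\Psi_\e^+$, and integration against $\rho_\e$ with $\int\rho_\e\,dp'=1$ are exactly the steps used there, the only difference being that you unfold the $\G\ba G$ integral into $H_0\times P'_\e$ coordinates first while the paper integrates the pointwise bound over $H_0$ and then folds up; the measure identification $dg=dh\,dp'$ you flag is likewise used (implicitly) in the paper via its choice of $dp'$ with $dp=d_{H\cap M}m\,dp'$. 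So the proposal is correct and matches the paper's approach.
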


\begin{proof}
For all $p\in P'_\e$, $h\in H_0$ and $t>0$,
$yh p a_t=yha_t(a_{-t} pa_t)\in yha_tP_\e$
and hence
$$ \int_{h\in H_0} \Psi(yha_t ) \phi(yh)dh
 \le   \int_{h\in H_0} \Psi_\e^+(yh p a_t ) \phi(yh) dh.$$
Integrating against $\rho_\e$,
we have
\begin{align*}
&\int_{h\in H_0} \Psi(yha_t ) \phi(yh)dh
\\ \le  &  \int_{yhp\in yH_0P_\e'} \Psi_\e^+(yh p a_t ) \phi(yh)\rho_\e(p) dhd p
\\ &=\int_{\G\ba G} \Psi_\e^+(g a_t ) \Phi_\e ( g)  dg
.\end{align*}
The other direction is proved similarly.
\end{proof}

\begin{lem}\label{mbre} For all $0<\e<\e_0$,
$$m^{\BR}_*(\Phi_\e)=(1+O(\e)) \mu^{\PS}_{yH}(\phi).$$
\end{lem}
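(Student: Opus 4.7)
Proof plan: The identity follows from identifying, to leading order in $\e$, the measure $\tilde m^{\BR}_*$ restricted to the thin tube $\tilde y H_0 P'_\e$ with the product measure $d\tilde\mu^{\PS}_{\tilde y H}(\tilde y h)\, dp'$.

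Lift $y$ to $\tilde y \in G$. By the choice of $\e_0$ and the fact that $H \times P' \to G$ is a local diffeomorphism at $e$, the map $(h,p) \mapsto \tilde y h p$ is a diffeomorphism of $H_0 \times P'_{\e_0}$ onto its image in $G$, and this image injects into $\G\backslash G$. Hence
\[
m^{\BR}_*(\Phi_\e) = \int_{H_0} \int_{P'_\e} \phi(\tilde y h)\,\rho_\e(p)\, d\tilde m^{\BR}_*(\tilde y h p).
\]
The key step is the local product decomposition
\[
d\tilde m^{\BR}_*(\tilde y h p) = J(h,p)\, d\tilde\mu^{\PS}_{\tilde y H}(\tilde y h)\, dp'
\]
on $H_0 \times P'_{\e_0}$, where $J$ is smooth, positive, and $J(h,e)=1$ for every $h \in H_0$. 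This is the $m^{\BR}_*$-analog of \eqref{mea2} (with the Haar-type measure $dp'$ on $P'$ in place of the Patterson--Sullivan-type $d\nu_{gP}$). To verify $J(h,e)=1$, I compute both sides directly from Definition~\ref{bm} at the base point $\tilde y h$: the Patterson--Sullivan factor $e^{\delta \beta_{(\tilde y h)^+}(o,\tilde y h)}\, d\nu_o((\tilde y h)^+)$ appearing in $d\tilde m^{\BR}_*$ is precisely $d\tilde\mu^{\PS}_{\tilde y H}(\tilde y h)$, and the remaining Lebesgue-density factor $e^{(n-1)\beta_{u^-}(o,\pi u)}\, dm_o(u^-)\, ds$ along the transversal $P'$-direction matches $dp'$ by a Busemann-cocycle calculation identical to the one implicit in formula~\eqref{mea}.

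Granting the decomposition, Fubini gives
\[
m^{\BR}_*(\Phi_\e) = \int_{H_0} \phi(\tilde y h)\, d\tilde\mu^{\PS}_{\tilde y H}(\tilde y h) \int_{P'_\e} \rho_\e(p) J(h,p)\, dp'.
\]
Smoothness of $J$ on the compact set $H_0 \times \overline{P'_{\e_0}}$ together with $J(h,e)=1$ yields $|J(h,p) - 1| = O(\e)$ uniformly for $(h,p) \in H_0 \times P'_\e$; combined with $\int \rho_\e \, dp' = 1$, the inner integral equals $1 + O(\e)$, and the outer integral is $\mu^{\PS}_{yH}(\phi)$. This gives $m^{\BR}_*(\Phi_\e) = (1+O(\e))\mu^{\PS}_{yH}(\phi)$.

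The only delicate point is the local decomposition with $J(h,e)=1$. For horospherical $H$ this is essentially the $m^{\BR}_*$-version of \eqref{mea2}, since $H \cap M = \{e\}$ and $P' = P$, so $dp'=dp$. For symmetric $H$, the transversality of $H$ and $P'$ at $e$ together with the conformal property of the densities $\{\nu_x\}$ and $\{m_x\}$ yield the same formula; the content is that the product factorization of $d\tilde m^{\BR}_*$ on $G/M$ in the $(u^+, u^-, s)$-variables intertwines correctly with the $HP'$-coordinates at $p=e$, and the $H\cap M$-averaging built into the definition of $dp'$ from $dp$ accounts for the quotient by $H\cap M$ used to define $\tilde \mu^{\PS}_{\tilde y H}$.
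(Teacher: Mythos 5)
Your proposal is correct and is essentially the paper's own argument: the paper proves the lemma by exactly this computation, writing $d\tilde m^{\BR}_*$ on the tube $yH_0P'_\e$ in the $(u^+,u^-,s)$-coordinates of Definition \ref{bm}, using that $(yhp)^+=(yh)^+$ so the $\nu_o$-factor gives $d\tilde\mu^{\PS}_{y\bar H}$ up to the cocycle factor $e^{\delta\beta_{(yh)^+}(yh,yhp)}=1+O(\e)$, and a Busemann-cocycle computation showing the remaining Lebesgue factor equals $(1+O(\e))\,dp=(1+O(\e))\,d_{H\cap M}(m)\,dp'$. The only difference is presentational: you package these cocycle factors as a smooth Jacobian $J$ with $J(\cdot,e)=1$ and invoke uniform smoothness on a compact set, whereas the paper bounds each factor by $1+O(\e)$ directly (e.g. via $|\beta_{g^+}(yh,yhp)|\le d(e,p)\le\e$ and $e^{-(n-1)t}=1+O(\e)$).
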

\begin{proof}
Choose $g_y\in G$ so that $y=\G\ba \G g_y$
and set $\tilde \phi(g_yh):=\phi(yh)$
and $\tilde \Phi_\e(g_yhp):=\tilde \phi(g_yh)\rho_\e(p)$ for $hp\in H_0P_\e'$ and
zero otherwise.
As $0<\e<\e_0$, we have
$m^{\BR}_*(\Phi_\e)=\tilde m^{\BR}_*(\tilde \Phi_\e)$ and
 $ \mu^{\PS}_{yH}(\phi)=\tilde\mu^{\PS}_{yH}(\tilde \phi)$.
By the definition, we have
 \begin{align*}
\tilde m^{\BR}_*(\tilde \Phi_\e)&=\int_{g\in  G/M}
\int_M  \tilde \Phi_\e (gm) dm  \; e^{\delta \beta_{g^+}(o, g)}\;
 e^{(n-1) \beta_{g^-}(o, g) }\; d\nu_o(g^+) dm_o(g^-) ds
\end{align*}
where $s=\beta_{g^-}(o, g)$.
For simplicity, we set $g_y=y\in G$ by abuse of notation.
For $g=yhp\in H_0P'_\e$,
as $|\beta_{g^+}(yh, g)|\le d(yh, yhp)=d(e,p)\le \e$,
we have
$e^{\delta\beta_{g^+}(yh, g)} =1+O(\e)$.
Since $g^+=(yh)^+$,
we have
$$e^{\delta \beta_{g^+}(o, g)}d\nu_o(g^+)= (1+O(\e)) e^{\delta \beta_{(yh)^+}(o, yh)}d\nu_o((yh)^+)
=(1+O(\e)) d\tilde \mu_{y\bar H}^{\PS}(yh).$$
On the other hand, as $\{m_x\}$ is $G$-invariant,
$$dm_o(g^-)=dm_{(yh)^{-1}(o)} (p^-)= e^{(n-1) \beta_{p^-}(o, (yh)^{-1}(o))} dm_o(p^-).$$

Since  $p^-=n_0^-$ for $p=n_0a_tm$, we have  \begin{align*} &\beta_{g^-}(o, g) +\beta_{p^-} (o, (yh)^{-1}(o))\\ &=
\beta_{p^-}((yh)^{-1}(o), p) +\beta_{p^-} (o, (yh)^{-1}(o))\\&=\beta_{p^-}(o,p)
\\ & =\beta_{n_0^-}(o, n_0 a_t)=
\beta_{X_0^-}(o,  a_t)+\beta_{n_0^-}(o, n_0 )
\\ &= -t +\beta_{n_0^-}(o, n_0 ).\end{align*}

As $n_0a_tm\in P_\e$,  we have $e^{-(n-1)t}=1+O(\e)$
 and hence 
 \begin{align*}& e^{(n-1)\beta_{g^-}(o, g)} dm_o(g^-) ds dm
\\ &=e^{(n-1)(\beta_{g^-}(o, g)+  \beta_{p^-} (o, (yh)^{-1}(o))} dm_o(p^-) ds dm
\\ &= 
e^{-(n-1)t}  e^{(n-1)\beta_{n_0^-}(o,n_0)}dm_o(n_0^-) dt dm \\
&=e^{-(n-1)t}  dn_0 dt dm=
(1+O(\e)) dp.\end{align*}

Since $dp=d_{H\cap M}(m)dp'$ for $p=mp'$,
for $\overline \phi(yh):=\int_{H\cap M}\tilde{\phi}(yhm) d_{H\cap M} (m)$, we have
 \begin{align*}
\tilde m^{\BR}_*(\tilde \Phi_\e)&=(1+O(\e)) \int_{P_\e'} \int_{yh\in yH_0/(H\cap M)}
  \overline{\phi}(yh) \rho_\e(p')   d\tilde \mu_{y\bar H}^{\PS}(yh)
 dp'
\\ &= (1+O(\e)) \tilde \mu_{yH}^{\PS}(\tilde \phi) .
\end{align*}
\end{proof}

\begin{cor}\label{psi} There exists $\ell\in \N$ such that
for any $\phi\in C^\infty(yH_0)$,
$\mu_H^{\PS}(\phi)\ll \S_\ell(\phi)$
where the implied constant depends only on the compact subset $yH_0$.
\end{cor}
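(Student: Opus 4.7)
The plan is to chain together Lemma~\ref{mbre} with a crude sup-norm bound on the BR$_*$-integral and a Sobolev embedding on $yH_0$. First I would fix once and for all a positive constant $\e<\e_0$; note that $\e_0$ depends only on $yH_0$ and $\Omega$, so the choice of $\e$ and the associated bump function $\rho_\e$ are fixed in terms of $yH_0$. Lemma~\ref{mbre} then gives
$$\mu^{\PS}_{yH}(\phi) = (1+O(\e))\, m^{\BR}_*(\Phi_\e),$$
so it suffices to bound $m^{\BR}_*(\Phi_\e)$ linearly in a Sobolev norm of $\phi$.

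Next I would estimate crudely
$$m^{\BR}_*(\Phi_\e) \;\le\; \|\Phi_\e\|_\infty \cdot m^{\BR}_*(yH_0 P'_\e).$$
Since $yH_0 P'_\e$ is a relatively compact subset of $\G\ba G$ (its closure is fixed once $yH_0$ and $\e$ are fixed) and $m^{\BR}_*$ is locally finite, the second factor is a finite constant depending only on $yH_0$. From the defining formula $\Phi_\e(yhp) = \phi(yh)\rho_\e(p)$ we read off
$$\|\Phi_\e\|_\infty \le \|\phi\|_\infty \cdot \|\rho_\e\|_\infty,$$
and $\|\rho_\e\|_\infty$ is also a fixed constant once $\e$ has been chosen.

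Finally, by the Sobolev embedding theorem applied on the compact set $yH_0 \subset H$, for any integer $\ell > \tfrac12 \dim H$ one has $\|\phi\|_\infty \ll \mathcal{S}_\ell(\phi)$, with implied constant depending only on $yH_0$. Chaining these three estimates gives $\mu^{\PS}_{yH}(\phi) \ll \mathcal{S}_\ell(\phi)$ with all constants depending only on $yH_0$, as claimed. There is no real obstacle; the only point to verify is that $\e$, $\rho_\e$, and $m^{\BR}_*(yH_0 P'_\e)$ are all chosen independently of $\phi$, which is evident from the construction of the thickening in the preceding paragraphs.
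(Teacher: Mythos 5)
Your proof is correct and takes essentially the same route as the paper: both reduce via the thickening $\Phi_\e$ and Lemma \ref{mbre} to bounding $m^{\BR}_*(\Phi_\e)$ by a Sobolev norm of $\phi$. The only (harmless) difference is in the last step, where the paper invokes Lemma \ref{brbms} and \eqref{bulleti} together with $\S_\ell(\Phi_{\e_0})\ll\S_\ell(\phi)\S_\ell(\rho_{\e_0})$, while you use the cruder bound $\|\Phi_\e\|_\infty\cdot m^{\BR}_*(\overline{yH_0P'_\e})$ (finite by local finiteness of $m^{\BR}_*$) plus Sobolev embedding on $H$ applied directly to $\phi$.
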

\begin{proof}
By Lemmas \ref{mbre}, \ref{brbms} and \eqref{bulleti}, there exists $\ell\in\N$ such that
 $$\mu_H^{\PS}(\phi)\ll m_*^{\BR}(\Phi_{\e_0})\ll \S_\ell (\Phi_{\ell})\ll \S_{\ell} (\phi) \S_\ell(\rho_{\e_0})
\ll  \S_{\ell} (\phi).$$
where the implied constants depending only on $\e_0$ and $yH_0$.
\end{proof}


\begin{thm}\label{maince}\label{efn} Suppose that $\G$ is Zariski dense in $G$ and that $L^2(\G\ba G)$ has a spectral gap.
Then there exist $\eta_0>0$ and $\ell\ge 1$ such that
for any $\Psi\in C^\infty(\Omega)$
and $\phi\in C^\infty(yH_0)$,
we have
\begin{multline*} e^{(n-1-\delta)t} \int_{yh\in yH} \Psi(yha_t ) \phi(yh)dh
\\ = \frac{1}{|m^{\BMS}|} m^{\BR}(\Psi)\tilde\mu^{\PS}_{yH}(\phi)  + e^{-\eta_0 t} O( \S_\ell (\Psi) \S_\ell (\phi)), \end{multline*}
with the implied constant depending on $\Om$ and $yH_0$.
\end{thm}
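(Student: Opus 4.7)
The plan is to reduce Theorem~\ref{efn} to Theorem~\ref{mc2} by the thickening method already set up in Lemma~\ref{comp}. Concretely, for $0<\e<\e_0$, Lemma~\ref{comp} sandwiches the $H$-integral between the two matrix coefficients
\[
\la a_t \Psi_\e^-, \Phi_\e\ra_{m^{\Haar}} \;\le\; \int_{H_0}\Psi(yha_t)\phi(yh)\,dh \;\le\; \la a_t \Psi_\e^+, \Phi_\e\ra_{m^{\Haar}},
\]
so it suffices to analyze $e^{(n-1-\delta)t}\la a_t\Psi_\e^\pm,\Phi_\e\ra$. Theorem~\ref{mc2} gives, for some $\eta>0$ and $\ell\in\N$ (depending only on the spectral gap data),
\[
e^{(n-1-\delta)t}\la a_t\Psi_\e^\pm,\Phi_\e\ra = \frac{m^{\BR}(\Psi_\e^\pm)\,m^{\BR}_*(\Phi_\e)}{|m^{\BMS}|} + O\bigl(e^{-\eta t}\,\S_\ell(\Psi_\e^\pm)\,\S_\ell(\Phi_\e)\bigr).
\]

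Next I would replace the main term by the desired expression up to an $O(\e)$ error. From \eqref{psin} and Lemma~\ref{brbms} (together with \eqref{bulleti}) one gets $|m^{\BR}(\Psi_\e^\pm)-m^{\BR}(\Psi)|\ll \e\,\S_\ell(\Psi)$, while Lemma~\ref{mbre} gives $m^{\BR}_*(\Phi_\e)=(1+O(\e))\,\tilde\mu^{\PS}_{yH}(\phi)$, and Corollary~\ref{psi} controls $\tilde\mu^{\PS}_{yH}(\phi)\ll \S_\ell(\phi)$. Combining these,
\[
\frac{m^{\BR}(\Psi_\e^\pm)\,m^{\BR}_*(\Phi_\e)}{|m^{\BMS}|}=\frac{m^{\BR}(\Psi)\,\tilde\mu^{\PS}_{yH}(\phi)}{|m^{\BMS}|}+O\bigl(\e\,\S_\ell(\Psi)\,\S_\ell(\phi)\bigr).
\]

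For the error term in Theorem~\ref{mc2} I need Sobolev bounds on $\Psi_\e^\pm$ and $\Phi_\e$. By \eqref{bulleti}, $\S_\ell(\Psi_\e^\pm)\ll \S_\ell(\Psi)$ uniformly in $\e$. The term $\Phi_\e(yhp)=\phi(yh)\rho_\e(p)$ is however concentrated in a ball of radius $\e$ in the $\dim(P')$-dimensional direction transverse to $H$, so any fixed left-invariant differential operator of order up to $\ell$ applied to $\rho_\e$ produces a factor $\e^{-\ell}$, and the $L^2$-mass gives another $\e^{-\dim(P')/2}$. Hence $\S_\ell(\Phi_\e)\ll \e^{-N}\S_\ell(\phi)$ for some $N=N(\ell,\dim P')$ independent of $\phi$. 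Putting everything together,
\[
e^{(n-1-\delta)t}\la a_t\Psi_\e^\pm,\Phi_\e\ra = \frac{m^{\BR}(\Psi)\,\tilde\mu^{\PS}_{yH}(\phi)}{|m^{\BMS}|}+O\bigl((\e+e^{-\eta t}\e^{-N})\,\S_\ell(\Psi)\S_\ell(\phi)\bigr).
\]

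The final step is to optimize in $\e$: choosing $\e=e^{-\eta t/(N+1)}$ balances both error terms and yields the bound $O(e^{-\eta_0 t}\S_\ell(\Psi)\S_\ell(\phi))$ with $\eta_0=\eta/(N+1)>0$, after which the sandwich of Lemma~\ref{comp} gives the theorem. The main obstacle is the careful $\e$-dependent Sobolev estimate for the thickened function $\Phi_\e$ in the non-compactly-supported $P'$-direction; the rest is standard bookkeeping using the ingredients of Section~\ref{localt}. Note that $\Psi_\e^\pm$ and $\Phi_\e$ are supported in a fixed neighborhood of the compact sets $\Om$ and $yH_0 P'_{\e_0}$, so all implied constants depend only on $\Om$ and $yH_0$.
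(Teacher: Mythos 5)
Your proposal is correct and follows essentially the same route as the paper's proof: the sandwich of Lemma~\ref{comp}, Theorem~\ref{mc2} applied to $\la a_t\Psi_\e^\pm,\Phi_\e\ra$, the main-term replacements via Lemma~\ref{mbre}, \eqref{bulleti} and Corollary~\ref{psi}, the Sobolev bound $\S_\ell(\Phi_\e)\ll \e^{-q_\ell}\S_\ell(\phi)$ coming from $\rho_\e$, and the final optimization $\e=e^{-\eta t/(1+q_\ell)}$ are exactly the steps in the paper.
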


\begin{proof} It suffices to prove the claim for $\Psi$ and $\phi$ non-negative.
Let $\ell \ge 1$ be bigger than those
$\ell$'s in Theorem \ref{mc2}, \eqref{bulleti} and Corollary \ref{psi}.
Let $q_\ell>0$ (depending only on the dimension of $P'$) be such that
$\S_\ell(\rho_\e)= O(\e^{-q_\ell})$, so that
\[
\mathcal S_\ell(\Phi_\e)\ll \S_\ell(\phi) \S_\ell(\rho_\e)\ll \S_\ell(\phi)\e^{-q_\ell}.
\]

Note that
$\S_\ell(\Psi_\e^{\pm})\ll \S_\ell(\Psi)$ and that
$m^{\BR}(\Psi_\e^{\pm})=m^{\BR}(\Psi) + O(\e A_\Psi^{\BR})$.

By Lemma \ref{comp},
$$ \la a_t \Psi_\e^-, \Phi_\e\ra \le
 \int_{yh\in yH} \Psi(yha_t ) \phi(yh)dh \le \la a_t \Psi_\e^+, \Phi_\e\ra.$$

By Lemma
\ref{mbre} and Theorem \ref{mc2}, there exists $\eta>0$ such that
\begin{align*} & e^{(n-1-\delta)t}\la a_t \Psi_\e^{\pm}, \Phi_\e\ra
\\& =\tfrac{1}{|m^{\BMS}|} m^{\BR}(\Psi_\e^{\pm})m^{\BR}_*(\Phi_\e)   +
e^{-\eta t} O(\S_\ell (\Psi) \S_\ell (\phi) \e^{-q_\ell})
\\& =\tfrac{1}{|m^{\BMS}|} m^{\BR}(\Psi)\tilde \mu^{\PS}_{yH}(\phi)  + O(\e A_\Psi^{\BR}  \tilde \mu^{\PS}_{yH}(\phi)) +
e^{-\eta t} O(\S_\ell (\Psi) \S_\ell(\phi) \e^{-q_\ell}). \end{align*}

By taking $\e=e^{-\eta t/(1+q_\ell)}$ and $\eta_0 =\eta/(1+q_\ell)$,
we obtain that
\begin{align*}& e^{(n-1-\delta)t} \int_{yh\in yH} \Psi(yha_t ) \phi(yh)dh
\\&=\tfrac{1}{|m^{\BMS}|} m^{\BR}(\Psi)\tilde \mu^{\PS}_{yH}(\phi)  + e^{-\eta_0 t} O(A_\Psi^{\BR}\tilde \mu^{\PS}_{yH}(\phi) +
\S_\ell (\Psi) \S_\ell (\phi)).
\end{align*}
By \eqref{bulleti} and Corollary \ref{psi}, this
proves the theorem.
\end{proof}

We remark that we don't need to assume $yH$ is closed in the above theorem,
as $\phi$ is assumed to be compactly supported.

When $H$ is horospherical or symmetric with $\text{Pb-corank}_H(\G)=0$,
Theorem \ref{effa} is a special case of Theorem \ref{efn} by Theorem \ref{horo} and Theorem \ref{cri}.

\section{Distribution of $\G\ba \G H a_t$ and Transversal intersections }\label{sec;eff-equi}
Let $\G, H,A=\{a_t\}$, $P=MAN^-$, etc
be as in the last section \ref{localt}. We set $N=N^+$.
Let $\{\mu_x\}$ be a $\G$-invariant conformal density
of dimension $\delta_\mu>0$ and let $\tilde \mu_{gH}$ and $\tilde \mu_{gN}$ be the measures
on $gH$ and $gN$ respectively defined with respect to $\{\mu_x\}$.

\subsection{Transversal intersections}
Fix $x\in \G\ba G$. Let $\e_0>0$ be the injectivity radius at $x$.
 In particular, the product map
$P_{\e_0}\times N_{\e_0} \to \G\ba G$ given
by $(p,n)\mapsto xpn$ is injective.
For any $\e\leq\e_0$ we set $B_{\e}:=P_{\e}N_\e$.

For some $c_1>1$, we have
 $N_{c_1^{-1}\e} P_{c_1^{-1}\e}\subset  B_\e:=P_\e N_\e \subset N_{c_1\e}P_{c_1\e}$ for all $\e>0$.
Therefore, in the arguments below, we will frequently identify $B_\e$ with $N_\e P_\e$, up to a fixed Lipschitz constant.






In the next lemma,
let $\Psi\in C_c^\infty(xB_{\e_0})^{H\cap M}$ and
$\phi\in C_c^\infty(yH)^{H\cap M}$.
For $0<\e\leq \e_0,$  define $\psi_{\e}^\pm\in C^\infty(xP)$ by
$$
\psi_{\e}^\pm(xp)=\int_{xpN} {\Psi}_{\e}^\pm(xpn)  d\mu_{xpN}(xpn)
$$
where  ${\Psi}_{\e}^\pm$ are as given in \eqref{peee}.

Define $\phi_{\e}^{\pm}\in C_c^\infty(yH)$
by
\be\label{dp}\phi_{\e}^{+}(yh)=\sup_{h'\in H_\e} \phi(yhh') \quad \text{and}\quad
\phi_{\e}^{-}(yh)=\inf_{h'\in H_\e} \phi(yhh') .\ee
Since the metric $d$ on $G$ is assumed to be left $G$-invariant and right $H\cap M$-invariant, we have
$mH_\e m^{-1}=H_\e$ and $mN_\e m^{-1}=N_\e$. Therefore the functions $\psi_e^\pm$ and $\phi_\e^{\pm}$
are $H\cap M$-invariant.

The following lemma is analogous to Corollary 2.14 in \cite{OS}; however
we are here working in $\G\ba G$ rather than in $\T^1(\G\ba \bH^n)$ as opposed to \cite{OS}.
Let $$P_x(t):=\{p\in P_{\e_0}/(H\cap M): \supp(\phi) a_t \cap xpN_{\e_0}(H\cap M) \ne \emptyset\}.$$
\begin{lem}\label{com1}
For any $0<\e\ll \e_0$, we have
\begin{multline*}
(1-c\e) \sum_{p\in P_x(t)} {\phi_{ce^{-t}\e_0}^-}(xpa_{-t}) \psi_{c\e}^-(xp)\le
e^{\delta_\mu t} \int_{yH} \Psi(yha_t)\phi(yh) d\mu_{yH}(yh)\\  \le
(1+c\e) \sum_{p\in P_x(t)} {\phi_{ce^{-t}\e_0}^+}(xpa_{-t})\psi_{c\e}^+(xp),
\end{multline*}
where $c>0$ is an absolute constant, depending only on the injectivity radii of $\supp(\phi)$ and
$\supp(\Psi)$.
\end{lem}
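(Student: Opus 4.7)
The plan is to decompose the left-hand integral as a sum over the transversal intersections of $\supp(\phi) a_t$ with the $N$-foliation of a neighborhood of $x$, and evaluate the Jacobian of the change of variables along each intersection. Using the local product chart $(p,n) \mapsto xpn$ from $P_{\e_0}\times N_{\e_0}$ onto $xB_{\e_0}$, which is a diffeomorphism modulo the $H\cap M$ action, the set $\{h \in \supp(\phi): yha_t \in xB_{\e_0}\}$ splits into disjoint sheets in $yH$, one for each $p \in P_x(t)$. For large $t$ the orbit $yHa_t$ is close to the $N$-foliation, as the $H$-direction is stretched into the $N$-direction by $a_t$, so each sheet is approximately an open piece of $xpN_{\e_0}$, and its $a_t$-preimage in $yH$ is an $H$-ball of radius $\asymp e^{-t}\e_0$ about a base point $h_p$ with $yh_p a_t \in xpB_{c\e}$.

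On each sheet I would parametrize by $n \in N_{\e_0}$ via the correspondence $yha_t = xpn\cdot q(p,n)$, with $q(p,n) \in P_{c\e}$ arising from the deviation of $yHa_t$ from an exact $N$-leaf. Two approximations then apply: $\Psi(yha_t) = \Psi(xpnq)$ lies between $\Psi_{c\e}^-(xpn)$ and $\Psi_{c\e}^+(xpn)$ by the very definition of $\Psi_\e^\pm$, while $d_G(yh, xpa_{-t}) = O(e^{-t}\e_0)$ throughout the sheet yields $\phi_{ce^{-t}\e_0}^-(xpa_{-t}) \le \phi(yh) \le \phi_{ce^{-t}\e_0}^+(xpa_{-t})$, where $\phi_\e^\pm(xpa_{-t})$ is interpreted through the nearest base point on $yH$. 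Integrating the second inequality against the fiber integral of the first produces the summand $\phi_{ce^{-t}\e_0}^{\pm}(xpa_{-t})\cdot \psi_{c\e}^{\pm}(xp)$ modulo the sought-after Jacobian.

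To extract the factor $e^{\delta_\mu t}$ I would compute the push-forward of $d\mu_{yH}$ restricted to a sheet under $h \mapsto n$. Using $d\mu_{gH}(gh) = e^{\delta_\mu \beta_{(gh)^+}(o,gh)}\, d\mu_o((gh)^+)$, the identity $(yha_t)^+ = (yh)^+$ (the forward endpoint is invariant under the geodesic flow) and the Busemann cocycle $\beta_{(yh)^+}(o, yha_t) = \beta_{(yh)^+}(o, yh) - t$, together with the comparison of base measures $d\mu_o((yh)^+)$ and $d\mu_o((xpn)^+)$ through $xpn = yha_t$, one finds $d\mu_{yH}(yh)|_{\text{sheet}} = e^{-\delta_\mu t}(1+O(\e))\, d\mu_{xpN}(xpn)$. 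Multiplying by $e^{\delta_\mu t}$ and summing over the disjoint sheets indexed by $P_x(t)$ then gives the claim with total multiplicative error $(1\pm c\e)$, with $c$ depending only on the injectivity radii of $\supp(\phi)$ and $\supp(\Psi)$.

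The main obstacle I expect is making the transversality quantitative and uniform in $p$: one must verify that the sheet of $yH$ corresponding to $p \in P_x(t)$ really has $H$-radius $\asymp e^{-t}\e_0$ and that the deviation element $q(p,n)$ has size $O(\e)$, both of which rest on the fact that conjugation by $a_{-t}$ contracts the $N$-direction at rate $e^{-t}$. In the horospherical case $H=N$ these properties are essentially tautologies, whereas in the symmetric case they follow from the standard analysis of the $a_t$-asymptotics of totally geodesic submanifolds in $\bH^n$, coupled with the transversality of $H$ to $P$ at the identity; this is the only place where the distinction between horospherical and symmetric $H$ enters the argument.
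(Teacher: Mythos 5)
Your proposal is correct and follows essentially the same route as the paper's proof: decompose into transversal sheets lying (up to an $O(\e)$ deviation in the $P$-direction) over the plaques $xpN_{\e_0}$, squeeze $\Psi$ and $\phi$ between their $c\e$- and $ce^{-t}\e_0$-sup/inf versions, and extract the factor $e^{-\delta_\mu t}(1+O(\e))$ by comparing the conformal densities through the common forward endpoints via the Busemann cocycle, which is exactly the content of the two Radon--Nikodym estimates in the paper's argument. One small slip worth fixing: with the paper's convention $\beta_\xi(y,x)=\lim_t (d(\xi_t,y)-d(\xi_t,x))$ one has $\beta_{(yh)^+}(o,yha_t)=\beta_{(yh)^+}(o,yh)+t$ rather than $-t$ (and it is conjugation by $a_t$, in the paper's normalization of $N=N^+$, that contracts the $N$-direction at rate $e^{-t}$); it is the corrected identity that actually produces your correctly stated Jacobian $d\mu_{yH}=e^{-\delta_\mu t}(1+O(\e))\,d\mu_{xpN}$.
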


\begin{proof}
 By considering a smooth partition of unity for the support of $\phi$, it suffices to prove the lemma,
 assuming $\mbox{supp}(\phi)\subseteq yN_\e P_\e\cap yH\subset yB_\e.$
Fix $g, g'\in G$ so that $y=\G g$ and $x=\G g'$.
Then for $\bar H=H/H\cap M$,
\begin{align*}
&\int_{yH} \Psi(yha_t)\phi(yh) d\mu_{yH}(yh)\\ &=
\sum_{\gamma\in (\G\cap gHg^{-1})\ba \Gamma}
\int_{\gamma gH} \Psi(y ha_t) \phi( yh) d\tilde\mu_{\gamma gH}(\gamma gh)\\
&=\sum_{\gamma\in (\G\cap gHg^{-1})\ba \Gamma}
\int_{\gamma g\bar H} \int_{H\cap M} \Psi(y ha_t m )  \;
  \phi(yhm)\; dm  d\tilde\mu_{\gamma g\bar H}(\gamma gh)\\
&=\sum_{\gamma\in (\G\cap gHg^{-1})\ba \Gamma}
\int_{\gamma g\bar H} {\Psi}( y ha_t)  \phi(y h)  \;
d\tilde\mu_{\gamma g\bar H}(\gamma gh)
\end{align*}
as $\Psi$ and $\phi$ are $H\cap M$-invariant and $dm$ is the probability Haar measure of $H\cap M$.


Suppose $yh\in \supp(\phi)\cap yH,$
and write $h=n_hp_h$ where $n_h\in N_\e$ and $p_h\in P_\e$.
As $h^+=n_h^+$ and $d(h, n_h)=O(\e)$,
we have that for any $\gamma \in\G$
\be\label{e;radon-nik-hn}
\frac{d\tilde\mu_{\gamma g\bar H}(\gamma g h)}{d\tilde\mu_{\gamma gN}(\gamma g n_h)}=1+O(\e).
\ee

Let $\gamma\in (\G\cap gHg^{-1})\ba \Gamma$.
If $\gamma gha_t=g'p_{h,t}n_{h,t}\in g' P_{\e_0}N_{\e_0}$,
then we claim that

\be\label{e;radon-nik-nn}
e^{\delta_\mu t} \frac{ d\tilde\mu_{\gamma gN}(\gamma gn_h)}{d\tilde\mu_{g'p_{h,t}N}(g'p_{h,t}n_{h,t})}= O(e^\e).
\ee

Note that
$\gamma gha_t=g'p_{h,t}n_{h,t}$
implies $\gamma gn_ha_t=g'p_{h,t}n_{h,t} (a_t^{-1}p_h a_t) $.
Hence
$\xi:=(\gamma gn_h)^+=(g'p_{h,t}n_{h,t})^+$, and for $p_{h,t}':=(a_t^{-1}p_h a_t)\in P_{\e}$,
\begin{align*}
&\beta_\xi(o, \gamma  gn_h) =\beta_\xi(o, g'p_{h,t}n_{h,t}) + \beta_\xi(g'p_{h,t}n_{h,t},g'p_{h,t}n_{h,t}p'_{h,t})\\
&+\beta_\xi(g'p_{h,t}n_{h,t}p'_{h,t},g'p_{h,t}n_{h,t}p'_{h,t}a_{-t})  = \beta_\xi(o, g'p_{h,t}n_{h,t})+O(\e) - t ,
\end{align*}
proving the claim \eqref{e;radon-nik-nn}.

Note that $xB_{\e_0}$ is the disjoint union $\cup_{p\in P_{\e_0}} xpN_{\e_0}.$
Since $n_{h, t}\in N_{\e_0}$ and $\gamma gh=g'p_{h,t}a_{-t} (a_t n_{h,t}a_{-t})$
with $a_t n_{h,t}a_{-t}\in N_{e^{-t}\e_0}$,
in view
of~\eqref{e;radon-nik-hn} and~\eqref{e;radon-nik-nn},
we have
\begin{align*}
&e^{\delta_\mu t}\int_{\gamma g\bar H}{\Psi}( y ha_t)\phi(y h) d\tilde\mu_{\gamma g\bar H}(\gamma gh) \\
&=(1+O(\e))  \sum_{p}  \phi^+_{c e^{-t} \e_0}( x pa_{-t}) \cdot
\int_{g'pN} {\Psi}^+_{c\e} ( x p n)d\tilde\mu_{g'pN}(g'pn)  \\
&=(1+O(\e)) \sum_{p}
\phi^+_{c e^{-t} \e_0}( x pa_{-t}) \cdot {\psi}^+_{c\e} ( x p )
\end{align*}
where the both sums are taken over the set of $p\in  {P}_{\e_0}/(H\cap M)$ such that
$ \gamma g H_{\e} a_t\cap  g'pN_{\e_0}(H\cap M) \ne \emptyset$ and $c>0$ is an absolute constant.


Summing over $\gamma\in (\G\cap gHg^{-1})\ba \Gamma$, we obtain
 one side of the inequality and
the other side follows if one argues similarly using $\Psi_{c\e}^-$.
\end{proof}


By a similar argument, we can prove the following:

\begin{lem}\label{com2}
Let $\phi\in C_c(yH)^{H\cap M}$ and $\psi\in C^\infty(xP_{\e_0})^{H\cap M}$.
Assume that $\mu_{xpN}(xpN_{\e_0})>0$ for all $p\in P_{\e_0}.$
 There exists $c>1$ such that for all small $0<\e\leq \e_0$,
\begin{multline*} (1-c\e) \int_{yH} \Psi^-_{c\e} (yha_t) \phi_{ce^{-t}\e_0}^- (yh)d\mu_{yH}(yh)  \le
e^{-\delta_\mu t} \sum_{p\in P_x(t)} \psi(xp) \phi (xpa_{-t}) \\  \le
(1+c\e)  \int_{yH} \Psi^+_{c\e} (yha_t) \phi_{ce^{-t}\e_0}^+(yh)d\mu_{yH}(yh)
\end{multline*}
where $\Psi\in C^\infty(xB_{\e_0})$ is defined by
$\Psi(xpn)=\frac{1}{\mu_{xpN}(xpN_{\e_0})} \psi(xp)$ for each $pn\in P_{\e_0}N_{\e_0}$.

\end{lem}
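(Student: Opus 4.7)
The plan is to mirror the proof of Lemma~\ref{com1}, now starting from the integral
\[
\int_{yH}\Psi^{\pm}_{c\e}(yha_t)\phi^{\pm}_{ce^{-t}\e_0}(yh)\,d\mu_{yH}(yh)
\]
and recovering the transversal sum $e^{-\delta_\mu t}\sum_{p\in P_x(t)}\psi(xp)\phi(xpa_{-t})$. I begin by unfolding the integral over $(\G\cap gHg^{-1})\ba\G$; since the metric $d$ is right $H\cap M$-invariant, the thickenings $\Psi^{\pm}_{c\e}$ and $\phi^{\pm}_{ce^{-t}\e_0}$ inherit the $H\cap M$-invariance of $\Psi$ and $\phi$, so exactly the same reduction used in the proof of Lemma~\ref{com1} applies. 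For each $\gamma$ contributing nontrivially, the translate $\gamma gha_t$ lies in a single thick slice $xpN_{\e_0}$ with $p=p_\gamma\in P_x(t)$ uniquely determined; writing $\gamma gha_t=xpn_{h,t}$ with $n_{h,t}\in N_{\e_0}$, and applying the Radon--Nikodym estimates \eqref{e;radon-nik-hn}--\eqref{e;radon-nik-nn} that were already established, the measure $d\tilde\mu_{\gamma g\bar H}(\gamma gh)$ is transported to $e^{-\delta_\mu t}\,d\tilde\mu_{xpN}(xpn_{h,t})$ up to a factor of $1+O(\e)$.

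The crucial new input is the identity $\Psi(xpn)\cdot\mu_{xpN}(xpN_{\e_0})=\psi(xp)$, valid for every $n$ since $\Psi$ is constant along the $N$-fiber by construction. Combined with $\Psi^{-}_{c\e}\le\Psi\le\Psi^{+}_{c\e}$, integration over the slice $xpN_{\e_0}$ sandwiches $\psi(xp)$ between $\int_{xpN_{\e_0}}\Psi^{\mp}_{c\e}(xpn)\,d\mu_{xpN}(xpn)$. In parallel, for the reference element $h_0\in H$ satisfying $\gamma gh_0 a_t=xp$ one computes $(\gamma gh_0)^{-1}(\gamma gh)=a_t n_{h,t}a_{-t}\in H\cap N_{e^{-t}\e_0}$; hence the $H$-displacement of $h$ from $h_0$ is of order $e^{-t}\e_0$, whence
\[
\phi^{-}_{ce^{-t}\e_0}(\gamma gh)\le\phi(xpa_{-t})\le\phi^{+}_{ce^{-t}\e_0}(\gamma gh)
\]
on the slice. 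Assembling these two fiberwise comparisons with the Radon--Nikodym conversion and summing over $p\in P_x(t)$ produces the two-sided inequality of the lemma, with the constant $c$ absorbing all Lipschitz factors arising from the local product decomposition $G=H\cdot P$.

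The main obstacle I expect is the bookkeeping of the $H$-versus-$N$ directional reconciliation: the displacement $(\gamma gh_0)^{-1}(\gamma gh)$ is naturally expressed as a horospherical element $a_tn_{h,t}a_{-t}$, whereas the thickening $\phi^{\pm}_{ce^{-t}\e_0}$ measures variation inside $H$. One needs the local product structure $G = H \cdot P'$ used in Section~\ref{th} together with a uniform Lipschitz estimate for the coordinate change, so that an $N$-displacement of size $e^{-t}\e_0$ corresponds to an $H$-displacement of comparable order. The hypothesis $\mu_{xpN}(xpN_{\e_0})>0$ for all $p\in P_{\e_0}$ is essential here: it guarantees that the ratio defining $\Psi$ is controlled, and is precisely what lets the integrated $\Psi^{\pm}_{c\e}$ be translated back into a pointwise sandwich on $\psi(xp)$ absorbed by the $(1\pm c\e)$ factor.
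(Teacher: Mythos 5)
Your proposal follows exactly the route the paper intends: the paper offers no separate proof of Lemma \ref{com2} beyond the remark that it follows ``by a similar argument'' to Lemma \ref{com1}, and your sketch is indeed that mirrored argument --- unfold over $(\G\cap gHg^{-1})\ba \G$, assign each point of $yHa_t$ in the box to its unique plaque $xpN_{\e_0}$, transport $d\tilde\mu_{\gamma g\bar H}$ to $e^{-\delta_\mu t}\,d\tilde\mu_{xpN}$ up to $1+O(\e)$ via \eqref{e;radon-nik-hn}--\eqref{e;radon-nik-nn}, and use the normalization $\int_{xpN_{\e_0}}\Psi\, d\mu_{xpN}=\psi(xp)$ together with $\Psi^-_{c\e}\le\Psi\le\Psi^+_{c\e}$ to recover $\psi(xp)$ from the plaque integrals. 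That skeleton is correct.

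The one step that would fail as literally written is the appeal to ``the reference element $h_0\in H$ satisfying $\gamma g h_0 a_t = xp$'': the flowed orbit $yHa_t$ has no reason to pass through the transversal point $xp$ itself, and the displacement $a_t n_{h,t} a_{-t}$ lies in $N$, not in $H\cap N_{e^{-t}\e_0}$ (were it in $H$, such an $h_0$ would exist, but generically it does not). The correct comparison --- which your closing paragraph essentially supplies, so treat it as the main argument rather than a side remark --- is: from $\gamma g h a_t = x p\, n_{h,t}\, m$ with $n_{h,t}\in N_{\e_0}$, $m\in H\cap M$, one gets $xpa_{-t} = (yhm^{-1})\, n'$ with $n'=m^{-1}a_t n_{h,t}^{-1}a_{-t}m \in N_{c e^{-t}\e_0}$, and this $N$-displacement is converted, through the local diffeomorphism $H\times P'\to G$ of Section \ref{th} with its uniform Lipschitz constant, into an $H$-component of size $O(e^{-t}\e_0)$ plus a transversal $P'$-component (harmless under the convention by which $\phi(xpa_{-t})$ is read off near $yH$, the same convention already implicit in Lemma \ref{com1}); conjugating by $m\in H\cap M$ and using the right $H\cap M$-invariance of $\phi$ and of the metric puts the displacement in $H_{ce^{-t}\e_0}$, giving $\phi^-_{ce^{-t}\e_0}(yh)\le \phi(xpa_{-t})\le \phi^+_{ce^{-t}\e_0}(yh)$ on the slice. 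With the fictitious $h_0$ removed and the comparison run this way, your assembly is sound: disjointness of the plaques gives the upper bound, and for the lower bound note that $\phi^-_{ce^{-t}\e_0}(yh)\neq 0$ forces $yh\in\supp\phi$, hence the plaque label $p$ automatically lies in $P_x(t)$, so no mass on the left-hand side escapes the sum.
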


Similarly to the definitions of $A_{\Psi}^\bullet$, we define for $\phi\in C(yH)$ and $\psi\in C(xP_{\e_0})$,
$$
A_{\phi}^{\PS}:=\S_{\infty,1}(\phi) \cdot  \mu^{\PS}_{yH}(\text{supp}(\phi)),
\quad A_\psi^{\nu}:=\S_{\infty,1}(\psi) \cdot \nu_{xP}({\rm supp}(\psi))$$ where $\nu_{xP}$ is defined
as in \eqref{nup}.

By a similar argument as in \eqref{bulleti},
we have $ A_\phi^{\PS}\ll \S_\ell(\phi)$ and $A_\psi^\nu \le \S_\ell(\psi)$ for
some $\ell\in \N$.

\begin{lem}\label{brl}
Let $\psi\in C(xP_{\e_0})^{H\cap M}$.
For $\Psi\in C^\infty(xB_{\e_0})^{H\cap M}$ given by
$\Psi(xpn)=\frac{1}{\mu_{xpN}^{\Haar}(xpN_{\e_0})} \psi(xp)$, we have
$$m^{\BR}(\Psi)=\nu_{xP}(\psi) \quad\text{ and}\quad A_\Psi^{\BR}\ll A_\psi^{\nu} .$$
\end{lem}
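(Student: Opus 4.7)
\medskip

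\noindent\textbf{Proof plan.} The identity $m^{\BR}(\Psi)=\nu_{xP}(\psi)$ is essentially a direct computation using the disintegration formula \eqref{mea3}. Since $\Psi$ is supported inside $xB_{\e_0}=xP_{\e_0}N_{\e_0}$ and the injectivity radius at $x$ exceeds $\e_0$, computing $m^{\BR}(\Psi)$ reduces to evaluating $\tilde m^{\BR}$ of a lift of $\Psi$ in a neighborhood of a chosen representative of $x$. First I would apply \eqref{mea3} with $g=g_x$ (a lift of $x$), obtaining
\[
m^{\BR}(\Psi)=\int_{p\in P_{\e_0}}\int_{n\in N_{\e_0}} \Psi(xpn)\, d\tilde\mu^{\Haar}_{xpN}(xpn)\, d\nu_{xP}(xp).
\]
Substituting the defining formula $\Psi(xpn)=\psi(xp)/\mu^{\Haar}_{xpN}(xpN_{\e_0})$, the factor $\psi(xp)$ pulls out of the inner integral, which then evaluates to $\mu^{\Haar}_{xpN}(xpN_{\e_0})$ and cancels the normalizing denominator. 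This yields $m^{\BR}(\Psi)=\int_{xP}\psi(xp)\,d\nu_{xP}(xp)=\nu_{xP}(\psi)$.

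For the bound $A_\Psi^{\BR}\ll A_\psi^{\nu}$, recall $A_\Psi^{\BR}=\S_{\infty,1}(\Psi)\cdot m^{\BR}(\mathrm{supp}(\Psi))$. I would handle the two factors separately. For the Sobolev factor, the key observation is that the function $p\mapsto \mu^{\Haar}_{xpN}(xpN_{\e_0})$ is smooth and bounded above and below by positive constants uniformly in $p\in P_{\e_0}$; this follows from Lemma \ref{inv} (which shows $d\tilde\mu^{\Haar}_{xpN}$ is a translate of the Haar measure $dn$) together with the compactness of $\overline{P_{\e_0}}$. Hence $\S_{\infty,1}(\Psi)\ll \S_{\infty,1}(\psi)$, with implied constant depending only on $\e_0$ and $x$. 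For the measure factor, $\mathrm{supp}(\Psi)\subset \mathrm{supp}(\psi)\cdot N_{\e_0}$, so applying \eqref{mea3} to the characteristic function of $\mathrm{supp}(\Psi)$ gives
\[
m^{\BR}(\mathrm{supp}(\Psi))=\int_{\mathrm{supp}(\psi)}\mu^{\Haar}_{xpN}(xpN_{\e_0})\, d\nu_{xP}(xp)\ll \nu_{xP}(\mathrm{supp}(\psi)),
\]
using the same uniform upper bound on $\mu^{\Haar}_{xpN}(xpN_{\e_0})$. Multiplying the two bounds gives $A_\Psi^{\BR}\ll \S_{\infty,1}(\psi)\cdot \nu_{xP}(\mathrm{supp}(\psi))=A_\psi^{\nu}$.

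There is no real obstacle here; the only minor subtlety is verifying that $\Psi$, as defined by the piecewise formula, can be taken to be $C^\infty$ (which is ensured by the smoothness of $p\mapsto \mu^{\Haar}_{xpN}(xpN_{\e_0})$ and the smoothness of $\psi$, together with the local product structure on $xB_{\e_0}$). The argument is purely a bookkeeping application of the disintegration \eqref{mea3} and the compactness of $\overline{P_{\e_0}}$.
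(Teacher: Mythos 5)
Your proof is correct and follows essentially the same route as the paper: the identity is the cancellation of the normalizing factor $\mu^{\Haar}_{xpN}(xpN_{\e_0})$ inside the disintegration \eqref{mea3} of $m^{\BR}$ over $(xP,\nu_{xP})$ (the paper just phrases this via $g^-=(xp)^-$ and the Busemann identity underlying \eqref{mea3}), and the bound $A_\Psi^{\BR}\ll A_\psi^{\nu}$ is obtained exactly as in the paper by treating the two factors $\S_{\infty,1}$ and the measure of the support separately, using that $\mu^{\Haar}_{xpN}(xpN_{\e_0})$ is uniformly comparable to a constant (indeed constant in $p$, by the argument of Lemma \ref{inv}).
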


\begin{proof} For $g=xpn$, we have $g^-=(xp)^-$
and $\beta_{(xp)^-}(o, xpn)=\beta_{(xp)^-} (o, xp)$. Based on this, the claims follow from the definition.
The second claim follows from $m^{\BR}(\supp(\Psi))=\nu_{xP}(\text{supp}(\psi))$ and $\S_{\infty, 1}(\Psi)\ll_{\e_0} \S_{\infty,1}(\psi)$.
\end{proof}

In the rest of this section, we assume that 
$$\text{ $\G$ is Zariski dense and $L^2(\G\ba G)$ has a spectral gap.}$$

\begin{thm}\label{tran}
There exist  $\beta>0$ and $\ell\in \N$ such that for any $0<\e\ll \e_0$
and any $\psi\in C^\infty(xP_{\e_0})^{H\cap M},$ and $\phi \in C^\infty_c(yH)^{H\cap M}$, we have
\begin{equation*} e^{-\delta t} \sum_{p\in P_x(t)} \psi(xp) \phi (xpa_{-t})=\frac{1}{|m^{\BMS}|} \nu_{xP}(\psi)
\mu_{yH}^{\PS}(\phi) +e^{-\beta t} O( \S_\ell (\psi) \S_\ell (\phi)),
\end{equation*}
where $P_x(t):=\{p\in P_{\e_0}/(H\cap M): \supp(\phi) a_t \cap xpN_{\e_0}(H\cap M) \ne \emptyset\}$ and
the implied constant depends only on the injectivity radii of ${\supp}(\psi)$ and
${\supp}(\phi)$.
\end{thm}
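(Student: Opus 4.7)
The strategy is to deduce Theorem \ref{tran} from Theorem \ref{efn} (effective equidistribution against $d\mu^{\Haar}_H$ for compactly supported test functions) by combining it with the transversal-intersection sandwich of Lemma \ref{com2} and the BR-mass computation of Lemma \ref{brl}.

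First I apply Lemma \ref{com2} with the conformal density $\{\mu_x\}$ chosen to be the Lebesgue density $\{m_x\}$ of dimension $\delta_\mu=n-1$, so that $\mu_{yH}$ becomes the Haar measure $dh$ and $\mu_{xpN}$ becomes $\mu^{\Haar}_{xpN}$. Take $\Psi\in C^\infty(xB_{\e_0})^{H\cap M}$ to be a smoothed version of the cutoff prescribed by Lemma \ref{com2}, chosen so that $\int_N\Psi(xpn)\,d\mu^{\Haar}_{xpN}(xpn)=\psi(xp)$ for each $p\in P_{\e_0}$. Lemma \ref{com2} then sandwiches $e^{-(n-1)t}\sum_{p\in P_x(t)}\psi(xp)\phi(xpa_{-t})$ between $(1\pm c\e)\int_{yH}\Psi^{\pm}_{c\e}(yha_t)\phi^{\pm}_{ce^{-t}\e_0}(yh)\,dh$. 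Theorem \ref{efn}, applied to each side, yields
\[
\int_{yH}\Psi^{\pm}_{c\e}(yha_t)\phi^{\pm}_{ce^{-t}\e_0}(yh)\,dh =\frac{e^{-(n-1-\delta)t}\,m^{\BR}(\Psi^{\pm}_{c\e})\,\mu^{\PS}_{yH}(\phi^{\pm}_{ce^{-t}\e_0})}{|m^{\BMS}|}+e^{-(n-1-\delta+\eta_0)t}O\!\left(\S_\ell(\Psi^{\pm}_{c\e})\,\S_\ell(\phi^{\pm}_{ce^{-t}\e_0})\right).
\]

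Lemma \ref{brl} identifies $m^{\BR}(\Psi)=\nu_{xP}(\psi)$, and the sandwich approximations cost $|m^{\BR}(\Psi^{\pm}_{c\e})-\nu_{xP}(\psi)|\ll \e\,\S_\ell(\psi)$ and $|\mu^{\PS}_{yH}(\phi^{\pm}_{ce^{-t}\e_0})-\mu^{\PS}_{yH}(\phi)|\ll e^{-t}\S_\ell(\phi)$ (the latter using Corollary \ref{psi} to bound the PS-mass of a small collar of $\supp(\phi)$). Multiplying through by $e^{(n-1-\delta)t}$ reproduces the claimed main term $|m^{\BMS}|^{-1}\nu_{xP}(\psi)\mu^{\PS}_{yH}(\phi)$. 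To close the argument, the two smoothing operations have quantitative Sobolev-norm costs $\S_\ell(\Psi^{\pm}_{c\e})\ll \e^{-q_\ell}\S_\ell(\psi)$ and $\S_\ell(\phi^{\pm}_{ce^{-t}\e_0})\ll e^{q_\ell t}\S_\ell(\phi)$ for some $q_\ell>0$, so the net error is bounded by $\bigl(\e+e^{-t}+e^{-\eta_0 t}\e^{-q_\ell}e^{q_\ell t}\bigr)\S_\ell(\psi)\S_\ell(\phi)$; optimizing $\e=e^{-\beta' t}$ for $\beta'>0$ small enough yields a positive exponent $\beta>0$ and the stated bound $e^{-\beta t}O(\S_\ell(\psi)\S_\ell(\phi))$.

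The main obstacle is this final balancing step. The smoothing scales in the $G$- and $H$-directions ($\e$ versus $e^{-t}$) differ, and the resulting Sobolev-norm blow-ups must both be absorbed by the mixing decay $e^{-\eta_0 t}$ coming from Theorem \ref{efn}; in particular one needs $\eta_0>q_\ell(1+\beta')$, which forces $\ell$ to be taken large enough relative to the spectral gap. A secondary technical point is the rigorous passage from the idealized cutoff of Lemma \ref{com2} to a genuine $C^\infty$ function to which Theorem \ref{efn} applies: this requires an auxiliary smoothing in the $N$-direction that preserves, up to an admissible error, the BR-mass identity of Lemma \ref{brl}.
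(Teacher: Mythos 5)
Your overall route is the same as the paper's: apply Lemma \ref{com2} with the Lebesgue density (so $\delta_\mu=n-1$ and $d\mu_{yH}=dh$), feed the resulting sandwich into Theorem \ref{efn}, identify the main term via Lemma \ref{brl} ($m^{\BR}(\Psi)=\nu_{xP}(\psi)$) together with the $O(\e+e^{-t})$ comparison of the perturbed masses, and finally choose $\e=e^{-\beta' t}$. Up to the main-term extraction your argument matches the paper's proof.

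The gap is in your final error bookkeeping. You assign the sup/inf perturbations the Sobolev costs $\S_\ell(\Psi^{\pm}_{c\e})\ll \e^{-q_\ell}\S_\ell(\psi)$ and, crucially, $\S_\ell(\phi^{\pm}_{ce^{-t}\e_0})\ll e^{q_\ell t}\S_\ell(\phi)$, and then you need $\eta_0>q_\ell(1+\beta')$ to make $e^{-\eta_0 t}\e^{-q_\ell}e^{q_\ell t}$ decay, claiming this is arranged by taking $\ell$ large. This does not close: $\eta_0$ is fixed by the spectral gap (it does not improve with $\ell$), while $q_\ell$ grows with $\ell$, so the inequality $\eta_0>q_\ell(1+\beta')$ is in general unattainable; enlarging $\ell$ only makes it worse. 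Moreover the blow-up factors are not actually there. The functions $\Psi^{\pm}_{c\e}$ and $\phi^{\pm}_{ce^{-t}\e_0}$ of \eqref{peee}--\eqref{dp} are sup/inf perturbations of \emph{fixed} smooth functions over small balls, not mollifiers at scale $\e$ or $e^{-t}$; as recorded in \eqref{bulleti} (and used verbatim in the paper's proof), their Sobolev norms are comparable to those of $\Psi$ and $\phi$ uniformly in the perturbation scale, and the smoothing enters only through the multiplicative $O(\e)$ and $O(e^{-t})$ corrections to $m^{\BR}$ and $\mu^{\PS}_{yH}$ that you already have. With the correct bounds the error is $O\bigl((\e+e^{-t}+e^{-\eta_0 t})\S_\ell(\psi)\S_\ell(\phi)\bigr)$ and $\e=e^{-\eta_0 t/2}$ gives $\beta=\eta_0/2$, which is exactly the paper's conclusion. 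Concerning your secondary point (making the transverse cutoff $\chi_{N_{\e_0}}$ genuinely smooth so Theorem \ref{efn} applies): if you mollify that cutoff at scale $\e\e_0$ you do pick up a single factor $\e^{-q_\ell}$, but that alone is harmless (choose $\e=e^{-\eta_0 t/(1+q_\ell)}$, as in the proof of Theorem \ref{maince}); what you must avoid is mollifying $\phi$ at scale $e^{-t}$ — there one uses monotonicity and the $O(e^{-t})$ mass estimate only, never a Sobolev norm of $\phi^{\pm}_{e^{-t}\e_0}$ beyond $\ll\S_\ell(\phi)$.
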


\begin{proof}
Define $\Psi^\pm_{\e}(xpn)=\frac{1}{\mu_{xpN}^{\Haar}(xpN_{\e})} \psi^{\pm}_\e(xp)$.
Then
$ m^{\BR}(\Psi^{\pm}_{\e} )=\nu_{xP}(\psi^{\pm}_{\e})$ by Lemma \ref{brl}.

We take $\ell$ big enough to satisfy
Theorem \ref{efn}, Corollary \ref{psi}
and that  $A_{\Psi}^{\BR}\ll A_\psi^{\nu}\ll \S_\ell(\psi)$ and $A_\phi^{\PS}\ll \S_\ell(\phi)$.

By Theorem \ref{efn}, for some $\eta_0>0$,
\begin{align*}
&e^{(n-1-\delta)t}\int_{yH} \Psi^\pm_\e (yha_t) \phi_{e^{-t}\e_0}^\pm (yh)dh \\
&= \tfrac{1}{|m^{\BMS}|} m^{\BR}(\Psi^\pm_\e)\mu_{yH}^{\PS}(\phi_{e^{-t}\e_0}^{\pm}) +
e^{-\eta_0 t} O ( \S_\ell (\Psi^\pm_\e) \S_\ell (\phi_{e^{-t}\e_0}^{\pm})) \\
&=m^{\BR}(\Psi)\mu_{yH}^{\PS}(\phi)
+O((\e +e^{-t})A_\Psi^{\BR}A_\phi^{\PS}) + e^{-\eta_0 t} O(\S_\ell (\Psi) \S_\ell (\phi))
\\ &= \nu_{xP}(\psi)\mu_{yH}^{\PS}(\phi)+ O((e^{-\eta_0t}+\e) \S_\ell (\psi) \S_\ell (\phi)).
\end{align*}

Therefore the claim now follows by applying
 Lemma \ref{com2} for $d\mu_{yH}(yh)=dh$ and $\delta_\mu=n-1$ with $\beta=\eta_0/2$ and $\e=e^{-\eta_0 t/2}$.
\end{proof}

Using Theorem \ref{tran},
we now prove the following theorem, which
 is analogous to Theorem \ref{maince} with $dh$ replaced by $ d\mu^{\PS}_{yH}(yh)$.
Translates of $d\mu^{\Haar}_{yH}$ and $d\mu^{\PS}_{yH}$ on $yH$ are closely related as
their transversals are essentially the same.
More precisely, Theorem \ref{tran} provides a link between translates of these two measures.

\begin{thm}\label{loc} 
 There exist  $\beta>0$ and $\ell\in \N$ such that for any $\Psi\in C^\infty(xB_{\e_0})^{H\cap M}$
and $\phi\in C_c^\infty(yH)^{H\cap M}$,
$$\int_{yH} \Psi(yha_t)\phi(yh) d\mu^{\PS}_{yH}(yh)=
\frac{1}{|m^{\BMS}|} m^{\BMS}(\Psi) \mu_{yH}^{\PS}(\phi)+  O(e^{-\beta t}\S_\ell(\Psi)\S_\ell(\phi)) .$$
\end{thm}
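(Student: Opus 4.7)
The strategy parallels the proof of Theorem~\ref{efn}, with Lemma~\ref{com1} again providing the local thickening on the $H$-side but with Theorem~\ref{tran} now playing the role of Theorem~\ref{mc2}. First, I would apply Lemma~\ref{com1} with the Patterson--Sullivan conformal density (so $\delta_\mu=\delta$), which sandwiches $e^{\delta t}\int_{yH}\Psi(yha_t)\phi(yh)\,d\mu^{\PS}_{yH}(yh)$ between
$$(1\pm c\e)\sum_{p\in P_x(t)}\phi^{\pm}_{ce^{-t}\e_0}(xpa_{-t})\,\psi^{\PS,\pm}_{c\e}(xp),$$
where $\psi^{\PS,\pm}_{\e}(xp):=\int_{xpN}\Psi^{\pm}_{\e}(xpn)\,d\mu^{\PS}_{xpN}(xpn)$. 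The required $(H\cap M)$-invariance of $\psi^{\PS,\pm}_{c\e}$ follows from that of $\Psi^{\pm}_{c\e}$ together with the $M$-equivariance of the family $\{\mu^{\PS}_{gN}\}$.

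Next, I would apply Theorem~\ref{tran} to each transversal sum with $\psi=\psi^{\PS,\pm}_{c\e}$ and test function $\phi^{\pm}_{ce^{-t}\e_0}$, producing the main term $\frac{e^{\delta t}}{|m^{\BMS}|}\nu_{xP}(\psi^{\PS,\pm}_{c\e})\,\mu^{\PS}_{yH}(\phi^{\pm}_{ce^{-t}\e_0})$ together with an error $e^{(\delta-\beta_0)t}O(\mathcal S_\ell(\psi^{\PS,\pm}_{c\e})\mathcal S_\ell(\phi^{\pm}_{ce^{-t}\e_0}))$. The crucial identification is the Fubini decomposition~\eqref{mea2}, which gives
$\nu_{xP}(\psi^{\PS,\pm}_{c\e})=m^{\BMS}(\Psi^{\pm}_{c\e}).$
Combined with the routine estimates $m^{\BMS}(\Psi^{\pm}_{c\e})=m^{\BMS}(\Psi)+O(\e A^{\BMS}_\Psi)$ and $\mu^{\PS}_{yH}(\phi^{\pm}_{ce^{-t}\e_0})=\mu^{\PS}_{yH}(\phi)+O(e^{-t}A^{\PS}_\phi)$, together with the Sobolev majorants $A^{\BMS}_\Psi\ll\mathcal S_\ell(\Psi)$ (as in \eqref{bulleti}) and $A^{\PS}_\phi\ll\mathcal S_\ell(\phi)$ (proved as in Corollary~\ref{psi}), this reduces the integral to
$$\frac{m^{\BMS}(\Psi)\mu^{\PS}_{yH}(\phi)}{|m^{\BMS}|}+O\bigl((\e+e^{-t}+e^{-\beta_0 t}\e^{-q_\ell})\mathcal S_\ell(\Psi)\mathcal S_\ell(\phi)\bigr).$$
Balancing by $\e:=e^{-\beta_0 t/(1+q_\ell)}$ yields the claim with $\beta=\beta_0/(1+q_\ell)$.

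\textbf{Main obstacle.} The only non-routine step is the Sobolev estimate
$\mathcal S_\ell(\psi^{\PS,\pm}_{c\e})\ll\e^{-q_\ell}\mathcal S_\ell(\Psi)$
required to feed $\psi^{\PS,\pm}_{c\e}$ into Theorem~\ref{tran}. The analogous step in Theorem~\ref{efn} is easy because the $N$-integration is against translation-invariant Haar measure; here the transversal slice carries the $p$-dependent singular density $d\mu^{\PS}_{xpN}$, so derivatives in the $p$-direction must act on both $\Psi^{\pm}_{c\e}$ and the measure. I would handle this by writing $d\mu^{\PS}_{xpN}(xpn)=e^{\delta\beta_{(xpn)^+}(o,xpn)}d\nu_o((xpn)^+)$ and absorbing derivatives of the smooth Busemann cocycle over the injectivity neighborhood $xB_{\e_0}$ into the polynomial factor $\e^{-q_\ell}$, which the balancing step then tolerates.
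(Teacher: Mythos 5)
Your proposal is correct and follows essentially the same route as the paper's proof: apply Lemma~\ref{com1} with the Patterson--Sullivan density, feed the transversal function $\psi^{\pm}_{\e}(xp)=\int_{xpN}\Psi^{\pm}_{\e}\,d\mu^{\PS}_{xpN}$ into Theorem~\ref{tran}, and identify the main term via $\nu_{xP}(\psi)=m^{\BMS}(\Psi)$ coming from \eqref{mea2}, then balance $\e$ against the exponential error. The only deviation is your cautious allowance of an $\e^{-q_\ell}$ loss in $\S_\ell(\psi^{\PS,\pm}_{c\e})$, where the paper simply asserts $\S_\ell(\psi^{\pm}_\e)\ll\S_\ell(\Psi)$ uniformly in $\e$ and takes $\e=e^{-\beta t}$ directly; this only affects the final value of $\beta$, not the validity of the argument.
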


\begin{proof}
Define $\psi\in C^\infty(xP_{\e_0})^{H\cap M}$ by
$$\psi(xp)=\int_{xpN_{\e_0}} \Psi(xpn)  d\mu^{\PS}_{xpN}(xpn).$$

We apply Theorem \ref{tran} and Lemma \ref{com1}
for the Patterson-Sullivan density $\{\mu_x\}$ and with this $\psi$.
It follows from the definition of $\psi$ (see~\eqref{mea2}) that $\nu_{xP}(\psi)=m^{\BMS}(\Psi)$
and $A_\psi^\nu \ll \S_\ell(\Psi)$ for some $\ell\ge  1$.
We take $\ell$ large enough to satisfy Theorem \ref{tran}.

Let $\beta$ be as in Theorem~\ref{tran} and let $\e=e^{-\beta t}.$
Now by Lemma \ref{com1}, we get
$$\int_{yH} \Psi(yha_t)\phi(yh) d\mu^{\PS}_{yH}(yh)=
(1+O(\e))e^{-\delta t} \sum_{p\in P_x(t)} {\phi_{e^{-t}\e_0}^{\pm}}(xpa_{-t})\psi^\pm_{\e}(xp).  $$
By Theorem~\ref{tran},
\begin{align*}&
 e^{-\delta t} \sum_{p\in P_x(t)} {\phi_{e^{-t}\e_0}^{\pm}}(xpa_{-t})\psi^\pm_{\e}(xp)\\
&= \tfrac{1}{|m^{\BMS}|} \nu_{xP}(\psi_\e^\pm)\mu_{yH}^{\PS}(\phi_{e^{-t}\e_0}^{\pm})
+ e^{-\beta t} O(\S_\ell (\psi) \S_\ell (\phi)) \\
&=\tfrac{1}{|m^{\BMS}|} \nu_{xP}(\psi)\mu_{yH}^{\PS}(\phi)
+O(\e + e^{-\beta t}) (\S_\ell (\psi) \S_\ell (\phi)).\end{align*}
Since  $\nu_{xP}(\psi)=m^{\BMS}(\Psi)$
and $ \S_\ell(\psi), A_\psi^\nu \ll  \S_\ell (\Psi)$, this finishes the proof. \end{proof}

\subsection{Effective equidistribution of $\G\ba \G Ha_t$}
We now
extend Theorems \ref{maince} and \ref{loc} to bounded functions $\phi\in  C^\infty((\G\cap H)\ba
H)$ which
are not necessarily compactly supported.  Hence the goal is to establish the following:
 set $\G_H:=\G\cap H$.
\begin{thm} \label{meqfinite} \label{meq} Suppose that
 $\G\ba\G H$ is closed and that $|\mu_H^{\PS}|<\infty$. 
There exist $\beta>0$ and $\ell \ge 1$ such that for any compact subset $\Om \subset \G\ba G$,
for any $\Psi\in C^\infty(\Om)$ and any bounded $\phi\in  C^\infty(\G_H \ba H)$,
we have, as $t\to +\infty$,
\begin{multline*} e^{(n-1-\delta)t} \int_{h\in  \G_H \ba  H} \Psi(h a_t )\phi(h) dh
 = \frac{{\mu^{\PS}_{H} (\phi)} }{|m^{\BMS}|} m^{\BR}(\Psi) +
O(e^{-\beta t}  \S_\ell (\Psi)\S_\ell(\phi) ) \end{multline*}
where the implied constant depends only on $\Om$.
\end{thm}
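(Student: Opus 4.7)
The plan is to reduce to the compactly supported case (Theorem \ref{maince}) via the thick-thin decomposition of Theorem \ref{yo}, using the transversal comparison technique of Theorem \ref{loc} to handle the thin-part contribution. Since $\Psi$ is supported in $\Om$, the integrand on the left-hand side vanishes off the non-wandering set $Y_\Om\subset \G_H\ba H$, so we may restrict attention to $Y_\Om$. For a small parameter $\e>0$ to be chosen as a negative power of $e^{-t}$, pick a smooth cutoff $\chi_\e$ supported in $Y_\e$, equal to $1$ on $Y_{2\e}$, and with $\S_\ell(\chi_\e)\ll \e^{-q}$ for some $q>0$ depending only on the dimensions. Split $\phi=\chi_\e\phi+(1-\chi_\e)\phi$.

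For the thick piece $\chi_\e\phi$, which is compactly supported by Theorem \ref{yo}(2a), I would apply Theorem \ref{maince} directly. This yields the main term $|m^{\BMS}|^{-1}\,m^{\BR}(\Psi)\,\mu_H^{\PS}(\chi_\e\phi)$ plus an error of size $e^{-\eta_0 t}\,\S_\ell(\Psi)\,\S_\ell(\chi_\e\phi)$, where $\S_\ell(\chi_\e\phi)\ll \e^{-q}(\|\phi\|_\infty+\S_\ell(\phi))$. Furthermore, by Theorem \ref{yo}(2c),
$$|\mu_H^{\PS}(\phi)-\mu_H^{\PS}(\chi_\e\phi)|\le \|\phi\|_\infty\cdot \mu_H^{\PS}(Y_\Om\setminus Y_{2\e})\ll \|\phi\|_\infty\,\e^{\delta-p_0},$$
where $p_0=\op{pb-corank}_H(\G)<\delta$ by hypothesis, so this discrepancy is absorbable.

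The main difficulty is estimating the thin-part contribution
$$I_t:=e^{(n-1-\delta)t}\int \Psi(ha_t)(1-\chi_\e(h))\phi(h)\,dh.$$
Here the Haar mass $\mu_H^{\Haar}(Y_\Om\setminus Y_{2\e})$ is merely $O(\e^{n-1-p_0})$, far larger than the target error, so a crude bound via $\|\Psi\|_\infty$ would overwhelm the main term. Instead, following the strategy of Theorem \ref{loc}, I would cover a neighborhood of each parabolic fixed point $\xi_i\in \Lambda_p(\G)\cap\partial(\tS)$ of $\tS$ (as provided by Theorem \ref{yo}(2b)) by small boxes $xB_{\e_0}=xP_{\e_0}N_{\e_0}$, and on each box decompose $dh$ along the stable $(H\cap P)$-leaves and the transverse $N$-directions. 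Under $a_t$, the expanding $N$-direction contributes a factor $e^{(n-1)t}$ to the Haar transversal but only $e^{\delta t}$ to the PS transversal, precisely compensating the prefactor $e^{(n-1-\delta)t}$; this is the content of Lemmas \ref{com1} and \ref{com2}. Applying this comparison box by box (and using finiteness of $\mu_H^{\PS}$) converts the Haar integral $I_t$ into a PS integral, bounding
$$|I_t|\ll \mu_H^{\PS}(Y_\Om\setminus Y_{2\e})\,\|\phi\|_\infty\,\|\Psi\|_\infty + e^{-\beta_0 t}\,\S_\ell(\Psi)\,\S_\ell(\phi),$$
and the first term is $O(\e^{\delta-p_0})\,\S_\ell(\Psi)\,\S_\ell(\phi)$ by Theorem \ref{yo}(2c) together with the bounds on Sobolev norms via \eqref{bulleti} and Corollary \ref{psi}.

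Combining the thick and thin estimates, the total error is $O(e^{-\eta_0 t}\e^{-q}+\e^{\delta-p_0})\,\S_\ell(\Psi)\,\S_\ell(\phi)$. Optimizing by setting $\e=e^{-\eta_0 t/(q+\delta-p_0)}$ balances the two terms and produces the claimed effective rate with some $\beta>0$. The main obstacle will be carrying out the transversal comparison uniformly over the finitely many cusps attached to $\tS$, while tracking precisely how the constants depend on the injectivity radii of the small boxes $xB_{\e_0}$ as one approaches the cusp; since the boxes degenerate, one must exploit the $\G_\xi$-invariance of the extended horoball $C(L,r)$ from Theorem \ref{cinf} and the parabolic reduction theory of Section \ref{nw} to patch the local comparisons into the required global bound on $I_t$.
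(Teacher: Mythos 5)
Your overall architecture (thickening via Theorem \ref{maince} on a compact core, thick--thin estimates from Theorem \ref{yo}, and the Haar-to-PS transversal comparison of Lemmas \ref{com1}--\ref{com2} to beat the mismatch between $e^{(n-1-\delta)t}\,d\mu^{\Haar}_H$ and $\mu^{\PS}_H$) is the same toolkit the paper uses, and your treatment of the thick piece and of the main-term discrepancy $\|\phi\|_\infty\,\e^{\delta-p_0}$ is correct. The gap is in the thin part. You propose to run the transversal comparison on all of $(1-\chi_\e)\phi$, i.e.\ on a function whose support is non-compact and runs arbitrarily deep into the cusps of $\G_H\ba H$. But Lemmas \ref{com1} and \ref{com2} are stated for compactly supported $\phi$, and their constants (and the admissible comparison scales) depend on the injectivity radius of $\supp(\phi)$, which degenerates in the cusp; patching the local box comparisons uniformly over the degenerating region is exactly what you flag as ``the main obstacle'' and leave unresolved. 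So as written the estimate $|I_t|\ll \e^{\delta-p_0}\,\S_\ell(\Psi)\S_\ell(\phi)+e^{-\beta_0 t}\S_\ell(\Psi)\S_\ell(\phi)$ is asserted, not proved, and it is the heart of the theorem.

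The paper avoids this issue by a three-region decomposition with two scales $\e_1\ll\e_0$ rather than your single scale $\e$. The transversal comparison is applied only to the compactly supported intermediate cutoff $\mathcal T_{\e_1}=\tau_{\e_1}-\tau_{\e_0}$ (supported in the relatively compact $Y_{\e_1/2}$), giving a contribution $\ll \S_\ell(\Psi)\,\mu^{\PS}_H(Y_\Om\setminus Y_{\e_0})\ll \S_\ell(\Psi)\,\e_0^{\delta-p_0}$, independent of $\e_1$. The genuinely deep cusp region $\G_H\ba H\setminus Y_{\e_1}$ is then handled by the crude Haar bound $e^{(n-1-\delta)t}\,\e_1^{\,n-1-p_0}\,\S_\ell(\Psi)$ from Theorem \ref{yo}; this is affordable precisely because $\e_1$ enters no other term, so it may be taken exponentially small in $t$ (the paper chooses $\e_1^{\,n-1-p_0}=\e_0^{\delta-p_0}e^{(\delta-n+1)t}$). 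Note that in your scheme this escape is unavailable: your single $\e$ is already pinned by balancing $e^{-\eta_0 t}\e^{-q}$ against $\e^{\delta-p_0}$, and with that choice the crude bound $e^{(n-1-\delta)t}\e^{\,n-1-p_0}$ on the complement of $Y_\e$ is not small (as you yourself observe), which is why your route forces the comparison into the cusp. Introducing the second, $t$-dependent scale $\e_1$ and using the trivial Haar estimate beyond it is the missing idea; with it, the transversal comparison only ever needs to be applied where the paper's lemmas legitimately apply.
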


We first prove the following which is an analogous version of Theorem \ref{meqfinite} for 
$\mu_H^{\PS}$:
\begin{thm}\label{t;eq-ps-h-orbit}  Suppose that
 $\G\ba\G H$ is closed and that $|\mu_H^{\PS}|<\infty$.
There exist $\beta_0>0$ and $\ell
\ge 1$ such that for any compact subset $\Om\subset \G\ba G$,
for any $\Psi\in C^\infty(\Om )^{H\cap M}$ and
 for any bounded $\phi\in  C^\infty((\G\cap H)\ba H)^{H\cap M}$,
we have
$$
 \int_{h\in \G_H\ba  H} \Psi(h a_t )\phi(h) d\mu_{H}^{\PS}(h )
= \frac{{\mu^{\PS}_{H}(\phi) } }{|m^{\BMS}|} m^{\BMS}(\Psi) + O(e^{-\beta_0 t} \S_\ell (\phi) \S_\ell (\Psi)).$$
\end{thm}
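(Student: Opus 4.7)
The plan is to bootstrap Theorem \ref{loc} (which handles compactly supported $\phi$ against $\mu_{yH}^{\PS}$) to the bounded non-compactly supported setting, using the thick--thin decomposition provided by Theorem \ref{yo}. First, using a smooth $H\cap M$-invariant partition of unity subordinate to a finite cover of $\Om$ by boxes of the form $x_iB_{\e_0}$, I reduce to the case where $\Psi$ is supported in a single such box; the implied constants remain uniform because $\Om$ is compact.

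Since $|\mu_H^{\PS}|<\infty$ forces $p_0:=\op{Pb-corank}_H(\G)<\delta$ by Proposition \ref{cri}, Theorem \ref{yo} gives compact thick sets $Y_\e\subset Y_\Om$ with $\mu_H^{\PS}(Y_\Om - Y_\e)\ll \e^{\delta-p_0}$. For each small $\e>0$, I construct an $H\cap M$-invariant smooth cutoff $\chi_\e\in C^\infty(\G_H\ba H)$ with $\chi_\e=1$ on $Y_{2\e}$, $\supp(\chi_\e)\subset Y_\e$, and $\S_\ell(\chi_\e)\ll \e^{-k}$ for some fixed $k=k(\ell)>0$. Writing $\phi=\phi\chi_\e+\phi(1-\chi_\e)$, I split
\[ \int_{\G_H\ba H}\Psi(ha_t)\phi(h)\,d\mu_H^{\PS}(h)=I_{\text{thick}}(t,\e)+I_{\text{thin}}(t,\e). \]
For $I_{\text{thick}}$, the function $\phi\chi_\e$ is compactly supported by Theorem \ref{yo}(2a), so Theorem \ref{loc} applies. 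Together with the Leibniz estimate $\S_\ell(\phi\chi_\e)\ll \S_\ell(\phi)\S_\ell(\chi_\e)\ll \S_\ell(\phi)\e^{-k}$, this yields
\[ I_{\text{thick}}(t,\e)=\frac{m^{\BMS}(\Psi)\,\mu_H^{\PS}(\phi\chi_\e)}{|m^{\BMS}|}+O\bigl(e^{-\beta t}\e^{-k}\S_\ell(\Psi)\S_\ell(\phi)\bigr). \]
For $I_{\text{thin}}$, the integrand is supported in $\supp(\mu_H^{\PS})\cap\{1-\chi_\e\ne 0\}\subset Y_\Om - Y_{2\e}$, so by Theorem \ref{yo}(2c)
\[ |I_{\text{thin}}(t,\e)|\le \|\Psi\|_\infty\|\phi\|_\infty\,\mu_H^{\PS}(Y_\Om - Y_{2\e})\ll \e^{\delta-p_0}\S_\ell(\Psi)\S_\ell(\phi), \]
and the same estimate bounds $|\mu_H^{\PS}(\phi)-\mu_H^{\PS}(\phi\chi_\e)|$. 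Summing all three contributions produces a total error of order $(\e^{\delta-p_0}+e^{-\beta t}\e^{-k})\S_\ell(\Psi)\S_\ell(\phi)$. Choosing $\e=e^{-\beta t/(\delta-p_0+k)}$ balances the two sources and gives the announced error $e^{-\beta_0 t}\S_\ell(\Psi)\S_\ell(\phi)$ with $\beta_0=\beta(\delta-p_0)/(\delta-p_0+k)$.

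The main obstacle will be the construction of the smooth cutoff $\chi_\e$ with the precise Sobolev bound $\S_\ell(\chi_\e)\ll \e^{-k}$ in the cuspidal neighborhoods of $\G_H\ba H$, where the injectivity radius collapses. Working cusp by cusp using the explicit coordinates on $C(L,r)\cap \bH^n$ from section \ref{sec;return-sym} around each parabolic fixed point $\xi_i\in\Lambda_p(\G)\cap\partial(\tilde S)$ provided by Theorem \ref{yo}(2b), and then gluing via an $H\cap M$-invariant partition of unity (using compactness of $H\cap M$ to average), one gets such a cutoff; the key point is that the power $k$ depends only on $\ell$ and the local cuspidal geometry, so it does not interact with $t$. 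The assumption $p_0<\delta$ is crucial: it is what makes the decay exponent $\delta-p_0$ positive and hence able to absorb the Sobolev blow-up into a genuinely exponential error in $t$.
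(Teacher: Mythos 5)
Your proposal is correct and follows essentially the same route as the paper: a smooth $H\cap M$-invariant cutoff approximating the thick part $Y_\e$ of Theorem \ref{yo} (the paper's $\tau_\e$, with $\S_\ell(\tau_\e)=O(\e^{-q_\ell})$), Theorem \ref{loc} applied to the now compactly supported $\phi\tau_\e$, the bound $\mu^{\PS}_H(Y_\Om-Y_\e)\ll\e^{\delta-p_0}$ for both the thin integral and the replacement of $\mu^{\PS}_H(\phi\tau_\e)$ by $\mu^{\PS}_H(\phi)$, and the final choice $\e=e^{-\beta t/(\delta-p_0+q_\ell)}$. The only cosmetic difference is that you elaborate on constructing the cutoff near the cusps, which the paper handles by citing the construction in \cite{BeO}.
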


\begin{proof} Fix $\ell\in \N$ large enough to satisfy Theorem \ref{loc},
${A^{\PS}_\phi} \ll \S_\ell(\phi)$ and ${A^{\BMS}_\Psi} \ll \S_\ell(\Psi)$.
If $H$ is horospherical, set $Y_\Om=\{h\in \G\ba \G HM: ha_t\in \Om \text{ for some $t\in \br$}\}$; if $H$ is symmetric, 
let $Y_\Om$ and $Y_\e$ be as in Theorem \ref{cri} and set $p_0:=\text{Pb-corank}_H(\G)$.
For $\e>0$, we choose $\tau_\e\in C^\infty(Y_\Om)$ which is an $H\cap M$-invariant
smooth approximation of the set
$Y_\e$; $0\le \tau_\e \le 1$, $\tau_\e(x)=1$ for $x\in Y_{\e}$ and $\tau_\e(x)=0$
for $x\notin Y_{\e/2}$; we refer to \cite{BeO} for the construction of such $\tau_\e$.
Let $q_\ell\gg 1$ be such that $\S_\ell(\tau_\e)=O(\e^{-q_\ell})$.
By the definition of $Y_\Om$, we may write the integral
$\int_{h \in \G_H\ba H} \Psi(h a_t ) \phi(h) d\mu_{H}^{\PS}(h )$ as the sum
$$\int_{\G_H\ba  H} \Psi(h a_t ) (\phi \cdot \tau_\e) (h) d\mu_{H}^{\PS}(h )
+  \int_{Y_\Om} \Psi(ha_t ) (\phi -\phi\cdot \tau_\e )(h)d\mu_{H}^{\PS}(h ). $$

Note that by (3) of Theorem \ref{yo}, we have
$\mu^{\PS}_H(Y_\Om- Y_\e) \ll \e^{\delta-p_0}$
and hence $\mu^{\PS}_H(\phi - \phi \cdot \tau_\e) \ll {A^{\PS}_\phi} \cdot\e^{\delta-p_0}\ll \S_\ell(\phi)
\e^{\delta-p_0}$.
Now by Theorem \ref{loc},
\begin{align*} &\int_{\G\ba \G H } \Psi(ha_t )  (\phi\cdot \tau_\e)(h) d\mu_{H}^{\PS}(h )
\\ &=\frac{\mu^{\PS}_{H}(\phi\cdot  \tau_\e)}{|m^{\BMS}|} m^{\BMS}(\Psi) +
O(\e^{-q_\ell} e^{-\beta t}  \S_\ell(\phi) \S_\ell (\Psi)) \\
&=\frac{\mu^{\PS}_{H}(\phi)}{|m^{\BMS}|} m^{\BMS}(\Psi) +
O({A^{\PS}_\phi}A_\Psi^{\BMS} \e^{\delta-p_0})+O(\e^{-q_\ell} e^{-\beta t}  \S_\ell(\phi) \S_\ell (\Psi))
\\&=
\frac{\mu^{\PS}_{H}(\phi)}{|m^{\BMS}|} m^{\BMS}(\Psi)  +O((\e^{\delta-p_0}+ \e^{-q_\ell} e^{-\beta t})  \S_\ell(\phi)
 \S_\ell (\Psi))
.\end{align*}
On the other hand,
\begin{multline*}
\int_{Y_\Om} \Psi(ha_t ) (\phi -\phi\cdot \tau_\e )(h)d\mu_{H}^{\PS}(h )
\\ \ll \S_{\infty,1}(\Psi) \S_{\infty,1}(\phi) \mu^{\PS}_H(Y_\Om- Y_\e)\ll \S_\ell(\Psi)\S_\ell(\phi) \e^{\delta-p_0} .
\end{multline*}
Hence by combining these two estimates, and taking $\e=e^{-\beta/(\delta-p_0+q_\ell)}$
and $\beta_0:=e^{-\beta(\delta-p_0)/(\delta-p_0+q_\ell)}$,
we obtain
$$\int_{h \in \G_H\ba H} \Psi(h a_t ) \phi(h) d\mu_{H}^{\PS}(h )=
\frac{\mu^{\PS}_{H}(\phi)}{|m^{\BMS}|} m^{\BMS}(\Psi) +
O( e^{-\beta_0 t}  \S_\ell(\phi) \S_\ell (\Psi)) .$$
\end{proof}


\begin{proof}[Proof of Theorem~\ref{meqfinite}]
We will divide the integration region into three different regions: compact part, thin part, intermediate range. The compact part
is the region where we get the main term using  Theorem ~\ref{maince}.
The thin region can be controlled using Theorem \ref{yo}.
However there is an intermediate range where we need some control.
This is in some sense  the main technical difference from the
case where $\G$ is a lattice. We control the contribution from this range,
using results proved in this section in particular by relating this integral
to summation over the ``transversal"; see Lemmas~\ref{com1} and~\ref{com2}.

We use the notation from the proof of Theorem \ref{t;eq-ps-h-orbit}. In particular,
if $H$ is horospherical, set $Y_\Om=\{h\in \G\ba \G HM: ha_t\in \Om \text{ for some $t\in \br$}\}$; if $H$ is symmetric, 
let $Y_\Om$ and $Y_\e$ be as in Theorem \ref{cri} and set $p_0:=\text{Pb-corank}_H(\G)$.
Let $0<\e_1<\e_0$.
Here, we regard $Y_{\e_0}$ as a thick part, $Y_{\e_1}-Y_{\e_0}$ as an intermediate range and $\G_H\ba H-Y_{\e_1}$ as a thin part.

As above we choose $\tau_{\e_0} \in C^\infty(Y)$ which is an $H\cap M$-invariant
smooth approximation of $Y_{\e_0}$
and recall that $\mu^{\PS}_H(Y_\Om- Y_{\e_0}) \ll \e_0^{\delta-p_0}$  by (1) of Proposition \ref{mest}.

We may write
\begin{align*}
\int_{h\in \G_H\ba H} \Psi(h&a_t ) \phi(h) dh\\
&  =\int_{\G_H\ba  H} \Psi(ha_t )(\phi\cdot \tau_{\e_0}) (h) dh+ \int_{Y_{\Om}} \Psi(ha_t ) (\phi -\phi \cdot \tau_{\e_0})
(h) dh.
\end{align*}

Then by Theorem \ref{maince} with $\eta_0>0$ therein and Theorem \ref{yo}, we get the asymptotic for the thick part:
\begin{multline}\label{m11}
 e^{(n-1-\delta)t} \int_{\G_H\ba  H } \Psi(ha_t ) (\phi\cdot \tau_{\e_0})(h) dh
\\=\frac{{\mu^{\PS}_{H}(\phi)} }{|m^{\BMS}|} m^{\BR}(\Psi) +
(\e_0 ^{\delta -p_0}+ e^{-\eta_0 t} \e_0^{-q_\ell}) O(\S_\ell(\phi) \S_\ell(\Psi)) .
\end{multline}

On the other hand, by Theorem \ref{yo},
we have, for $\mathcal T_{\e_1}:=\tau_{\e_1}-\tau_{\e_0}$,
\begin{align}\label{e1}
& \int_{Y_{\Om}} \Psi(ha_t ) (\phi -\phi \cdot \tau_{\e_0})(h) dh \notag
 \\
&\ll \S_{\infty,1}(\phi)  \left(\int_{Y_{\Om}} \Psi(ha_t ) \mathcal T_{\e_1}(h) dh
+\int_{Y_{\Om}-Y_{\e_1}} \Psi(ha_t )  dh \right).
   \end{align}

Set $\overline\Psi(x):=\int_{H\cap M}\Psi(xm)dm$.
Applying Lemma~\ref{com1} for the Haar measure $d\mu_H^{\Haar}=dh$
and Lemma~\ref{com2} for the $\PS$ measure $\mu_H^{\PS}$, and
for the function $\mathcal T:=\mathcal T_{\e_1}$,
with the notation as in the proof of Theorem~\ref{tran}, we get the following estimate of the integral over the intermediate range $Y_{\e_1}-Y_{\e_0}$:
\begin{align}\label{e2}
&e^{(n-1-\delta)t}\int_{Y_\Om} \Psi(ha_t )\mathcal T_{\e_1}(h)  dh \notag \\
&\ll e^{(n-1-\delta)t}\int_{Y_\Om} \overline \Psi(ha_t )\mathcal T_{\e_1}(h)  dh \notag \\
&\ll
e^{-\delta t}\sum_{p\in P_e (t)}\overline \psi^+_{\e_1}(xp){\mathcal T}^+_{e^{-t}\e_1}(xpa_{-t}) \notag\\
&\ll \int_{\G\ba \G H} \overline \Psi^+_{\e_1}(ha_t) {\mathcal T}^+_{e^{-t}\e_1}(h) d\mu_{H}^{\PS}(h) \notag \\
&\ll \mathcal S_{\infty,1}(\overline \Psi) \mu_H^{\PS} (Y_{\e_1}-Y_{\e_0}) \ll
\mathcal S_\ell(\Psi)  \e_0^{\delta-p_0}.
\end{align}

Using Theorem \ref{yo}, we also get the following estimate of the integral over the thin part, which is the complement of $Y_{\e_1}$: 
\be\label{e3}
e^{(n-1-\delta)t} \int_{Y_\Om\setminus Y_{\e_1}} \Psi(ha_t )  dh\leq
\S_\ell(\Psi) e^{(n-1-\delta)t}\e_1^{n-1+p_0}.
\ee

Therefore by \eqref{m11}, \eqref{e1},\eqref{e2}, and \eqref{e3},
\begin{align*}
&e^{(n-1-\delta)t}\int_{h\in \G_H\ba H} \Psi(ha_t ) \phi(h) dh \\
&=\frac{{\mu^{\PS}_{H}(\phi)} }{|m^{\BMS}|} m^{\BR}(\Psi) +
O(\e_0 ^{\delta -p_0}+ e^{-\eta_0 t} \e_0^{-q_\ell} +e^{(n-1-\delta)t}\e_1^{n-1-p_0} )
 \S_\ell(\phi) \S_\ell(\Psi) . \end{align*}

 Recalling $\delta>p_0$,
 take $\e_0$ and $\e_1$ by $\e_0=e^{-\eta_0t/(\delta-p_0+q_\ell)}$ and
$\e_1^{n-1-p_0}= \e_0^{\delta -p_0} e^{(\delta-n+1)t}$. We may assume that $\e_1<\e_0$
by taking $\ell$ and hence $q_\ell$ big enough.
Finally, we obtain  the claim with $\beta:=  \eta_0(\delta-p_0)/(\delta-p_0+q_\ell )$.
\end{proof}

We can also prove an analogue of
Theorem \ref{meq} with $a_t$ replaced by $a_{-t}$, by following a similar argument step by step
but using Corollary \ref{reverse} in place of Theorem \ref{mc2}.
Consider the $H\cap M$-invariant measure $\mu_{H, -}^{\PS}$ on $\G_H\ba H$ induced by
the measure $e^{\delta\beta_{h^-}(o, \bar h)} d\nu_o(h^-)$ on $\bar H= H/(H\cap M)$:
\be \label{hmi} d\mu_{H,-}^{\PS}(hm)=e^{\delta\beta_{h^-}(o, \bar h)} d\nu_o(h^-)d_{H\cap M}(m).
\ee
\begin{thm} \label{meqminus} Suppose that
$|\mu_{H,-}^{\PS}|<\infty$. 
There exist $\beta>0$ and $\ell \ge 1$ such that
for any compact subset $\Om$ in $\G\ba G$, any $\Psi\in C^\infty(\Om)$ and any bounded $\phi\in  C^\infty(\G_H \ba H)$,
we have, as $t\to +\infty$,
\begin{multline*} e^{(n-1-\delta)t} \int_{h\in  \G_H \ba  H} \Psi(h a_{-t} )\phi(h) dh
 = \frac{{\mu^{\PS}_{H, -} (\phi)} }{|m^{\BMS}|} m^{\BR}_*(\Psi) +
O(e^{-\beta t}  \S_\ell (\Psi)\S_\ell(\phi) ) \end{multline*}
where the implied constant depends only on $\Om$.
\end{thm}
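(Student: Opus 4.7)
The plan is to mirror the entire chain leading to Theorem \ref{meqfinite}, replacing the forward time $a_t$ with $a_{-t}$ throughout, so that Corollary \ref{reverse} plays the role of Theorem \ref{mc2}. The effect of this substitution is to interchange $m^{\BR}$ with $m^{\BR}_*$ in the leading term, and to replace the skinning measure $\tilde\mu^{\PS}_{H}$, which is built from the forward visual endpoint $h^+$, by its backward analog $\tilde\mu^{\PS}_{H,-}$ built from $h^-$ as defined in \eqref{hmi}.

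First I would prove the $a_{-t}$ analog of Theorem \ref{efn}: for $y\in\G\ba G$, a compactly supported $\phi\in C^\infty(yH_0)$, and $\Psi\in C^\infty(\Omega)$,
\[
e^{(n-1-\delta)t}\int_{yh\in yH}\Psi(yha_{-t})\,\phi(yh)\,dh
=\tfrac{1}{|m^{\BMS}|}\, m^{\BR}_*(\Psi)\,\tilde\mu^{\PS}_{yH,-}(\phi)+ O(e^{-\eta_0 t}\mathcal S_\ell(\Psi)\mathcal S_\ell(\phi)).
\]
The proof uses the same thickening $\Phi_\e$ and Lemma \ref{comp}; the only substantive change is in the computation of Lemma \ref{mbre}, where now $m^{\BR}(\Phi_\e)$ is computed in place of $m^{\BR}_*(\Phi_\e)$, and the Busemann cocycle at the backward endpoint replaces the one at the forward endpoint in the BR-type integration. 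Applying Corollary \ref{reverse} to $\la a_{-t}\Phi_\e,\Psi_\e^{\pm}\ra$ and rearranging as in Theorem \ref{maince} produces the claimed formula.

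Next, I would rework Section \ref{sec;eff-equi} with the roles of $N=N^+$ and $N^-$ interchanged, since $N^-$ is the expanding horospherical group for $a_{-t}$. Setting $P^{\op}:=MAN^+$ and defining the transversal measure $\nu^-_{xP^{\op}}$ via $(gp)^+$ instead of $(gp)^-$, the analogs of \eqref{mea}, \eqref{mea3}, \eqref{mea2} give
\[
\tilde m^{\BR}_*(\Psi)=\int_{gP^{\op}}\!\!\int_{N^-}\Psi(gpn)\,d\tilde\mu^{\Haar}_{gpN^-}\,d\nu^-_{gP^{\op}},\quad
\tilde m^{\BMS}(\Psi)=\int_{gP^{\op}}\!\!\int_{N^-}\Psi(gpn)\,d\tilde\mu^{\PS}_{gpN^-,-}\,d\nu^-_{gP^{\op}}.
\]
Lemmas \ref{com1} and \ref{com2} go through verbatim with $N$ replaced by $N^-$ and $h^+$ replaced by $h^-$, and feeding the first step into them produces the backward analog of Theorem \ref{tran}, and subsequently of Theorem \ref{loc}, pairing $m^{\BMS}(\Psi)$ with $\mu^{\PS}_{yH,-}(\phi)$. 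The backward analog of Theorem \ref{t;eq-ps-h-orbit} for bounded $\phi$ then follows by the same smooth cutoff argument.

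Finally I would establish the backward analog of Theorem \ref{yo} for
\[
Y^-_\Omega:=\{h\in \G_H\ba H : ha_{-t}\in\Omega\text{ for some }t>0\},
\]
whose construction is symmetric to that of $Y_\Omega$ under the reversal of geodesic flow: the half-ray $\{ha_{-t}:t>0\}$ is orthogonal to $\tilde S$ and determines the backward endpoint $h^-$, so the entire cuspidal analysis in Section \ref{nw} (Lemma \ref{l;return-limitset-close}, Corollary \ref{c;return-close-to-ps}, Lemma \ref{l;cusp-return}, Theorem \ref{inc2}, Proposition \ref{mest}) applies with $h^+$ systematically replaced by $h^-$, yielding the estimates $\mu^{\PS}_{H,-}(Y^-_\Omega-Y^-_\e)\ll \e^{\delta-p_0}$ and $\mu^{\Haar}_H(Y^-_\Omega-Y^-_\e)\ll \e^{n-1-p_0}$. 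With all three building blocks in place, the proof of Theorem \ref{meqminus} is completed by the same thick-thin-intermediate decomposition used in the proof of Theorem \ref{meqfinite}: a smooth approximation $\tau_{\e_0}$ of $Y^-_{\e_0}$ cuts off the thick part (handled by the backward Theorem \ref{maince}), the intermediate range is controlled via the backward transversal estimate combined with the backward Theorem \ref{t;eq-ps-h-orbit}, and the thin complement of $Y^-_{\e_1}$ is controlled by the Haar-volume bound. Balancing $\e_0$ and $\e_1$ exactly as before yields the effective exponent $\beta>0$.

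The main obstacle is bookkeeping rather than conceptual: one must verify that every asymmetric ingredient introduced between Sections \ref{nw} and \ref{sec;eff-equi} --- the choice of expanding horosphere, the visual map used to detect cusps in $\partial(\tilde S)$, the PS-measure defined against $h^+$, and the $P=MAN^-$ decomposition --- admits the symmetric counterpart obtained by swapping forward and backward endpoints. Once this is done, no new ideas beyond those already used in the proof of Theorem \ref{meqfinite} are required.
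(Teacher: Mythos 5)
Your proposal is correct and is essentially the paper's own argument: the paper proves Theorem \ref{meqminus} exactly by rerunning the proof of Theorem \ref{meqfinite} step by step with $a_t$ replaced by $a_{-t}$, using Corollary \ref{reverse} in place of Theorem \ref{mc2}, which forces the interchange of $m^{\BR}$ with $m^{\BR}_*$, of $h^+$ with $h^-$ (hence $\mu^{\PS}_H$ with $\mu^{\PS}_{H,-}$), and of $N^+$ with $N^-$ in the transversal and reduction-theoretic ingredients. Your write-up simply makes explicit the bookkeeping that the paper leaves to the reader.
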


\subsection{Effective mixing of the BMS measure}\label{sec;eff-mixing}
In this subsection we prove an effective mixing for the $\BMS$ measure:

\begin{thm}\label{em} 
 There exist $\beta>0$ and $\ell \in \N$ such that for any compact subset $\Om \subset \G\ba G$,
and for any $\Psi, \Phi\in C^\infty(\Om)$,
$$\int_{\G\ba G} \Psi(ga_t) \Phi(g) dm^{\BMS}(g) =\tfrac{1}{|m^{\BMS}|} m^{\BMS}(\Psi) m^{\BMS}(\Phi) +
O(e^{-\beta t}  \S_\ell(\Psi)\S_\ell(\Phi) )$$
with the implied constant depending only on $\Om$.
\end{thm}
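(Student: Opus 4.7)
The plan is to derive effective mixing of $m^{\BMS}$ from the effective equidistribution of translated PS measures along expanding horospherical orbits, namely Theorem \ref{loc} applied with $H=N$, combined with the local product decomposition \eqref{mea2} of $m^{\BMS}$. For $g$ in a flow box $xB_{\e_0}=xP_{\e_0}N_{\e_0}$ of size below the injectivity radius on $\Om$, \eqref{mea2} disintegrates $m^{\BMS}$ as $d\mu^{\PS}_{xpN}$ along expanding horospherical leaves $xpN$, with transversal measure $d\nu_{xP}$ on $P$-leaves. Since $N\cap M=\{e\}$ in $G=\SO(n,1)^\circ$, the $H\cap M$-invariance hypothesis in Theorem \ref{loc} is automatic.

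Using a finite smooth partition of unity subordinate to a cover of $\Om$ by flow boxes, I first reduce to the case where $\Psi$ and $\Phi$ are each supported in a single flow box, say $x_jB_{\e_0}$ and $x_iB_{\e_0}$ respectively. For this reduced setup, applying \eqref{mea2} gives
\begin{equation*}
\int_{\G\ba G}\Psi(ga_t)\Phi(g)\,dm^{\BMS}(g)
=\int_{p\in P_{\e_0}}F_t(p)\,d\nu_{x_iP}(x_ip),
\end{equation*}
where, writing $y=x_ip$ and $\phi_y(n):=\Phi(yn)$, a compactly supported smooth function on $yN$,
\begin{equation*}
F_t(p)=\int_{yN}\Psi(yna_t)\phi_y(yn)\,d\mu^{\PS}_{yN}(yn).
\end{equation*}
Theorem \ref{loc} applied to each fixed $y$ then yields
\begin{equation*}
F_t(p)=\frac{m^{\BMS}(\Psi)\,\mu^{\PS}_{yN}(\phi_y)}{|m^{\BMS}|}+O\!\left(e^{-\beta t}\S_\ell(\Psi)\S_\ell(\phi_y)\right).
\end{equation*}
Reinserting this into the outer integral and using \eqref{mea2} in reverse to recognize
$\int_{P_{\e_0}}\mu^{\PS}_{yN}(\phi_y)\,d\nu_{x_iP}(x_ip)=m^{\BMS}(\Phi)$
produces the main term $\tfrac{m^{\BMS}(\Psi)m^{\BMS}(\Phi)}{|m^{\BMS}|}$. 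Summing over the finitely many partition pieces gives the claimed asymptotic.

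The main obstacle will be making the error bound uniform in the transversal variable $p$. Two points must be verified: first, that the implicit constant in Theorem \ref{loc} is uniform as $y=x_ip$ ranges over a relatively compact piece of the flow box, which holds because that constant depends only on lower bounds for the injectivity radius at $y$, uniformly bounded below for $y\in\Om$; second, that $\sup_{p}\S_\ell(\phi_y)\ll\S_\ell(\Phi)$ with an implied constant depending only on $\Om$, which follows from the continuity of restriction to a submanifold in Sobolev norms together with the inclusion $\mathfrak n\subset\mathfrak g$ in the defining basis for $\S_\ell$. Since $\nu_{x_iP}$ assigns finite mass to the bounded transversal $x_iP_{\e_0}$, integrating the error over $p$ and summing over the partition yields the asserted overall error $O(e^{-\beta t}\S_\ell(\Psi)\S_\ell(\Phi))$.
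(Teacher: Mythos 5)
Your proposal is correct and follows essentially the same route as the paper: reduce via a smooth partition of unity to a flow box $xB_{\e_0}=xP_{\e_0}N_{\e_0}$, disintegrate $m^{\BMS}$ along the expanding horospherical leaves via \eqref{mea2}, apply Theorem \ref{loc} with $H=N$ (where $N\cap M=\{e\}$ makes the invariance hypothesis vacuous) on each leaf $xpN_{\e_0}$, and reassemble the main term from $\int_{xP_{\e_0}}\mu^{\PS}_{xpN}(\Phi|_{xpN_{\e_0}})\,d\nu_{xP}(xp)=m^{\BMS}(\Phi)$. The only cosmetic differences are that the paper localizes only $\Phi$ and bounds the error by integrating $\S_\ell(\Phi|_{xpN_{\e_0}})$ against $\nu_{xP}$ (giving $\ll_\Om \S_\ell(\Phi)$) rather than taking a supremum over the transversal, which amounts to the same estimate.
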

\begin{proof}

Using a smooth partition of unity for $\Om$, it suffices to prove the claim
for $\Phi\in C_c(xB_{\e_0})$ for $x\in \Om $, $B_{\e_0}=P_{\e_0}N_{\e_0}$ and
$\e_0>0$ smaller than the injectivity radius of $\Om$.

By Theorem \ref{loc} with $H=N$ and for each $p\in P_{\e_0}$,
\begin{align*} &\int_{xpN_{\e_0}} \Psi(xpna_t) \Phi(xpn) d\mu^{\PS}_{xpN}(xpn)\\&=
\tfrac{1}{|m^{\BMS}|} m^{\BMS}(\Psi) \mu_{xpN}^{\PS}(\Phi|_{xpN_{\e_0}}) +
e^{-\beta t} O(\S_\ell  (\Psi) \S_\ell ({\Phi}|_{xpN_{\e_0}}))
\end{align*}
for some $\beta>0$ and $\ell\in \N$.
As
$$\int_{xP_{\e_0}} \mu_{xpN}^{\PS}(\Phi|_{xpN_{\e_0}})  d\nu_{xP}(xp)=m^{\BMS}(\Phi),$$
we have
\begin{align*}
&\int_{xB_{\e_0}} \Psi(ga_t) \Phi(g) dm^{\BMS}(g)\\
&=\tfrac{1}{|m^{\BMS}|} \int_{xp\in xP_{\e_0}}\int_{xpN_{\e_0}} \Psi(xpna_t) \Phi(xpn) d\mu^{\PS}_{xpN}(xpn) d\nu_{xP}(xp)
\\&= \tfrac{1}{|m^{\BMS}|} m^{\BMS}(\Psi ) m^{\BMS}(\Phi)
+O(e^{-\beta t}) \S_\ell (\Psi) \cdot \int_{xP_{\e_0}}  \S_\ell  ({\Phi}|_{xpN_{\e_0}})  d\nu_{xP}(xp).
\end{align*}
Since
$$\int_{xP_{\e_0}}  \S_\ell  ({\Phi}|_{xpN_{\e_0}})  d\nu_{xP}(xp)\ll
 \S_\ell (\Phi)m^{\BR} (\supp \Phi) \ll_\Om  S_{\ell} (\Phi), $$
this finishes the proof.
\end{proof}




\section{Effective uniform  counting}\label{s:count}
\subsection{The case when $H$ is symmetric or horospherical} Let $G, H,A=\{a_t:t\in \br\}$, $K$ etc
be as in the section \ref{localt}. Let $\G$ be a Zariski dense and geometrically finite group with $\delta>(n-1)/2$.
Suppose that $[e]\G$ is discrete in $H\ba G$, equivalently, $\G\ba \G H$ is closed in $\G \ba G$ and that
$|\mu_H^{\PS}|<\infty$.

In this section, we will obtain effective
counting results from Theorem \ref{meq} with $\phi$ being the constant function
$1$ on $(\G\cap H)\ba H$.


\begin{dfn}[Uniform spectral Gap]\label{usg}
A family of subgroups $\{\G_i<\G :i\in I\}$ of finite index
is said to have a uniform spectral gap property if
$$\sup_{i\in I} s_0(\G_i) <\delta\quad \text{ and }\quad \sup_{i\in I}n_0(\G_i) <\infty .$$
where $s_0(\G_i)$ and $n_0(\G_i)$ are defined as in \eqref{sngg}.

The pair $(\sup_{i\in I} s_0(\G_i), \sup_{i\in I}n_0(\G_i))$ will be referred to as the uniform spectral gap data
for the family $\{\G_i:i\in I\}$.
\end{dfn}

As we need to keep track of the main term when varying $\G$ over its subgroups of finite index
for our intended applications to affine sieve, we consider
the following situation:
let $\G_0<\G$ be a subgroup of finite index with $\G_0\cap H=\G\cap H$ and fix $\gamma_0\in \G$.
Throughout this section, we assume that both $\G$ and $\G_0$ have spectral gaps; hence $\{\G,\G_0\}$ is assumed to have
 a uniform spectral gap. By Theorem \ref{nm2},
this assumption is automatic if $\delta>(n-1)/2$ for $n=2,3$ and if $\delta>n-2$ for $n\ge 4$.

\medskip

For a family $\B_T\subset H\ba G$ of compact subsets, we would like to investigate $\#
[e]\G_0\gamma_0\cap \B_T$.

Define a function
$F_T:=F_{\B_T}$ on $\G_0\ba G$ by
$$F_T(g):=\sum_{\gamma\in H\cap \G\ba \Gamma_0} \chi_{\B_T}([e]\gamma g)$$
where $\chi_{\B_T}$ denotes the characteristic function of $\B_T$.
Note that
$$F_T(\gamma_0)=\# [e]\Gamma_0\gamma_0\cap \B_T .$$

Denote by $\{\nu_x\}$ the Patterson-Sullivan density for $\G$ normalized so that
$|\nu_o|=1$. Clearly,
$\{\nu_x\}$ is the unique PS density for $\G_0$ with $|\nu_o|=1$.
Recall the Lebesgue density $\{m_x\}$ with $|m_o|=1$.

Therefore if $\tilde m^{\BMS}$, $\tilde m^{\BR}$ and $\tilde m^{\Haar}$ are
the BMS measure, the BR measure, the Haar measure on $G$,
the corresponding measures
 $ m^{\BMS}_{\G_0}$, $ m^{\BR}_{\G_0}$ and $m^{\Haar}_{\G_0}$
 on $\G_0\ba G$ are naturally induced from them.
 In particular, for each $\bullet=\BMS,\BR,\Haar$,
$|m^{\bullet}_{\G_0}|= [\G:\G_0]\cdot | m^{\bullet}|$.
Since $H\cap \G=H\cap \G_0$,
we have $|\mu_{H}^{\PS}|=|\mu_{\G_0,H}^{\PS}|$
and $|\mu_{H,-}^{\PS}|=|\mu_{\G_0,H,-}^{\PS}|$.

\subsection{Weak-convergence of counting function} \label{not}
Fix $\psi\in C_c^\infty(G)$. For $k\in K$ and $\gamma\in \G$,
define $\psi^k,\psi_\gamma \in C_c^\infty(G)$ by $\psi^k(g)=\psi(gk)$
and $\psi_\gamma(g)=\psi(\gamma g)$.
Also define  $\Psi, \Psi_\gamma
\in C_c^\infty(\G_0\ba G)$ by
$$\Psi(g):=\sum_{\gamma'\in \G_0} \psi(\gamma' g)\quad\text{and}\quad
\Psi_\gamma (g):=\sum _{\gamma'\in \G_0} \psi(\gamma \gamma' g) .$$


For a function $f$ on $K$,
define a function $\psi*_Kf$, or simply $\psi*f$, on $G$
by $$\psi*f(g)=\int_{k\in K}\psi(gk)f(k)\; dk .$$

 For a subset $\B\subset H\ba G$,
define a function $f_{\B}^{\pm}$ on $K$ by $$f^{\pm}_{\B}(k)=\int_{a_{\pm t} \in \B k^{-1}\cap [e] A^\pm}
e^{\delta t} dt .$$
We adopt the notation $\tilde m^{\BR}_+=\tilde m^{\BR}$ and
$\tilde m^{\BR}_-=\tilde m^{\BR}_*$ below.
\begin{prop}\label{sqz} There exist $\beta_1>0$ and $\ell\ge 1$ (depending only on the uniform spectral gap data of
$\G$ and $\G_0$) such that
for any $T\gg 1$,and any $\gamma\in \G$,
$\la F_{T}, \Psi_{\gamma} \ra,$ the pairing is in $\Gamma_0\backslash G,$ is given by
\begin{equation*}
  \begin{cases} \tfrac{ |\mu_{H}^{\PS}|}{[\G:\G_0] \cdot |m^{\BMS}|}
\tilde m^{\BR}(\psi *f^+_{\B_T})
+O(\max_{a_t\in \B_T} e^{(\delta-\beta_1)t} \cdot \mathcal S_\ell (\psi))&\text{if $G=HA^+K$} \\
\sum \tfrac{ |\mu_{H^{\pm}}^{\PS}|}{[\G:\G_0] \cdot |m^{\BMS}|}
\tilde m^{\BR}_{\pm} (\psi *f^\pm_{\B_T})
+O(\max_{a_t\in \B_T} e^{(\delta-\beta_1)|t|} \cdot \mathcal S_\ell (\psi))
&\text{otherwise}.\end{cases}\end{equation*}
 \end{prop}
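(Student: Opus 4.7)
The plan is to unfold the pairing against the sum defining $F_T$, decompose $g\in G$ via $G = HA^+K$ (resp.\ $G=HA^+K\cup HA^-K$), apply the effective equidistribution Theorem~\ref{meq} (and, in Case~2, Theorem~\ref{meqminus}) to the resulting inner $H$-integral, and reassemble. Since the $\G_0$-stabilizer of $[e]$ equals $\G\cap H = \G_0\cap H$ and $\Psi_\gamma$ is $\G_0$-invariant, a standard unfolding of the $F_T$-sum gives
\[
\la F_T,\Psi_\gamma\ra = \int_{(\G\cap H)\ba G}\chi_{\B_T}([e]g)\,\Psi_\gamma(g)\,dg.
\]
Writing $g = ha_tk$ via $G = HA^+K$ (Case~2 is structurally identical), the Haar measure factors as $dg = J(a_t)\,dh\,dt\,dk$, where $J(a_t) = e^{(n-1)t}$ exactly for $H$ horospherical and $J(a_t) = e^{(n-1)t}(1+O(e^{-2t}))$ for $H$ symmetric (a product of $\sinh$-$\cosh$ factors with leading exponential $e^{(n-1)t}$). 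Using $[e]h = [e]$, the integral becomes
\[
\int_K\int_{A^+} J(a_t)\,\chi_{\B_T}([e]a_tk)\,\biggl(\int_{(\G\cap H)\ba H}\Psi_\gamma^k(ha_t)\,dh\biggr)dt\,dk,
\]
with $\Psi_\gamma^k(g):=\Psi_\gamma(gk)$.

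For each fixed $k\in K$, I will apply Theorem~\ref{meq} in the quotient $\G_0\ba G$ to the inner integral. Since $\G$ and $\G_0$ share the same critical exponent and limit set, the PS density and $\mu_H^{\PS}$ are identical for both, while $|m^{\BMS}_{\G_0}| = [\G:\G_0]\,|m^{\BMS}|$. Unfolding the $\G_0$-sum defining $\Psi_\gamma^k$ against the left $\G$-invariance of $\tilde m^{\BR}$ (legitimate since $\gamma\in\G$) yields $m^{\BR}_{\G_0}(\Psi_\gamma^k) = \tilde m^{\BR}(\psi^k)$, and clearly $\S_\ell(\Psi_\gamma^k)\ll \S_\ell(\psi)$ with constants independent of $\gamma$, $k$, and the choice of $\G_0$. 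The uniform spectral gap hypothesis on $\{\G,\G_0\}$ makes the constants $\beta>0$ and $\ell\in\N$ in Theorem~\ref{meq} uniform, so
\[
\int_{(\G\cap H)\ba H}\Psi_\gamma^k(ha_t)\,dh = \tfrac{|\mu_H^{\PS}|\,\tilde m^{\BR}(\psi^k)}{[\G:\G_0]\,|m^{\BMS}|}\,e^{(\delta-n+1)t} + O\!\bigl(e^{(\delta-n+1-\beta)t}\,\S_\ell(\psi)\bigr).
\]

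Substituting back, the Jacobian absorbs as $J(a_t)\,e^{(\delta-n+1)t} = e^{\delta t}(1+O(e^{-2t}))$, so the principal contribution is
\[
\tfrac{|\mu_H^{\PS}|}{[\G:\G_0]\,|m^{\BMS}|}\int_K\tilde m^{\BR}(\psi^k)\biggl(\int_{a_t\in\B_Tk^{-1}\cap[e]A^+}\!\!e^{\delta t}\,dt\biggr)dk = \tfrac{|\mu_H^{\PS}|}{[\G:\G_0]\,|m^{\BMS}|}\,\tilde m^{\BR}(\psi*f_{\B_T}^+),
\]
upon recognizing $f_{\B_T}^+(k)$ in the inner integral and using $\tilde m^{\BR}(\psi*f) = \int_K f(k)\,\tilde m^{\BR}(\psi^k)\,dk$. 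The equidistribution error integrates to at most $\max_{a_t\in\B_T}e^{(\delta-\beta_1)t}\,\S_\ell(\psi)$ for any $\beta_1<\beta$, chosen slightly smaller to absorb both the $O(e^{-2t})$ subexponential Jacobian correction and the polynomial $dt$-factor from the compact interval $\{a_t\in\B_Tk^{-1}\}$. In Case~2, the $HA^-K$ piece is handled identically using Theorem~\ref{meqminus}, which produces the second summand with $\mu_{H,-}^{\PS}$ and $\tilde m^{\BR}_*$ in place of $\mu_H^{\PS}$ and $\tilde m^{\BR}$. The principal technical obstacle is precisely this uniformity of $\beta, \ell$ across the family $\{\G,\G_0\}$: it rests entirely on the uniform spectral gap hypothesis, which propagates through Theorem~\ref{meq} without change, while all the measure-theoretic bookkeeping ($\mu_H^{\PS}$ unchanged, $|m^{\BMS}|$ scaled by $[\G:\G_0]$) follows from the coincidence of limit sets and $H$-intersections.
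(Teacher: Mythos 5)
Your proposal is correct and follows essentially the same route as the paper's proof: unfold the pairing, decompose the Haar measure in $HA^{\pm}K$ coordinates with Jacobian $e^{(n-1)|t|}(1+O(e^{-\alpha_1|t|}))$, apply Theorem \ref{meq} (resp.\ Theorem \ref{meqminus}) with the uniform spectral gap to the inner $H$-integral, remove $\gamma$ by the left $\G$-invariance of $\tilde m^{\BR}$, and recognize $\tilde m^{\BR}(\psi*f^{\pm}_{\B_T})$. The only cosmetic difference is that the paper takes $\beta_1=\min\{\beta,\alpha_1\}$ outright rather than your ``slightly smaller than $\beta$'' phrasing, which amounts to the same absorption of the Jacobian correction.
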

 \begin{proof} For the Haar measure $d\tilde m^{\Haar}( g)=dg$, we may write
$dg=\rho(t) dhdtdk$ where $g=ha_tk$ and $\rho(t)=e^{(n-1)|t|} (1+O(e^{-\alpha_1 |t|}))$ for some $\alpha_1>0$ (cf. \cite{OS}).

Setting $\kappa^{\pm}(\G_0):=\frac{ |\mu_{\G_0,H,{\pm}}^{\PS}|}{|m^{\BMS}_{\G_0}|}$,
we have  $\kappa^\pm(\G_0)=  \frac{1}{[\G:\G_0]} \kappa^\pm(\G)$.
We will only prove the claim for the case $G=HA^+K$, as the other case can be
 deduced in a similar fashion,
based on Theorem \ref{meqminus}.
We apply Theorem \ref{meq} and obtain:

\begin{align*}&\la F_{T}, \Psi_\gamma \ra=
\int_{[e] a_t k\in \B_T} \left(\int_{(H\cap \G)\ba H} \Psi_\gamma(ha_tk) dh \right)\rho(t)dt dk
\\ &
= \kappa^+(\G_0) \int_{[e] a_t k\in \B_T}   e^{(\delta-n+1)t}m^{\BR}_{\G_0}(\Psi^k_\gamma)  \rho(t)  dtdk
\\ &+   \int_{[e]a_tk \in  \B_T}   e^{(\delta-n+1-\beta)t} \rho(t)  dtdk \cdot  O(\mathcal S_\ell (\Psi))
\\ &= \kappa^+(\G_0) \int_{[e] a_t k\in \B_T}   e^{\delta t}\tilde m^{\BR}(\psi^k_\gamma)    dtdk
+O(\max_{a_t\in \B_T} e^{(\delta-\beta_1)t} \cdot \mathcal S_\ell (\psi))
\end{align*}
where $\beta_1=\min \{\beta, \alpha_1\}$.

By the left $\G$-invariance of $\tilde m^{\BR}$, we have
$\tilde m^{\BR}(\psi^k_\gamma) =\tilde m^{\BR}(\psi^k) $.
Hence $$\int_{[e] a_t k\in \B_T}   e^{\delta t}\tilde m^{\BR}(\psi^k_\gamma)    dtdk =
\int_{k\in K}\int_{a_t\in \B_T k^{-1}} e^{\delta t} \tilde m^{\BR}(\psi^k)   dt dk = \tilde m^{\BR}(\psi*f^+_{\B_T}).$$
This finishes the proof.
\end{proof}

\subsection{Counting and the measure $\mathcal M_{H\ba G}$}
Define a measure $\mathcal M_{H\ba G}=\mathcal M_{H\ba G}^\Gamma$ on $H\ba G$: for $\phi\in C_c(H\ba G)$,
\be\label{mde}
\mathcal M_{H\ba G} (\phi)=\begin{cases}\tfrac{ |\mu_{H}^{\PS}|}{|m^{\BMS}|} \int_{a_tk\in A^+K} \phi(a_tk)
  e^{\delta t} dt d\nu_o(k^{-1}X_0^-)&\text{if $G=HA^+K$}
\\ \sum \tfrac{ |\mu_{H,\pm}^{\PS}|}{|m^{\BMS}|}\int_{a_{\pm t}k\in A^\pm K}  \phi(a_{\pm t}k)
e^{\delta t} dt d\nu_o (k^{-1}X_0^{\mp})
 &\text{otherwise}, \end{cases}\ee
where
 $X_0\in \T^1(\bH^n)$ is the vector fixed by $M$,.

Observe that the measure $\mathcal M_{H\ba G}$ depends on $\G$ but is independent of the normalization
of the PS-density.

\begin{thm}\label{mcla}
If $\{\B_T\subset H\ba G\}$ is effectively well-rounded with respect to $\G$ (see Def. \ref{adm}),
 then there exists $\eta_0>0$ (depending only
on a uniform spectral gap data for $\G$ and $\G_0$) such that for any $\gamma_0\in \G$
$$\# ([e]\G_0\gamma_0\cap \B_T )= \tfrac{1}{[\G:\G_0] }
\mathcal M_{H\ba G}(\B_T)  + O( \mathcal M_{H\ba G}(\B_T)^{1-\eta_0})$$
with the implied constant independent of $\G_0$ and $\gamma_0\in \G$.
\end{thm}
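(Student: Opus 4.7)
The plan is to reduce $F_T(\gamma_0)$ to the quantity $\langle F_T,\Psi_\gamma\rangle$ already handled by Proposition \ref{sqz}, by smoothing the counting function with a bump near the identity and exploiting effective well-roundedness to absorb the smoothing error. Concretely, fix a non-negative $\psi_\e\in C_c^\infty(G_\e)$ with $\int_G\psi_\e\,dg=1$ and $\S_\ell(\psi_\e)\ll \e^{-q_\ell}$ for a suitable $q_\ell>0$. For any $g\in G_\e$ and any $\gamma\in(H\cap\G)\ba\G_0$, the very definition of $\B_{T,\e}^\pm$ yields the pointwise inequalities
$$\chi_{\B_{T,\e}^-}([e]\gamma\gamma_0 g)\le \chi_{\B_T}([e]\gamma\gamma_0)\le \chi_{\B_{T,\e}^+}([e]\gamma\gamma_0 g).$$
Summing in $\gamma$, integrating against $\psi_\e(g)\,dg$, and unfolding from $G$ down to $\G_0\ba G$ produces the sandwich
$$\langle F_{T,\e}^-,\Psi_{\e,\gamma_0^{-1}}\rangle\le F_T(\gamma_0)\le \langle F_{T,\e}^+,\Psi_{\e,\gamma_0^{-1}}\rangle,$$
where $F_{T,\e}^\pm$ is the counting function attached to $\B_{T,\e}^\pm$ and $\Psi_{\e,\gamma}$ is as in Section \ref{not}.

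Next I would apply Proposition \ref{sqz} to each side of the sandwich. In the case $G=HA^+K$, the main term so produced is $\tfrac{|\mu_H^{\PS}|}{[\G:\G_0]\,|m^{\BMS}|}\,\tilde m^{\BR}(\psi_\e * f_{\B_{T,\e}^\pm}^+)$, with error $O\bigl(\e^{-q_\ell}\max_{a_t\in\B_{T,\e}^+}e^{(\delta-\beta_1)t}\bigr)$ uniform in $\gamma_0$ since the error in Proposition \ref{sqz} depends only on $\psi_\e$; the two-sided case $G=HA^+K\cup HA^-K$ is handled analogously by combining the $\BR$ and $\BR_*$ contributions. Because $\psi_\e$ is an approximate identity at $e$, Fubini against the generalized Cartan decomposition identifies $\tilde m^{\BR}(\psi_\e * f_{\B}^+)$ with $\M_{H\ba G}(\B)$ up to a factor $1+O(\e)$: the $K$-integral against $f_\B^+(k)=\int_{a_t\in \B k^{-1}\cap A^+}e^{\delta t}\,dt$ matches exactly the $(a_t,k)$-integral in the definition \eqref{mde}, while the $\psi_\e$-smoothing over $G_\e$ contributes only an $O(\e)$ error because the density of $\tilde m^{\BR}$ restricts at the identity coset to $d\nu_o(k^{-1}X_0^-)\,dt$.

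Feeding this into the sandwich and using effective well-roundedness $\M_{H\ba G}(\B_{T,\e}^+-\B_{T,\e}^-)\ll \e^p\M_{H\ba G}(\B_T)$, both bounds read $\tfrac{1}{[\G:\G_0]}\M_{H\ba G}(\B_T)$ plus an error of size $O(\e^{\min(1,p)}\M_{H\ba G}(\B_T))+O(\e^{-q_\ell}\M_{H\ba G}(\B_T)^{1-\eta})$ for some $\eta>0$, coming from $\beta_1$ together with the bound $\max_{a_t\in\B_T}e^{\delta t}\ll \M_{H\ba G}(\B_T)^{O(1)}$. Balancing the two error contributions by choosing $\e$ as a small negative power of $\M_{H\ba G}(\B_T)$ yields the required $\eta_0>0$; uniformity in $\G_0$ and $\gamma_0\in\G$ follows from the uniform spectral gap assumption feeding into Proposition \ref{sqz}.

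The main technical hurdle I expect is the Fubini identification of $\tilde m^{\BR}(\psi_\e * f_\B^+)$ with $\M_{H\ba G}(\B)$ to within an $O(\e)$ smoothing error, especially in the case $G=HA^+K\cup HA^-K$, where one must simultaneously pair $\tilde m^{\BR}$ with $f^+$ and $\tilde m^{\BR}_*$ with $f^-$ and verify that these match the two summands in \eqref{mde}. A secondary concern is confirming the growth bound $\max_{a_t\in\B_T}e^{\delta t}\ll \M_{H\ba G}(\B_T)^{O(1)}$, which should follow from effective well-roundedness together with positivity and local non-degeneracy of the Patterson--Sullivan density on $\Lambda(\G)$.
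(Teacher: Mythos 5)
Your proposal follows essentially the same route as the paper's proof: the sandwich $\la F_{\B_{T,\e}^-},\Psi^\e_{\gamma_0^{-1}}\ra \le F_T(\gamma_0)\le \la F_{\B_{T,\e}^+},\Psi^\e_{\gamma_0^{-1}}\ra$, an application of Proposition \ref{sqz} to both sides, the identification of $\kappa^+(\G)\,\tilde m^{\BR}(\psi^\e * f_{\B_{T,\e}^{\pm}})$ with $(1+O(\e^{p}))\,\mathcal M_{H\ba G}(\B_T)$, and the final choice of $\e$ as a negative power of $\mathcal M_{H\ba G}(\B_T)$ to balance the $O(\e^{p}\mathcal M_{H\ba G}(\B_T))$ and $O(\e^{-q_\ell}\mathcal M_{H\ba G}(\B_T)^{1-\eta})$ errors. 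The step you flag as the main hurdle — controlling $f_{\B}(\kappa(k^{-1}g))$ for $g\in G_\e$ inside the $KAN$-coordinate formula $d\tilde m^{\BR}(ka_rn)=e^{-\delta r}\,dn\,dr\,d\nu_o(kX_0^-)$ — is handled in the paper exactly as you anticipate, via the strong wave front property (Lemma \ref{str}, \cite{GOS}) combined with effective well-roundedness, so your plan matches the paper's argument.
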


\begin{proof}
Let $\psi^\e\in C^\infty(G)$ be an $\e$-smooth approximation of $e$: $0\le \psi^\e \le 1$,
$\supp(\psi^\e)\subset G_\e$ and $\int \psi^\e dg=1$.
Set $\B_{T,\e}^+:=\B_T G_\e$ and $\B_{T,\e}^-:=\cap_{g\in G_\e} \B_Tg$.
Then
$$\la F_{\B_{T,\e}^-}, \Psi^\e_{\gamma_0^{-1}} \ra \le F_T(\gamma_0)\le \la F_{\B_{T,\e}^+}, \Psi^\e_{\gamma_0^{-1}} \ra .$$

Again, we will provide a proof only for the case $G=HA^+K$; the other case can be done similarly, based on
Proposition \ref{sqz}.
By Proposition \ref{sqz}, for $ \kappa^+(\G_0):=\frac{ |\mu_{\G_0,H}^{\PS}|}{|m^{\BMS}_{\G_0}|}$,
$$ \la F_{\B_{T,\e}^\pm }, \Psi^\e_{\gamma_0^{-1}} \ra =
\kappa^+(\G_0) \tilde m^{\BR}(\psi^\e *f_{\B_{T,\e}^\pm})
+O(\max_{a_t\in \B_T} e^{(\delta-\beta_1)t} \e^{-q_\ell}).$$
where $q_\ell$ is so that $\S_\ell(\psi^\e)=O(\e^{-q_\ell})$.
For $g=a_rnk'\in ANK$, define $H(g)=r$ and $\kappa(g)=k'$.

Now, using the strong wave front property for $ANK$ decomposition
\cite{GOS}, and the definition \ref{adm}, there exists $c>1$  such that
for any $g\in G_\e$ and $T\gg 1$,
$$ f_{\B_{T,c\e}^-}(\kappa(k^{-1} )) \le f_{\B_{T}}(\kappa(k^{-1} g))\le f_{{\B_{T,c\e}^+}}(\kappa(k^{-1} )).$$

We use the formula for $\tilde m^{\BR}$ (cf. \cite{OS}):
$$d\tilde m^{\BR}(ka_rn)=e^{-\delta r}dndrd\nu_o(kX_0^-)$$
and  deduce \begin{align}\label{brc}
&\kappa^+(\G) \tilde m^{\BR}(\psi^\e  *f_{{\B_{T,\e}^+}})\notag
\\ &=\kappa^+(\G) \int_{k'\in K} \int_{KAN} \psi^\e (ka_r n k')f_{{\B_{T,\e}^+}}(k') e^{-\delta r}dk' dndrd\nu_o(kX_0^-)
\notag \\ &= \kappa^+(\G) \int_{k\in K} \int_{G} \psi^\e (kg)f_{{\B_{T,\e}^+}}(\kappa(g)) e^{(-\delta +(n-1))H(g)}
dg d\nu_o(k X_0^-) \notag \\
 &= \kappa^+(\G) \int_{k\in K} \int_{G} \psi^\e (g)f_{{\B_{T,\e}^+}}(\kappa(k^{-1} g)) e^{(-\delta +(n-1))H(k^{-1} g)}
dg d\nu_o(k X_0^-)
\notag \\ &\le  (1+O(\e))\kappa^+(\G)  \int_{k\in K} \int_{G} \psi^\e (g)f_{{\B_{T,c\e}^+}}(k^{-1})
dg d\nu_o(kX_0^-)
\notag \\&= (1+O(\e))\mathcal M_{H\ba G}({\B_{T,c\e}^+}) =  (1+ O(\e^p)) \mathcal M_{H\ba G}(\B_T)
\end{align}
 since $\int \psi^\e  dg =1$ and $\kappa^+(\G) \int_{k\in K} f_{\B_T}(k^{-1})d\nu_o(kX_0^-) =
 \mathcal M_{H\ba G}(\B_T)$.

Similarly,
$$\kappa^+(\G) \tilde m^{\BR}(\psi^\e *f_{\B_{T, c\e}^-})= (1+ O(\e^p)) \mathcal M_{H\ba G}(\B_T) .$$
Since $\max_{a_t\in \B_T} e^{(\delta -\beta_1)t} \ll
  \mathcal M_{H\ba G}(\B_T)^{1-\eta}$ for some $\eta>0$,
$$\# (\G_0\gamma_0\cap \B_T )= \tfrac{1}{[\G:\G_0]} \mathcal M_{H\ba G}(\B_T) +  O(\e^p \mathcal M_{H\ba G}(\B_T))
+ O(\e^{-q_{\ell}}   \mathcal M_{H\ba G}(\B_T)^{1-\eta} ).$$
Hence by taking $\e=  \mathcal M_{H\ba G}(\B_T)^{-\eta/(p+q_\ell)}$
and $\eta_0=  -p\eta/(p+q_\ell)$, we complete the proof. \end{proof}



\subsection{Effectively well-rounded families of $H\ba G$}

\subsubsection{Sectors}
For $\om \subset K$, we consider the following sector in $H\ba G$:
$$S_T(\om):=[e]\{a_t: 0\le t\le \log T\} \om .$$

In this subsection, we show that the family of sectors $\{S_T(\om):T\gg 1\}$ is effectively well-rounded
provided $\om$ is admissible in the following sense:

\begin{dfn} \label{admissible}\rm We will call  a Borel subset $\om\subset K$ with $\nu_o(\om^{-1}X_0^-)>0$
 {\it admissible} if there exists $0<p\le 1$ such that for all small $\e>0$,
 \begin{equation}\label{ade}\nu_o((\om^{-1} K_\e  -\cap_{k\in K_\e} \om^{-1} k)X_0^-)\ll \e^p \end{equation}
with the implied constant depending only on $\om$.
\end{dfn}

\begin{lem}\label{tri}
Let  $\om \subset K$ be a Borel subset. If $\nu_o(\om^{-1}X_0^-)>0$ and $\partial(\om^{-1})\cap \Lambda(\G)=\emptyset$, then $\om$
 is admissible. 
\end{lem}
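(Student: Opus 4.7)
The plan is to show that, under the hypothesis, the set appearing in \eqref{ade} is in fact $\nu_o$-null for all sufficiently small $\epsilon>0$, so that admissibility holds trivially with any $p\in (0,1]$.

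First I would establish a purely topological inclusion:
$$\omega^{-1} K_\epsilon \;-\;\bigcap_{k\in K_\epsilon}\omega^{-1}k \;\subset\; \mathcal N_{3\epsilon}(\partial(\omega^{-1})),$$
where $\mathcal N_{3\epsilon}$ denotes the $3\epsilon$-neighborhood in $K$. Indeed, if $g$ belongs to the left-hand side, then $g=sk$ for some $s\in\omega^{-1}$ and $k\in K_\epsilon$ (giving $d(g,s)\le \epsilon$), and there is some $k_0\in K_\epsilon$ with $gk_0^{-1}\notin \omega^{-1}$ (giving a point of $K\setminus \omega^{-1}$ within $\epsilon$ of $g$). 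A minimizing geodesic in $K$ from $s\in \omega^{-1}$ to $gk_0^{-1}\in K\setminus\omega^{-1}$ has length $\le 2\epsilon$ and, by continuity, crosses $\partial(\omega^{-1})$; this places $g$ within $3\epsilon$ of $\partial(\omega^{-1})$.

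Next, the map $k\mapsto kX_0^-$ from $K$ to $\partial(\bH^n)$ is Lipschitz with respect to the chosen left-invariant Riemannian metric on $K$ and any fixed visual metric on $\partial(\bH^n)$, so there is $C>0$ with
$$\mathcal N_{3\epsilon}(\partial(\omega^{-1}))\cdot X_0^- \;\subset\; \mathcal N_{C\epsilon}\bigl(\partial(\omega^{-1})\,X_0^-\bigr)\quad\text{in }\partial(\bH^n).$$
By hypothesis the compact sets $\partial(\omega^{-1})\,X_0^-$ and $\Lambda(\G)$ are disjoint, hence separated by some positive distance $d_0>0$. For every $\epsilon<\epsilon_0:=d_0/(2C)$, the $C\epsilon$-neighborhood on the right misses $\Lambda(\G)$, and since $\supp(\nu_o)\subset \Lambda(\G)$ we conclude
$$\nu_o\Bigl(\bigl(\omega^{-1}K_\epsilon-\cap_{k\in K_\epsilon}\omega^{-1}k\bigr)X_0^-\Bigr)=0\qquad \text{for all }\epsilon\in(0,\epsilon_0).$$
In particular \eqref{ade} is satisfied trivially with, say, $p=1$, completing the proof.

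There is no real obstacle here. The only points requiring any care are the inclusion of the symmetric difference into an $O(\epsilon)$-neighborhood of $\partial(\omega^{-1})$ (a standard compactness/geodesic argument in the compact Riemannian manifold $K$) and the Lipschitz comparison between the metric on $K$ and the visual metric on $\partial(\bH^n)$ via $k\mapsto kX_0^-$; both are routine. The decay in $\epsilon$ is not merely polynomial but, in fact, eventually zero, which is much stronger than the admissibility condition requires.
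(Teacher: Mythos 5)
Your proof is correct and follows essentially the same route as the paper: both arguments use compactness and the disjointness of $\partial(\om^{-1})$ (viewed on the boundary via $k\mapsto kX_0^-$) from $\Lambda(\G)=\supp(\nu_o)$ to conclude that the set in \eqref{ade} has $\nu_o$-measure zero for all small $\e$. You merely spell out the inclusion of the symmetric difference into an $O(\e)$-neighborhood of $\partial(\om^{-1})$ and the Lipschitz comparison, which the paper leaves implicit.
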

\begin{proof}
As $\partial(\om^{-1})$ and $\Lambda(\G)$ are compact subsets, we can find $\e_0>0$
such that the $\e_0$-neighborhood of  $\partial(\om^{-1})$ is disjoint from $\Lambda(\G)$.
Hence we can find $\e_1>0$ such that $\partial(\om^{-1})K_{\e_1}$ is disjoint from $\Lambda(\G)$;
so $\nu_o(\partial(\om^{-1})K_{\e_1} X_0^-)=0$.
\end{proof}

\begin{prop}\label{exam} Let $\kappa_o:=\max_{\xi\in \Lambda_p(\G)} \text{rank }(\xi)$.
If $$\delta >\max \{n-2,  \frac{n-2 +\kappa_0 }{2}\},$$
then any Borel subset $\om \subset K$ such that  $\nu_o(\om^{-1}X_0^-)>0$ and
$\partial(\om^{-1})$ is piece-wise smooth
is admissible.
\end{prop}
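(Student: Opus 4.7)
The plan is to reduce the bound \eqref{ade} to (i) a covering estimate for a tubular neighborhood of a codimension--one subset of $\partial(\bH^n)$, and (ii) a uniform upper bound on the Patterson--Sullivan measure of small metric balls.

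First, I would reinterpret the set in question geometrically. A direct unpacking of the definitions shows that $k'\in \omega^{-1}K_\e-\bigcap_{k\in K_\e}\omega^{-1}k$ if and only if $k'K_\e$ meets both $\omega^{-1}$ and its complement; equivalently, $k'$ lies within distance $O(\e)$ of $\partial(\omega^{-1})$ in $K$. Since the orbit map $k\mapsto kX_0^-$ factors through $K/M$ and the induced map $K/M\to \partial(\bH^n)$ is a diffeomorphism (and thus bi--Lipschitz on compacts), the image lies in an $O(\e)$--neighborhood of $S:=\partial(\omega^{-1})X_0^-$. Under the piecewise--smoothness hypothesis this set $S$ is piecewise smooth in $\partial(\bH^n)\cong S^{n-1}$, of codimension one, hence has Minkowski dimension at most $n-2$; consequently its $O(\e)$--neighborhood admits a covering by $\ll \e^{-(n-2)}$ metric balls of radius comparable to $\e$.

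Next I would invoke the global measure formula for the Patterson--Sullivan density of geometrically finite Kleinian groups (Sullivan, Stratmann--Velani): there is a constant $C>0$ such that, uniformly for $\xi\in\partial(\bH^n)$ and small $r>0$,
$$\nu_o(B(\xi,r))\le C\,r^{\alpha},\qquad \alpha:=\min\{\delta,\;2\delta-\kappa_0\}.$$
Away from parabolic fixed points one has the bound $r^{\delta}$, while near a bounded parabolic fixed point of rank $k\le \kappa_0$ the bound takes the form $r^{2\delta-k}\ge r^{2\delta-\kappa_0}$; taking the worst of these gives the exponent $\alpha$. Summing the local bound over the cover then yields
$$\nu_o\!\bigl((\omega^{-1}K_\e-\cap_{k\in K_\e}\omega^{-1}k)X_0^-\bigr)\ll \e^{-(n-2)}\cdot \e^{\alpha}=\e^{\alpha-(n-2)}.$$

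Finally, the hypothesis $\delta>\max\{n-2,\,(n-2+\kappa_0)/2\}$ is equivalent to the pair of inequalities $\delta>n-2$ and $2\delta-\kappa_0>n-2$, i.e.\ precisely $\alpha>n-2$. Thus $p:=\min\{\alpha-(n-2),1\}>0$ is a valid exponent in Definition \ref{admissible}, completing the argument. The step I expect to require the most care is the citation/verification of the uniform local bound in the second step with the correct exponent $\alpha$: the sharpness of the ``worst case'' behavior near rank--$\kappa_0$ cusps is exactly what forces the appearance of $(n-2+\kappa_0)/2$ in the hypothesis, so any looser bound on $\nu_o$ of balls near cusps would not suffice.
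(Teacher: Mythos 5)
Your proposal is correct and follows essentially the same route as the paper: the paper likewise derives the uniform ball estimate $\nu_o(B(\xi,\e))\ll \e^{\delta}+\e^{2\delta-\kappa_0}$ from Sullivan's shadow-type formula (rather than citing the global measure formula), covers the $\e$-neighborhood of the piecewise smooth boundary by $O(\e^{-(n-2)})$ balls of radius $\e$ (performed in $K$ and converted via $\dim K-\dim M=n-1$, which matches your count on $\partial(\bH^n)$), and concludes with exactly the same exponent bookkeeping showing admissibility iff $\delta>n-2$ and $2\delta-\kappa_0>n-2$.
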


\begin{proof}
Let $s_\xi=\{\xi_t: t\in [0, \infty)\} $ be a geodesic ray emanating from $o$ toward $\xi$ and let
$b(\xi_t)\in \overline \bH^n$ be the Euclidean ball centered at $\xi$ whose boundary
is orthogonal to $s_\xi$ at $\xi_t$. For simplicity, we write $\nu_o(\om)=\nu_o(\om X_0^-)$ for a Borel subset $\om$ of $K$.
Then by Sullivan \cite{Sullivan1984}, there exists a $\G$-invariant collection of pairwise disjoint horoballs
$\{\mathcal H_\xi: \xi\in \Lambda_p(\G)\}$ for which the following holds: there exists a constant $c>1$ such
that for any $\xi\in \Lambda(\G)$ and for any $t>0$,
$$c^{-1} e^{-\delta t} e^{d(\xi_t, \G(o)) (k(\xi_t) -\delta)}\le
\nu_o(b(\xi_t)) \le c e^{-\delta t} e^{d(\xi_t, \G(o)) (k(\xi_t)-\delta)}$$
where $k(\xi_t)$ is the rank of $\xi'$ if $\xi_t\in \mathcal H_{\xi'}$ for some $\xi'\in \Lambda_p(\G)$
and $\delta$ otherwise.

Therefore, using $0\le d(\xi_t, \G(o)) \le t$, it follows that for any $\xi\in \Lambda(\G)$ and $t>1$,
\be \label{compare} \nu_o(b(\xi_t)) \ll \begin{cases}   e^{(-2\delta +k(\xi_t))t} &\text{ if $k(\xi_t)\ge \delta$}
    \\ e^{-\delta t} &\text{otherwise}.
   \end{cases}
\ee

By  standard computations in hyperbolic geometry,  there exists $c_0>1$ such that
$ B(\xi, c_0^{-1} e^{-t})\subset b(\xi_t) \subset B(\xi, c_0e^{-t})$
where $B(\xi, r)$ denotes the Euclidean ball in $\partial(\bH^n)$ of radius $r$.
Hence it follows from \eqref{compare} that if we set $\kappa_0:=\max_{\xi'\in \Lambda_p(\G)} \text{rank }(\xi')$, then
 for all small $\e>0$ and $\xi\in \Lambda(\G)$,
$$\nu_o(B(\xi, \e))\ll \e^\delta + \e^{2\delta -\kappa_0} .$$

Clearly, this inequality holds for all $\xi\in \partial(\bH^n)$, as the support of $\nu_o$ is equal to $\Lambda(\G)$.

Now if $\partial(\om^{-1})$ is a piece-wise smooth subset of $K$,
we can cover its $\e$-neighborhood by $O(\e^{1-d_K})$ number of $\e$-balls, where $d_K$ is the dimension of $K$.

Since for any $k\in K$, $$\nu_o(B(k, \e))\ll \e^{d_M} \cdot \nu_o(B(k(X_0^+), \e))\ll  \e^{\delta+d_M} + \e^{2\delta -\kappa_0+d_M};$$
where $X_0\in \T^1(\bH^n)$ is fixed by $M$ and $d_M$ is the dimension of $M$,
we obtain that the $\nu_o$ measure of an $\e$-neighborhood of $\partial(\om^{-1})$
is at most of order
$$\e^{\delta+d_M-d_K+1} + \e^{2\delta -\kappa_0+d_M-d_K+1} =
\e^{\delta-n+2} + \e^{2\delta -\kappa_0-n+2}.$$
Hence $\om$ is admissible if $\delta$ is bigger than both $(n-2)$ and $\frac{n-2+\kappa_0}{2}$.
\end{proof}

\begin{cor}
 If $\delta>n-2$ and $\text{rank }(\xi)<\delta$ for all $\xi\in \Lambda_p(\G)$, then
  any Borel subset $\om \subset K$ such that  $\nu_o(\om^{-1}X_0^-)>0$ and
$\partial(\om^{-1})$ is piece-wise smooth
is admissible.
\end{cor}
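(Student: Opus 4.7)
The plan is to adapt the proof of Proposition~\ref{exam}, observing that the stronger hypothesis $\operatorname{rank}(\xi)<\delta$ for all parabolic fixed points rules out the unfavorable case in Sullivan's global estimate on $\nu_o$-masses of shadow balls. Concretely, the proof of Proposition~\ref{exam} produced the dichotomy
\[
\nu_o(b(\xi_t)) \ll
\begin{cases} e^{(-2\delta + k(\xi_t))t} & \text{if } k(\xi_t) \ge \delta, \\ e^{-\delta t} & \text{otherwise,}\end{cases}
\]
which led to the weaker bound $\nu_o(B(\xi,\e)) \ll \e^{\delta} + \e^{2\delta - \kappa_0}$, forcing the extra hypothesis $\delta > (n-2+\kappa_0)/2$ to handle the second term.

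The first step is to show that under the hypothesis $\operatorname{rank}(\xi) < \delta$ for every $\xi \in \Lambda_p(\Gamma)$, i.e.\ $\kappa_0 < \delta$, the value $k(\xi_t)$ is \emph{always} $\le \max(\kappa_0,\delta) = \delta$, with equality only in the ``otherwise'' branch. In both branches $(k(\xi_t)-\delta)d(\xi_t,\Gamma(o)) \le 0$, so Sullivan's formula $\nu_o(b(\xi_t)) \asymp e^{-\delta t} e^{d(\xi_t,\Gamma(o))(k(\xi_t)-\delta)}$ collapses to the clean uniform bound $\nu_o(b(\xi_t)) \ll e^{-\delta t}$ for all $\xi \in \Lambda(\Gamma)$.

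The second step is to translate this into the ball estimate. Using $B(\xi, c_0^{-1}e^{-t}) \subset b(\xi_t) \subset B(\xi, c_0 e^{-t})$ and extending by zero outside $\operatorname{supp}(\nu_o) = \Lambda(\Gamma)$, I obtain $\nu_o(B(\xi,\e)) \ll \e^{\delta}$ for all $\xi \in \partial(\mathbb{H}^n)$. Passing to $K$ via the visual map and the $M$-invariance argument of Proposition~\ref{exam} yields $\nu_o(B(k,\e) X_0^-) \ll \e^{\delta + d_M}$ for balls in $K$, where $d_M = \dim M$.

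Finally, I would cover the $\e$-neighborhood of the piece-wise smooth set $\partial(\omega^{-1})$ by $O(\e^{1-d_K})$ balls of radius $\e$ in $K$, where $d_K = \dim K = n-1+d_M$. Summing gives $\nu_o$-measure of order $\e^{\delta + d_M + 1 - d_K} = \e^{\delta - n + 2}$, and since $\delta > n-2$ by hypothesis, this establishes \eqref{ade} with $p = \delta - n + 2 > 0$, hence admissibility. No step is genuinely delicate here — the content is simply noticing that the parabolic-rank hypothesis removes the bad term from Sullivan's estimate, so the admissibility threshold drops from $\delta > \max\{n-2, (n-2+\kappa_0)/2\}$ to $\delta > n-2$.
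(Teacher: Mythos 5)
Your argument is correct and is essentially the paper's: the corollary is deduced from the shadow-lemma estimate of Proposition \ref{exam}, and you have simply observed that the hypothesis $\operatorname{rank}(\xi)<\delta$ kills the unfavorable branch, giving $\nu_o(B(\xi,\e))\ll\e^{\delta}$ and hence the bound $\e^{\delta-n+2}$ for the boundary neighborhood (with $p=\min\{1,\delta-n+2\}$). Note that you could have shortened this further: since $\kappa_0<\delta$ and $\delta>n-2$ already force $\delta>\tfrac{n-2+\kappa_0}{2}$, the corollary follows formally from the statement of Proposition \ref{exam} without revisiting its proof.
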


The following strong wave front property of $HAK$ decomposition is a crucial ingredient
 in proving an effective well-roundedness of a given family:
\begin{lem}[Strong wave front property] \cite[Theorem 4.1]{GOS}\label{str}
 There exists $c>1$ and $\e_0>0$ such that for any $0<\e<\e_0$ and for any $g= ha_tk\in HA^+K$ with $t>1$,
$$gG_\e\subset (hH_{c\e})\;( a_t A_{c\e})\;( k K_{c\e})$$
where $H_{c\e}=H\cap G_{c\e}$ and $A_{c\e}$ and $K_{c\e}$ are defined similarly. \end{lem}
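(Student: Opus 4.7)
The plan is to prove this local wave front property by reducing it to a question about small perturbations near the identity, and then controlling the adjoint action of $a_t$ on the Lie algebra $\mathfrak{h}$. First, since $K$ is compact and $d$ is left-invariant, conjugation by $k$ is bi-Lipschitz, so one can choose $c_0>0$ (independent of $k\in K$) such that $\tilde g_\e := kg_\e k^{-1}\in G_{c_0\e}$ whenever $g_\e\in G_\e$. Then
$$ gg_\e \;=\; ha_t k g_\e \;=\; h\, a_t \tilde g_\e\, k,$$
and the goal reduces to decomposing $a_t \tilde g_\e = h_1\, a_t a_s\, \hat k_1$ with $h_1\in H_{c\e}$, $a_s\in A_{c\e}$, $\hat k_1\in K_{c\e}$, uniformly in $t>1$.

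Next, I would invoke the transversality $\mathfrak g = \mathfrak h + \mathfrak a + \mathfrak k$ (which holds for both the horospherical and symmetric choices of $H,A$ made in \ref{basicset}) to get a local diffeomorphism $H\times A\times K\to G$ near the identity. Shrinking $\e_0$ if necessary, this lets us write $\tilde g_\e = h_\e\, a_{s_\e}\, k_\e$ with each factor of size $O(\e)$ in its respective group. Substituting gives
$$a_t \tilde g_\e \;=\; (a_t h_\e a_t^{-1})\cdot a_t a_{s_\e}\cdot k_\e,$$
so we set $h_1 := a_t h_\e a_t^{-1}$, $a_s := a_{s_\e}$, $\hat k_1 := k_\e$. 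The problem becomes bounding $|h_1|$ uniformly in $t>1$.

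For the horospherical case $H=N^+$, this is straightforward: by the defining property $a_tH a_{-t}\to e$, the adjoint $\operatorname{Ad}(a_t)$ is a strict contraction on $\mathfrak h$ for $t>0$, so $|h_1|\ll e^{-t}|h_\e| \le c\e$ uniformly in $t>1$, and the lemma follows.

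The main obstacle is the symmetric case, where $\operatorname{Ad}(a_t)$ does \emph{not} contract $\mathfrak h$. Indeed, since $\sigma$ sends the positive root space $\mathfrak g_\alpha$ to the negative root space $\mathfrak g_{-\alpha}$ (using $\sigma(Y_0)=-Y_0$), the intersection $\mathfrak h\cap(\mathfrak g_\alpha\oplus \mathfrak g_{-\alpha})$ consists of $\sigma$-invariant vectors $X+\sigma X$ with $X\in \mathfrak g_\alpha$, on which $\operatorname{Ad}(a_t)$ acts as $e^t\oplus e^{-t}$. To handle this, I would refine the decomposition: first peel off from $\tilde g_\e$ the contracting horospherical component (via the $N^- A N^+$ Iwasawa chart valid near identity), use the strong wave front property for the $N^-AK$ decomposition (which behaves nicely because $\operatorname{Ad}(a_t)$ contracts $\mathfrak n^-$), and then exploit the involution $\sigma$ to rewrite the expanding $\mathfrak g_\alpha$-contribution as a sum of an $\mathfrak h$-element and an element that can be absorbed into the $A$- and $K$-factors, at a uniform cost in $\e$. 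This iterative rebalancing is precisely the content of \cite[Thm.~4.1]{GOS}, whose argument I would follow verbatim for the detailed estimates.
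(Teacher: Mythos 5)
First, a remark on the comparison: the paper does not prove this lemma at all — it is quoted directly from \cite[Thm. 4.1]{GOS} — so there is no internal argument to measure you against, and I judge your sketch on its own terms. Your opening reduction $gg_\e=h\,a_t\,(kg_\e k^{-1})\,k$ and your treatment of the horospherical case are correct: there $H\times A\times K\to G$ is the Iwasawa decomposition, a genuine diffeomorphism, and with the paper's convention $H=\{g: a_tga_{-t}\to e,\ t\to+\infty\}$ the conjugation by $a_t$, $t>0$, contracts $H$, so the argument closes.

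For the symmetric case, however, there is a genuine gap, and it occurs \emph{before} the non-contraction issue you flag. The transversality $\mathfrak g=\mathfrak h+\mathfrak a+\mathfrak k$ you invoke is false in general: since $\mathfrak h$ is $\theta$-stable, $\mathfrak h+\mathfrak k=\mathfrak k\oplus(\mathfrak p\cap\mathfrak h)$, so $\mathfrak h+\mathfrak a+\mathfrak k$ has codimension $n-k-1$ in $\mathfrak g$ for $H$ locally $\SO(k,1)\times\SO(n-k)$; it is a proper subspace whenever $k\le n-2$, and maximally so in the Cartan-type case $H=K$, for which the paper actually uses Lemma \ref{str} (proof of Corollary \ref{bik}). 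Consequently a small element $\tilde g_\e$ need \emph{not} factor as $h_\e a_{s_\e}k_\e$ with all three factors $O(\e)$: the $HA^+K$ decomposition is singular at $t=0$, exactly like polar coordinates at the origin. Concretely, in $\bH^3$ with $\tilde S=H(o)$ a geodesic, an element $\tilde g_\e\in G_\e$ moving $o$ to a point at distance $\e$ from $\tilde S$ in a normal direction making angle $\theta$ with the $A$-direction forces every $H$-factor in $\tilde g_\e=hak$ to contain a rotation by roughly $\theta$ about $\tilde S$; this is bounded away from $e$ and cannot be absorbed into $H\cap M$. This singularity at the identity is precisely why the lemma is stated only for $t>1$ and why \cite{GOS} is nontrivial. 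Your proposed remedy for the expansion of $\mathfrak h\cap(\mathfrak g_\alpha\oplus\mathfrak g_{-\alpha})$ (peeling off horospherical pieces and "rebalancing" via $\sigma$) is only a heuristic and is in the end deferred wholesale to \cite[Thm. 4.1]{GOS}; deferring is consistent with what the paper itself does, but then your write-up supplies no proof of the hard case, while the reduction it does supply rests on the false transversality claim. (Minor: with the paper's conventions $\operatorname{Ad}(a_t)$, $t>0$, contracts $\mathfrak n^+$, not $\mathfrak n^-$ as in your symmetric-case sketch.)
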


\begin{prop}\label{sectore}
  Let $\om \subset K$ be an admissible subset. Then the family $\{S_T(\om): T\gg 1\}$ is effectively well-rounded and
$$\mathcal M_{H\ba G}(S_T(\om))=
\tfrac{ |\mu_{H}^{\PS}| \cdot \nu_o(\om^{-1})X_0^-}{\delta\cdot
 |m^{\BMS}|} (T^\delta -1).$$
\end{prop}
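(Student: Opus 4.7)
The evaluation of $\mathcal M_{H\backslash G}(S_T(\om))$ is a direct Fubini computation from Definition~\ref{sd}: plugging in $\psi=\chi_{S_T(\om)}$, the $t$-integral of $e^{\delta t}$ over $[0,\log T]$ yields $(T^\delta-1)/\delta$ while the $k$-integral over $\om$ yields $\nu_o(\om^{-1}X_0^-)$, giving the claimed formula. The real content is therefore the effective well-roundedness.

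To establish it, I will sandwich $S_{T,\e}^{\pm}$ between sectors with slightly perturbed $A$- and $K$-parameters by means of the strong wave front property (Lemma~\ref{str}). Concretely, for $g_0\in G_\e$ with $\e$ small and $g=ha_tk\in HA^+K$ with $t>1$, Lemma~\ref{str} produces a decomposition $gg_0=h'a_{t'}k'$ with $h'\in hH_{c\e}$, $|t'-t|\le c\e$, $k'\in kK_{c\e}$. Projecting to $H\backslash G$, this gives the inclusions
\begin{equation*}
S_T(\om)G_\e \;\subset\; [e]\{a_{t'}:-c\e\le t'\le \log T+c\e\}\cdot(\om K_{c\e}),
\end{equation*}
\begin{equation*}
\bigcap_{g_0\in G_\e}S_T(\om)g_0 \;\supset\; [e]\{a_{t'}:c\e\le t'\le \log T-c\e\}\cdot \om_{c\e}^-,
\end{equation*}
where $\om_{c\e}^-:=\{k\in K:kK_{c\e}\subset\om\}=\bigcap_{k_0\in K_{c\e}}\om k_0^{-1}$.

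Taking the $\mathcal M_{H\backslash G}$-measure of these enveloping sectors by Fubini and subtracting, the difference splits into two pieces. First, the $t$-boundary (the two $O(\e)$-intervals near $0$ and $\log T$) contributes
$O(\e\, T^\delta\,\nu_o(\om^{-1}X_0^-))=O(\e\,\mathcal M_{H\backslash G}(S_T(\om)))$. Second, the $K$-boundary contributes $\tfrac{|\mu_H^{\PS}|}{\delta|m^{\BMS}|}(T^\delta-1)$ times
$\nu_o\bigl((K_{c\e}\om^{-1}-\cap_{k_0\in K_{c\e}}k_0\om^{-1})X_0^-\bigr)$. This set is an $O(\e)$-neighborhood of $\om^{-1}X_0^-$ in $\partial(\bH^n)$ minus a corresponding interior, i.e.\ precisely the object controlled by the admissibility hypothesis on $\om$ (using the symmetry $K_{c\e}=K_{c\e}^{-1}$ to pass between the left-thickening appearing here and the right-thickening appearing in Definition~\ref{admissible}). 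Admissibility bounds this by $\e^p$, so the $K$-boundary contribution is $O(\e^p\cdot \mathcal M_{H\backslash G}(S_T(\om)))$.

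Adding the two contributions yields $\mathcal M_{H\backslash G}(S_{T,\e}^+-S_{T,\e}^-)=O(\e^{\min(1,p)}\mathcal M_{H\backslash G}(S_T(\om)))$, which is the required effective well-roundedness. The one delicate point in this plan is the reconciliation mentioned above: the wave-front naturally produces a left-$K_{c\e}$-thickening of $\om^{-1}$ in $K$, whereas Definition~\ref{admissible} phrases admissibility in terms of a right-thickening of $\om^{-1}$. Both, however, descend to comparable $O(\e)$-neighborhoods of the image $\om^{-1}X_0^-\subset\partial(\bH^n)$, so they carry comparable $\nu_o$-mass; this is the only nontrivial bookkeeping.
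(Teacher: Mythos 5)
Your proposal follows essentially the same route as the paper: the same Fubini computation for $\mathcal M_{H\ba G}(S_T(\om))$, the strong wave front property (Lemma~\ref{str}) to trap $S_{T,\e}^{\pm}$ between sectors with $O(\e)$-perturbed $A$- and $K$-parameters, and the admissibility hypothesis \eqref{ade} to absorb the $K$-boundary, with the $t$-boundary contributing only $O(\e)$. The only difference is that you explicitly flag and resolve the left- versus right-thickening of $\om^{-1}$ (via comparability of the two with metric neighborhoods of $\om^{-1}X_0^-$ in $\partial(\bH^n)$), a point the paper's proof passes over silently; this is a correct and harmless refinement.
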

\begin{proof}
We compute
\begin{align*}
\mathcal M_{H\ba G}(S_T(\om))&=
\tfrac{ |\mu_{H}^{\PS}|}{|m^{\BMS}|} \int_{t=0}^{\log T}
  e^{\delta t} dt \int_{k\in \om} d\nu_o(k^{-1}X_0^-)\\ &=
\tfrac{ |\mu_{H}^{\PS}| \cdot \nu_o(\om^{-1}X_0^-)}{\delta\cdot
 |m^{\BMS}|} (T^\delta -1).
\end{align*}

By Lemma \ref{str},
there exists $c\ge 1$ such that for all  $T>1$ and $\e>0$
$$S_T(\om) G_\e\subset [e]\{a_t: \log (1-c\e)  \le t\le \log (1+c\e) T\} \om_{c\e}^+$$
where $\om_{c_\e}^+= \om K_{c\e}$ and $K_{c\e}$ is a $c\e$-neighborhood of $e$ in $K$.
Hence with $p>0$ given in \eqref{ade},
\begin{align*}
\mathcal M_{H\ba G}(S_T(\om)G_\e)& \ll
\tfrac{ |\mu_{H}^{\PS}| \cdot \nu_o((\om_{c\e}^+)^{-1} X_0^-)}{\delta\cdot
 |m^{\BMS}|} (1+c\e)^\delta T^\delta  \\&\ll
\tfrac{ |\mu_{H}^{\PS}| \cdot (1+ O(\e^p)) \nu_o(\om^{-1}X_0^-)}{\delta\cdot
 |m^{\BMS}|} (1+c\e)^\delta T^\delta
\\ & =  (1+O(\e^{p}))  \mathcal M_{H\ba G}(S_T(\om)) .
\end{align*}
Similarly, we can show that
$$\mathcal M_{H\ba G}(\cap_{g\in G_\e} S_T(\om)g)=(1+O(\e^{p}))  \mathcal M_{H\ba G}(S_T(\om)).$$
Hence the family $\{S_T(\om)\}$
is an effectively well-rounded family for $\G$.
\end{proof}

Therefore we deduce from Theorem \ref{mcla}:
\begin{cor}\label{sector}
 Let $\om \subset K$ be an admissible subset. Then
there exists $\eta_0>0$ (depending only
on a uniform spectral gap data for $\G$ and $\G_0$) such that for any $\gamma_0\in \G$
$$ \# ([e]\G_0\gamma_0\cap S_T(\om) )= \tfrac{ |\mu_{H}^{\PS}| \cdot \nu_o(\om^{-1}X_0^-)}{[\G:\G_0] \cdot |m^{\BMS}|
\cdot \delta}
{T}^\delta  + O( T^{\delta -\eta_0}).$$
\end{cor}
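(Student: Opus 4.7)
The plan is to apply Theorem \ref{mcla} directly to the family $\B_T = S_T(\om)$. The two hypotheses of Theorem \ref{mcla} that need verification, namely (i) effective well-roundedness of the family with respect to $\G$ and (ii) a formula for $\mathcal M_{H\ba G}(\B_T)$, are supplied exactly by Proposition \ref{sectore}: (i) comes from combining the strong wave front property for the $HAK$ decomposition (Lemma \ref{str}) with admissibility of $\om$ to control $\mathcal M_{H\ba G}(S_T(\om)G_\e)$ and $\mathcal M_{H\ba G}(\cap_{g\in G_\e}S_T(\om)g)$ against $\mathcal M_{H\ba G}(S_T(\om))$ up to a factor $1+O(\e^p)$, and (ii) is the explicit computation
$$\mathcal M_{H\ba G}(S_T(\om)) = \tfrac{|\mu_H^{\PS}|\cdot\nu_o(\om^{-1}X_0^-)}{\delta\cdot |m^{\BMS}|}\,(T^\delta - 1).$$

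With these inputs, Theorem \ref{mcla} furnishes $\eta_0>0$, depending only on the uniform spectral gap data of $\{\G,\G_0\}$, such that
$$\#([e]\G_0\gamma_0 \cap S_T(\om)) = \tfrac{1}{[\G:\G_0]}\,\mathcal M_{H\ba G}(S_T(\om)) + O\!\left(\mathcal M_{H\ba G}(S_T(\om))^{1-\eta_0}\right),$$
with the implied constant independent of $\G_0$ and $\gamma_0\in\G$. Substituting the formula from Proposition \ref{sectore} gives
$$\#([e]\G_0\gamma_0 \cap S_T(\om)) = \tfrac{|\mu_H^{\PS}|\cdot\nu_o(\om^{-1}X_0^-)}{[\G:\G_0]\cdot|m^{\BMS}|\cdot\delta}\,(T^\delta-1) + O(T^{\delta(1-\eta_0)}).$$
I would then absorb the constant $-1$ in the main term into the error and replace $\eta_0$ by the smaller (but still positive) quantity $\delta\eta_0$ to arrive at the claimed asymptotic.

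There is no real obstacle at the level of this corollary: the entire content is a substitution of Proposition \ref{sectore} into Theorem \ref{mcla}. All the genuine work has already been carried out upstream, namely the effective equidistribution of $\G\ba\G Ha_t$ (Theorem \ref{meq}), the transversal intersection argument used to pass from counting to the weak-convergence statement (Proposition \ref{sqz}), and the strong wave front property together with the admissibility hypothesis on $\om$ used to verify effective well-roundedness for sectors.
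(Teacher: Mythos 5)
Your proposal is correct and is exactly the paper's route: Proposition \ref{sectore} supplies the effective well-roundedness of $\{S_T(\om)\}$ and the formula for $\mathcal M_{H\ba G}(S_T(\om))$, and Corollary \ref{sector} is then an immediate substitution into Theorem \ref{mcla}, with the $-1$ and the renaming of the exponent absorbed into the error term just as you describe.
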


\subsubsection{Counting in norm-balls}
Let $V$ be a finite dimensional vector space on which $G$ acts linearly from  the right and let $w_0\in V$.
We assume that $w_0\G$ is discrete and that $H:=G_{w_0}$
is either a symmetric subgroup or a horospherical subgroup.
We let $A=\{a_t\}, K, M$ be
as in section \ref{th}.
Let $\lambda\in \mathbb N$ be the log of the largest eigenvalue of $a_1$ on the $\br$-span of $w_0G$,
and set $$w_0^{\pm \lam}:=\lim_{t\to\infty} e^{-\lam t} w_0a_{\pm t} .$$
Fixing a norm $\|\cdot \|$ on $V$, let $B_T:=\{v\in w_0G: \|v\|<T\}$.

\begin{prop}\label{normw}
For any admissible $\om \subset K$, the family $\{B_T\cap w_0A^\pm \om\}$ is
effectively well-rounded. In particular, $\{B_T\}$
is effectively well-rounded.

We also compute that for some $0<\eta<\delta/\lam$,
$$\mathcal M_{H\ba G}(B_T\cap w_0A^\pm \om) =
 \tfrac{|\mu_{H, \pm} ^{\PS}|}{\delta \cdot |m^{\BMS}|} \cdot
\int_{\om} \|w_0^{ \pm \lam}  k\|^{-\delta/\lam}d\nu_o^{\pm} (k^{-1}X_0^-) \cdot   {T}^{\delta/\lam} + O( T^{\eta}).$$
\end{prop}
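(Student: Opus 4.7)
The plan is to parametrize $B_T \cap w_0 A^\pm\om$ through the $HA^\pm K$ decomposition and evaluate the defining integral of $\mathcal M_{H\ba G}$ quantitatively. Since $H=G_{w_0}$ stabilizes $w_0$, for $g=ha_t k\in HA^+K$ we have $\|w_0 g\|=\|w_0 a_t k\|$, so
\[
B_T\cap w_0 A^+\om=\{[e]a_t k : t\ge 0,\ k\in\om,\ \|w_0 a_t k\|<T\}.
\]
The first step is to establish a uniform asymptotic expansion for $\|w_0 a_t k\|$. Decomposing the $\br$-span of $w_0G$ into the $a_1$-eigenspaces and noting that $w_0^{+\lam}=\lim_{t\to\infty}e^{-\lam t}w_0 a_t$ is a nonzero element of the top eigenspace, a standard spectral estimate gives some $\alpha>0$ such that, uniformly for $k$ in any compact subset of $K$,
\[
\|w_0 a_t k\|=e^{\lam t}\bigl(\|w_0^{+\lam}k\|+O(e^{-\alpha t})\bigr).
\]
Since $w_0^{+\lam}k\ne 0$ for all $k$ and $K$ is compact, $\|w_0^{+\lam}k\|$ is bounded away from $0$ on $K$, so the threshold $t^*(k,T)$ defined by $\|w_0 a_{t^*}k\|=T$ satisfies
\[
t^*(k,T)=\tfrac{1}{\lam}\log\bigl(T/\|w_0^{+\lam}k\|\bigr)+O(T^{-\alpha/\lam}).
\]

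Next I would substitute this into the definition of $\mathcal M_{H\ba G}$. A direct computation gives
\[
\int_0^{t^*(k,T)} e^{\delta t}\,dt
=\tfrac{1}{\delta}\,\|w_0^{+\lam}k\|^{-\delta/\lam}T^{\delta/\lam}\bigl(1+O(T^{-\alpha/\lam})\bigr)-\tfrac{1}{\delta},
\]
and integrating against $d\nu_o(k^{-1}X_0^-)$ over $\om$ yields the claimed main term together with an error $O(T^{\eta})$ for any $\delta/\lam-\alpha/\lam<\eta<\delta/\lam$. The $A^-$ case is identical with $w_0^{-\lam}$ and $\nu_o^-$ replacing $w_0^{+\lam}$ and $\nu_o$; when $G=HA^+K\cup HA^-K$ one just adds the two contributions.

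For effective well-roundedness, the plan is to exploit the strong wave front property of $HA^+K$ (Lemma \ref{str}): any $g'\in G_\e$ shifts $g=ha_t k$ into $hH_{c\e}\cdot a_t A_{c\e}\cdot k K_{c\e}$. Consequently $(B_T\cap w_0 A^+\om)G_\e$ is contained in the parameter region $\{(t,k):\ 0\le t\le t^*(k,T)+c'\e,\ k\in \om K_{c\e}\}$, while $\bigcap_{g\in G_\e}(B_T\cap w_0 A^+\om)g$ contains the region $\{0\le t\le t^*(k,T)-c'\e,\ k\in \bigcap_{k'\in K_{c\e}}\om k'^{-1}\}$. The symmetric difference therefore splits into a radial sliver $t\in[t^*(k,T)-c'\e, t^*(k,T)+c'\e]$, whose $\mathcal M_{H\ba G}$-mass is $O(\e\,T^{\delta/\lam})$, plus an angular annulus of the form $\om K_{c\e}\setminus\bigcap_{k'\in K_{c\e}}\om k'^{-1}$, whose $\nu_o$-measure is $O(\e^p)$ by admissibility of $\om$ (Definition \ref{admissible}), contributing $O(\e^p T^{\delta/\lam})$. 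Both bounds together give $\mathcal M_{H\ba G}$-mass $O(\e^{\min(1,p)}\,\mathcal M_{H\ba G}(B_T\cap w_0 A^+\om))$, which is effective well-roundedness. The final assertion that $\{B_T\}$ itself is effectively well-rounded follows by specializing to $\om=K$ (and summing over $\pm$ when needed), which is trivially admissible.

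The only nonformal step is the uniform asymptotic for $\|w_0 a_t k\|$, but this reduces to the spectral decomposition of $V$ under $A$ together with the fact that $w_0^{+\lam}\ne 0$; the remaining work is bookkeeping, controlled in one direction by the effective radial expansion and in the other by admissibility of $\om$ through the strong wave front property.
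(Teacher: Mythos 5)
Your proposal is correct and takes essentially the same route as the paper: the main term comes from the eigenvalue expansion $w_0a_tk=e^{\lam t}w_0^{\lam}k+O(e^{\lam_1 t})$ inserted into the defining integral of $\M_{H\ba G}$, and effective well-roundedness comes from the strong wave front property (Lemma \ref{str}) combined with admissibility of $\om$, splitting the perturbation into an angular part of $\nu_o$-measure $O(\e^p)$ and a radial part of mass $O(\e\,T^{\delta/\lam})$. The only cosmetic difference is that you invert to a threshold $t^*(k,T)$ and compare slivers in the $t$-variable, whereas the paper controls the radial error directly by comparing $B_T$ with $B_{(1+c\e)T}$ in the norm variable.
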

\begin{proof}
By the definition of $\lambda$ and $w_0^\lam$, it follows that
$w_0a_t k=e^{\lam t}w_0^\lam k +O(e^{\lam_1 t})$
for some $\lam_1<\lam$. Noting that $\|w_0a_tk\|\le T$ implies that $e^{\lam t}=O(T)$ and $e^{\lam_1 t}=O(T^{\lam_1/\lam})$,
we have
 \begin{align*} &\M_{H\ba G}(B_T\cap w_0A^+\om)  \\&
=\tfrac{|\mu_H^{\PS}|}{|m^{\BMS}| }\int_{k\in \om} \int_{\|w_0a_tk\|< T} e^{\delta t} dt d\nu_o(k^{-1}X_0^-)\\
&=\tfrac{|\mu_H^{\PS}|}{|m^{\BMS}| } \int_{k\in \om} \int_{e^{\lam t}\le {\|w_0^\lam k\|}^{-1} T +O(T^{\lam_1/\lam}) } e^{\delta t} dt d\nu_o(k^{-1}X_0^-)\\
 &= \tfrac{|\mu_H^{\PS}|}{|m^{\BMS}|\cdot \delta } T^{\delta/\lam}  \int_{k\in \om} \|w_0^\lam k\|^{-\delta/\lam} d\nu_o(k^{-1}X_0^-)+ O( T^{\eta})
\end{align*}
for some $\eta<\delta/\lam$.
The claim about $\M_{H\ba G}(B_T\cap w_0A^-\om)$ can be proven similarly.
To show the effective well-roundedness, we first note that
by Lemma \ref{str}, for some $c>1$,
we have $$(B_T\cap w_0A^+\om)G_\e \subset B_{(1+c\e)T}\cap w_0A^+ \om_{c\e}^+.$$

Therefore, using the admissibility of $\om$, and with $p$ given in \eqref{ade}, we deduce
\begin{align*}
&\M_{H\ba G}((B_T\cap w_0A^+\om)G_\e -(B_T\cap w_0A^+\om))
\\ & \ll \int_{k\in \om_{c\e}^+-\om}  \int_{\|w_0a_tk\|<(1+c\e)T} e^{\delta t} dt d\nu_o(k^{-1}X_0^-)
+  \int_{k\in \om_{c\e}^+} \int_{T\le \|w_0a_tk\|<(1+c\e)T} e^{\delta t} dt d\nu_o(k^{-1}X_0^-)
\\ & \ll \e^p \cdot  T^{\delta/\lam} + ((1+c\e )T)^{\delta/\lam} -T^{\delta/\lam}) \\ &
\ll \e^p T^{\delta/\lam}\ll \e^p \M_{H\ba G}(B_T\cap w_0A^+\om).
 \end{align*}
Similarly we can show that
$$\M_{H\ba G}((B_T\cap w_0A^+\om)-\cap_{g\in G_\e} (B_T\cap w_0A^+\om)g) \ll \e^p \M_{H\ba G}(B_T\cap w_0A^+\om).$$
This finishes the proof for the effective well-roundedness of $\{B_T\cap w_0A^+\om\}$.
The claims about $\{B_T\cap w_0A^-\om\}$ can be shown in a similar fashion.
 \end{proof}

Put
\begin{equation*}
\Xi_{w_0} (\G):=\begin{cases}
\tfrac{|\mu_{H} ^{\PS}|}{\delta \cdot |m^{\BMS}|} \cdot
\int_{K} \|w_0^{ \lam}  k\|^{-\delta/\lam}d\nu_o(k^{-1}X_0^-)&\text{if $G=HA^+K$} \\
\sum \tfrac{|\mu_{H^{\pm}} ^{\PS}|}{\delta \cdot |m^{\BMS}|} \cdot
\int_{K} \|w_0^{\pm \lam}  k\|^{-\delta/\lam}d\nu_o (k^{-1}X_0^{\mp})&\text{otherwise} .\end{cases}\end{equation*}

 We deduce the following from
Proposition \ref{normw}
and Theorem \ref{mcla}:

\begin{cor}\label{vsector}
\begin{enumerate}
\item For any admissible $\om \subset K$, there exists $\eta_0>0$ such that for any $\gamma_0\in \G$,
 \begin{multline*}
\# \{v\in w_0 \G_0\gamma_0\cap w_0A^+\om: \|v\|<T\} \\ = \tfrac{|\mu_{H} ^{\PS}|}{\delta\cdot  {[\G:\G_0]}  \cdot |m^{\BMS}|} \cdot
\int_{\om} \|w_0^{ \lam}  k\|^{-\delta/\lam}d\nu_o(k^{-1}X_0^-)  {T}^{\delta/\lambda} + O( T^{\delta/\lambda -\eta_0}) .
 \end{multline*}

 \item There exists $\eta_0>0$ (depending only
on a uniform spectral gap data for $\G$ and $\G_0$) such that for any $\gamma_0\in \G$,
$$\# \{v\in w_0 \G_0\gamma_0: \|v\|<T\}= \tfrac{1 }{[\G:\G_0]}  \Xi_{w_0}
 (\G)  {T}^{\delta/\lambda} + O( T^{\delta/\lambda -\eta_0}) .$$
\end{enumerate}
\end{cor}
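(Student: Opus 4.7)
The plan is to deduce both statements directly from Theorem \ref{mcla} (the main uniform counting theorem), using Proposition \ref{normw} to verify the effective well-roundedness hypothesis and to identify the main term. Since $H=G_{w_0}$ is the stabilizer of $w_0$, the map $Hg\mapsto w_0g$ identifies $H\ba G$ with the orbit $w_0 G\subset V$, so that counting points of $w_0\G_0\gamma_0$ inside a subset of $w_0G$ is the same as counting $[e]\G_0\gamma_0$ in the corresponding subset of $H\ba G$.

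For Part (1), I would set $\B_T:=B_T\cap w_0A^+\om$, viewed as a family of subsets of $H\ba G$. Proposition \ref{normw} tells us two things simultaneously: first, that $\{\B_T\}$ is effectively well-rounded with respect to $\G$ (there exists $p>0$ so that $\mathcal M_{H\ba G}(\B_{T,\e}^+-\B_{T,\e}^-)=O(\e^p\mathcal M_{H\ba G}(\B_T))$); second, that
\[
\mathcal M_{H\ba G}(\B_T)=\tfrac{|\mu_H^{\PS}|}{\delta\cdot |m^{\BMS}|}\cdot\Bigl(\int_{\om}\|w_0^{\lam}k\|^{-\delta/\lam}d\nu_o(k^{-1}X_0^-)\Bigr)T^{\delta/\lam}+O(T^\eta)
\]
for some $\eta<\delta/\lam$. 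Plugging $\{\B_T\}$ into Theorem \ref{mcla} yields
\[
\#([e]\G_0\gamma_0\cap \B_T)=\tfrac{1}{[\G:\G_0]}\mathcal M_{H\ba G}(\B_T)+O(\mathcal M_{H\ba G}(\B_T)^{1-\eta_0}),
\]
and substituting the explicit formula for $\mathcal M_{H\ba G}(\B_T)$ gives the stated asymptotic after absorbing the $O(T^\eta)$ and the power-saving term into a single $O(T^{\delta/\lam-\eta_0'})$ error (possibly shrinking $\eta_0$).

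For Part (2), I would split $G$ into its generalized Cartan pieces. If $G=HA^+K$, then $B_T=B_T\cap w_0A^+K$ and Part (1) applied with $\om=K$ (which is trivially admissible since $\nu_o(K\cdot X_0^-)=1$ and $\partial K=\emptyset$) yields the result directly. If $G=HA^+K\cup HA^-K$, then $B_T$ decomposes (up to a negligible intersection, the orbit of $w_0$ itself) as the disjoint union $(B_T\cap w_0A^+K)\sqcup (B_T\cap w_0A^-K)$, and applying Part (1) to each piece separately (using the corresponding Proposition \ref{normw} estimate for $w_0A^-\om$, which involves $|\mu_{H,-}^{\PS}|$, $w_0^{-\lam}$ and $d\nu_o(k^{-1}X_0^+)$) and summing produces the term $\Xi_{w_0}(\G)$.

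The main obstacle is essentially bookkeeping rather than new analysis: one must verify that $\om=K$ satisfies the admissibility condition used in Proposition \ref{normw} (it does, trivially) and that the implied constants in the two applications of Theorem \ref{mcla} can be made uniform in $\G_0$ and $\gamma_0$. The uniformity in $\G_0$ comes from the uniform spectral gap assumption on the family, which is the input under which Theorem \ref{mcla} was formulated; the uniformity in $\gamma_0$ is built into that theorem's statement. No further argument is needed beyond this combination.
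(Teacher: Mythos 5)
Your proposal is correct and follows essentially the same route as the paper, which deduces the corollary precisely by feeding the effectively well-rounded families and the explicit values of $\mathcal M_{H\ba G}$ from Proposition \ref{normw} into Theorem \ref{mcla} (with the $HA^+K$ versus $HA^+K\cup HA^-K$ cases handled exactly as you describe, the latter giving $\Xi_{w_0}(\G)$). The uniformity in $\G_0$ and $\gamma_0$ is indeed already built into Theorem \ref{mcla}, so no further argument is needed.
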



\subsection{The case when $H$ is trivial}
In this subsection,
we will prove  the following theorem
directly from the asymptotic of the matrix coefficient functions
in Theorem \ref{mc2}.

Recall from the introduction the following Borel measure $\mathcal M_{G}=\mathcal M_G^\G$ on $G$: for $\psi\in C_c(G)$,
\begin{equation*}\label{mdegg}
\mathcal M_{G} (\psi)=\tfrac{ 1}{|m^{\BMS}|} \int_{k_1a_tk_2\in K A^+K} \psi(k_1a_tk_2)
  e^{\delta t} d\nu_o(k_1X_0^+)dt d\nu_o(k_2^{-1}X_0^-).\end{equation*}

\begin{thm}\label{mcla3} Let $\G_0<\G$ be a subgroup of finite index.
If $\{\B_T\subset G\}$ is effectively well-rounded with respect to $\G$ (see Def. \ref{adm}),
 then there exists $\eta_0>0$ (depending only
on a uniform spectral gap data for $\G$ and $\G_0$) such that for any $\gamma_0\in \G$
$$\# (\G_0\gamma_0\cap \B_T )= \tfrac{1}{[\G:\G_0] }
\mathcal M_{G}(\B_T)  + O( \mathcal M_{ G}(\B_T)^{1-\eta_0})$$
with the implied constant independent of $\G_0$ and $\gamma_0\in \G$.
\end{thm}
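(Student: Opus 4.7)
The strategy is to adapt the argument of Theorem \ref{mcla} to the degenerate case $H=\{e\}$, replacing the $H$-orbital equidistribution result Theorem \ref{meq} by direct appeal to the effective matrix-coefficient asymptotics of Theorem \ref{mc2}. Following the template of Proposition \ref{sqz}, define the counting function $F_T(g):=\sum_{\gamma\in\G_0}\chi_{\B_T}(\gamma g)$ on $\G_0\backslash G$, so that $F_T(\gamma_0)=\#(\G_0\gamma_0\cap\B_T)$. Pick a non-negative bump $\psi^\e\in C_c^\infty(G_\e)$ with $\int_G\psi^\e\,dg=1$ and form $\Psi^\e_{\gamma_0^{-1}}(g):=\sum_{\gamma\in\G_0}\psi^\e(\gamma_0^{-1}\gamma g)$, which is left-$\G_0$-invariant since right translation commutes with the left $\G_0$-action. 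Unfolding $F_{\B_T}$ and applying the standard smoothing argument yields the sandwich
\[
\la F_{\B_{T,\e}^-},\Psi^\e_{\gamma_0^{-1}}\ra \leq \#(\G_0\gamma_0\cap\B_T)\leq \la F_{\B_{T,\e}^+},\Psi^\e_{\gamma_0^{-1}}\ra,
\]
so it suffices to compute $\la F_{\B_T},\Psi^\e_{\gamma_0^{-1}}\ra=\int_{\B_T}\Psi^\e_{\gamma_0^{-1}}(g)\,dg$ up to an $O(\e^p\,\M_G(\B_T))$ error absorbed by the effective well-roundedness.

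Next, expand this Haar integral using the Cartan decomposition $g=k_1a_tk_2\in KA^+K$ with $dg=\rho(t)\,dk_1\,dt\,dk_2$ and $\rho(t)=e^{(n-1)t}(1+O(e^{-\alpha_1 t}))$. The crucial move is to convert the pointwise evaluation $\Psi^\e_{\gamma_0^{-1}}(k_1a_tk_2)$ into an $L^2(\G_0\backslash G)$-matrix coefficient. Introduce an auxiliary bump $\alpha^{\e'}\in C_c^\infty(G_{\e'})$ with $\int\alpha^{\e'}=1$, and for each $k_1\in K$ set $V^{\e'}_{k_1}(g):=\sum_{\gamma\in\G_0}\alpha^{\e'}(\gamma g k_1^{-1})$; this is left-$\G_0$-invariant, being the periodization of the right translate $R(k_1^{-1})\alpha^{\e'}$. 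Unfolding and substituting $h=gk_1^{-1}$ gives
\[
\la a_tR(k_2)\Psi^\e_{\gamma_0^{-1}},V^{\e'}_{k_1}\ra=\int_G\Psi^\e_{\gamma_0^{-1}}(hk_1a_tk_2)\alpha^{\e'}(h)\,dh=\Psi^\e_{\gamma_0^{-1}}(k_1a_tk_2)+O(\e'\S_1(\Psi^\e)),
\]
which is the desired representation.

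Applying Theorem \ref{mc2} to this matrix coefficient produces, for suitable $\eta_0>0$ and $\ell\in\N$,
\[
\la a_tR(k_2)\Psi^\e_{\gamma_0^{-1}},V^{\e'}_{k_1}\ra=e^{(\delta-n+1)t}\,\frac{m^{\BR}(R(k_2)\Psi^\e_{\gamma_0^{-1}})\,m^{\BR}_*(V^{\e'}_{k_1})}{|m^{\BMS}_{\G_0}|}+O\!\left(e^{(\delta-n+1-\eta_0)t}\S_\ell(\Psi^\e)\S_\ell(\alpha^{\e'})\right).
\]
Using the explicit formula $dm^{\BR}(ka_rn)=e^{-\delta r}\,dn\,dr\,d\nu_o(kX_0^-)$ and its mirror for $m^{\BR}_*$, one computes that $m^{\BR}(R(k_2)\Psi^\e_{\gamma_0^{-1}})$ and $m^{\BR}_*(V^{\e'}_{k_1})$ reduce, as $k_1,k_2$ vary, to the PS densities $d\nu_o(k_2^{-1}X_0^-)/dk_2$ and $d\nu_o(k_1X_0^+)/dk_1$ respectively (up to $1+O(\e+\e')$ factors). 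Combining this with $|m^{\BMS}_{\G_0}|=[\G:\G_0]\,|m^{\BMS}|$, the integrated main term of the triple integral over $(k_1,t,k_2)$ is precisely $\tfrac{1}{[\G:\G_0]}\,\M_G(\B_T)$ by Definition \ref{group}.

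Finally, optimizing $\e$ and $\e'$ as small negative powers of $\M_G(\B_T)$ balances the Lipschitz slack $\e'\S_1(\Psi^\e)$, the exponential matrix-coefficient remainder after integrating over $t\in[0,\log\max|\B_T|]$, and the well-roundedness loss $\e^p\,\M_G(\B_T)$, giving the announced bound $O(\M_G(\B_T)^{1-\eta_0})$; the uniform spectral gap hypothesis makes the constants $\eta_0$ and $\ell$ independent of $\G_0$, as required. The main obstacle is the precise identification in Step 3 of $m^{\BR}(R(k_2)\Psi^\e_{\gamma_0^{-1}})$ and its mirror for $V^{\e'}_{k_1}$: since $m^{\BR}$ is not $K$-invariant, this requires tracking how right translation by $k\in K$ transforms the Iwasawa decomposition and verifying that the resulting cocycle matches the boundary measure $d\nu_o(k^{\mp 1}X_0^\pm)$ in the definition of $\M_G$ up to the normalization involving $|m^{\BMS}|$.
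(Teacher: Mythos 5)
Your overall architecture (smooth, sandwich via well-roundedness, Cartan decomposition, Theorem \ref{mc2}, then identify the main term through $m^{\BR}$, $m^{\BR}_*$ and the wavefront property) is the right template, but the step you yourself flag as the "crucial move" contains a genuine gap that kills the argument as written. You claim
$\la a_tR(k_2)\Psi^\e_{\gamma_0^{-1}},V^{\e'}_{k_1}\ra=\int_G\Psi^\e_{\gamma_0^{-1}}(hk_1a_tk_2)\,\alpha^{\e'}(h)\,dh=\Psi^\e_{\gamma_0^{-1}}(k_1a_tk_2)+O(\e'\S_1(\Psi^\e))$
with an error uniform in $t$. This is false: the metric and the Sobolev norms are built from left-invariant data, so only \emph{right} perturbations of the argument are small, while here $h$ sits on the \emph{left}. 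Writing $hk_1a_tk_2=(k_1a_tk_2)\cdot\bigl(k_2^{-1}a_{-t}(k_1^{-1}hk_1)a_tk_2\bigr)$, the conjugation by $a_{-t}$ expands the $N^+$-component of $h$ by $e^{t}$, so the true error is $O(\e'e^{t}\,\S_{\infty,1}(\Psi^\e))$; equivalently, your pairing computes the average of $\Psi^\e_{\gamma_0^{-1}}$ over a tube of length $\asymp\e'e^{t}$ along the expanding horosphere through $k_1a_tk_2$, which by mixing tends to the global mean, not to the pointwise value. After multiplying by $\rho(t)\asymp e^{(n-1)t}$ and integrating over $\B_T$, this contributes $\asymp\e'\int e^{nt}$ against a main term $\asymp\int e^{\delta t}$ with $\delta\le n-1$, so you would need $\e'\ll e^{-t}$; but then $\S_\ell(\alpha^{\e'})\asymp\e'^{-q_\ell}$ makes the spectral-gap remainder $e^{(\delta-n+1-\eta_0)t}\S_\ell(\alpha^{\e'})\rho(t)\asymp e^{(\delta-\eta_0+q_\ell)t}$ overtake $e^{\delta t}$. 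The two requirements are incompatible, and no optimization of $\e,\e'$ rescues the pointwise-recovery step.

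The paper sidesteps this entirely by never evaluating the automorphic function at a point: it counts with the two-variable function $F_{\B}(g,h)=\sum_{\gamma\in\G_0}\chi_{\B}(g^{-1}\gamma h)$ on $\G_0\ba G\times\G_0\ba G$ and pairs against $\Phi^\e\otimes\Phi^\e_{\gamma_0^{-1}}$, so that $\la F_{\B},\Psi_1\otimes\Psi_2\ra=\int_{g\in\B}\la\Psi_1,g.\Psi_2\ra\,dm^{\Haar}(g)$ is \emph{exactly} an integral of matrix coefficients, with no approximation loss in either slot; this is precisely why Definition \ref{adm} for $H=\{e\}$ demands control of the two-sided perturbations $G_\e\B_T G_\e$, which your one-sided smoothing does not exploit. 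Once you replace your pointwise-recovery device by this double smoothing, your remaining computation (Cartan decomposition with $\rho(t)=e^{(n-1)t}(1+O(e^{-\alpha_1 t}))$, Theorem \ref{mc2}, the $\tilde m^{\BR}_*\otimes\tilde m^{\BR}$ evaluation via $dm^{\BR}(ka_rn)=e^{-\delta r}dn\,dr\,d\nu_o(kX_0^-)$ and the strong wavefront property for $AN^{\pm}K$, and $|m^{\BMS}_{\G_0}|=[\G:\G_0]\,|m^{\BMS}|$) does go through and is essentially the paper's proof.
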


Consider the following function on $\G_0\ba G\times \G_0\ba G$: for a compact subset $\B\subset G$,
$$F_{\B}(g,h):=\sum_{\gamma\in \G_0}\chi_{\B}(g^{-1}\gamma h) $$
where $\chi_{\B}$ is the characteristic function of $\B$. We set $F_T:=F_{\B_T}$ for simplicity.
Observe that $F_T(e,\gamma_0 )=\# (\G_0 \gamma_0\cap \B_T)$.
Let $\B_{T,\e}^{\pm}$ be as in the definition \ref{adm} and let $\phi^\e\in C^\infty(G)$ and $\Phi^\e\in C^\infty(\G_0\ba G)$ be
as in the proof of Theorem \ref{mcla}.
We then have
$$\la F_{\B_{T,\e}^-}, \Phi^\e\otimes \Phi^\e_{\gamma_0^{-1}}  \ra \le
F_T(e, \gamma_0)\le  \la F_{\B_{T,\e}^+}, \Phi^\e\otimes \Phi^\e_{\gamma_0^{-1}}  \ra .$$

Note that for $\Psi_1,\Psi_2 \in C_c(\G_0\ba G)$
$$ \la F_T, \Psi_1\otimes \Psi_2\ra_{\G_0\ba G\times \G_0\ba G}
=\int_{g\in \B_T} \la \Psi_1, g.\Psi_2\ra_{L^2(\G_0\ba G)} \;  dm^{\Haar}(g) .$$
For a Borel subset $\B$ of $G$, consider a function $f_{\B}$ on $K\times K$ given by
 $$f_{\B}(k_1, k_2)=\int_{a_t\in k_1^{-1}\B k_2^{-1}\cap A^+} e^{\delta t} dt ,$$
and
define a function on $G\times G$ by
$$\left((\psi^\e\otimes \psi^\e)*f_\B \right)(g,h)=\int_{K\times K} \psi^\e(gk_1^{-1}) \psi^\e(hk_2) f_\B(k_1, k_2) dk_1 dk_2.$$

We deduce by applying Theorem \ref{harmixing} and using the left $\G$-invariance of the measures
$\tilde m^{\BR}$ and $\tilde m^{\BR}_*$ that for some $\eta', \eta>0$,
\begin{align*}
 &\la F_{\B}, \Psi^\e\otimes \Psi^\e_{\gamma_0^{-1}}  \ra_{\G_0\ba G\times \G_0\ba G}
 \\ &=\int_{x\in \B}\int_{\G_0\ba G} \Psi^\e(g) \Psi^\e_{\gamma_0^{-1}}(gx)dm_{\G_0}^{\Haar}(g) dx\\
 &=\int_{k_1a_tk_2\in \B}\left(\int_{\G_0\ba G} \Psi^\e(gk_1^{-1}) \Psi^\e_{\gamma_0^{-1}}(ga_tk_2) dm_{\G_0}^{\Haar}(g)\right) e^{(n-1)t}(1+O(e^{-\eta' t}))  dtdk_1 dk_2 \\
 &=\frac{1}{|m_{\G_0}^{\BMS}|} \int_{k_1a_tk_2\in \B} e^{\delta t} (1+O(e^{-\eta  t})) m^{\BR}_{\G_0}(k_2 \Psi_{\gamma_0^{-1}}^\e)
 m^{\BR}_{*,\G_0}(k_1^{-1}\Psi^\e) dt dk_1dk_2 
 \\ &=\frac{1}{|m_{\G_0}^{\BMS}|} \int_{k_1a_tk_2\in \B} e^{\delta t} (1+O(e^{-\eta t})) \tilde m^{\BR}(k_2 \psi^\e) \tilde m^{\BR}_{*}(k_1^{-1}\psi^\e)dt dk_1dk_2 .
\end{align*}
Therefore
\begin{align}\label{when}
&\la F_{\B_{T,\e}^\pm}, \Phi^\e\otimes \Phi^\e_{\gamma_0^{-1}}  \ra_{\G_0\ba G\times \G_0\ba G}
\\
&=  \tfrac{1}{|m^{\BMS}_{\G_0}|} ( \tilde m_*^{\BR}\otimes \tilde m^{\BR}) ((\psi^\e\otimes \psi^\e)*f_{\B_{T,\e}^\pm})
+ O(\max_{a_t\in \B_T} e^{(\delta -\eta)t}\e^{q_\ell})\notag.
\end{align}

Recall
$$dm^{\BR}(ka_r n^+)=e^{-\delta r} dn^+drd\nu_o(kX_0^-)\quad\text{ for $ka_rn^+\in KAN^+$};$$
$$dm_*^{\BR}(ka_r n^-)=e^{\delta r} dn^-drd\nu_o(kX_0^+)\quad\text{for $ka_rn^-\in KAN^-$}$$
and $dg=d\tilde m^{\Haar}(a_rn^{\pm} k)= drdn^{\pm}dk$.

For $x\in G$,
let $\kappa^{\pm}(x)$ denote the $K$-component of $x$ in $AN^{\pm}K$ decomposition and let $H^{\pm}(x)$ be uniquely given by the requirement $x\in e^{H^\pm(x)} N^{\pm} K$.

We obtain
\begin{align*}&( \tilde m_*^{\BR}\otimes \tilde m^{\BR}) ((\psi^\e\otimes \psi^\e)*f_{\B})=
 \\ &\int_{K\times K} \int_{G\times G}\psi^\e (g_1k_1^{-1}) \psi^\e(h_1k_2) f_{\B}(k_1, k_2) d\tilde m_*^{\BR}(g_1)d\tilde m^{\BR}(h_1)  dk_1 dk_2 =
 \\&\int_{K\times K} \int_{G\times G}\psi^\e (kg) \psi^\e(k_0h)f_{\B}(\kappa^-(g)^{-1},\kappa^+(h)) e^{(\delta-n+1)(H^-(g)-H^+(h))}
\\&  dgdh  d\nu_o(k_0X_0^-) d\nu_o(kX_0^+) =
\\ &
\int_{K\times K} \int_{G\times G}\psi^\e (g) \psi^\e(h)f_{\B}(\kappa^-(gk^{-1})^{-1},\kappa^+(hk_0^{-1})) e^{(\delta-n+1)(H^-(gk^{-1})-H^+(hk_0^{-1}))}
\\&  dgdh  d\nu_o(k_0X_0^-) d\nu_o(kX_0^+); 
\end{align*} first replacing
$k_1$ with $k_1^{-1}$,  substituting $g_1=ka_rn\in KAN^+$ and $h_1=k_0a_{r_0}n_0\in KAN^-$ and again substituting $a_rnk_1=g$ and $a_{r_0}n_0k_2=h$.

Therefore, using the strong wave front property for $AN^{\pm}K$ decompositions \cite{GOS} and the assumption that $\int \psi^\e dg=1$, we 
 have, for some $p>0$,
\begin{align*} &( \tilde m_*^{\BR}\otimes \tilde m^{\BR}) ((\psi^\e\otimes \psi^\e)*f_{\B_{T,\e}^\pm})
\\ &=(1+O(\e^p)) \int_{K\times K} f_{\B_T}(k,k_0^{-1})  d\nu_o(kX_0^+)  d\nu_o(k_0X_0^-)
\\ &=(1+O(\e^p)) \int_{ka_tk_0^{-1}\in \B_T} e^{\delta t} d\nu_o(kX_0^+)  d\nu_o(k_0X_0^-) \\
&=(1+O(\e^p)) \int_{ka_tk_0\in \B_T} e^{\delta t} d\nu_o(kX_0^+)  d\nu_o(k_0^{-1} X_0^-) 
\\&=
(1+O(\e^p)) \M_G(\B_T) \end{align*}
with $\M_G$ defined as in Definition \ref{group}.
Since $|m^{\BMS}_{\G_0}|=|m^{\BMS}_{\G}|\cdot [\G:\G_0]$, putting the above together, we get
$$F_T(e,\gamma_0)= \frac{1}{[\G:\G_0]} \M_G(\B_T) +  O(\M_G(\B_T)^{1-\eta_0})$$
for some $\eta_0>0$ depending only on a uniform spectral gap
data of $\G$ and $\G_0$. This proves Theorem \ref{mcla3}.


\begin{cor}\label{bik}
Let $\om_1, \om_2\subset K$ be Borel subsets in $K$ such that $\om_1^{-1}$ and $\om_2$ are admissible
in the sense of \eqref{admissible}.
Set $S_T(\om_1, \om_2):=\om_1\{a_t: 0<t<\log T\} \om_2$. Then
the family $\{S_T(\om_1, \om_2):T\gg 1\}$
is effectively well-rounded with respect to $\G$,
 and for some $\eta_0>0$,
$$\# (\G_0\gamma_0\cap S_T(\om_1, \om_2) )=
 \frac{\nu_o(\om_1 X_0^+)\cdot \nu_o (\om_2^{-1} X_0^-)}{\delta \cdot |m^{\BMS}|\cdot [\G:\G_0]}T^\delta
  + O( T^{\delta-\eta_0})$$
with the implied constant independent of $\G_0$ and $\gamma_0\in \G$.
\end{cor}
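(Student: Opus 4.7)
The plan is to deduce this corollary directly from Theorem \ref{mcla3}, which requires two ingredients: an explicit computation of $\mathcal M_G(S_T(\om_1,\om_2))$ that produces the claimed main term, and a verification that the family $\{S_T(\om_1,\om_2)\}$ is effectively well-rounded with respect to $\G$ in the sense of Definition \ref{adm}. The explicit computation is immediate: since $S_T(\om_1,\om_2)\subset KA^+K$, plugging the characteristic function of $S_T(\om_1,\om_2)$ into Definition \ref{group} gives a triple integral that factorizes as
$$\mathcal M_G(S_T(\om_1,\om_2))=\tfrac{1}{|m^{\BMS}|}\,\nu_o(\om_1 X_0^+)\,\nu_o(\om_2^{-1}X_0^-)\,\int_0^{\log T}e^{\delta t}\,dt =\tfrac{\nu_o(\om_1 X_0^+)\,\nu_o(\om_2^{-1}X_0^-)}{\delta\,|m^{\BMS}|}(T^\delta-1).$$
Here one uses that the measures $d\nu_o(k_1 X_0^+)$ and $d\nu_o(k_2^{-1}X_0^-)$ descend through the $M$-ambiguity of the $KAK$-decomposition because $M$ fixes both $X_0^{\pm}$.

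For the effective well-roundedness, my first step is to invoke the strong wave front property for the $KAK$-decomposition (cf.\ \cite{GOS}): there exists $c>0$ such that for all small $\e>0$, all $g_1,g_2\in G_\e$, and all $k_1 a_t k_2\in KA^+K$ with $t>1$,
$$g_1 k_1 a_t k_2 g_2\in (K_{c\e}k_1)\cdot a_{(t-c\e,\,t+c\e)}\cdot (k_2 K_{c\e}).$$
Hence $G_\e S_T(\om_1,\om_2)G_\e\subset (K_{c\e}\om_1)\cdot\{a_t:-c\e<t<\log T+c\e\}\cdot(\om_2 K_{c\e})$, and a matching inner inclusion holds for $\cap_{g_1,g_2\in G_\e}g_1 S_T(\om_1,\om_2)g_2$. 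Combined with the factorized formula for $\mathcal M_G$, the question reduces to showing that the left $K_{c\e}$-thickening of $\om_1$ changes $\nu_o(\om_1 X_0^+)$ by $O(\e^p)$, and that the right $K_{c\e}$-thickening of $\om_2$ changes $\nu_o(\om_2^{-1}X_0^-)$ by $O(\e^p)$.

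The second step is to convert the admissibility hypotheses on $\om_1^{-1}$ and $\om_2$ into these two estimates. Using the symmetry $K_{c\e}=K_{c\e}^{-1}$, one rewrites $K_{c\e}\om_1 = (\om_1^{-1}K_{c\e})^{-1}$ and $\om_2 K_{c\e}=(K_{c\e}\om_2^{-1})^{-1}$, so the two required boundary estimates become exactly
$$\nu_o\bigl((\om_1^{-1}K_{c\e})^{-1}X_0^+ - \om_1 X_0^+\bigr)\ll\e^p,\qquad \nu_o\bigl(K_{c\e}\om_2^{-1}X_0^- - \om_2^{-1}X_0^-\bigr)\ll\e^p,$$
together with the analogous lower estimates for the intersections $\cap_{k\in K_{c\e}}\om_1 k^{-1}$ and $\cap_{k\in K_{c\e}}k\om_2$. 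The second of these is the admissibility of $\om_2$ read off directly from \eqref{ade}, while the first is the corresponding statement for $\om_1^{-1}$; the only subtle point is that one is phrased against $X_0^-$ and the other against $X_0^+$, but this is absorbed by the $M$-invariance of the definitions and the fact that the admissibility proof in Proposition \ref{exam} (shadow lemma plus piecewise smoothness of the boundary) is insensitive to the choice of $X_0^{\pm}$. Once both estimates are in place, one concludes $\mathcal M_G(S_{T,\e}^{+}-S_{T,\e}^{-})=O(\e^p\,\mathcal M_G(S_T(\om_1,\om_2)))$, so the family is effectively well-rounded.

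Finally, applying Theorem \ref{mcla3} to this family yields
$$\#(\G_0\gamma_0\cap S_T(\om_1,\om_2))=\tfrac{1}{[\G:\G_0]}\mathcal M_G(S_T(\om_1,\om_2))+O\bigl(\mathcal M_G(S_T(\om_1,\om_2))^{1-\eta_0'}\bigr)$$
for some $\eta_0'>0$ depending only on a uniform spectral gap datum for $\{\G,\G_0\}$, and substituting the explicit formula for $\mathcal M_G(S_T(\om_1,\om_2))$ gives the stated asymptotic with a suitable $\eta_0>0$. The main obstacle is the bookkeeping in the well-roundedness step, specifically matching the one-sided admissibility condition \eqref{ade} (a right perturbation of $\om^{-1}$ evaluated against $X_0^-$) with the two-sided perturbations of $KAK$-sectors that arise from the wave front inclusion; this is handled by the two separate admissibility hypotheses on $\om_1^{-1}$ and $\om_2$ together with the inversion trick above.
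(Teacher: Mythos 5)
Your argument is essentially the paper's own proof: the paper likewise deduces Corollary \ref{bik} from Theorem \ref{mcla3}, after establishing effective well-roundedness of $\{S_T(\om_1,\om_2)\}$ by combining the strong wave front lemma (Lemma \ref{str} with $H=K$) with the admissibility hypotheses on $\om_1^{-1}$ and $\om_2$, arguing exactly as in Proposition \ref{sectore}. Your explicit evaluation of $\mathcal M_G(S_T(\om_1,\om_2))$ and the inversion/$X_0^{\pm}$ bookkeeping merely spell out steps the paper leaves implicit, so the proposal is correct at the same level of detail as the paper's (very terse) proof.
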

Using Proposition \ref{str} for $H=K$, we can prove the effective well-roundedness of
$\{S_T(\om_1, \om_2):T\gg 1\}$ with respect to $\G$ in a similar fashion to the proof of
Proposition \ref{sectore}. Hence Corollary \ref{bik} follows from Theorem \ref{mcla3};
we refer to Lemma \ref{tri} and Proposition \ref{exam} for admissible subsets of $K$.






\subsection{Counting in bisectors of $HA^+K$ coordinates}
We state a counting result for bisectors in $HA^+K$ coordinates.

Let ${\tau}_1\in C^\infty_c(H)$ with its support being injective to $\G\ba G$ and $\tau_2\in C^\infty(K)$,
and define $\xi_T\in C^\infty(G)$ as follows: for $g=hak\in HA^+K$,
$$\xi_T(g)= \chi_{A_T^+}(a) \cdot \int_{H\cap M} \tau_1(hm)\tau_2(m^{-1} k) dm  $$
where $\chi_{A_T^+}$ denotes the characteristic function of $A_T^+=\{a_t: 0<t<\log T\}$ for $T>1$.
Since if $hak=h'ak'$, then $h=h'm$ and $k=m^{-1}k'$ for some $m\in H\cap M$, the above function is well-defined.

\begin{thm}\label{bisec2} Let $\G_0<\G$ be a subgroup of finite index.
There exist $\eta_0>0$ (depending only
on a uniform spectral gap data  for $\G$ and $\G_0$)  and $\ell\in \N$ such that for any $\gamma_0\in \G$,
$$\sum_{\gamma\in  \G_0} \xi_T(\gamma \gamma_0) = \frac{\tilde \mu_{H}^{\PS}(\tau_1)\cdot \nu_o^* (\tau_2)}{\delta \cdot |m^{\BMS}|\cdot [\G:\G_0]}T^\delta + O( T^{\delta -\eta_0}
 \S_\ell(\tau_1) \S_\ell(\tau_2))$$
where $\nu_o^*(\tau_2):=\int_{k}\tau_2(k)d\nu_o(k^{-1}X_0^-)$.
\end{thm}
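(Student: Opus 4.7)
The plan is to rewrite the bisector sum as a regularized integral of translated $H$-orbits and apply the effective equidistribution Theorem~\ref{meq} (with $\G_0$ in place of $\G$, using the uniform spectral gap hypothesis on $(\G,\G_0)$). By a partition of unity we may assume $\supp(\tau_1)\subset H_0$ for a compact $H_0\subset H$ injecting to $\G\ba G$; then $\tilde\tau_1(h):=\sum_{\gamma_H\in\G_H}\tau_1(\gamma_H h)$ descends to $\phi\in C_c^\infty(\G_H\ba H)$ with $\mu_H^{\PS}(\phi)=\tilde\mu_H^{\PS}(\tau_1)$. For small $\e>0$, let $\rho_\e\in C_c^\infty(G_\e)$ be a non-negative smooth bump with $\int\rho_\e\,dg=1$, and form the $\G_0$-invariant function concentrated near $[\gamma_0]$:
\[
\Psi_\e(g):=\sum_{\gamma\in\G_0}\rho_\e(\gamma_0^{-1}\gamma^{-1}g)\in C^\infty(\G_0\ba G).
\]
Introduce the regularized counting integral
\[
J_\e(T):=\int_0^{\log T}\!\!e^{(n-1)t}\int_K\tau_2(k)\int_{\G_H\ba H}\Psi_\e(ha_tk)\,\phi(h)\,dh\,dk\,dt.
\]

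Compute $J_\e(T)$ in two ways. On the geometric side, unfolding both periodizations and changing variable $g=\gamma_0^{-1}\gamma^{-1}ha_tk$ (with $dh\,dt\,dk=\rho(t)^{-1}dg$ for the $HA^+K$ Jacobian $\rho(t)\sim e^{(n-1)t}$) yields, for each $\gamma\in\G_0$, a local integral of $\rho_\e(g)$ against the $H\cap M$-averaged product $[\tau_1\otimes\tau_2]^{H\cap M}(h(\gamma\gamma_0 g),k(\gamma\gamma_0 g))\chi_{A_T^+}(a(\gamma\gamma_0 g))e^{(n-1)t(\gamma\gamma_0 g)}/\rho(t(\gamma\gamma_0 g))$, the averaging appearing automatically because the $HA^+K$-parameterization has $H\cap M$-fibers. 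For $g\in\supp\rho_\e$ the $HAK$-coordinates of $\gamma\gamma_0 g$ are $(h_\gamma,t_\gamma,k_\gamma)+O(\e)$ and $e^{(n-1)t}/\rho(t)=1+O(e^{-\alpha t})$, so this integral equals $\xi_T(\gamma\gamma_0)+O(\e)$; summing, and bounding the boundary contribution from $|t_\gamma|<\e$ or $|t_\gamma-\log T|<\e$ by Corollary~\ref{sector} applied to thin annular sectors in $H\ba G$,
\[
J_\e(T)=\sum_{\gamma\in\G_0}\xi_T(\gamma\gamma_0)+O\bigl(\e^p T^\delta\,\mathcal S_\ell(\tau_1)\mathcal S_\ell(\tau_2)\bigr)
\]
for some $p>0$. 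On the spectral side, for each $k\in K$ set $\Psi_{\e,k}(g):=\Psi_\e(gk)$; Theorem~\ref{meq} applied to $(\Psi_{\e,k},\phi)$, integrated in $t$ against $e^{(n-1)t}dt$, gives
\[
J_\e(T)=\frac{T^\delta}{\delta}\cdot\frac{\tilde\mu_H^{\PS}(\tau_1)}{|m_{\G_0}^{\BMS}|}\int_K\tau_2(k)\,m_{\G_0}^{\BR}(\Psi_{\e,k})\,dk+O\bigl(T^{\delta-\beta}\e^{-q_\ell}\mathcal S_\ell(\tau_1)\mathcal S_\ell(\tau_2)\bigr).
\]

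It remains to identify the BR integral: by $\G$-invariance of $\tilde m^{\BR}$ and $\gamma_0\in\G$,
\[
\int_K\tau_2(k)\,m_{\G_0}^{\BR}(\Psi_{\e,k})\,dk = \int_G (\rho_\e *_K \tau_2)(g)\,d\tilde m^{\BR}(g),
\]
where $(\rho_\e *_K \tau_2)(g):=\int_K\rho_\e(gk)\tau_2(k)\,dk$ is an $\e$-smoothing of $g=pk'\mapsto\rho_\e^{P'}(p)\tau_2(k'^{-1})$ concentrated in an $\e$-neighborhood of $K$ in $G$; using the density $d\tilde m^{\BR}(g)=e^{(n-1)\beta_{g^+}(o,\pi g)}e^{\delta\beta_{g^-}(o,\pi g)}dm_o(g^+)d\nu_o(g^-)ds$ and the fact that for $g=pk'$ with $p\to e$ one has $g^-\to k'X_0^-$ while the exponentials tend to $1$, this integral evaluates to $\int_K\tau_2(k^{-1})d\nu_o(kX_0^-)+O(\e)=\nu_o^*(\tau_2)+O(\e)$. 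Combining the two formulas for $J_\e(T)$, using $|m_{\G_0}^{\BMS}|=[\G:\G_0]|m^{\BMS}|$, and optimizing $\e=T^{-\beta/(p+q_\ell)}$ to balance the errors $\e^p T^\delta$ and $T^{\delta-\beta}\e^{-q_\ell}$, yields the claimed asymptotic with error $O(T^{\delta-\eta_0}\mathcal S_\ell(\tau_1)\mathcal S_\ell(\tau_2))$. The principal technical hurdle is the geometric identity $J_\e(T)\approx\sum_\gamma\xi_T(\gamma\gamma_0)$: one must verify that the Jacobian factor $e^{(n-1)t}/\rho(t)$ cancels up to an $e^{-\alpha t}$ error, that the $H\cap M$-averaging built into $\xi_T$ arises precisely from the fiber structure of the $HA^+K$-coordinates, and that the boundary strata at $t\in\{0,\log T\}$ contribute only $O(\e^p T^\delta)$ (handled by the effective count in thin annular sectors from Corollary~\ref{sector}); the BR-density identification in the last step is conceptually clean but hinges on a short local computation near $K\subset G$.
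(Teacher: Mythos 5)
Your argument is essentially the paper's own proof: pair the $\G_0$-periodized counting function with an $\e$-bump $\rho_\e$ placed at $\gamma_0$, apply the effective equidistribution of translates $\G_0\ba \G_0 H a_t$ (integrated over the $HA^+K$ decomposition, with the Jacobian correction $e^{(n-1)t}/\rho(t)=1+O(e^{-\alpha t})$ absorbed into the error), identify $\tilde m^{\BR}(\rho_\e *_K \tau_2)=\nu_o^*(\tau_2)+O(\e)\,\S_\ell(\tau_2)$ by unfolding and left $\G$-invariance, and optimize $\e$ against $T$ --- the only cosmetic difference being that the paper controls the unsmoothing by a two-sided sandwich with $\tau_i^{\e,\pm}$ and $\chi_{A^+_{(1\pm\e)T}}$ via the strong wavefront lemma, where you unfold term by term and handle the boundary annuli with Corollary \ref{sector}. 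One small correction: since $\phi$ is compactly supported you should cite Theorem \ref{efn} (as the paper does, which needs neither $\G\ba\G H$ closed nor $|\mu_H^{\PS}|<\infty$) rather than Theorem \ref{meq}, whose extra hypotheses are not part of Theorem \ref{bisec2}.
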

\begin{proof}
Now define a function $F_T$ on $\G_0\ba G$ by
$$F_T(g)=\sum_{\gamma\in \G_0 } \xi_T(\gamma g) .$$

For any $\psi\in C_c^\infty(G)$ and $\Psi\in C_c^\infty(\G_0\ba G)$
given by $\Psi (g)=\sum_{\g\in \G_0}\psi(\gamma g)$, we have:
\begin{align*}\la F_T, \Psi\ra_{\G_0\ba G} &=
 \int_{k\in K}\tau_2(k) \int_{a_t\in A_T^+} \left( \int_{h\in H}\tau_1(h)\Psi (ha_t k) dh \right) \rho(t)dhdkdt .
\end{align*}
As $\Psi\in C(\G_0\ba G)$, $\supp(\tau_1)$ injects to $\G_0\ba G$ and $H\cap \G=H\cap \G_0$,
we have $\mu_{\G_0, H}^{\PS} (\tau_1)=\tilde \mu_{H}^{\PS}(\tau_1)$ and
 $\int_{h\in H}\tau_1(h)\Psi (ha_t k) dh =\int_{h\in \G_0\ba \G_0 H}\tau_1(h)\Psi (ha_t k) dh $.
Therefore, by applying Theorem \ref{efn} to the inner integral,
we obtain $\eta>0$ and $\ell\in \N$ such that
\begin{align}\label{lft} &\la F_T, \Psi\ra_{\G_0\ba G} \notag \\  &=
\frac{\tilde \mu_{H}^{\PS}(\tau_1)}{ |m^{\BMS}_{\G_0}|} \int_{k\in K} \int_{a_t\in A_T^+} \tau_2(k)  m^{\BR}_{\G_0}(\Psi_k)  e^{\delta t} dkdt
+O(\S_\ell(\tau_1)\S_\ell(\psi) T^{\delta -\eta})\notag \\&=
\frac{\tilde \mu_{H}^{\PS}(\tau_1)\cdot \tilde m^{\BR} (\psi*\tau_2)}{\delta \cdot |m^{\BMS}|\cdot [\G:\G_0]}
\cdot  T^\delta +O( \S_\ell(\tau_1) \S_\ell(\tau_2)\S_\ell(\psi)   T^{\delta -\eta}).
\end{align}

Let $\tau_i^{\e, \pm}$ be $\e$-approximations of $\tau_i$;
$\tau_i^{\e, \pm}(x)$ are respectively the supremum and the infimum of $\tau_i$ in the $\e$-neighborhood of $x$.
Then for a suitable $\ell\ge 1$,
 $\tilde \mu_H^{\PS}(\tau_1^{\e,+} -\tau_1^{\e, -})=O(\e\cdot  \S_\ell(\tau_1))$,
and $\nu_o((\tau_2^{\e,+} -\tau_2^{\e, -})X_0^-)=O(\e \cdot \S_\ell(\tau_2))$.

Let $F_T^{\e, \pm}$ be a function on $\G\ba G$ defined similarly as
$F_T$, with respect to
  $\xi_{T}^{\e,\pm} (hak)=\chi_{A_{(1\pm \e) T}^+}(a) \cdot \int_{H\cap M} \tau_1^{\e,+}(hm)\tau_2^{\e,+}(m^{-1} k) dm  $.

As before,
let $\psi^\e\in C^\infty(G)$ be an $\e$ smooth approximation of $e$: $0\le \psi^\e \le 1$,
$\supp(\psi^\e)\subset G_\e$ and $\int \psi^\e dg=1$. Let $\Psi_{\gamma_0^{-1}}^\e$ be
defined as in the subsection \ref{not} with respect to $\psi^\e$.
Lemma \ref{str} implies
that  there exists $c>0$ such that for all $g\in G_{c\e}$,
$$F_T^{\e, -}(\gamma_0 g)\le F_T(\gamma_0)\le F_T^{\e, +}(\gamma_0 g)$$
and hence
\be \label{lft2} \la F_T^{\e, -},\Psi_{\gamma_0^{-1}}
^\e\ra \le F_T(\gamma_0)\le \la F_T^{\e, +},\Psi_{\gamma_0^{-1}}^\e\ra .\ee
By a similar computation as in the proof of Theorem \ref{mcla} (cf. \cite[proof of Prop. 7.5]{OS}),
we have $ \tilde m^{\BR} (\psi^\e *\tau_2)=\nu_o^*(\tau_2)+O(\e) \S_\ell(\tau_2).$

Therefore, for $q_\ell$ given by $\S_\ell(\psi^\e)=O(\e^{-q_\ell})$,
 we deduce from \eqref{lft} and \eqref{lft2}
that, using the left $\G$-invariance of the measure $\tilde m^{\BR}$,
\begin{align*} &\delta \cdot |m^{\BMS}|\cdot [\G:\G_0] \cdot F_T(\gamma_0)\\
 & = \tilde \mu_{H}^{\PS}(\tau_1)\cdot
 \tilde m^{\BR} (\psi^\e *\tau_2)\cdot  T^\delta +O(\S_\ell(\tau_1) S_\ell(\tau_2)\S_\ell(\psi^\e)  T^{\delta -\eta})
\\ &= \tilde \mu_{H}^{\PS}(\tau_1) \nu_o^* (\tau_2)T^\delta + O(\e T^\delta  +  \e^{-q_\ell} T^{\delta -\eta}) \S_\ell(\tau_1) \S_\ell(\tau_2)
\\&= \tilde \mu_{H}^{\PS}(\tau_1) \nu_o^* (\tau_2)T^\delta + O( T^{\delta -\eta_0})
 \S_\ell(\tau_1) \S_\ell(\tau_2)\end{align*}
for some $\eta_0>0$, by taking $\e=T^{-\eta/(1+q_\ell)}$.
\end{proof}

Corollary \ref{bik} as well as its analogues in the $HAK$ decomposition
can  be deduced easily from Theorem \ref{bisec2} by approximation admissible sets by smooth functions.
\renewcommand{\O}{\mathcal O}
\section{Affine sieve}\label{sieve}
In this final section, we prove Theorems \ref{affine_u} and \ref{affine_l}.
We begin by recalling the combinatorial sieve (see \cite[Theorem 7.4]{HR}).

Let $\cA=\{a_n\}$ be a sequence of non-negative numbers and let $B$ be a finite set of primes.
For $z >1$,
let $P$ be the product of primes $P=\prod_{p\notin B, p<z}p$.
We set $$S(\cA,P):=\sum_{(n,P)=1} a_n .$$
To estimate $S(\cA,P)$, we need to understand how $\cA$ is distributed along arithmetic progressions.
For $d$ square-free, define
$$
\cA_d:=\{a_n\in\cA:n\equiv0(d)\}
$$
and set
$
|\cA_d|:=\sum_{n\equiv0(d)}a_n.
$

\newcommand{\X}{\mathcal X}\renewcommand{\deg}{\op{deg}}

We will use the following combinatorial sieve:
\begin{thm}\label{sie} \begin{itemize}
\item[$(A_1)$] For $d$ square-free with no factors in $B$,
suppose that $$|\cA_d|=g(d) \X + r_d(\cA)$$
where $g$ is a function on square-free integers with $0\le g(p)<1$,
 $g$ is multiplicative outside $B$, i.e.,
$g(d_1d_2)=g(d_1)g(d_2)$ if $d_1$ and $d_2$ are square-free integers  with $(d_1, d_2)=1$ and $(d_1 d_2, B)=1$,
and for some $c_1>0$, $g(p)<1-1/c_1$ for all prime $p\notin B$.
\item[$(A_2)$]  $\cA$ has level distribution $D(\X)$, in the sense that for some $\e>0$ and $C_\e>0$,
$$\sum_{d<D}|r_d(\cA)|\le  C_\e \X^{1-\e}.$$
\item[$(A_3)$]  $\cA$ has sieve dimension $r$ in the sense that there exists $c_2>0$ such that for all $2\le w\le z$,
$$-c_2\le \sum_{(p,B)=1, w\le p\le z} g(p)\log p -r \log \frac{z}{w} \le c_2 .$$
Then for $s>9r$, $z=D^{1/s}$ and $\X$ large enough,
$$S(\cA,P)\asymp \frac{\X}{(\log \X)^r} .$$
           \end{itemize}
\end{thm}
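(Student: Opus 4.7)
The plan is to derive Theorem \ref{sie} by the standard combinatorial sieve machinery, namely by comparing $S(\mathcal{A},P)$ with a truncated sum against Rosser--Iwaniec type weights $\lambda^\pm(d)$ supported on $d<D$.

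First, I would rewrite $S(\mathcal{A},P)=\sum_{n}a_n \sum_{d\mid (n,P)}\mu(d)$. Expanding and inserting the hypothesis $(A_1)$, this naively yields
$$
S(\mathcal A,P) \;=\; \mathcal X \sum_{d\mid P}\mu(d)g(d) + \sum_{d\mid P}\mu(d)r_d(\mathcal A),
$$
but the error sum has $2^{\pi(z)}$ many terms and the first sum itself must be processed. The key idea is to replace $\mu$ by weights $\lambda^\pm(d)$ concentrated on $d\leq D$ with the combinatorial property that, for every squarefree $n$ with $(n,P_B)=1$,
$$
\sum_{d\mid n}\lambda^-(d)\ \le\ \sum_{d\mid n}\mu(d)\ \le\ \sum_{d\mid n}\lambda^+(d).
$$
Such weights can be constructed by Iwaniec's $\beta$-sieve (or the classical Brun/Selberg truncation): one keeps only those divisors whose prime factorization grows at a controlled rate. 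Plugging these in gives
$$
\sum_{d\le D,\,d\mid P}\lambda^-(d)\,|\mathcal A_d|\ \le\ S(\mathcal A,P)\ \le\ \sum_{d\le D,\,d\mid P}\lambda^+(d)\,|\mathcal A_d|.
$$

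Next, I would substitute $(A_1)$ into both bounds. The error contribution is controlled by $(A_2)$: since $|\lambda^\pm(d)|\le 1$, the remainder is bounded by $\sum_{d<D}|r_d(\mathcal A)|\ll \mathcal X^{1-\varepsilon}$, which is negligible compared to the target $\mathcal X/(\log \mathcal X)^r$. The main term becomes $\mathcal X\sum_{d\le D,\,d\mid P}\lambda^\pm(d)g(d)$. To compare this with the ``true'' main term $V(z):=\prod_{p<z,\,p\notin B}(1-g(p))$, I would use the dimension hypothesis $(A_3)$ together with a Mertens-type identity to deduce $V(z)\asymp (\log z)^{-r}$. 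The combinatorial properties of $\lambda^\pm$ ensure that, writing $s=\log D/\log z$, the truncated weighted sums satisfy
$$
\sum_{d\le D,\,d\mid P}\lambda^\pm(d)g(d)\;=\;V(z)\bigl(F_r^\pm(s)+o(1)\bigr)
$$
where $F_r^\pm(s)\to 1$ as $s\to\infty$; the well-known fact that $F_r^\pm(s)$ are positive for $s>9r$ (and in fact the crossover threshold depends on the sieve used, but $9r$ is a safe universal choice) is the source of the hypothesis on $s$.

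The hard part, and the crux of the whole theorem, is the construction and analysis of the sieve functions $F_r^\pm(s)$: one must show that for the chosen weights $\lambda^\pm$ the corresponding $F_r^-(s)$ stays strictly positive once $s>9r$, which is what produces a genuine \emph{lower bound} of the correct order. This amounts to solving a delicate system of differential-difference equations (the Iwaniec $F,f$-functions for dimension $r$) and extracting explicit numerical bounds. Once these analytic facts are in place, combining the upper and lower truncations with the negligible remainder yields
$$
S(\mathcal A,P)\;\asymp\; \mathcal X \cdot V(z)\;\asymp\; \frac{\mathcal X}{(\log \mathcal X)^r},
$$
as claimed; since this sieve is the classical Halberstam--Richert result, in our paper we would simply quote it from \cite{HR} rather than reproduce the full analysis.
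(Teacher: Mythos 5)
Your proposal is correct and ends exactly where the paper does: the paper gives no proof of this statement at all, quoting it verbatim as the classical combinatorial sieve, \cite[Theorem 7.4]{HR}, and you likewise defer to that citation after sketching the standard Rosser--Iwaniec/beta-sieve route (upper and lower weights $\lambda^{\pm}$ supported on $d<D$, remainder negligible by $(A_2)$, Mertens-type evaluation $V(z)\asymp(\log z)^{-r}$ from $(A_3)$, and positivity of the lower sieve function for $s>9r$). So your treatment takes essentially the same approach as the paper, with the sketch being a faithful outline of how the quoted result is established in \cite{HR}.
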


Let $\bf{G}$, $G$ $V=\c^m $, $\G$, $w_0\in V(\z)$, etc., be as in Theorem \ref{affine_u}.
We consider the spin cover $\tilde {\bf G}\to {\bf G}$. Noting that the image of
 $\tilde{\bf G}(\br)$ is precisely $G={\bf G}(\br)^\circ$, we replace $\G$ by its preimage under the spin cover.
This does not affect the orbit $w_0\G$ and all our counting statements hold equally.
Set $W:=w_0{ \bf G}$ (resp. $w_0{ \bf G}\cup \{0\} $) if $w_0{\bf G}$  (resp. $w_0{\bf G}\cup \{0\}$) is Zariski closed,

Let $F\in \q[W]$ be an integer-valued polynomial on $w_0\G$  and let
$F=F_1\cdots F_r$ where $F_i\in \q[W]$ are all irreducible also in $\c[W]$ and integral on the orbit $w_0\G$.
We may assume without loss of generality that $\gcd \{F({x}):{ x}\in w_0\G\} =1$, by replacing $F$ by
$m^{-1}F$ for $m:=\gcd \{F({x}):{ x}\in w_0\G\} $.

Let $\{\B_T\subset w_0G\}$ be an effectively well-rounded family of subsets with respect to $\G$.
Set $\mathcal O:=w_0\G$. For $n\in \N$, $d\in \N$, and $T>1$, we also set
$$ a_n(T):=\#\{x\in  \mathcal O\cap \B_T: F(x)=n\};$$
$$ \G_{w_0}(d):=\{\gamma\in \G: w_0\gamma \equiv w_0 \; (d)\},$$
$$|\cA(T)|:=\sum_n a_n(T)=\# \O\cap \B_T;$$
$$ |\cA_d(T)|:=\sum_{n\equiv 0 (d)} a_n(T)= \#\{x\in  \mathcal O\cap \B_T: F(x)\equiv 0 \;(d)\}.$$

Let $\G_d:=\{\gamma\in \G: \gamma \equiv e\;\; (d)\}$.

\begin{thm}\label{usp} If $\delta >n-2$, then there exists
a finite set $S$ of primes such that
the family $\{\G_d:  \text{$d$ is square-free with no factors in $S$}\}$ has a uniform spectral gap.
\end{thm}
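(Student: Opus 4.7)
The plan is to combine the classification of the unitary dual $\hat G$ due to Hirai with the uniform spherical spectral gap for congruence quotients of $\G$ established by Bourgain--Gamburd--Sarnak and Salehi-Golsefidy--Varju.

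First I would verify condition (1) of Definition \ref{snggg} uniformly. By the classification recalled in Section \ref{ss;standard-comp-series}, every non-spherical complementary series $\mathcal U(\upsilon,s-n+1)$, $\upsilon\in\hat M\setminus\{1\}$, has parameter $s\in(\tfrac{n-1}{2},n-2)$. Since $\delta>n-2$, no such representation can occur at parameter $\delta$ in $L^2(\G_d\ba G)$ for any $d$, so the multiplicity at parameter $\delta$ of any non-spherical $\mathcal U(\upsilon,\delta-n+1)$ is zero. The spherical piece $\mathcal U(1,\delta-n+1)$ appears with multiplicity exactly one by Lax--Phillips \cite{LaxPhillips} and Sullivan \cite{Sullivan1979}, applied to each $\G_d$ separately (note $\delta(\G_d)=\delta$ since $[\G:\G_d]<\infty$). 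Hence condition (1) holds with $n_0(\G_d)=1$ uniformly in $d$.

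Next I would verify condition (2) uniformly. Fix any $s_1\in(\max\{n-2,\tfrac{n-1}{2}\},\delta)$. By Hirai again, any complementary series with parameter $s\in(s_1,\delta)$ must be spherical. Thus condition (2) for the family $\{\G_d\}$ reduces to finding a finite set $S$ of primes and some $s_0\in(s_1,\delta)$ such that for every square-free $d$ with $(d,S)=1$, the representation $L^2(\G_d\ba G)$ does not weakly contain any spherical $\mathcal U(1,s-n+1)$ with $s\in(s_0,\delta)$. Equivalently, there is a uniform spectral gap above the base Laplace eigenvalue $\delta(n-1-\delta)$ on $L^2(\G_d\ba\mathbb H^n)$, of size bounded below independently of $d$.

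This uniform spherical spectral gap is precisely the output of super-approximation: for a suitable finite exceptional set $S$ of primes depending on $\G$, the Cayley graphs of $\G/\G_d$ with respect to a fixed symmetric generating set form an expander family as $d$ ranges over square-free integers coprime to $S$. Expansion of these congruence graphs was proved by Bourgain--Gamburd--Sarnak \cite{BGS,BGS2} in the cases $n=2,3$ and by Salehi-Golsefidy--Varju \cite{SV} in the generality required here, and is transferred into a uniform Laplace spectral gap via the standard Burger--Sarnak type argument using the decomposition of $L^2(\G_d\ba\mathbb H^n)$ along the congruence tower. The genuinely hard part of the theorem is exactly this expander/super-approximation input, which relies on Helfgott-type product theorems in $\G/\G_d$ and on the Zariski density of $\G$; once it is accepted as a black box, the remainder of the argument is an immediate corollary of the classification of $\hat G$ and the multiplicity-one statement above.
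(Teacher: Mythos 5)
Your proof is correct and follows essentially the same route as the paper: uniform spherical gap from Salehi-Golsefidy--Varju together with the Bourgain--Gamburd--Sarnak transfer, the Hirai/Knapp--Stein classification to rule out non-spherical complementary series above parameter $n-2$, and Lax--Phillips/Sullivan multiplicity one at parameter $\delta$ to get $n_0(\G_d)=1$ uniformly. The only cosmetic difference is your labeling of the transfer step as a ``Burger--Sarnak type argument,'' whereas the paper simply invokes the transfer property from \cite{BGS}; the substance is identical.
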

\begin{proof} As $\delta>(n-1)/2$,
by \cite{SV} and by the transfer property obtained in
 \cite{BGS}, there exists
a finite set $S$ of primes such that
 the family $L^2(\G_d\ba \bH^n)$ has a uniform spectral gap where $d$ runs over all square-free integers with
no prime factors in $S$, that is, there exists $s_1<\delta$ such that
$L^2(\Gamma_d\ba G)$ does not contain
 a spherical complementary series representation of parameter $ s_1< s<\delta$.
 By Theorem \ref{nm2} and the classification of $\hat G$ \cite{KS},
$L^2(\Gamma_d \ba G)$ does not contain
 a non-spherical complementary series representation of parameter $s>(n-2)$.

It follows that $L^2(\Gamma_d \ba G)=\mathcal H_\delta \oplus \mathcal W_d$
 where $\mathcal H_\delta=U(1, (\delta-n+1)\alpha)$ is the spherical complementary series representation of
 parameter $\delta$; hence $n_0(\G_d)=1$
 and $\mathcal W_d$ does not weakly contain any complementary series representation of parameter $\max(n-2, s_1) <s<\delta$.
So $\sup s_0(\G_d)\le \max(n-2, s_1)<\delta$ and $\sup n_0(\G_d)=1$ where $d$ runs over all square-free integers with
no prime factors in $S$.
\end{proof}

Denote by $\G(d)$ the image of $\G$ under the reduction map $\tilde {\bf G} \to \tilde {\bf G}(\z/d\z)$ and
set $\O_d$ to be the orbit of $w_0$ in $(\z/d\z)^m$ under $\G(d)$; so $\# \mathcal O_d=[\G:\G_{w_0}(d)]$.
We also set
$$\O_F(d):=\{x\in \O_d: F(x)\equiv 0 \;\;(d)\}.$$

\begin{cor}\label{ns1}
Put $\M_{w_0G}(\B_T)=\X$. 
 Suppose that for some finite set $S$ of primes,
the family $\{\G_d:  \text{$d$ is square-free with no factors in $S$}\}$ has a uniform spectral gap.
Then there exists $\eta_0>0$ such that for any square-free integer $d$ with no factors in $S$,
 we have
$$|\cA_d(T)|=g(d) \X + r_d(\cA)$$
where $g(d)= \frac{\# \O_F(d)}{\# \O_d}$ and $r_d(\cA)=  {\# \O_F(d)} \cdot O({\X^{1-\eta_0}} ).$
\end{cor}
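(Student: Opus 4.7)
The plan is to reduce the estimate of $|\cA_d(T)|$ to a sum of orbit-counting statements to which the uniform counting theorem (Theorem \ref{mc}, in the form of Theorem \ref{mcla}) applies, with the congruence subgroups $\G_{w_0}(d)$ playing the role of $\G_0$.

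First I would partition $w_0\G\cap\B_T$ according to the reduction of $w_0\gamma$ modulo $d$. Since $F$ is integer-valued on $w_0\G$, the condition $F(w_0\gamma)\equiv 0\pmod d$ depends only on the image of $w_0\gamma$ in $\mathcal O_d\subset (\bz/d\bz)^m$. Choosing a system of representatives $\{\gamma_0(x_0)\in\G: x_0\in\mathcal O_d\}$ with $w_0\gamma_0(x_0)\equiv x_0\pmod d$, the fibre over $x_0$ consists exactly of the coset $w_0\G_{w_0}(d)\gamma_0(x_0)$. Hence
\[
|\cA_d(T)|=\sum_{x_0\in\mathcal O_F(d)}\#\bigl(w_0\G_{w_0}(d)\gamma_0(x_0)\cap\B_T\bigr).
\]

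Next I would verify that Theorem \ref{mcla} may be applied uniformly to the family $\{\G_{w_0}(d)\}$, where $d$ ranges over square-free integers with no factors in $S$. Since $\G_d\subset\G_{w_0}(d)$ and $L^2(\G_{w_0}(d)\ba G)$ embeds as a closed $G$-invariant subspace of $L^2(\G_d\ba G)$, the uniform spectral gap data for $\{\G_d\}$ is inherited by $\{\G_{w_0}(d)\}$. Moreover $\G_{w_0}(d)\cap H=\G\cap H=H\cap\G_{w_0}$ since $H$ is the stabilizer of $w_0$, so the hypothesis of Theorem \ref{mcla} is satisfied. Applying that theorem yields a single $\eta_0>0$ and an implied constant independent of $d$ and of $\gamma_0(x_0)\in\G$ such that
\[
\#\bigl(w_0\G_{w_0}(d)\gamma_0(x_0)\cap\B_T\bigr)=\frac{1}{[\G:\G_{w_0}(d)]}\M_{w_0G}(\B_T)+O\bigl(\M_{w_0G}(\B_T)^{1-\eta_0}\bigr).
\]

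Summing over $x_0\in\mathcal O_F(d)$ and using $[\G:\G_{w_0}(d)]=\#\mathcal O_d$ together with $\X=\M_{w_0G}(\B_T)$ gives
\[
|\cA_d(T)|=\frac{\#\mathcal O_F(d)}{\#\mathcal O_d}\,\X+\#\mathcal O_F(d)\cdot O(\X^{1-\eta_0}),
\]
which is exactly the claimed identity with $g(d)=\#\mathcal O_F(d)/\#\mathcal O_d$ and $r_d(\cA)=\#\mathcal O_F(d)\cdot O(\X^{1-\eta_0})$. The only point requiring care is that the implied constant in Theorem \ref{mcla} is indeed independent of the subgroup and of the coset representative, and this is precisely what the uniform spectral gap hypothesis delivers; no further obstacle arises since everything else is a bookkeeping decomposition along fibres of the reduction map $w_0\G\to\mathcal O_d$.
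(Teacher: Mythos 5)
Your proposal is correct and follows essentially the same route as the paper: decompose $|\cA_d(T)|$ over the cosets $\gamma\in\G_{w_0}(d)\ba\G$ with $F(w_0\gamma)\equiv 0\pmod d$ (equivalently, over fibres of the reduction map to $\O_F(d)$), note that the uniform spectral gap for $\{\G_d\}$ passes to $\{\G_{w_0}(d)\}$ since $\G_d\subset\G_{w_0}(d)$, and apply Theorem \ref{mc}/\ref{mcla} uniformly with main term $\X/[\G:\G_{w_0}(d)]=\X/\#\O_d$ and error $O(\X^{1-\eta_0})$ per coset. Your additional checks (that $\G_{w_0}(d)\cap H=\G\cap H$ and that the implied constant is uniform in $d$ and the representative) are exactly the points the paper relies on.
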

\begin{proof}
Since $\G_d\subset \G_{w_0} (d)$, the assumption implies that
the family $\{\G_{w_0}(d): \text{$d$ is square-free with no factors in $S$}\}$ has a uniform spectral gap.
Therefore, Theorem \ref{mc} on $\# (w_0\G_{w_0}(d)\gamma \cap \B_T )$ implies that for some uniform $\e_0>0$,
\begin{align*}
           |\cA_d(T)|&=\sum_{\gamma\in \G_{w_0}(d)\ba \G , F(w_0\gamma)\equiv 0 (d)} \# (w_0\G_{w_0}(d)\gamma \cap \B_T )
\\ &=\sum_{\gamma\in \G_{w_0}(d)\ba \G , F(w_0\gamma)\equiv 0 (d)} \left(\tfrac{1}{[\G:\G_{w_0}(d)]}\X   +O(\X^{1-\e_0})\right) .
 \end{align*}
Since $\# \O_F(d)=\#\{ \gamma\in \G_{w_0}(d)\ba \G , F(w_0\gamma)\equiv 0 \;\;(d)\}$, the claim follows.
\end{proof}

In the following we verify the sieve axioms $(A_1)$, $(A_2)$ and $(A_3)$ in this set-up.
This step is very similar to \cite[sec. 4]{NS} as we use the same combinatorial sieve and the only difference is
that we use the variable $\X=\M_{w_0G}(\B_T)$ instead of $T$. This is needed for us, as we are working with very general sets
$\B_T$; however if $\M_{w_0G}(\B_T)\asymp T^{\alpha}$ for some $\alpha>0$, we could also use the parameter $T$.

Using a theorem of Matthews, Vaserstein and Weisfeiler \cite{MVW},
and enlarging $S$ if necessary,  the diagonal embedding of $\G$ is dense in
$\prod_{p\notin S} \tilde {\bf G}(\mathbb F_p)$. The multiplicative property of $g$ on square-free integers
with no factors in $S$ follows from this (see \cite[proof of Prop. 4.1]{NS}).

 Letting $W_j=\{x\in W: F_j(x)=0\}$,
$W_j$ is an absolutely irreducible affine variety over $\q$ of dimension $\text{dim}(W)-1$
and hence by Noether's theorem,
 $W_j$ is absolutely irreducible over $\mathbb F_p$ for all $p\notin S$, by enlarging
$S$ if necessary. We may also assume that $W(\mathbb F_p)=w_0{\bf G}(\mathbb F_p)$ (possibly after adding $\{0\}$)
for all $p\notin S$ by Lang's theorem \cite{Lang}.
Using Lang-Weil estimate \cite{LW} on $\#W(\mathbb F_p)$ and $\# W_j(\mathbb F_p)$,
we obtain that for $p\notin S$,
$$\# \O_F(p)=r\cdot  p^{{\dim}(W)-1} +O(p^{\dim W-3/2})\;\text{ and }\; \# \O_p=p^{\dim W} +O(p^{\dim W-1/2}) .$$
Hence $g(p)= r\cdot p^{-1} +O(p^{-3/2})$ for all $p\notin S$.
This implies $A_3$ (cf. \cite[Thm 2.7]{MV}), as well as the last claim of $A_1$.

Moreover this together with Corollary \ref{ns1} imply that
$r(\cA, d)\ll d^{\dim W -1} \X^{1-\eta_0}$.
Hence for $D\le  \X^{{\eta_0}/({2\dim W})}$ and $\e_0=\eta_0/2$,
$$\sum_{d\le D}r(\cA, d)\ll D^{\dim W } \X^{1-\eta_0}  \le \X^{1-\e_0},$$
providing $(A_2)$.
Therefore
for any $z=D^{1/s}\le  \X^{{\eta_0}/({2 s\dim W})}$ and $s>9r$, and for all large $\X$,
we have
\be\label{car2}  S(\cA, P)\asymp \frac{\X}{(\log \X )^r}.\ee

\noindent{\bf Proof of Theorem \ref{affine_u}}.
Using arguments in the proof of Corollary \ref{ns1}, we first observe (cf. \cite[Lem. 4.3]{NS}) that
there exists $\eta>0$ such that for any $k\in \N$, $$\#\{x\in \mathcal O\cap \B_T: F_j(x)=k\} \ll  \X^{1-\eta}.$$
Fixing $0<\e_1 <\eta$, it implies that
\be\label{car1}  \#\{x\in \mathcal O\cap \B_T: |F_j(x)| \le \X^{\e_1}\} \ll  \X^{1-\eta+\e_1}.\ee

Now
\begin{multline*}
 \#\{x\in \mathcal O\cap \B_T: \text{all } F_j(x) \text{ prime}\}
\le \sum_{j=1}^r \# \{ x\in \mathcal O\cap \B_T: |F_j(x)| \le \X^{\e_1}\} \\
+ \# \{ x\in \mathcal O\cap \B_T: |F_j(x)| \ge \X^{\e_1} \text{ for all $1\le j\le r$ and }
\text{ all }F_j(x) \text{ prime} \}.
\end{multline*}
Now for $z\le \X^{{\eta_0}/({2 s\dim W})}$ such that $P=\prod_{p<z} p\ll \X^{\e_1}$, we have
\begin{multline*} \{ x\in \mathcal O\cap \B_T: |F_j(x)| \ge \X^{\e_1} \text{ for all $1\le j\le r$ and }
\text{ all }F_j(x) \text{ prime} \}\\ \subset
\{ x\in \mathcal O\cap \B_T: (F_j(x), P)=1 \}
\end{multline*}
and the cardinality of the latter set is
$S(\cA, P)$ according to our definition of $a_n$'s.

Therefore, we obtain the desired upper bound:
$$\#\{x\in \mathcal O\cap \B_T: \text{all } F_j(x) \text{ prime}\}\ll
\X^{1-\eta+\e_1} +\frac{\X}{(\log \X )^r} \ll \frac{\X}{(\log \X )^r}.$$

\noindent{\bf Proof of Theorem \ref{affine_l}}. By the assumption, for some $\beta>0$,
\be \label{fd}\max_{x\in \B_T} \|x\|\ll \M_{w_0G}(\B_T)^{\beta}=\X^{\beta}.\ee
It follows that
\be \label{fd2}\max_{x\in \B_T} |F(x)|\ll \M_{w_0G}(\B_T)^{\beta \deg (F)}=\X^{\beta \deg (F)}.\ee

Then for $z= \X^{{\eta_0}/({2 s\dim W})}$ and $P=\prod_{p<z, p\notin S}p$, $R=\frac{\beta \cdot \deg (F) {2 s\dim W} }{\eta_0}$, we have
 $$\{x\in \O\cap \B_T: (F(x), P)=1 \} \subset
\{ x\in \O\cap \B_T: F(x) \text{ has at most $R$ prime factors}\},$$
since all prime factors of $F(x)$ has to be at least the size of $z$ if $(F(x), P)=1$ and $ |F(x)|\ll \X^{\beta \deg (F)}$ if $x\in \B_T$.
Since $S(\cA,P)=\#\{x\in \O\cap \B_T: (F(x), P)=1 \} $,
we get the desired lower bound $\frac{\X}{(\log \X )^r}$ from \eqref{car2}.

\begin{rmk}\rm  When $\G$ is an arithmetic subgroup  of a simply connected semisimple
algebraic $\q$-group $G$, and $H$ is a symmetric subgroup,
the analogue of Theorem \ref{mc} has been obtained in \cite{BeO}, assuming that $H\cap \G$ is a lattice in $H$.
Strictly speaking, \cite[Theorem 1.3]{BeO}
is stated only for a fixed group $\G$; however it is clear from its proof that the statement also
holds uniformly over its congruence subgroups with the correct main term, as in Theorem \ref{mc}.
Based on this, one can use the combinatorial sieve \ref{sie} to obtain analogues of
Theorems \ref{affine_u} and \ref{affine_l}, as it was done for a group variety in \cite{NS}.
Theorem \ref{affine_l} on lower bound for $\G$ arithmetic was obtained in \cite{GN} further assuming that
$H\cap \G$ is co-compact in $H$.
\end{rmk}


\end{document}